\documentclass[12pt]{amsart}

\usepackage[margin=1in,letterpaper,portrait]{geometry}
\usepackage{amsmath}
\usepackage{amssymb}
\usepackage{amsthm}
\usepackage{mathrsfs}
\usepackage{verbatim}
\usepackage{fancyhdr}
\usepackage{url}
\usepackage{tikz}
\usepackage{enumerate}
\usepackage{longtable}
\usetikzlibrary{matrix,arrows}
\usepackage{tikz-cd}
\usepackage[pdftex,bookmarksnumbered=true,linkbordercolor={1 0 0}]{hyperref}
\usetikzlibrary{plotmarks}
\usepackage{pgfplots}
\usepgfplotslibrary{fillbetween}

\usepackage{setspace}

\newcommand{\student}{Dennis Tseng}
\newcommand{\studentemail}{DennisCTseng@gmail.com}

\newcommand{\mb}{\mathbb}

\newcommand{\mc}{\mathcal}

\newcommand{\wt}{\widetilde}
\newcommand{\codim}{{\rm codim}}
\newcommand{\Hilb}{{\rm Hilb}}
\newcommand{\RHilb}{\wt{{\rm Hilb}}}

\newcommand{\Spec}{{\rm Spec}}
\newcommand{\Prob}{{\rm Prob}}
\newcommand{\F}{F_{1}}
\newcommand{\Fo}{F_{0,1}}
\newcommand{\Co}{{\rm Contain}}
\newcommand{\Span}{{\rm Span}}
\newcommand{\Hyp}{{\rm Hyp}}

\newcommand*{\sheafhom}{\mathscr{H}\kern -.5pt om}


\fancyhf{}
\rhead{\student \\ \studentemail}
\cfoot{\thepage}
\addtolength{\headheight}{30pt}

\setlength{\parindent}{0cm}
\setlength{\parskip}{.15cm plus1mm minus1mm}


\begin{document}
\theoremstyle{plain}
\newtheorem{Thm}{Theorem}[section]
\newtheorem{Cor}[Thm]{Corollary}
\newtheorem{Conj}[Thm]{Conjecture}
\newtheorem{Pro}[Thm]{Problem}
\newtheorem{Main}{Main Theorem}
\renewcommand{\theMain}{}
\newcommand{\Sol}{\text{Sol}}
\newtheorem{Lem}[Thm]{Lemma}
\newtheorem{Claim}[Thm]{Claim}
\newtheorem{Prop}[Thm]{Proposition}
\newtheorem{Exam}{Example}
\newtheorem{Rem}{Remark}
\newtheorem{ToDo}{To Do}

\theoremstyle{definition}
\newtheorem{Def}{Definition}[section]
\newtheorem{Exer}{Exercise}

\theoremstyle{remark}

\title{Collections of Hypersurfaces Containing a Curve}
\author{Dennis Tseng}
\address{Dennis Tseng, Harvard University, Cambridge, MA 02138}
\email{DennisCTseng@gmail.com}
\date{\today}

\allowdisplaybreaks

\begin{abstract}
We consider the closed locus parameterizing $k$-tuples of hypersurfaces that have positive dimensional intersection and fail to intersect properly, and show in a large range of degrees that its unique irreducible component of maximal dimension consists of tuples of hypersurfaces whose intersection contains a line. We then apply our methods in conjunction with a known reduction to positive characteristic argument to find the unique component of maximal dimension of the locus of hypersurfaces with positive dimensional singular loci. We will also find the components of maximal dimension of the locus of smooth hypersurfaces with a higher dimensional family of lines through a point than expected.
\end{abstract}

\maketitle

\section{Introduction}
A general choice of $k$ hypersurfaces in $\mb{P}^r$ will intersect in a locus of dimension $r-k$. Equivalently, a general choice of $k$ homogenous polynomials in $K[X_0,\ldots,X_r]$ form a regular sequence. It is a closed condition for a sequence of hypersurfaces to not intersect properly, and the purpose of this paper is to address basic questions about this locus. 

In the second part of the paper, we will give two applications that are of independent interest. Specifically, we will look at the locus of hypersurfaces with positive dimensional singular locus and the locus of hypersurfaces with more lines through a point than expected, improving on previously known results. 

\subsection{Problem statement}
A first natural question would be the following:
\begin{Pro}
\label{PROBA}
Let $Z\subset \prod_{i=1}^{k}\mb{A}^{\binom{r+d_i}{d_i}}$ parameterize the $k$-tuples $(F_1,\ldots,F_{k})$ of homogenous forms of degrees $d_1,\ldots,d_k$ where $\dim(V(F_1,\ldots,F_k))$ exceeds $r-k$. What is the dimension of $Z$, and what are its components of maximal dimension?
\end{Pro}

For example, one might naively expect that the largest component of $Z$ is the locus of $k$-tuples of forms all containing the same linear space of dimension $r-k+1$, because linear spaces are the simplest $r-k+1$ dimensional subvarieties of projective space.
\begin{Pro}
\label{PROBB}
Does $Z$ have a unique component of maximal dimension, consisting of tuples $(F_1,\ldots,F_k)$ of hypersurfaces all containing the same $r-k+1$ dimensional linear space?
\end{Pro}

The answer to Problem \ref{PROBB} is negative as it stands. For example, if $r=3$ and the degrees are $d_1=2$, $d_2=2$, and $d_3=100$, then the locus of 3-tuples of hypersurfaces all containing the same line is codimension 103, while the second quadric will be equal to the first quadric in codimension 9. Even if the degrees are all equal, we can let $r=4$, $k=2$, and the degrees be $d_1=2$, $d_2=2$, where the two quadrics will contain a plane in codimension 16, but are equal in codimension 14. 

\subsection{Results and applications}
In spite of the simple counterexamples above, we will show that Problem \ref{PROBB} has a positive answer in many cases when $k=r$, as stated in Theorem \ref{NR}. 
\begin{Thm}
\label{NR}
If $2\leq d_1\leq d_2\leq \cdots\leq d_r$ and $d_i\leq d_1+\binom{d_1}{2}(i-1)$, then the locus $Z\subset \prod_{i=1}^{r}\mb{A}^{\binom{r+d_i}{d_i}}$ has codimension
\begin{align*}
-2(r-1)+\sum_{i=1}^{r}{(d_i+1)}.
\end{align*}
Furthermore, the unique component of maximal dimension consists of $r$-tuples of hypersurfaces all containing the same line. 
\end{Thm}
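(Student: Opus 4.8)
The plan is to stratify $Z$ according to the curve cut out inside the common zero locus and to reduce the theorem to a single numerical inequality comparing, for each curve, the number of conditions it imposes against the dimension of the family in which it moves. First I would set up the relevant incidence variety: for a component $\mc{H}$ of the Hilbert scheme parameterizing $1$-dimensional subschemes $C\subset\mb{P}^r$, let
\[
\Sigma_{\mc{H}}=\{(C,F_1,\ldots,F_r): [C]\in\mc{H},\ C\subset V(F_1,\ldots,F_r)\},
\]
and let $Z_{\mc{H}}$ be the closure of its image in $\prod_{i=1}^r\mb{A}^{\binom{r+d_i}{d_i}}$. Since the space of degree-$d$ forms vanishing on a fixed $C$ has codimension equal to the Hilbert function value $h_C(d)=\dim\bigl(K[X_0,\ldots,X_r]/I_C\bigr)_d$, the fiber of $\Sigma_{\mc{H}}$ over a general $[C]$ has the expected dimension, so that
\[
\codim Z_{\mc{H}}\ \ge\ \sum_{i=1}^r h_C(d_i)-\dim_{[C]}\mc{H}
\]
for general $C\in\mc{H}$. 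Because every tuple in $Z$ has positive-dimensional zero locus, any irreducible component $W$ of $Z$ lies in $Z_{\mc{H}}$ for some $\mc{H}$; and since $h_C$ grows with $\deg C$, only finitely many $\mc{H}$ are relevant, so it suffices to bound $\codim Z_{\mc{H}}$ from below component by component.

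The heart of the argument is then the inequality
\[
\sum_{i=1}^r h_C(d_i)-\dim_{[C]}\mc{H}\ \ge\ \sum_{i=1}^r(d_i+1)-2(r-1),
\]
valid for every such $C$, with equality exactly when $C$ is a line. When $C=L$ is a line one has $h_L(d)=d+1$ and $\dim\mc{H}=\dim\mb{G}(1,r)=2(r-1)$, so equality holds; moreover a general tuple vanishing on $L$ should vanish on no other line, so $\Sigma_{\mb{G}(1,r)}\to Z_{\text{line}}$ is generically finite, and $Z_{\text{line}}$ is irreducible of the stated codimension, being dominated by a Grassmannian bundle. To prove the strict inequality for every non-line $C$ I would separate the \emph{degenerate} case, in which $C$ lies in a proper linear subspace and the forms $F_i$ are forced to acquire common factors, so that $Z_{\mc{H}}$ becomes comparable to the ``contains a $\mb{P}^m$'' loci and is handled by a direct count, from the \emph{non-degenerate} case, in which I would bound $h_C(d_i)$ below by Castelnuovo- and Macaulay-type estimates on the Hilbert function of $C$ and simultaneously bound $\dim_{[C]}\mc{H}$ above in terms of those same Hilbert function values.

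I expect the main obstacle to be exactly this last estimate: proving that the excess $\sum_i\bigl(h_C(d_i)-(d_i+1)\bigr)$ of conditions imposed strictly dominates the excess $\dim_{[C]}\mc{H}-2(r-1)$ of the parameter space over the Grassmannian of lines, uniformly over the degree and arithmetic genus of $C$. This is precisely where the hypothesis $d_i\le d_1+\binom{d_1}{2}(i-1)$ must be used: it rules out the pathologies of the introduction, where a single large $d_i$ lets a low-degree relation or a common quadric factor undercut the line, and it keeps the degrees balanced enough that, degree by degree, each curve of degree at least $2$ imposes enough additional conditions to outweigh the larger family in which it varies. Once the inequality is established together with the characterization of equality by lines, the finiteness of the relevant strata shows that $Z_{\text{line}}$ is the unique component of maximal dimension and that $\codim Z=\sum_{i=1}^r(d_i+1)-2(r-1)$, which completes the proof.
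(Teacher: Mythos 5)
Your key inequality is false, and it fails in a way that cannot be repaired inside your framework. Let $\mc{H}$ be a component of the Hilbert scheme whose general member is a plane curve $C$ of degree $e>d_r$, spanning a $2$-plane $\Lambda\subset\mb{P}^r$. Any form of degree $d_i\leq d_r<e$ vanishing on $C$ restricts to a form on $\Lambda$ of degree less than $\deg(C)$ vanishing on $C$, hence vanishes identically on $\Lambda$; therefore $h_C(d_i)=\binom{d_i+2}{2}$, independent of $e$, while
\begin{align*}
\dim_{[C]}\mc{H}\ \geq\ 3(r-2)+\binom{e+2}{2}-1
\end{align*}
grows without bound, so $\sum_{i}h_C(d_i)-\dim_{[C]}\mc{H}\to-\infty$ as $e\to\infty$ and your central inequality is violated by every such stratum with $e$ large. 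The same happens for curves of type $(a,a)$ on a quadric surface in a $\mb{P}^3$ (nondegenerate when $r=3$, so the failure is not confined to your ``degenerate case''): for $a>\max d_i$ one has $h_C(d_i)=h_Q(d_i)$, fixed, while the curves move in a family of dimension growing like $a^2$. The theorem survives on these strata because $Z_{\mc{H}}$ is contained in the locus of tuples containing the plane (resp.\ the quadric), but your bound $\codim Z_{\mc{H}}\geq\sum_i h_C(d_i)-\dim_{[C]}\mc{H}$ cannot detect this: the projection $\Sigma_{\mc{H}}\to Z_{\mc{H}}$ has large positive-dimensional fibers (once the $F_i$ contain the surface, $C$ moves freely on it), which is exactly the situation in which ``conditions minus moduli'' yields nothing. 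In particular no upper bound for $\dim_{[C]}\mc{H}$ in terms of the values $h_C(d_1),\ldots,h_C(d_r)$ can exist, since these examples fix the latter while making the former arbitrarily large; this is precisely the obstruction the paper names when it discards the naive incidence correspondence in its discussion of previous work. (Your finiteness reduction is also misjustified for the same reason: $h_C$ does \emph{not} grow with $\deg C$; finiteness comes instead from refined Bezout, which produces an integral curve of degree at most $d_1\cdots d_r$ in the intersection.)

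Your two proposed repairs do not close this gap. The degenerate-case fix (comparison with the ``contains a $\mb{P}^m$'' loci) applies only when the degree of $C$ is large relative to the $d_i$, so that containing $C$ forces containing its span; a curve of intermediate degree inside a proper subspace --- say a type $(a,a)$ curve on a quadric surface inside some $\mb{P}^3\subset\mb{P}^r$ --- forces a quadric, not a linear space, into $V(F_1,\ldots,F_r)$, and falls through both of your cases. The paper's proof is structured so that no Hilbert scheme dimension is ever bounded: it stratifies by the dimension $b$ of the \emph{span} of the curve, so the only moduli ever subtracted is $\dim\mb{G}(b,r)$; it restricts the forms to the span $\Lambda\cong\mb{P}^b$, where their common zero locus now has dimension $(r-b)+1$ in excess of expected (Lemma \ref{IC}); and it then forgets one form at a time (Lemma \ref{kind}): at each step either the remaining forms already cut out an integral subvariety of one dimension higher, or the forgotten form must contain a fixed integral component, costing at least the minimal-degree Hilbert function bound of Lemma \ref{ARBH}. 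The excess $(r-b)+1$ forces that cost to be paid $(r-b)+1$ separate times, and this repeated payment --- rather than a single payment of $\sum_i h_C(d_i)$ weighed against the moduli of $C$ --- is what makes degenerate and surface-riding curves expensive; the hypothesis $d_i\leq d_1+\binom{d_1}{2}(i-1)$ then enters only in the purely combinatorial comparison of the resulting bounds across $b$ (Lemma \ref{KCL}). To salvage your outline you would need either this mechanism or an upper bound on Hilbert scheme dimensions in terms of Hilbert function values, and the latter does not exist.
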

Figure \ref{F1} depicts the restriction on the degrees in Theorem \ref{NR}. 

In fact, it is no harder to prove the analogous result in the more general case where $Z$ is the locus of $k$-tuples of hypersurfaces having positive dimensional intersection for $k\geq r$.  This is stated in Theorem \ref{slope}, proving the result claimed in the abstract. 
\begin{figure}[h]
\begin{tikzpicture} 
    \begin{axis}[xmin=-.5, xmax=10.5,
   ymin=0, ymax= 35,      yticklabels={,,},
      xticklabels={,,}, ticks=none]
    \addplot[name path=f,domain=0:10,black, dashed] {3*x+3};
    \addplot[name path=g,domain=0:10,black, dashed] {3};

    \path[name path=axis] (axis cs:0,0) -- (axis cs:10,0);

    \addplot [
        thick,
        color=black,
        fill=black, 
        fill opacity=0.05
    ]
    fill between[
        of=f and g,
        soft clip={domain=0:10},
    ];

    \node [rotate=43] at (axis cs:  5,  21) {Slope $\binom{d_1}{2}$};
\node  at (axis cs:  0,  1.5) {$d_1$};
\node  at (axis cs:  1,  2.5) {$d_2$};
\node  at (axis cs:  10,  25.5) {$d_r$};

\addplot[mark=none, domain=0:1,black]{
x+3
};
\addplot[mark=none, domain=1:2,black]{
3*(x-1)+4
};
\addplot[mark=none, domain=2:5,black]{
7
};
\addplot[mark=none, domain=5:6,black]{
14*(x-5)+7
};
\addplot[mark=none, domain=6:8,black]{
1*(x-6)+21
};
\addplot[mark=none, domain=8:10,black]{
2*(x-8)+23
};
    \end{axis}
\end{tikzpicture}
\caption{Allowable range of degrees in Theorem \ref{NR}}
\label{F1}
\end{figure}
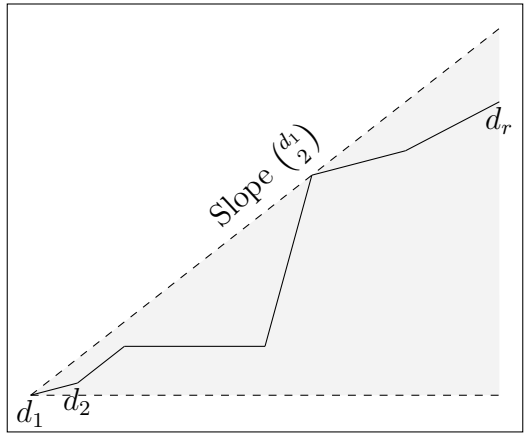

We give two applications. 
\subsubsection{Singular hypersurfaces}
Consider the following conjecture studied by Slavov \cite{Kaloyan}:
\begin{Conj}
\label{CONJA}
Among hypersurfaces of degree $\ell$ in $\mb{P}^r$ with singular locus of dimension at least $b$, the unique component of maximal dimension consists of hypersurfaces singular along a $b$-dimensional linear space. 
\end{Conj}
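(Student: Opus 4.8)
The plan is to translate Conjecture \ref{CONJA} into the setting of Theorem \ref{slope} through the Jacobian ideal. Over a field whose characteristic does not divide $\ell$, Euler's relation $\ell F = \sum_i X_i\,\partial F/\partial X_i$ shows that $V(\partial_0 F,\ldots,\partial_r F)\subseteq V(F)$, so the singular locus of $V(F)$ is exactly the common zero locus of the $r+1$ partial derivatives $\partial_0 F,\ldots,\partial_r F$, which are forms of degree $\ell-1$ on $\mb{P}^r$. Hence the locus $\Sigma_b$ of degree-$\ell$ hypersurfaces whose singular locus has dimension at least $b$ is precisely the set of $F$ whose gradient $(\partial_0 F,\ldots,\partial_r F)$ fails to intersect properly, the expected dimension of the intersection of $r+1$ forms of degree $\ell-1$ being $-1$ (a general hypersurface is smooth). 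This places $\Sigma_b$ in the framework of the present paper, as the preimage of an improper-intersection locus under the linear gradient map $\nabla\colon F\mapsto(\partial_0 F,\ldots,\partial_r F)$.

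Next I would pin down the conjectured component and its dimension. For a $b$-plane $\Lambda\subset\mb{P}^r$, the hypersurfaces singular along $\Lambda$ are exactly the sections in $H^0(\mc{I}_\Lambda^2(\ell))$, and as $\Lambda$ ranges over the Grassmannian $\mb{G}(b,r)$ these sweep out an irreducible locus $\Sigma_b^{\mathrm{lin}}$ (a vector bundle over $\mb{G}(b,r)$) whose general member has singular locus equal to $\Lambda$. A direct count of the monomials of degree $\ell$ that are at most linear in a complementary set of $r-b$ variables gives
\[ \codim \Sigma_b^{\mathrm{lin}} = \binom{b+\ell}{\ell} + (r-b)\binom{b+\ell-1}{\ell-1} - (b+1)(r-b), \]
and the task is to show that every other component of $\Sigma_b$ is strictly smaller.

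The essential difficulty, and where I expect the real work to lie, is that the gradient $(\partial_0 F,\ldots,\partial_r F)$ is never a general tuple: it is integrable, satisfying $\partial_j\partial_i F=\partial_i\partial_j F$, so the image of $\nabla$ meets the improper-intersection locus non-transversely and a naive application of Theorem \ref{slope} to the partials produces the wrong estimate. Instead I would rerun the \emph{method} of proof of Theorem \ref{slope} on the gradient locus itself, and here invoke the known reduction to positive characteristic (cf. Slavov \cite{Kaloyan}): reducing modulo a suitable prime, the Frobenius provides additional flexibility in analyzing the gradient map and lets one control the geometry of these special tuples, and since an upper bound on the dimension of $\Sigma_b$ may be verified after reduction modulo a suitable prime, it suffices to carry out the estimate over $\ol{\mb{F}}_p$. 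The delicate point is to account simultaneously for the second-order (singular-along) vanishing and for the integrability of the partials, so that the resulting bound lands exactly on $\codim\Sigma_b^{\mathrm{lin}}$ rather than overshooting; this matching, rather than any single step, is the main obstacle.

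Granting this, the uniqueness built into Theorem \ref{slope} transfers: among the relevant tuples failing to intersect properly the unique top-dimensional family is the one whose members vanish on a common linear space, which on the gradient locus corresponds to forms singular along a $b$-plane. One then concludes that $\Sigma_b^{\mathrm{lin}}$ is the unique component of $\Sigma_b$ of maximal dimension, establishing Conjecture \ref{CONJA} in the range of $\ell$ for which Theorem \ref{slope} applies (i.e.\ with $\ell-1$ in its admissible range of degrees); the positive-dimensional case $b=1$, where the linear space is a line, is the one directly governed by the theorems stated above.
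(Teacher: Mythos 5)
You have correctly set up the translation (singular locus $=$ common zeros of the partials in characteristic $0$, via Euler's relation), correctly computed the codimension of the conjectured component (for $b=1$ it agrees with the paper's $\ell r-2r+3$), and correctly identified the central obstruction: the gradient tuple $(\partial_0F,\ldots,\partial_rF)$ is integrable, so Theorem \ref{slope} cannot be applied to it directly. But your proposal stops exactly where the work begins. First, note that the statement is a \emph{conjecture} which the paper leaves open in general; what the paper actually proves (Theorem \ref{TsengT}) is only the case $b=1$, for $\ell\geq 7$ or $\ell=5$, in characteristic $0$ or $2$. Second, the step you defer --- ``rerun the method on the gradient locus, where Frobenius provides additional flexibility'' --- is the entire content of the paper's argument, and it is not a rerun of Theorem \ref{slope} on the partials at all. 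The concrete mechanism (Poonen's trick, as used by Slavov) is: work over $\ol{\mb{F}}_2$ and write a random form of odd degree $\ell$ as $F=G+X_0G_0^2+\cdots+X_rG_r^2$ with the $G_i$ of degree $(\ell-1)/2$ chosen \emph{independently}, so that $\partial_iF=\partial_iG+G_i^2$; a torsor argument (Lemmas \ref{lp1}, \ref{lp2}: the set of $G_0$ with $V(G+G_0^2)\supset Z$ is a torsor under the set of $G_0$ with $V(G_0)\supset Z$, for $Z$ reduced) combined with the Lang--Weil estimate (Lemma \ref{LWb2}) converts the codimension bound for the singular locus into a codimension bound for the locus where the independent tuple $(G_0,\ldots,G_r)$ contains a common curve --- and that is where the machinery of Theorem \ref{FML} enters, applied to genuinely independent forms of degree roughly $\ell/2$, not to the partials. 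Even then the general bound is too weak for rational normal curves, which the paper must excise and treat separately using Conca's computation of the Hilbert function of the square of their ideal (Lemmas \ref{Rs}--\ref{srnci}).

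There is a second genuine gap in your reduction-to-characteristic-$p$ step. You assert that an upper bound on $\dim\Sigma_b$ ``may be verified after reduction modulo a suitable prime,'' but semicontinuity of fiber dimension only yields a non-strict inequality, whereas uniqueness of the maximal component requires the \emph{strict} inequality $\dim(\mc{S}^1_{1,\ol{\mb{Q}}})>\dim(\mc{S}_{1,\ol{\mb{Q}}}\setminus\mc{S}^1_{1,\ol{\mb{Q}}})$; one must rule out a component over $\ol{\mb{Q}}$, distinct from the linear-space component, whose reduction mod $2$ collapses onto it. The paper handles this with Claims \ref{STZ}--\ref{ST2}: a general form singular along a line has Jacobian ideal cutting out that line scheme-theoretically with degree $1$, so the closure of the non-linear locus over $\Spec(\mb{Z})$ can never contain the linear locus in any fiber. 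Relatedly, once you reduce to characteristic $2$ (or any $p\mid\ell$) your opening identification breaks down --- Euler's relation fails, and the singular locus must be defined by $(F,\partial_0F,\ldots,\partial_rF)$ including $F$ itself, as the paper does. Finally, since Theorem \ref{slope} (and hence the whole pipeline) only treats the case $r-k+a=1$, i.e.\ curves, this strategy as it stands cannot reach $b\geq 2$, so even a completed version of your argument would prove only the $b=1$ case of the conjecture, with degree restrictions.
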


The obstruction to solving Conjecture \ref{CONJA} is similar to the obstruction to solving Problem \ref{PROBA} outlined above. One would expect that it is easier to be singular along a linear space than a more complicated variety, but a more complicated variety might vary in a family with more moduli. 

As progress towards Conjecture \ref{CONJA}, Slavov showed:
\begin{Thm} [{\cite[Theorem 1.1]{Kaloyan}}]
\label{KaloyanT}
Given $r, b, p$, there exists an effectively computable integer $\ell_0=\ell_0(r,b,p)$ such that for all $\ell\geq \ell_0$, Conjecture \ref{CONJA} holds over a field of characteristic $p$, where $p$ is possibly 0.
\end{Thm}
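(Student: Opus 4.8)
The plan is to recast Conjecture \ref{CONJA} as a dimension count on an incidence correspondence stratified by the singular locus. Write $\mb{P}^N$, $N=\binom{r+\ell}{r}-1$, for the space of degree-$\ell$ hypersurfaces, and let $\Sigma_b\subset\mb{P}^N$ be the locus with $\dim\mathrm{Sing}(V(F))\geq b$. For a fixed reduced $b$-dimensional subvariety $X\subset\mb{P}^r$, the hypersurfaces singular along $X$ are precisely those with $F\in(I_X^2)_\ell$ (at least when $\mathrm{char}$ is prime to $\ell$, so that the Jacobian ideal cuts out the singular locus), a linear subspace of $\mb{P}^N$. I would therefore form
\[
\mc{I}=\{(X,[F]): X\subset\mathrm{Sing}(V(F)),\ \dim X=b\}\subset \Hilb\times\mb{P}^N,
\]
fibered over the components $\mc H$ of the Hilbert scheme with fibre $\mb{P}((I_X^2)_\ell)$, and bound $\dim\Sigma_b\leq\max_{\mc H}\big(\dim\mc H+N-h^0(\O/I_X^2(\ell))\big)$, the maximum over all relevant $\mc H$. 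The tension is exactly the one the paper highlights for Problem \ref{PROBA}: a more complicated singular locus imposes more linear conditions on $F$ (shrinking the fibre) but can move in a family of larger moduli (growing the base), and one must show the first effect wins.

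First I would pin down the candidate stratum. Taking $X=\Lambda$ a $b$-plane, a direct monomial count gives
\[
h^0(\O/I_\Lambda^2(\ell))=\binom{b+\ell}{b}+(r-b)\binom{b+\ell-1}{b},
\]
and, since a general hypersurface singular along $\Lambda$ is singular along nothing larger, the projection is generically finite, so the locus of hypersurfaces singular along some $b$-plane has dimension exactly
\[
D_0=(b+1)(r-b)+\binom{r+\ell}{r}-\binom{b+\ell}{b}-(r-b)\binom{b+\ell-1}{b}-1.
\]
The goal is to show every other stratum has dimension $<D_0$ once $\ell\geq\ell_0$.

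The heart is a domination estimate comparing fibre loss against moduli gain. For a $b$-dimensional $X$ of degree $d$ the first infinitesimal neighbourhood has Hilbert polynomial with leading term $\tfrac{(r-b+1)d}{b!}\ell^b$, so
\[
h^0(\O/I_X^2(\ell))-h^0(\O/I_\Lambda^2(\ell))\geq \frac{(r-b+1)(d-1)}{b!}\,\ell^b+O(\ell^{b-1}),
\]
i.e. raising the degree of the singular locus by $d-1$ costs on the order of $(d-1)\ell^b$ in fibre dimension. Against this I would weigh the gain in moduli, for which the key input is a linear-in-degree bound
\[
\dim\mc H\ \leq\ (b+1)(r-b)+C\,(d-1),
\]
with $C=C(r,b)$ independent of $d$ and $\ell$ (tight already for $0$-cycles, where $\dim\mathrm{Sym}^d\mb{P}^r=rd$). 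Granting it, the excess of any non-linear stratum over $D_0$ is at most $C(d-1)-\tfrac{(r-b+1)(d-1)}{b!}\ell^b+O(\ell^{b-1})<0$ as soon as $\ell^b$ beats $C$, \emph{uniformly in $d$}; since a $b$-dimensional singular locus of a degree-$\ell$ hypersurface has bounded degree by refined B\'ezout ($d\lesssim(\ell-1)^{r-b}$), only finitely many strata survive and the resulting $\ell_0$ is effective.

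I expect the obstacle to be two-fold. The serious geometric input is the uniform linear-in-degree dimension bound for families of $b$-folds, required across all components of $\Hilb$ and all $d$ up to $(\ell-1)^{r-b}$, together with uniform control of the lower-order terms $O(\ell^{b-1})$; extracting an explicit $C$, and hence an effective $\ell_0$, is the delicate part. The second issue is characteristic: in characteristic $p$ the identification $\mathrm{Sing}(V(F))=V(\partial_0F,\dots,\partial_rF)$ and the passage from $F\in I_X^2$ to singularity along $X$ can break down (inseparability, the case $p\mid\ell$, and wild behaviour of the Jacobian ideal), so both the fibre computation and the very meaning of ``singular along $X$'' need care. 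Proving the estimate uniformly in $p$ and then specialising is what both secures the characteristic-$0$ case and forces the stated dependence $\ell_0=\ell_0(r,b,p)$.
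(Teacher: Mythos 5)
Your reduction to an incidence correspondence over the Hilbert scheme is the natural first attempt, but the lemma your entire domination estimate rests on is false. You need $\dim\mc{H}\leq (b+1)(r-b)+C(d-1)$ with $C=C(r,b)$ uniform over all components $\mc{H}$ of the Hilbert scheme of $b$-dimensional subvarieties of degree $d$. This holds for $b=0$ (your $\mathrm{Sym}^d\mb{P}^r$ example) but fails for every $b\geq 1$: degree-$d$ hypersurfaces inside a $(b+1)$-plane of $\mb{P}^r$ move in a family of dimension
\[
\binom{d+b+1}{b+1}-1+(b+2)(r-b-1),
\]
which grows like $d^{b+1}/(b+1)!$, not linearly in $d$. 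Already for $b=1$, $r=3$, plane curves of degree $d$ move in dimension $\binom{d+2}{2}+2$, while your bound caps this at $4+C(d-1)$. Such strata cannot be discarded (any member of $(I_X^{(2)})_\ell$ is singular along $X$, and this space is nonzero once $\ell\geq 2d$), so your comparison ``moduli gain $\leq C(d-1)$ versus fibre loss $\gtrsim (d-1)\ell^b$'' collapses exactly on degenerate singular loci, where both quantities are superlinear in $d$ and must be balanced against each other delicately. A second gap compounds this: your fibre-loss estimate is a Hilbert \emph{polynomial} statement, but you need a lower bound on the Hilbert \emph{function} of $\O/I_X^{(2)}$ in degree $\ell$ itself, uniformly over all $X$ of degree up to $(\ell-1)^{r-b}$, which is far beyond any uniform regularity bound. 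These two issues are precisely what the paper flags in its ``Previous work'' discussion: the naive incidence correspondence ``quickly runs into issues regarding the dimension of the Hilbert scheme and bounds for the Hilbert function, neither of which are well understood.''

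The cited proof (Slavov's, whose method this paper reuses and sharpens) avoids the false lemma entirely and is structured quite differently. One reduces to counting points over finite fields of characteristic $p$ and applies Poonen's decoupling trick: replace the form by $F+X_0G_0^p+\cdots+X_rG_r^p$, so that its $X_i$-partial is $\partial_{X_i}F+G_i^p$ and the $r+1$ partials behave like independently chosen forms; Lang--Weil then converts dimension statements into point counts, and the problem becomes one about $(r+1)$-tuples of hypersurfaces with excess intersection. The stratification is by the \emph{degree} of the variety in the singular locus, with genuinely separate arguments in the low-degree and high-degree ranges (the latter via a crude linear degeneration of each component to a union of linear spaces). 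That special treatment of low-degree and degenerate varieties is not a removable blemish of Slavov's write-up; it is forced by exactly the failure of your moduli bound. The present paper's own contribution (Theorems \ref{NR} and \ref{TsengT}) is a better way to run this balance --- filtering by the linear span of the variety and invoking the Hilbert function lower bound for nondegenerate varieties (Lemma \ref{ARBH}) --- which is what makes the threshold effective and small for $b=1$. Your characteristic-$p$ worry, by contrast, is a side issue: over an algebraically closed field, being singular along a variety $X$ is equivalent to lying in the second symbolic power in any characteristic; the genuine role of positive characteristic in the real proof is the decoupling trick above, which has no analogue in your argument.
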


We can give a much better bound for $\ell_0$ in characteristic 0 when $b=1$. We will show
\begin{Thm}
\label{TsengT}
In characteristic 0 or 2, if $\ell\geq 7$ or $\ell=5$, among hypersurfaces with positive dimensional singular locus, the unique component of maximal dimension is the locus of hypersurfaces singular along a line. 
\end{Thm}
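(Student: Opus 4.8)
The plan is to realize the singular locus as a slice of the locus $Z$ governed by Theorem \ref{slope}. A degree-$\ell$ hypersurface $V(F)\subset\mathbb{P}^r$ is singular exactly along the common zero locus of its $r+1$ partial derivatives $\partial_0F,\ldots,\partial_rF$, which are forms of degree $\ell-1$; when the characteristic does not divide $\ell$, Euler's relation identifies this with the scheme-theoretic singular locus. Thus $V(F)$ has positive-dimensional singular locus precisely when the gradient tuple $(\partial_0F,\ldots,\partial_rF)$ lies in the locus $Z\subset\prod_{i=1}^{r+1}\mathbb{A}^{\binom{r+\ell-1}{\ell-1}}$ of $(r+1)$-tuples of degree-$(\ell-1)$ forms with positive-dimensional common intersection. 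Since all $d_i=\ell-1$, the hypothesis $d_i\le d_1+\binom{d_1}{2}(i-1)$ of Theorem \ref{slope} holds automatically, so that result applies: the top component of $Z$ consists of tuples vanishing on a common line. Pulling back under the linear gradient map, the matching locus of hypersurfaces is exactly those singular along a line; writing $F\in(X_2,\ldots,X_r)^2$ for $L=V(X_2,\ldots,X_r)$ and fibering over $G(2,r+1)$ shows this locus is irreducible of codimension
\[
r(\ell-2)+3,
\]
and the lower bound it provides is valid in every characteristic.

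The difficulty is that the gradient map is \emph{not} surjective onto the tuple space: its image $V$ is the special subspace of tuples satisfying $\partial_j(\partial_iF)=\partial_i(\partial_jF)$, and intersecting $Z$ with $V$ can drop codimension. Indeed, for the line one computes that the locus of hypersurfaces singular along $L$ has codimension $r(\ell-2)+3$ inside the space of forms, whereas the common-line component of $Z$ has codimension $(r+1)\ell-2(r-1)$ inside the full tuple space; these differ by $\ell-1$, reflecting that $V$ is far from transverse to $Z$. Consequently the ordering of components supplied by Theorem \ref{slope} cannot be transported to $V$ for free: for every competing component of $Z$ (tuples vanishing on a conic, a plane, a higher-degree curve, and so on) one must bound the codimension of its intersection with $V$ from below and show it strictly exceeds $r(\ell-2)+3$. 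Controlling this tangency defect of $V$ against each component is the \textbf{main obstacle}.

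To resolve it I would invoke the known reduction to positive characteristic underlying Theorem \ref{KaloyanT} of \cite{Kaloyan}. Spreading the incidence correspondence $\{(F,C):C\subset\operatorname{Sing}V(F)\}$ out over $\Spec\mathbb{Z}$ and applying upper semicontinuity of fiber dimension bounds the characteristic-$0$ locus above by its characteristic-$2$ specialization, while the explicit line construction supplies the matching lower bound; hence it suffices to prove the statement over $\overline{\mathbb{F}}_2$, and characteristic $0$ then follows by the squeeze (with uniqueness propagated by the same specialization). I expect that working in characteristic $2$ is exactly what renders $V$ tractable: the reduction should let one treat the gradient tuple as a sufficiently general tuple of forms when applying Theorem \ref{slope}, so that the defect identified above is absorbed uniformly and the codimension comparison goes through.

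With the non-line components of $Z$ thus controlled, the last step is the numerical comparison confirming that $r(\ell-2)+3$ is strictly smaller than the codimension of every other component of $Z\cap V$, so that the line locus is the unique maximal component. This is where the degree restriction enters: for the smallest degrees the line ceases to win, exactly as in the quadric counterexamples of the introduction, and I expect that in characteristic $2$ the even degrees introduce square-power phenomena in the gradient map that force the exclusion of $\ell=6$ while leaving $\ell=5$ and $\ell\ge 7$ intact. I anticipate that the bookkeeping in this final comparison, rather than any single conceptual step, will be the most delicate part once the positive-characteristic reduction is in place.
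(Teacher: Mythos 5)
You have the right skeleton --- reduce to characteristic 2 following \cite{Kaloyan}, then control tuples of forms vanishing on a common curve --- and your diagnosis of the obstacle (the gradient tuple lives in a proper linear subspace, with a defect of $\ell-1$ against the line component) is accurate and quantitative. But the step you leave as a hope, that ``the reduction should let one treat the gradient tuple as a sufficiently general tuple of forms when applying Theorem~\ref{slope},'' is not what the characteristic-2 trick delivers, and as formulated it fails numerically. What Poonen's trick \cite{Poonen}, as used in \cite{Kaloyan} and in the paper, actually provides is this: for odd $\ell$ one writes $F=G+X_0G_0^2+\cdots+X_rG_r^2$ with the $G_i$ of degree $(\ell-1)/2$, so that $\partial_{X_i}F=\partial_{X_i}G+G_i^2$; then for a \emph{fixed reduced} curve $C$ a torsor argument (Lemma~\ref{lp1}) gives $\Prob(\text{all } \partial_{X_i}F \text{ vanish on } C)\leq \Prob(\text{all } G_i \text{ vanish on } C)$, and Lang--Weil (Lemma~\ref{LWb2}) converts between such probabilities over $\mb{F}_{2^c}$ and codimension. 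So the span machinery is applied to $(r+1)$-tuples of \emph{half-degree} forms $G_i$, not to the degree-$(\ell-1)$ gradient, and the number to beat is the codimension $A=r(\ell-2)+3$ of the line locus in the space of hypersurfaces, not a comparison internal to one tuple space.

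With the degree halved, Theorem~\ref{slope} itself is quantitatively too weak. Taking $k=r+1$ forms of degree $d=(\ell-1)/2$, Theorem~\ref{slope} bounds the codimension of the non-line tuple locus below by $[(r+1)(d+1)-2(r-1)]+[r-1+(d-2)(r-2)+d]=2rd-2r+6$, while $A=2rd-r+3$; the inequality $2rd-2r+6>2rd-r+3$ holds only for $r<3$. This is exactly why the paper does \emph{not} quote Theorem~\ref{slope} here: rational normal curves, which achieve the minimal-degree Hilbert bound $bd+1$ underlying Theorem~\ref{slope}, are split off and handled directly in the hypersurface space via Conca's computation (Lemmas~\ref{Rs}--\ref{srnci}), and for the remaining curves the span argument is re-run with the stronger bound $h\geq(b+1)d$ of \cite{Park} (Lemmas~\ref{ABP}--\ref{klpc}); the extra $d-1$ per form is what closes the gap. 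The even-degree case needs a different substitution ($F=G+X_0X_1G_0^2+X_0(X_1G_1^2+\cdots+X_rG_r^2)$, fudge factors of degree $\ell/2-1$, with curves inside $\{X_jX_{j'}=0\}$ excluded), which is why even $\ell$ requires $\ell\geq 8$ and hence the hypothesis ``$\ell\geq 7$ or $\ell=5$.'' Finally, your semicontinuity step needs one more ingredient you elide: to rule out a characteristic-0 component of the non-line locus specializing \emph{into} the characteristic-2 line locus, the paper shows (Claims~\ref{STZ}--\ref{ST2}) that a generic hypersurface singular along a line has singular \emph{scheme} of degree exactly 1, so the line locus is not contained in the closed locus of hypersurfaces singular along a curve of degree at least 2 in any characteristic; ``uniqueness propagated by the same specialization'' does not come for free.
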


We will prove Theorem \ref{TsengT} by applying the trick first given by Poonen \cite{Poonen} to decouple the partial derivatives of a homogenous form in positive characteristic to prove Theorem \ref{KaloyanT}.  Specifically, we will use Slavov's argument \cite{Kaloyan} to reduce the problem to understanding the locus of  $(r+1)$-tuples $(F_1,\ldots,F_{r+1})$ of hypersurfaces in $\mb{P}^r$ of the same degree, where $V(F_1, \cdots,F_{r+1})$ is positive dimensional, in characteristic 2.

\subsubsection{Lines on hypersurfaces}
The second application concerns lines on hypersurfaces, in particular, smooth hypersurfaces with unexpectedly large dimensions of lines through a given point. These points are called \emph{1-sharp} points in \cite{RY}. We are able to find the maximal dimensional components of the space of hypersurfaces with a 1-sharp point in \emph{all} dimensions and degrees. 

We would expect an $r-d-1$ dimensional family of lines through a point on a smooth hypersurface of degree $d$ in $\mb{P}^r$. The case $d\geq r$ is trivial, since we are not expecting any lines through a given point, the case $d\leq r-2$ can be deduced from the crude degeneration given in the proof of \cite[Theorem 2.1]{LowDegree}. Therefore, the last open case is $d=r-1$, and we show:

\begin{Thm}
\label{EVMI}
Let $U\subset \mb{P}^{\binom{2r-1}{r-1}-1}$ be the open set of smooth hypersurfaces of degree $r-1\geq 3$ in $\mb{P}^r$. Let $Z\subset U$ be the closed subset of hypersurfaces $X$ that contain a positive dimensional family of lines through a point. Then, there are two components of $Z$ of maximal dimension for $r=5$ and a unique component of maximal dimension for $r\neq  5$. The components of maximal dimensions are:
\begin{enumerate}
\item the locus of hypersurfaces where the second fundamental form vanishes at a point for $r\leq 5$
\item the locus of hypersurfaces containing a 2-plane for $r\geq 5$.
\end{enumerate}
\end{Thm}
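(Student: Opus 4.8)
The plan is to reduce the global problem on $\mb{P}^{N}$, $N=\binom{2r-1}{r-1}-1$, to a fiberwise study of lines through a point, and then feed the resulting tuple of forms into Theorem \ref{NR} (equivalently Theorem \ref{slope}). Concretely, I would work with the flag incidence variety $\Phi=\{(X,p,H): p\in X,\ H=\mb{T}_{p}X,\ \text{the lines through }p\text{ on }X\text{ form a positive-dimensional family}\}$ and its projection to $Z$. Fixing $p=[1:0:\cdots:0]$ and writing $F=\sum_{j=0}^{r-1}X_{0}^{\,r-1-j}G_{j}(X_{1},\dots,X_{r})$, the line through $p$ in direction $v$ lies on $X$ iff $G_{1}(v)=\cdots=G_{r-1}(v)=0$. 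After restricting to the tangent hyperplane $H=\{G_{1}=0\}$, the Fano scheme of lines through $p$ is cut out in $\mb{P}^{r-2}$ by the $r-2$ forms $\ol{G}_{2},\dots,\ol{G}_{r-1}$ of degrees $2,3,\dots,r-1$. As $X$ ranges over hypersurfaces with $p\in X$ and fixed tangent hyperplane, these forms range over \emph{all} tuples of the given degrees, so $X$ has a $1$-sharp point at $p$ exactly when $(\ol{G}_{2},\dots,\ol{G}_{r-1})$ fails to intersect properly. This is precisely the $k=r$ case of Theorem \ref{NR} with $r$ replaced by $r-2$ and $d_{i}=i+1$; since $d_{1}=2$ gives slope $\binom{d_{1}}{2}=1$, the degrees sit exactly on the boundary $d_{i}\le d_{1}+\binom{d_{1}}{2}(i-1)$, so the theorem applies.

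Next I would identify the two geometric mechanisms with the relevant degenerations of this tuple. The case in which the $\ol{G}_{i}$ all contain a common line $\ell\subset\mb{P}^{r-2}$ of tangent directions is exactly the case in which the $2$-plane spanned by $p$ and $\ell$ lies on $X$, giving component (2); by Theorem \ref{NR} this is the \emph{unique} minimal-codimension degeneration at the level of tuples. The orthogonal mechanism is the identical vanishing of the second fundamental form $\ol{G}_{2}\equiv 0$, which simply deletes one equation and forces $V(\ol{G}_{3},\dots,\ol{G}_{r-1})\subset\mb{P}^{r-2}$ to be positive-dimensional, giving component (1). I would then compute the two global codimensions through the respective incidence correspondences, checking that each projection is generically finite (a general $X$ in the first locus carries only finitely many $2$-planes, since the Fano scheme of planes has negative expected dimension, and a general $X$ in the second has $\text{II}$ vanishing at finitely many points), obtaining
\[
c_{2}=\binom{r+1}{2}-3(r-2)=\tfrac{1}{2}(r^{2}-5r+12),\qquad c_{1}=\binom{r}{2}-r+1=\binom{r-1}{2}=\tfrac{1}{2}(r^{2}-3r+2).
\]
Since $c_{1}-c_{2}=r-5$, the locus (1) strictly dominates for $r=4$, the two tie for $r=5$, and (2) strictly dominates for $r\ge 6$, which is the claimed trichotomy.

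The hard part is ruling out \emph{all other} components. The key bookkeeping is that a degeneration of tuple-type $W$ with tuple-codimension $c_{W}$ produces a locus in $Z$ of codimension $c_{W}-(r-1)+\phi_{W}$, where $\phi_{W}$ is the dimension of the family of flags $(p,H)$ realizing the degeneration on a general member (one checks $\phi=2$ for the $2$-plane, as $p$ ranges over it, and $\phi=0$ for the vanishing-$\text{II}$ stratum, recovering $c_{2}$ and $c_{1}$). The crux is that although Theorem \ref{NR} makes ``contain a line'' the unique minimizer of $c_{W}$, a positive $\phi_{W}$ can compensate, so the global ranking genuinely differs from the tuple ranking; one must prove that no exotic degeneration carries a flag family large enough to offset its larger $c_{W}$. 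I would organize this by stratifying according to the rank of the second fundamental form $\ol{G}_{2}$ and, within each rank, bounding $c_{W}$ by the techniques behind Theorem \ref{NR} and bounding $\phi_{W}$ geometrically, showing $c_{W}-(r-1)+\phi_{W}>\min(c_{1},c_{2})$ except on the two extremal strata (full vanishing of $\text{II}$, and full-rank $\text{II}$ with the higher forms sharing a line). This uniform control of the intermediate strata — degenerate but nonzero $\text{II}$ coupled with higher-order coincidences — is the main obstacle, and is where the sharp boundary form of Theorem \ref{NR} is essential.

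Finally I would establish irreducibility of the two extremal loci: the $2$-plane locus fibers over the Grassmannian $G(3,r+1)$ with irreducible linear-system fibers, and the vanishing-$\text{II}$ locus fibers over $\mb{P}^{r}$ with irreducible fibers, so each is irreducible of the computed codimension. Combining this with the stratum bound yields that $Z$ has a unique component of maximal dimension for $r\neq 5$ and exactly two for $r=5$, as stated.
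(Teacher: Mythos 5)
Your plan follows essentially the same route as the paper: fix $p$ and the tangent hyperplane, identify the lines through $p$ with the zero locus of a free tuple of forms of degrees $2,\ldots,r-1$ on $\mb{P}^{r-2}$, match the line degeneration with $2$-planes through $p$ and the $\ol{G}_2\equiv 0$ degeneration with vanishing second fundamental form, and compare the resulting global codimensions; your values of $c_1$ and $c_2$, the fiber dimensions $2$ and $0$, and the difference $c_1-c_2=r-5$ all agree with the paper's Propositions \ref{2r1} and \ref{UHS} and Lemmas \ref{BP1} and \ref{BP2}.

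The gap is in the step you yourself flag as the hard part, and it is compounded by a sign confusion. In your own formula the global codimension of a stratum of tuple type $W$ is $c_W-(r-1)+\phi_W$, and $\phi_W\geq 0$ always; a large flag family makes the image of a stratum \emph{smaller}, never larger, so there is no danger of an exotic degeneration ``offsetting its larger $c_W$'' with a big $\phi_W$, and bounding $\phi_W$ on intermediate strata --- which you describe as the main obstacle --- is a non-issue: the trivial bound $\phi_W\geq 0$ already gives exotic codimension at least $c_W-(r-1)$ in $Z$. What is actually needed, and what your proposal does not supply, is a \emph{strict} lower bound on $c_W$ for every exotic stratum. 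Theorem \ref{NR} (equivalently Theorem \ref{slope}) only gives $c_W\geq c_{\mathrm{line}}+(r-3)$ for all non-line strata, and the stratum $\ol{G}_2\equiv 0$ attains this bound with equality, at $c_W=\binom{r}{2}$. For $r\geq 6$ the non-strict bound suffices (every exotic stratum then has codimension in $Z$ at least $c_1$, which exceeds $c_2$), but for $r=4,5$ --- precisely the cases in which the Eckardt locus is a maximal component --- it does not: Theorem \ref{NR} leaves open the possibility of a third tuple stratum tying with $\ol{G}_2\equiv 0$ at codimension $\binom{r}{2}$, whose image in $Z$ would then tie with $c_1=\min(c_1,c_2)$, ruining both the uniqueness claim for $r=4$ and the ``exactly two'' claim for $r=5$. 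The missing ingredient is that the tuple space has a \emph{unique second largest} component, namely $\ol{G}_2\equiv 0$, with every other component of codimension strictly greater than $\binom{r}{2}$. This is the paper's Proposition \ref{2r1}, and its proof requires genuinely more than Theorem \ref{NR}: one reruns the span partition, rules out span $b=2$ by a separate estimate, and shows that the index minimum defining $F_{r,a}$ is attained only by dropping the quadric. Your proposed substitute --- stratifying by the rank of $\ol{G}_2$ and ``bounding $c_W$ by the techniques behind Theorem \ref{NR}'' --- is a sketch that does not contain this content; the rank of the quadric is not the parameter those techniques control.

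Two smaller omissions: you never check that the two loci actually meet the smooth locus $U$ (smooth hypersurfaces containing a $2$-plane exist only for $r\geq 5$, and in positive characteristic the existence of smooth members of either locus needs an argument --- the paper's Lemma \ref{BP1}), and your generic-finiteness claims (a general member of each locus has exactly one $2$-plane, resp.\ one Eckardt point) are justified only by expected-dimension heuristics, whereas the paper proves them by incidence counts in Lemma \ref{BP2}.
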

For a more detailed description of what happens in all degrees and dimensions, see Theorem \ref{EVM} below. 

Lines through points on hypersurfaces are relevant to the following conjecture by Coskun, Harris, and Starr: 
\begin{Conj}
[{\cite[Conjecture 1.3]{cubic}}]
\label{CONJB}
Let $X\subset\mb{P}^r$ be a general Fano hypersurface of degree at least 3 and $R_e(X)$ denote the closure of the locus of the Hilbert scheme parameterizing smooth degree $e$ rational curves on $X$. Then, $R_e(X)$ is irreducible of dimension $e(r+1-d)+r-4$. 
\end{Conj}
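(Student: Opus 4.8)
The plan is to realize $R_e(X)$ as a fiber of the incidence variety
\[
\Phi = \{(C,X) : C \subseteq X\} \subseteq H \times \mb{P}^N,
\]
where $H$ denotes the (irreducible) parameter space of smooth degree $e$ rational curves in $\mb{P}^r$ and $\mb{P}^N$, with $N = \binom{r+d}{d}-1$, is the space of degree $d$ hypersurfaces. Projecting to $H$, the fiber over a curve $C$ is the projective space $\mb{P}(H^0(\mc{I}_C(d)))$ of hypersurfaces containing $C$; since $H$ is irreducible of dimension $(r+1)(e+1)-4$ (being the quotient of an open set of $(r+1)$-tuples of degree $e$ forms by $\mathrm{PGL}_2$) and a general $C$ imposes the expected $de+1$ conditions on degree $d$ forms, $\Phi$ is irreducible and a short count shows the other projection $\pi : \Phi \to \mb{P}^N$ has general fiber dimension $(r+1)(e+1) - 4 - (de+1) = e(r+1-d)+r-4$. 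The first step is to prove $\pi$ is dominant, so that for general $X$ the fiber $R_e(X)$ attains \emph{at most} the expected dimension; this upper bound reduces to controlling the sublocus of $\Phi$ where $C$ fails to impose independent conditions or moves in an overlarge family.

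The irreducibility of the general fiber is the crux and does not follow formally from that of $\Phi$. I would establish it by an induction on $e$ in the spirit of Riedl--Yang \cite{RY}: degenerate a general degree $e$ rational curve lying on $X$ to a nodal union of a degree $e-1$ rational curve and an incident line (or to a chain of lines), and show that the resulting broken curves smooth back to honest members of $R_e(X)$ and sweep out a single component of the expected dimension. The deformation-theoretic engine is control of the normal bundle $N_{C/X}$: when it is as balanced as possible one has $H^1(N_{C/X}) = 0$, so $C$ is an unobstructed smooth point of $R_e(X)$ of the expected dimension, and standard smoothing results for nodal curves glue together the strata arising from different breaking types. With the base case $e=1$ supplied by the geometry of the Fano scheme of lines, irreducibility in degree $e-1$ together with an analysis of how attaching lines vary would yield irreducibility in degree $e$.

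The induction stalls precisely when an attaching line or the degree $e-1$ component is forced through a point of $X$ admitting an anomalously large family of lines, i.e.\ a \emph{sharp point}. The dimension of the family of lines through a point, and the size of the locus of points through which this family is too large, is exactly what decides whether the broken curves fill out only the expected dimension rather than some excess component. This is where the results of this paper feed in: Theorem \ref{EVMI}, and its refinement Theorem \ref{EVM}, identify the maximal components and hence the codimension of the locus of hypersurfaces possessing a 1-sharp point, so that a general Fano $X$ has its sharp points confined to a subvariety of controlled dimension; substituting this estimate into the incidence count bounds the contribution of curves meeting sharp points and rules out spurious components. The main obstacle I anticipate lies at the high-degree boundary of the Fano range, $d = r-1$ and $d = r$, where $e(r+1-d)+r-4$ is small and the balancedness of $N_{C/X}$ can genuinely fail, so that $H^1(N_{C/X}) \neq 0$ and one must prove irreducibility in the presence of obstructions. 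It is exactly there that having the classification of 1-sharp loci in \emph{all} degrees and dimensions, as opposed to only an asymptotic bound, becomes indispensable, and where the bulk of the remaining difficulty resides.
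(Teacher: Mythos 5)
You are attempting to prove Conjecture \ref{CONJB}, which this paper neither proves nor claims to prove: it is quoted from \cite[Conjecture 1.3]{cubic} as an open problem. The paper records that the case $d\leq r-2$ is a theorem of Riedl--Yang \cite{RY}, and it offers Theorem \ref{EVMI} only in the \emph{hope} that it will be useful for the $d=r-1$ case, noting that the companion attempt in \cite{Tseng2}, which uses exactly this input, succeeds only for $e$ at most roughly $\frac{2-\sqrt{2}}{2}n$. So there is no proof in the paper to compare yours against, and your proposal has to stand on its own --- which it does not, because it is a strategy outline whose unproved steps are precisely the open content of the conjecture.

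Concretely, two gaps. First, the irreducibility of the incidence variety $\Phi$ is not automatic: the fibers of $\Phi\to H$ are projective spaces of jumping dimension, and ruling out extra components of $\Phi$ (equivalently, of $R_e(X)$ for general $X$) requires bounding the locus of curves that fail to impose independent conditions or move in overlarge families --- a problem of the same flavor as Problem \ref{PROBA} and not a ``short count.'' Second, and decisively, your inductive step ``irreducibility in degree $e-1$ together with an analysis of how attaching lines vary would yield irreducibility in degree $e$'' is exactly what breaks at $d=r-1$: there the expected dimension of lines through a point of $X$ is $r-1-d=0$, so the broken curves produced by bend and break have no moduli in their line components, and Theorem \ref{EVMI} controls only the codimension of the locus where the family of lines through a point is too large --- it supplies none of the smoothing and connectedness statements your gluing step needs. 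Likewise, $H^1(N_{C/X})=0$ for a balanced normal bundle gives unobstructedness at general points of a component but does not by itself connect the strata arising from different breaking types into a single component; when balancedness fails, as you yourself anticipate, you offer no substitute argument. That \cite{Tseng2}, running essentially the plan you describe with the full strength of Theorem \ref{EVMI}, still only reaches $e$ up to roughly $\frac{2-\sqrt{2}}{2}n$ is strong evidence that a genuinely new idea, absent from your outline, is required to close the induction.
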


For work on Conjecture \ref{CONJB}, see \cite{LowDegree, Beheshti, RY}. For the related problem regarding rational curves on a arbitrary smooth hypersurface of very low degree see \cite{cubic, TDBrowning}. 


The best-known bound to Conjecture \ref{CONJB} was given by Riedl-Yang \cite{RY}, where the authors proved Conjecture \ref{CONJB} for $d\leq r-2$ by applying bend and break to rational curves within a complete family of hypersurfaces to reduce to the case of lines, so it was crucial for them to know the codimension of the locus of hypersurfaces with more lines through points than expected. We hope that Theorem \ref{EVMI} will be useful in proving the $d=r-1$ case of Conjecture \ref{CONJB}. An attempt at using Theorem \ref{EVMI} to prove the $d=r-1$ case that works only for when $e$ is at most roughly $\frac{2-\sqrt{2}}{2}n$ is given in \cite{Tseng2}. 

\subsection{Previous work}
The naive incidence correspondence between $k$-tuples of hypersurfaces and possible $r-k+1$ -dimensional schemes contained in the intersection quickly runs into issues regarding the dimension of the Hilbert scheme and bounds for the Hilbert function, neither of which are well understood. 

To the author's knowledge, the only known method in the literature to approach Problem \ref{PROBA} is through a crude degeneration. Namely, in order to bound the locus where $(F_1,\ldots,F_i)$ form a complete intersection but $F_{i+1}$ contains a component of $V(F_1, \cdots, F_i)$, we can linearly degenerate each component of $V(F_1, \cdots,F_i)$ to lie in a linear space and forget the scheme-theoretic structure. This method was used in \cite[Theorem 2.1]{LowDegree}. 

Similarly, we can linearly degenerate each component of $V(F_1, \cdots, F_i)$ to a union of linear spaces \cite[Lemma 4.5]{Kaloyan}. However, applying this requires a separate incidence correspondence to deal with the case where $V(F_1, \cdots, F_k)$ contains a $r-k+1$ dimensional variety of low degree, making it difficult to obtain quantitative bounds. 

To illustrate the limitations of the known methods, the solution to Problem \ref{PROBA} was unknown to the author even in the case $k=r$ and $d_1=\cdots=d_r=2$, where we are intersecting $r$ quadrics in $\mb{P}^r$. 

\subsubsection{A qualitative answer}
We note that Problem \ref{PROBB} also has an affirmative answer after a large enough twist.
\begin{Thm} [{\cite[Corollary 1.3]{Tseng}}]
\label{HYP}
Given degrees $d_1,\ldots,d_k$, there exists $N_0$ such that for $N\geq N_0$, the unique component of maximal dimension of the locus 
\begin{align*}
Z\subset \prod_{i=1}^{k}\mb{A}^{\binom{r+d_i+N}{d_i+N}}
\end{align*}
of $k$-tuples of hypersurfaces of degrees $d_1+N,\ldots,d_k+N$ that fail to intersect properly is the locus where the hypersurfaces all contain the same codimension $k-1$ linear space.
\end{Thm}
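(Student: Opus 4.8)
The plan is to stratify $Z$ by the ``shape'' of the excess intersection and to show that the twist by $N$ forces the linear shape to dominate. Set $m=r-k+1$. If $(F_1,\ldots,F_k)\in Z$, then $V(F_1,\ldots,F_k)$ contains an $m$-dimensional subvariety $Y\subset\mb{P}^r$; grouping such $Y$ by the irreducible component $\mc{H}$ of the Hilbert scheme of $m$-dimensional subschemes on which they lie, I would form the incidence correspondences
\begin{align*}
\Phi_{\mc{H}}=\{(Y,F_1,\ldots,F_k)\ :\ Y\in\mc{H},\ Y\subset V(F_1,\ldots,F_k)\}
\end{align*}
and bound $\dim Z\leq\max_{\mc{H}}\dim\Phi_{\mc{H}}$. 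Since containing a fixed $Y$ is a system of linear conditions on each $F_i$, projection of $\Phi_{\mc{H}}$ to $\mc{H}$ shows that the stratum contributed by $\mc{H}$ has codimension $\sum_{i=1}^{k}c_Y(d_i+N)-\dim\mc{H}$ in $\prod_{i=1}^{k}\mb{A}^{\binom{r+d_i+N}{d_i+N}}$, where $c_Y(e)$ is the number of independent conditions for a degree-$e$ form to vanish on a general $Y\in\mc{H}$.

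For the component $\mb{G}$ parameterizing linear $m$-planes one has $\dim\mb{G}=(m+1)(r-m)$ and $c_\Lambda(e)=\binom{m+e}{e}$, and the claim is that this stratum has the strictly smallest codimension once $N$ is large. The key input is that, after a large twist, $c_Y(d_i+N)$ is governed by the Hilbert polynomial of $Y$: for $d_i+N$ past the Castelnuovo--Mumford regularity of $Y$ it equals $P_Y(d_i+N)$, whose leading term is $\frac{\deg Y}{m!}(d_i+N)^m$. Thus passing from a linear $m$-plane (degree $1$) to any $Y$ of degree $\delta\geq2$ raises the number of conditions by an amount growing like $(\delta-1)N^m$, whereas $\dim\mc{H}$ is bounded independently of $N$. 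This is the precise mechanism by which the competition described in the introduction, between extra moduli and a larger Hilbert function, is resolved in favor of the linear space.

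The main obstacle is uniformity over the infinitely many components $\mc{H}$: their degrees are unbounded and their regularity grows with the degree, so for a fixed $N$ the equality $c_Y=P_Y$ can fail for very high-degree $Y$. To handle this I would invoke the crude degeneration to unions of linear spaces \cite[Lemma 4.5]{Kaloyan}, degenerating a general $Y\in\mc{H}$ of degree $\delta$ to a union of $\delta$ linear $m$-planes; this bounds $\dim\Phi_{\mc{H}}$ by the dimension of the incidence correspondence for such unions, reducing the whole comparison to the single one-parameter family of ``unions of $\delta$ general $m$-planes.'' For that family the degree is additive, so the number of conditions still has leading term $\frac{\delta}{m!}e^m$ while the family dimension is at most $\delta(m+1)(r-m)$; hence its codimension is, to leading order, $\delta$ times that of the linear stratum, which is strictly increasing in $\delta$ as soon as $\sum_{i=1}^{k}\binom{m+d_i+N}{d_i+N}>(m+1)(r-m)$. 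Choosing $N_0$ so that this inequality holds makes the comparison valid simultaneously for all $\delta\geq1$, with the minimum attained uniquely at $\delta=1$.

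I would finish by recording two loose ends. First, intersections of dimension $>m$ contain subvarieties $Y$ of dimension $\geq m+1$, whose conditions grow like $e^{m+1}$ and therefore dominate $e^m$ once $N$ is large, so these contribute only lower-dimensional strata. Second, the linear stratum is fibered over the irreducible Grassmannian $\mb{G}$ with affine (in fact linear) fibers and is therefore irreducible of the asserted codimension; combined with the strict codimension inequality for every other stratum, this yields both maximality and the uniqueness of the component.
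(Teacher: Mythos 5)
First, a point of calibration: this paper does not prove Theorem \ref{HYP} at all; it quotes it from \cite[Corollary 1.3]{Tseng}, and the effective results actually proved here (Theorems \ref{NR} and \ref{slope}) use a different mechanism. Your proposal follows the ``naive incidence correspondence plus crude degeneration'' route that the discussion of previous work in the introduction explicitly flags as insufficient, and it has a genuine gap at the uniformity step. The degeneration of an integral $Y$ of degree $\delta$ to a union of $\delta$ $m$-planes (semicontinuity of the Hilbert function in the flat limit) bounds only the \emph{fiber} of $\Phi_{\mc{H}}\to\mc{H}$, i.e.\ the number of conditions a single $Y$ imposes; it gives no control on $\dim\mc{H}$, and the inequality your reduction implicitly needs, $\dim\mc{H}\leq\delta(m+1)(r-m)$, is false. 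Already for $m=1$, the component of the Hilbert scheme whose general member is a degree-$\delta$ curve inside a $2$-plane has dimension $\binom{\delta+2}{2}-1+3(r-2)$, unbounded in $\delta$. Worse, the planes produced by the degeneration are not general: they contain a common $(m-1)$-plane and may all lie in a common $(m+1)$-plane, so the conditions they impose \emph{saturate} rather than grow like $\delta N^{m}$; for instance, once $\delta>d_i+N$, a degree-$\delta$ plane curve imposes on forms of degree $d_i+N$ exactly the conditions of its $2$-plane, namely $\binom{d_i+N+2}{2}$. So the key claim, that the codimension of the degree-$\delta$ stratum is to leading order $\delta$ times that of the linear stratum, fails precisely for the degenerate high-degree families, which are the whole difficulty of the problem.

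There is also a decisive sanity check showing your argument proves too much. Your criterion for $N_0$, namely $\sum_{i=1}^{k}\binom{m+d_i+N}{m}>(m+1)(r-m)$, already holds at $N=0$ for any degrees $d_i\geq 1$: each summand is at least $m+1$, there are $k$ of them, and the right-hand side equals $(m+1)(k-1)$. So your proof, if valid, would establish the untwisted statement, but the introduction gives counterexamples: for $r=k=3$ and degrees $(2,2,100)$, the locus where the second quadric is proportional to the first has codimension $9$, while the locus of triples containing a line has codimension $103$. That counterexample sits exactly in a stratum your reduction mishandles: $V(F_1,F_3)$ is an integral nondegenerate curve of degree $200$ whose Hilbert-scheme component has enormous dimension, and no comparison with unions of $200$ general lines detects this. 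The missing idea is a device for handling subvarieties of unbounded degree and small span: this paper does it by partitioning by the span $b$, restricting the forms to the spanning $b$-plane, where the excess intersection dimension jumps by $r-b+1$ so that Lemma \ref{kind} applies $r-b+1$ extra times, combined with the Hilbert-function bound of Lemma \ref{ARBH} (see Lemmas \ref{IC}, \ref{KIA}, \ref{ICA}); the cited proof of Theorem \ref{HYP} in \cite{Tseng} instead uses a more general, non-effective vector-bundle argument. Without some such device, the twist $N$ never enters your estimates in the way the counterexample shows it must.
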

In fact, the author has shown that Theorem \ref{HYP} holds in much greater generality, where we replace $\mb{P}^r$ with an arbitrary variety $X$ and replace the choice of $k$ hypersurfaces by a section of a vector bundle $V$ \cite[Theorem 1.2]{Tseng}. However, the key difference is that the method in this paper gives effective bounds, which is required for the applications. As an analogy, both Theorems \ref{HYP} and \ref{KaloyanT} require a large twist, and Theorems \ref{NR} and \ref{TsengT} are effective versions. 

\subsection{Proof idea}
\label{PI}
The proof is elementary, and the only input is a lower bound for the Hilbert function of a nondegenerate variety. This is stated in \cite[Theorem 1.3]{Park}, for example. 
\begin{Lem}
\label{ARBH}
If $X\subset \mb{P}^r$ is a nondegenerate variety of dimension $a$, then the Hilbert function $h_X$ of $X$ is bounded below by
\begin{align*}
h_{X}(d) \geq (r-a)\binom{d+a-1}{d-1}+\binom{d+a}{d}.
\end{align*}
\end{Lem}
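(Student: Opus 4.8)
The plan is to induct on $a=\dim X$, and I would first strengthen the statement so that it applies to every reduced nondegenerate closed subscheme of $\mb{P}^r$ of pure dimension $a$, not merely to integral varieties: this is forced on us because the natural inductive step passes to a hyperplane section, which is typically reducible. The descent in dimension is accomplished by cutting with a general hyperplane $H$ and comparing $h_X$ with the Hilbert function of $X\cap H$ in $H\cong\mb{P}^{r-1}$.

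The engine is a first-difference inequality. Write $S=K[X_0,\ldots,X_r]$, let $I_X$ be the saturated homogeneous ideal, and set $R=S/I_X$, so $h_X(d)=\dim_K R_d$. For a general linear form $\ell$, the associated primes of the reduced ring $R$ are exactly the ideals of the components of $X$, none of which contains $\ell$ (a general hyperplane contains no component), so $\ell$ is a non-zerodivisor on $R$ and multiplication $R_{d-1}\hookrightarrow R_d$ is injective; hence $h_X(d)-h_X(d-1)=\dim_K(R/\ell R)_d$. Since $I_X+(\ell)$ is contained in the saturated ideal of the scheme $X\cap H=V(\ell)$, which is in turn contained in the ideal of its reduction, this yields
\begin{align*}
h_X(d)-h_X(d-1)=\dim_K(R/\ell R)_d \;\geq\; h_{(X\cap H)_{\mathrm{red}}}(d).
\end{align*}
For general $H$ and $a\geq 1$, the section $(X\cap H)_{\mathrm{red}}$ is reduced, of pure dimension $a-1$, and nondegenerate in $\mb{P}^{r-1}$, so the inductive hypothesis applies to it (with codimension $r-a$).

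With the inequality in hand the result is pure bookkeeping. The base case $a=0$ is immediate: a reduced nondegenerate zero-dimensional scheme is a set of points spanning $\mb{P}^r$, so $h_X(1)=r+1$, its Hilbert function is nondecreasing, and the claimed bound specializes to $r+1$ for all $d\geq 1$. For the inductive step, summing the displayed inequality over $e=1,\ldots,d$ and substituting the bound for $(X\cap H)_{\mathrm{red}}$ in $\mb{P}^{r-1}$ gives
\begin{align*}
h_X(d)\;\geq\; 1+\sum_{e=1}^{d}\left[(r-a)\binom{e+a-2}{e-1}+\binom{e+a-1}{e}\right],
\end{align*}
and the hockey-stick identities $\sum_{e=1}^{d}\binom{e+a-2}{e-1}=\binom{d+a-1}{d-1}$ and $\sum_{e=1}^{d}\binom{e+a-1}{e}=\binom{d+a}{d}-1$ collapse the right-hand side to exactly $(r-a)\binom{d+a-1}{d-1}+\binom{d+a}{d}$. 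Note that nothing here uses the characteristic, which matters since the lemma is invoked in characteristic $2$.

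The non-zerodivisor computation and the binomial sums are routine. The real content, and the step I expect to be the main obstacle, is verifying that a general hyperplane section of a nondegenerate variety stays nondegenerate: this is precisely what keeps the codimension of the section in $\mb{P}^{r-1}$ equal to $r-a$, and hence makes the coefficient $(r-a)$ come out correctly rather than being replaced by something smaller. I would prove it through the general position circle of ideas, reducing by repeated generic hyperplane sections to the case of an integral nondegenerate curve, where a general hyperplane meets it in $\deg X\geq r$ points that span the hyperplane. One must also observe that passing to the reduction of the section does not alter its linear span, since the span depends only on the support; this is exactly the reason the induction is most cleanly run for reduced schemes rather than for integral varieties.
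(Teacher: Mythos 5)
Your overall skeleton (induct on dimension, cut with a general hyperplane, use the first-difference inequality, then hockey-stick bookkeeping) is the intended elementary argument, and your nonzerodivisor computation, the inequality $h_X(d)-h_X(d-1)\geq h_{(X\cap H)_{\mathrm{red}}}(d)$, and the base case for points are all correct. But the opening move --- strengthening the lemma so that it applies to every \emph{reduced} nondegenerate closed subscheme of pure dimension $a$ --- introduces a genuine gap, because that strengthened statement is false. Take $X=V(x_0,x_1)\cup V(x_2,x_3)\subset\mb{P}^3$, two disjoint lines: it is reduced, pure of dimension $1$, and nondegenerate, but the degree-$2$ piece of its ideal is spanned by the four monomials $x_ix_j$ with $i\in\{0,1\}$, $j\in\{2,3\}$, so $h_X(2)=10-4=6$, whereas the claimed bound is $(3-1)\binom{2}{1}+\binom{3}{2}=7$. (In general $h_X(d)=2d+2<3d+1$ for $d\geq 2$.) Consistently with this, the step you yourself single out as the main obstacle --- that a general hyperplane section of a nondegenerate $X$ stays nondegenerate --- is not merely hard in the reducible setting, it is false there: the general section of these two lines is two points in $\mb{P}^2$, which span only a line. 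No ``general position'' argument can close this, since the statement your induction is trying to propagate fails; and your proposed reduction ``to the case of an integral nondegenerate curve'' silently uses the irreducibility that the strengthening discarded.

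The repair is to run the induction within the class of \emph{integral} varieties, which is where irreducibility does real work, and to meet reducible schemes only at the very bottom. For $\dim X\geq 2$, a general hyperplane section of an integral variety is again integral by Bertini's theorem, which holds for hyperplane sections in arbitrary characteristic (this matters, since the paper invokes the lemma in characteristic $2$); it is also nondegenerate: if $X\cap H\subseteq H'$ for a hyperplane $H'\neq H$, then $H'/H$ is a rational function on $X$ whose divisor, computed on the normalization, is effective, so $H'/H\in H^0(\O_X^\times)=K^\times$, a contradiction --- an argument that genuinely needs $X$ integral. For $\dim X=1$ the same rational-function argument shows a general section is a reduced finite set spanning the hyperplane, and only at this stage does one encounter a reducible scheme, namely your (correct) base case for points. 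With that change the rest of your write-up goes through verbatim. For calibration: the paper does not prove the lemma at all; it cites \cite[Theorem 1.3]{Park} and merely remarks that one can argue by repeated general hyperplane sections together with \cite[Lemma 3.1]{Montreal} --- i.e., exactly the induction you set up, but over integral varieties.
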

Note that Lemma \ref{ARBH} is also elementary, as it can be proven by repeatedly cutting it with a general hyperplane and applying \cite[Lemma 3.1]{Montreal}. The varieties $X$ for which equality is satisfied are the varieties of minimal degree \cite[Remark 1.6 (1)]{Park}. 

To see the proof worked out completely in an example, see Section \ref{KEX}. In this section, we give a feeling for the argument by briefly describing how it might arise naturally. Suppose for convenience that $k=r$, and we try to take the homogeneous forms $F_1,F_2,\ldots,F_r$ one at a time. For each $1\leq i\leq r-1$, it suffices to consider each case where
\begin{enumerate}
\item $V(F_1,\ldots,F_i)\subset \mb{P}^r$ has the expected dimension $r-i$
\item $V(F_1,\ldots,F_{i+1})$ is also dimension $r-i$,
\end{enumerate}
which means $V(F_{i+1})$ contains one of the components of $V(F_1,\ldots,F_i)$. By the definition of the Hilbert function, for a component $Z$ of $V(F_1,\ldots,F_i)$ , $V(F_{i+1})$ will contain $Z$ in codimension $h_Z(d_{i+1})$, where $h_Z$ is the Hilbert function of $Z$. 

At this point, the obstacle is that $Z$ could have a small Hilbert function, and it won't be enough to simply bound $Z$ below by the Hilbert function of a linear subspace. The next idea is to further divide up our case work in terms of the dimension of the \emph{linear span} of $Z$. 

For instance, if $Z$ spans all of $\mb{P}^r$, then we can bound $h_Z$ using Lemma \ref{ARBH}. If $Z$ spans a proper subspace $\Lambda\subset \mb{P}^r$, then we have a worse bound for the Hilbert function of $Z$. However, if we restrict $F_1,\ldots,F_{i+1}$ to $\Lambda\cong\mb{P}^b$ for $b<r$, we find $F_1|_{\Lambda},\ldots,F_{i+1}|_{\Lambda}$ are homogenous forms on $\mb{P}^b$ with intersection dimension 
\begin{align*}
\dim(V(F_1,\ldots,V_{i+1}))=r-i=(b-i)+(r-b),
\end{align*}
which is now $(r-b)+1$ more than expected. This means that, if we restrict our attention to $\Lambda$ and take the forms $F_1,\ldots,F_{i+1}$ one at a time, then there will be $(r-b)+1$ separate instances where intersecting with the next form won't drop the dimension, which will contribute $(r-b)+1$ times to the codimension instead of just once. 

Finally, in the final argument, it is cleaner to divide up the tuples of forms $F_1,\ldots,F_r$ for which $\dim(V(F_1,\ldots,F_r))\geq 1$ by the dimension $b$ of the spans of the curves they contain \emph{first}, and then in each case intersect the forms restricted to $b$-dimensional linear spaces $\Lambda \subset \mb{P}^r$ one at a time. 

\subsection{Funding}

This material is based upon work supported by the National Science Foundation Graduate
Research Fellowship Program under Grant No. 1144152. Any opinions, findings, and conclusions or recommendations expressed in this material are those of the author and do not necessarily reflect the views of the National Science Foundation.

\subsection{Acknowledgements}

The author would like to thank Joe Harris, Anand Patel, Bjorn Poonen, and Kaloyan Slavov for helpful conversations. The author would also like to thank the referees for helpful comments and patience with the author's inexperience. 

\section{Example}
\label{KEX}
The reader is encouraged to read the following example before the main argument because it will convey all of the main ideas. Throughout the example, we will be informal in our definitions and our justifications in the interest of clarity. We work over an algebraically closed field of arbitrary characteristic. 

We consider the following case of our general problem.
\begin{Pro}
What is the dimension of the space 4-tuples of homogenous forms in 5 variables and degree sequence (3,4,5,6), such that the common vanishing locus in $\mb{P}^4$ is positive dimensional, and what is a description of the component(s) of maximal dimension?
\end{Pro}

More precisely, let $W_d$ be the vector space of homogenous degree $d$ polynomials in $X_0,\ldots,X_4$. Define the closed subset $\Phi\subset W_3\times W_4\times W_5\times W_6$ to be the tuples $(F_3,F_4,F_5,F_6)$ where $F_i$ is of homogenous of degree $i$ and $F_3,F_4,F_5,F_6$ all vanish on some curve. Note that we are allowing the possibility that $F_i$ can be chosen to be zero. 

Since there are many types of curves that could be contained in $V(F_3,F_4,F_5,F_6)$, we set up the incidence correspondence
\begin{center}
\begin{tikzcd}
& \wt{\Phi} \arrow[swap, ld, "\pi_1"] \arrow[dr, "\pi_2"] &\\
\Phi & & \RHilb
\end{tikzcd}
\end{center}
where $\RHilb$ is open subset of the Hilbert scheme of curves parameterizing integral curves, and $\wt{\Phi}\subset \Phi\times \RHilb$ is the locus of pairs $((F_3,F_4,F_5,F_6),[C])$ such that $C$ is a curve contained in $V(F_3,F_4,F_5,F_6)$. 

\subsection{Filtration by span}
We write $\Phi=\Phi(1)\cup \Phi(2)\cup \Phi(3) \cup \Phi(4)$, where $\Phi(i)$ is the locus where $V(F_3,F_4,F_5,F_6)$ contains some integral curve $C$ spanning an $i$-dimensional plane. Even though the sets $\Phi(i)$ are in general only constructible sets, it still makes sense to talk about their dimensions, for example by taking the dimension of their closures. 

\subsubsection{Case of lines}
The set $\Phi(1)$ is by definition $\pi_1(\pi_2^{-1}(\{\text{lines}\}))$, or the locus where $V(F_3,F_4,F_5,F_6)$ contains some line. One can check the codimension of $\Phi(1)$ in $ W_3\times W_4\times W_5\times W_6$ is 
\begin{align*}
4+5+6+7-\dim(\mb{G}(1,4))=16.
\end{align*}

\subsubsection{Case of nondegenerate curves}
Next, we look at $\Phi(4)$. Since we will need to keep track of more data in the following discussion, we let $\Phi^{\text{curve}}_{3,4,5,6}(4):=\Phi(4)$. Here, the subscripts are the degrees of the homogenous forms $F_3,F_4,F_5,F_6$, and the superscript shows that we are restricting to the case where $V(F_3,F_4,F_5,F_6)$ contains a curve that spans a 4-D plane. 

If $(F_3,F_4,F_5,F_6)\in \Phi^{\text{curve}}_{3,4,5,6}(4)$, then in particular $F_3,F_4,F_5$ contain a nondegenerate integral curve. Under our new notation, this means forgetting the last homogenous form defines a map $\pi: \Phi^{\text{curve}}_{3,4,5,6}(4)\rightarrow \Phi^{\text{curve}}_{3,4,5}(4)$. Let $\Phi^{\text{surface}}_{3,4,5}(4)\subset \Phi^{\text{curve}}_{3,4,5}(4)$ be the loci of $(F_3,F_4,F_5)$ that all vanish on a nondegenerate integral surface. 

Since
\begin{align*}
\dim(\Phi^{\text{curve}}_{3,4,5,6}(4))=\max\{\dim(\pi^{-1}(\Phi^{\text{surface}}_{3,4,5}(4))),\dim(\pi^{-1}( \Phi^{\text{curve}}_{3,4,5}(4)\backslash \Phi^{\text{surface}}_{3,4,5}(4)))\},
\end{align*}
it suffices to bound $\pi^{-1}(\Phi^{\text{surface}}_{3,4,5}(4))$ and $\pi^{-1}( \Phi^{\text{curve}}_{3,4,5}(4)\backslash \Phi^{\text{surface}}_{3,4,5}(4))$ separately. Trivially, 
\begin{align*}
\dim(\pi^{-1}(\Phi^{\text{surface}}_{3,4,5}(4)))\leq \dim(\Phi^{\text{surface}}_{3,4,5}(4))+\dim(W_6).
\end{align*}
To bound $\pi^{-1}( \Phi^{\text{curve}}_{3,4,5}(4)\backslash \Phi^{\text{surface}}_{3,4,5}(4))$, we need to bound the dimension of the fiber of 
\begin{align*}
\pi: \pi^{-1}( \Phi^{\text{curve}}_{3,4,5}(4)\backslash \Phi^{\text{surface}}_{3,4,5}(4))\rightarrow \Phi^{\text{curve}}_{3,4,5}(4)\backslash \Phi^{\text{surface}}_{3,4,5}(4)
\end{align*}
 from below. Suppose $(F_3,F_4,F_5,F_6)\in \pi^{-1}( \Phi^{\text{curve}}_{3,4,5}(4)\backslash \Phi^{\text{surface}}_{3,4,5}(4))$. Then, $V(F_3, F_4, F_5)$ contains no nondegenerate surfaces, but it does contain a nondegenerate curve $C$. We see that the number of conditions for $F_6$ to contain $C$ set theoretically is at least $6\cdot 4+1=25$, which is the Hilbert function of the rational normal curve in $\mb{P}^4$ evaluated at 6. 

To summarize what we have so far, it is easier to use codimensions instead of dimensions. Let $\codim(\Phi^{\text{curve}}_{3,4,5,6}(4))$ denote the codimension in $W_3\times W_4\times W_5 \times W_6$. Similarly, $\codim(\Phi^{\text{curve}}_{3,4,5}(4))$ and $\codim(\Phi^{\text{surface}}_{3,4,5}(4))$ denote the codimension in $W_3\times W_4\times W_5$. So far, we have shown
\begin{align*}
\codim(\Phi^{\text{curve}}_{3,4,5,6}(4))&\geq \min\{\codim(\Phi^{\text{surface}}_{3,4,5}(4)),\codim(\Phi^{\text{curve}}_{3,4,5}(4))+25\}\\
&=\min\{\codim(\Phi^{\text{surface}}_{3,4,5}(4)),25\}.
\end{align*}
Repeating the same argument above for $\Phi^{\text{surface}}_{3,4,5}(4)$, we see
\begin{align*}
\codim(\Phi^{\text{surface}}_{3,4,5}(4))\geq \min\{\codim(\Phi^{\text{3-fold}}_{3,4}(4)),51\},
\end{align*}
where $51=2\binom{2+5-1}{5-1}+\binom{2+5}{5}$ is the Hilbert function of a minimal surface evaluated at 5. Continuing the process, we find
\begin{align*}
\codim(\Phi^{\text{3-fold}}_{3,4}(4))&\geq \min\{\codim(\Phi^{\text{4-fold}}_{3}(4)),55\}.
\end{align*}
As a possible point of confusion, the number of conditions it is for a degree 4 hypersurface to contain a degree 3 hypersurface is $\binom{4+4}{4}-\binom{4+1}{1}=65$, which is greater than 55. However, to keep our arguments consistent, we instead choose the weaker bound by the Hilbert function of a quadric hypersurface, which is also a variety of minimal degree. Finally,
\begin{align*}
\codim(\Phi^{\text{4-fold}}_{3}(4))&=35,
\end{align*}
where $\Phi^{\text{4-fold}}_{3}(4)$ is just the single point in $W_3$ corresponding to the zero homogenous form. Putting everything together, we find
\begin{align*}
\codim(\Phi(4))=\codim(\Phi^{\text{curve}}_{3,4,5,6}(4))\geq \min\{25,51,55,35\}=25. 
\end{align*}
\subsubsection{Case of curves spanning a 3-plane}
\label{ICEX}
In order to bound the codimension of $\Phi(3)$ in $W_3\times W_4\times W_5\times W_6$, we  use the same argument as we used to bound the codimension of $\Phi(4)$ with one small extra step. Consider the incidence correspondence
\begin{center}
\begin{tikzcd}
& \wt{\Phi}(3) \arrow[swap, ld, "\pi_1"] \arrow[dr, "\pi_2"] &\\
\Phi(3) & & \mb{G}(3,4)=(\mb{P}^4)^{*}
\end{tikzcd}
\end{center}
where $\wt{\Phi}(3)\subset \Phi(3)\times (\mb{P}^4)^{*}$ consists of pairs $((F_3,F_4,F_5,F_6),H)$ where $H$ is a hyperplane such that $V(F_3, F_4, F_5, F_6)$ contains an integral curve that spans $H$. To bound the codimension of $\Phi(3)$ from below, it suffices to bound the codimension of the fibers of $\pi_2$ in $W_3\times W_4\times W_5\times W_6$ from below. 

By restricting $F_3,F_4,F_5,F_6$ to $H$, we see that the codimension of each fiber of $\pi_2$ is precisely the number of conditions for homogenous forms of degrees (3,4,5,6) in $\mb{P}^3$ to vanish on some nondegenerate curve in $\mb{P}^3$. Writing this in symbols, the codimension of the fibers of $\pi_2$ is the codimension of $\Phi^{3,\text{curve}}_{3,4,5,6}(3)\subset W_{3,3}\times W_{3,4}\times W_{3,5}\times W_{3,6}$, where $W_{r,d}\cong \mb{A}^{\binom{r+d}{d}}$ is the vector space of homogenous polynomials of degree $d$ in $r+1$ variables, and $\Phi^{\mb{P}^3,\text{curve}}_{3,4,5,6}(3)$ is the locus of tuples of homogenous forms $(F_3',F_4',F_5',F_6')$ vanishing on some nondegenerate integral curve in $\mb{P}^3$. The extra superscript is to remind ourselves that we are working in $\mb{P}^3$ now. 

Repeating the same argument as we did to bound $\codim(\Phi(4))$ above, we compute
\begin{align*}
\codim(\Phi^{\mb{P}^3,\text{curve}}_{3,4,5,6}(3))&\geq \min\{\codim(\Phi^{\mb{P}^3,\text{surface}}_{3,4,5}(3)),19+\codim(\Phi^{\mb{P}^3,\text{curve}}_{3,4,5}(3))\}\\
\codim(\Phi^{\mb{P}^3,\text{curve}}_{3,4,5}(3))&\geq \min\{\codim(\Phi^{\mb{P}^3,\text{surface}}_{3,4}(3)),16\}\\
\codim(\Phi^{\mb{P}^3,\text{surface}}_{3,4,5}(3))&\geq \min\{\codim(\Phi^{\mb{P}^3,\text{3-fold}}_{3,4}(3)),\codim(\Phi^{\mb{P}^3,\text{surface}}_{3,4}(3))+36\}\\
\codim(\Phi^{\mb{P}^3,\text{surface}}_{3,4}(3))&\geq \min\{\codim(\Phi^{\mb{P}^3,\text{3-fold}}_{3}(3)),25\}\\
\codim(\Phi^{\mb{P}^3,\text{3-fold}}_{3,4}(3))&=\codim(\Phi^{\mb{P}^3,\text{3-fold}}_{3}(3))+35\\
\codim(\Phi^{\mb{P}^3,\text{3-fold}}_{3}(3))&=20.
\end{align*}
Putting everything together, we see
\begin{align*}
\codim(\Phi^{\mb{P}^3,\text{curve}}_{3,4,5,6}(3))&\geq \min\{19+16,19+25,19+20,36+25,36+20,35+20\}=35
\end{align*}
Finally, we conclude 
\begin{align*}
\codim(\Phi(3))&\geq -\dim(\mb{G}(3,4))+\codim(\Phi^{\mb{P}^3,\text{curve}}_{3,4,5,6}(3))\geq 31. 
\end{align*}
\subsubsection{Case of plane curves}
The case of plane curves can be done directly since we can completely classify them. However, if we apply the same argument as above, the bound we get is $\codim(\Phi(2))$ is at least
\begin{align*}
-\dim(\mb{G}(2,4))+\min\{13+11+9,13+11+10,13+15+10,21+15+10\}=27.
\end{align*}
\subsubsection{Combining the cases}
Combining the cases, we get
\begin{align*}
\codim(\Phi^1)&=16,\ \codim(\Phi^2)\geq 27,\ \codim(\Phi^3)\geq 31,\ \codim(\Phi^4)\geq 25.
\end{align*}
Summarizing, we now know that the codimension of $\Phi$ in $W_3\times W_4\times W_5\times W_6$ is exactly 16, the component of maximal dimension of $\Phi$ corresponds to tuples of homogenous forms vanishing on some line, and a component of second largest dimension has codimension at least 25. 

\section{General Argument}
\label{MainArgument}

We now implement the argument given in Section \ref{KEX} in the general case. The notation is heavy, so we have included a list of conventions and a chart of the definitions introduced for the argument in this section for reference.

Conventions:
\begin{enumerate}
\item the base field is an algebraically closed field $K$ of arbitrary characteristic
\item the ambient projective space is $\mb{P}^r$
\item $X$ is a subvariety of $\mb{P}^r$
\item $A$ is a constructible set
\item $\mc{X}\to S$ is a family over a finite type $K$-scheme $S$
\item $F_i$ is a homogeous form of degree $d_i$
\item $D$ is an integer greater than $d_1\cdots d_k$
\end{enumerate}

\pagebreak
Notations:
\begin{center}
\begin{longtable}{l | p{11cm} | r}
Symbol & Informal Meaning & Definition \\\hline
$\Hilb_X$ & Hilbert scheme of $X$ & \ref{HilbD}\\
$\Hilb^b_X$ & $\Hilb_X$ restricted to subschemes of dimension $b$ & \ref{HilbD}\\
$\RHilb_X$ & $\Hilb_X$ of $X$ restricted to integral subschemes & \ref{RHilbD}\\
$\RHilb^b_X$ & $\RHilb_X\cap \Hilb^b_X$  & \ref{RHilbD}\\
$\RHilb_X^{b,\leq D}$ & $\RHilb^b_X$ restricted to schemes of degree at most $b$ & \ref{HilbRDD}\\
$\Hilb_{\mc{X}/S}$ & relative version of $\Hilb_X$ & \ref{HilbS}\\
$\Hilb^b_{\mc{X}/S}$ & relative version of $\Hilb^b_X$ & \ref{HilbS}\\
$\RHilb_{\mc{X}/S}$ & relative version of $\RHilb_X$  & \ref{HilbS}\\
$\RHilb^b_{\mc{X}/S}$ & relative version of $\RHilb^b_X$  & \ref{HilbS}\\
$\RHilb_{\mc{X}/S}^{b,\leq D}$ & relative version of $\Hilb_X^{b,\leq D}$ & \ref{HilbRDD}\\
$W_{r,d}$ & vector space degree $d$ homogeous forms in $r+1$ variables & \ref{WDEF}\\
$F_{r,d}$ & universal hypersurface $F_{r,d}\subset W_{r,d}\times \mb{P}^r$ & \ref{FDEF}\\
$\Phi^{\mb{P}^r,a}_{d_1,\ldots,d_k}(X)$ & parameterizes forms $F_1,\ldots,F_k$ whose vanishing locus in $X\subset \mb{P}^r$ is dimension $a$ more than expected & \ref{PX}\\
$\Phi^{\mb{P}_S^r,a}_{d_1,\ldots,d_k}(\mc{X}/S)$ & relative version of $\Phi^{\mb{P}^r,a}_{d_1,\ldots,d_k}(X)$  & \ref{PXA}\\
$\wt{\Phi}^{\mb{P}^r,a}_{d_1,\ldots,d_k}(X)$ & parameterizes forms $F_1,\ldots,F_k$ in $\Phi^{\mb{P}^r,a}_{d_1,\ldots,d_k}(X)$ together with a choice of integral subscheme $Z\subset X\cap V(F_1,\ldots,F_k)$ of dimension $a$ more than expected  & \ref{PXT}\\
$\wt{\Phi}^{\mb{P}^r_S,a}_{d_1,\ldots,d_k}(\mc{X}/S)$ & relative version of $\wt{\Phi}^{\mb{P}^r,a}_{d_1,\ldots,d_k}(X)$ & \ref{PXAT}\\
$\wt{\Phi}^{\mb{P}^r,a}_{d_1,\ldots,d_k}(X,A)$ & $\wt{\Phi}^{\mb{P}^r,a}_{d_1,\ldots,d_k}(X)$ restricted to choices $((F_1,\ldots,F_k),Z)$ where $Z\in A$ ($A\subset \RHilb_{\mb{P}^r}$) & \ref{IMA}\\
$\Phi^{\mb{P}^r,a}_{d_1,\ldots,d_k}(X,A)$ & image $\wt{\Phi}^{\mb{P}^r,a}_{d_1,\ldots,d_k}(X)\to \Phi^{\mb{P}^r,a}_{d_1,\ldots,d_k}(X,A)$ forgetting $Z$ & \ref{IMA}\\
$\wt{\Phi}^{\mb{P}^r,a}_{d_1,\ldots,d_k}(\mc{X}/S,A)$ & relative version of $\wt{\Phi}^{\mb{P}^r,a}_{d_1,\ldots,d_k}(X,A)$  & \ref{IMAS}\\
$\Phi^{\mb{P}^r,a}_{d_1,\ldots,d_k}(\mc{X}/S,A)$ & relative version of $\Phi^{\mb{P}^r,a}_{d_1,\ldots,d_k}(X,A)$ & \ref{IMAS}\\
$h_A$ & $h_A(d)$ is the minimum of Hilbert functions $h_Z(d)$ for $[Z]\in A\subset \wt{\Hilb_X}$ \ref{minhD}\\
$\Co(A)$ & all $[Y]\in \RHilb_X$ where $Y\supset Z$ for $[Z]\in A$ & \ref{containD}\\
$\Span(r,b)$ & subset of $\RHilb_{\mb{P}^r}$ of schemes $Z$ that span a $b$-dimensional linear space & \ref{spanD}\\
\end{longtable}
\end{center}

\subsection{Definitions}
Here, we fix the notation for the objects of study. 

\subsubsection{Hilbert scheme and relative Hilbert scheme}
We won't use the geometry of the Hilbert scheme, but we will use its existence. 
\begin{Def}
\label{HilbD}
If $X\subset \mb{P}^r$ is a projective scheme, let $\Hilb_X$ denote the Hilbert scheme of subschemes of $X$. Let $\Hilb_X^b\subset \Hilb_X$ denote the connected components corresponding to subschemes of dimension $b$.
\end{Def}

\begin{Def}
\label{RHilbD}
Let $\RHilb_X\subset \Hilb_X$ denote the subset of the Hilbert scheme corresponding to geometrically integral subschemes. Recall $\RHilb$ is open in $\Hilb$ \cite[IV 12.1.1 (x)]{EGA}, . Let $\RHilb_X^b\subset \RHilb_X$ correspond to subschemes of dimension $b$. 
\end{Def}

\begin{Def}
\label{HilbS}
Suppose $S$ is a finite type $K$-scheme and $\mc{X}\subset \mb{P}^r_S$ is a closed subscheme. Let $\Hilb_{\mc{X}/S}$ denote the relative Hilbert scheme of the family $\mc{X}\rightarrow S$. Similar to above, we let $\RHilb_{\mc{X}/S}\subset \Hilb_{\mc{X}/S}$ denote the open locus parameterizing geometrically integral subschemes, and $\RHilb_{\mc{X}/S}^b\subset \Hilb_{\mc{X}/S}^b$ denote the restriction to $b$-dimensional subschemes. 
\end{Def}

\subsubsection{Constructible sets}
\label{CONSET}
We will need to work with dimensions and maps of constructible sets. Chevalley's theorem implies constructible subsets remain constructible after taking images \cite[Tag 054K]{stacks-project}. Since we are only interested in their dimensions, it suffices to think of them as subsets of an ambient space with a notion of dimension. 
\begin{Def}
If $X$ is a scheme, then a constructible set $A\subset X$ is a finite union of locally closed subsets. 
\end{Def}

To take the dimension of a constructible set, it suffices to either look at the generic points or take the closure.
\begin{Def}
\label{DEFDIM}
If $A\subset X$ is a constructible set, then $\dim(A):=\dim(\overline{A})$. The dimension of the empty set is $-\infty$. The codimension of the empty set in a nonempty constructible set is $\infty$. 
\end{Def}

By applying the usual theorem on fiber dimension to $\overline{A}$ and $\overline{B}$ below, we have:
\begin{Lem}
\label{FY6}
If $f:X\rightarrow Y$ is a morphism of finite type schemes over a field, $A\subset X$ and $B=f(A)\subset Y$ constructible sets, and $\dim(f^{-1}(b)\cap A)<c$ for all $b\in B$, then 
\begin{align*}
\dim(A)\leq \dim(B)+c. 
\end{align*}
If $\dim(f^{-1}(b))=c$ for all $b\in B$, then equality holds. 
\end{Lem}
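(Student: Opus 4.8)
The plan is to reduce the statement to the classical theorem on the dimension of the generic fibre of a dominant morphism of irreducible varieties, applied to the closures $\overline{A}$ and $\overline{B}$. First I would record that $\dim(A)=\dim(\overline A)$ and $\dim(B)=\dim(\overline B)$ by Definition~\ref{DEFDIM}, and that $f$ restricts to a \emph{dominant} morphism $\overline A\to\overline B$: indeed $f(\overline A)\subseteq\overline{f(A)}=\overline B$, while $\overline B=\overline{f(A)}\subseteq\overline{f(\overline A)}$, so $\overline{f(\overline A)}=\overline B$. Decomposing $\overline A=\bigcup_i X_i$ into irreducible components, it suffices to bound each $\dim(X_i)$, since $\dim(A)=\max_i\dim(X_i)$. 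For a fixed component $X_i$, set $Y_i:=\overline{f(X_i)}\subseteq\overline B$, so that $f|_{X_i}\colon X_i\to Y_i$ is dominant and the generic-fibre theorem gives $\dim(X_i)=\dim(Y_i)+e_i$, where $e_i$ is the dimension of the generic (equivalently minimal) fibre.

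The heart of the argument is to show that this generic fibre dimension $e_i$ is actually controlled by the hypothesis, which only constrains $\dim(f^{-1}(b)\cap A)$ for honest points $b\in B$. Since $A$ is constructible and $X_i$ is an irreducible component, $A$ meets $U:=\overline A\setminus\bigcup_{j\neq i}X_j$ in a set that is dense in $X_i$; a constructible dense subset of an irreducible scheme contains a dense open, so there is a dense open $W_i\subseteq X_i$ with $W_i\subseteq A$. I would then prove the auxiliary fact that the generic fibre of $X_i\to Y_i$ meets $W_i$ in its full dimension: writing $Z:=X_i\setminus W_i$, each component of $Z$ either fails to dominate $Y_i$ (so its fibre over a general $b$ is empty) or dominates $Y_i$ with generic fibre dimension at most $\dim(Z)-\dim(Y_i)<e_i$; applying the generic-fibre theorem to each and intersecting the resulting dense opens produces a dense open of $Y_i$ over which $\dim(Z\cap f^{-1}(b))<e_i=\dim(f^{-1}(b)\cap X_i)$. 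For such a general $b$ we then have $\dim(W_i\cap f^{-1}(b))=e_i$; in particular $b\in f(W_i)\subseteq B$ and $\dim(f^{-1}(b)\cap A)\geq e_i$. The hypothesis $\dim(f^{-1}(b)\cap A)<c$ now forces $e_i<c$, whence $\dim(X_i)=\dim(Y_i)+e_i\leq\dim(\overline B)+c=\dim(B)+c$, and taking the maximum over $i$ gives the inequality.

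For the equality clause I would run the same bookkeeping in reverse. Choosing $b$ general in a top-dimensional component $B_0$ of $\overline B$ (so $\dim(B_0)=\dim(B)$), only those components $X_j$ with $\overline{f(X_j)}=B_0$ contribute to $f^{-1}(b)\cap A$ over such $b$, and one gets $\dim(f^{-1}(b)\cap A)=\max_{j:\,\overline{f(X_j)}=B_0}e_j$; the hypothesis that this equals $c$ produces a component $X_{j^*}$ with $e_{j^*}=c$ and $\dim(X_{j^*})=\dim(B)+c$, giving the matching lower bound $\dim(A)\geq\dim(B)+c$. The main obstacle is precisely this transfer of control from the constructible data $(A,B)$ to the generic fibres of the closures: the naive estimate $\dim(\overline A)\leq\dim(\overline B)+\max_{w}\dim(f^{-1}(w)\cap\overline A)$ ranges over all $w\in\overline B$ and over all of $\overline A$, and its maximal fibre dimension may be attained off $A$ or over points of $\overline B\setminus B$ about which the hypothesis says nothing. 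The constructibility of $A$ (dense implies containing a dense open) together with the fibre-dimension estimate for the complement $Z$ is exactly what closes this gap and lets the generic, rather than the maximal, fibre dimension govern the bound.
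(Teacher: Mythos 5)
Your proof is correct and is essentially the paper's own argument: the paper gives no proof beyond the remark that one applies the usual theorem on fiber dimension to $\overline{A}$ and $\overline{B}$, and your write-up simply supplies the details that remark suppresses (decomposition into components, the dense open $W_i\subseteq A$, and the transfer of the fiber-dimension hypothesis from points of $B$ to generic fibers of the closures). One small point: you prove the equality clause under the reading $\dim(f^{-1}(b)\cap A)=c$ rather than the literal $\dim(f^{-1}(b))=c$, which is the right call, since the literal version is false (take $A$ a section of the projection $\mathbb{A}^2\to\mathbb{A}^1$) and the version about $f^{-1}(b)\cap A$ is what the paper actually uses.
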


For notational convenience, we define the pullback of a constructible set.
\begin{Def}
If $X\rightarrow Z \leftarrow Y$ are morphisms and $A\subset X$ is a constructible set, then define $A\times_Z Y$ to be the preimage of $A$ in $X\times_Z Y$ under $X\times_Z Y\rightarrow X$. 
\end{Def}
\subsubsection{Locus of tuples of hypersurfaces not intersecting properly}
We parameterize our space of homogenous forms with affine spaces. 

\begin{Def}
\label{WDEF}
Given positive integers $r,d$, let $W_{r,d}\cong \mb{A}^{\binom{r+d}{d}}$ be the affine space whose underlying vector space is $H^0(\mb{P}^r,\mathscr{O}_{\mb{P}^r}(d))$, the hypersurfaces of degree $d$ in $\mb{P}^r$. 
\end{Def}

\begin{Def}
\label{FDEF}
Over $W_{r,d}$, let $\mc{F}_{r,d}\subset \mb{P}^r\times W_{r,d}$ be the universal family $\mc{F}_{r,d}\rightarrow W_{r,d}$. 
\end{Def}

\begin{Def}
\label{PX}
If $X\subset \mb{P}^r$ is a projective scheme, $a$ a nonnegative integer, and $(d_1,\ldots,d_k)$ is a tuple of positive integers, define $\Phi^{\mb{P}^r,a}_{d_1,\ldots,d_k}(X)\subset \prod_{i=1}^{k}W_{r,d_i}$ to be the closed subset of tuples $(F_1,\ldots,F_k)$ of homogenous forms of degrees $(d_1\,\ldots,d_k)$ such that 
\begin{align*}
\dim(X\cap V(F_1,\ldots,F_k))\geq \dim(X)-k+a.
\end{align*}
\end{Def}

In particular, $\Phi^{\mb{P}^r,0}_{d_1,\ldots,d_k}(X)$ is all of $\prod_{i=1}^{k}W_{r,d_i}$ and $\Phi^{\mb{P}^r,1}_{d_1,\ldots,d_k}(\mb{P}^r)$ is the locus of hypersurfaces failing to be a complete intersection. Similarly, given a family $\mc{X}\subset \mb{P}^r_S$, we can define a relative version.

In Definition \ref{PX} it is useful to keep in mind that, even though we are mostly interested in the case of hypersurfaces failing to be a complete intersection $\Phi^{\mb{P}^r,1}_{d_1,\ldots,d_k}(\mb{P}^r)$, we will need to let $X$ and $a$ vary.

\begin{Def}
\label{PXA}
Given a finite type $K$-scheme $S$ and a closed subscheme $\mc{X}\subset \mb{P}^r_S$, let $\Phi^{\mb{P}_S^r,a}_{d_1,\ldots,d_k}(\mc{X}/S)\subset S\times_K \prod_{i=1}^{k}{W_{r,d_i}}$ denote the closed subset such that for all $s\in S$ with residue field $k(s)$, then the fiber of $\Phi^{\mb{P}_S^r,a}_{d_1,\ldots,d_k}(\mc{X}/S)\rightarrow S$ over $s$ is $\Phi^{\mb{P}_{k(s)}^r,a}_{d_1,\ldots,d_k}(\mc{X}|_s)$. 
\end{Def}

Both $\Phi^{\mb{P}^r,a}_{d_1,\ldots,d_k}(X)$ and $\Phi^{\mb{P}_S^r,a}_{d_1,\ldots,d_k}(\mc{X}/S)$ can be constructed by applying upper semicontinuity of dimension to the respective families over $\prod_{i=1}^{k}{W_{r,d_i}}$ and $S\times \prod_{i=1}^{k}{W_{r,d_i}}$.

\subsection{Incidence correspondence}
We will want to break up $\Phi^{\mb{P}^r,1}_{d_1,\ldots,d_k}(\mb{P}^r)$ into constructible sets based on the types of schemes contained in the common vanishing loci of the $k$-tuples of homogenous forms. To formalize this, we define the following.
\begin{Def}
\label{PXT}
Given $X\subset \mb{P}^r$ projective, let $\wt{\Phi}^{\mb{P}^r,a}_{d_1,\ldots,d_k}(X)\subset \Phi^{\mb{P}^r,a}_{d_1,\ldots,d_k}(X)\times \RHilb_X^{\dim(X)-k+a}$ denote the closed subset corresponding to pairs $((F_1,\ldots,F_k),[V])$, where $(F_1,\ldots,F_k)$ is in $\prod_{i=1}^{k}{W_{r,d_i}}$ and $[V]$ is in $ \RHilb_{\dim(X)-k+a}$ such that $V\subset X\cap V(F_1,\cdots F_k)$. 

\end{Def}
Similarly, we have the relative version.
\begin{Def}
\label{PXAT}
Given a finite type $K$-scheme $S$ and a subscheme $\mc{X}\subset \mb{P}^r_S$ where each fiber of $\mc{X}/S$ has the same dimension $b$, let $\wt{\Phi}^{\mb{P}^r_S,a}_{d_1,\ldots,d_k}(\mc{X}/S)\subset \Phi^{\mb{P}_S^r,a}_{d_1,\ldots,d_k}(\mc{X}/S)\times \RHilb_{\mc{X}/S}^{b-k+a}$ denote the closed subset such that, for each $s\in S$ with residue field $k(s)$, the restriction of $\wt{\Phi}^{\mb{P}^r_S,a}_{d_1,\ldots,d_k}(\mc{X}/S)\rightarrow S$ to the fiber over $s$ is $\wt{\Phi}^{\mb{P}_{k(s)}^r,a}_{d_1,\ldots,d_k}(\mc{X}|_{s})$.
\end{Def}
To construct both $\wt{\Phi}^{\mb{P}^r,a}_{d_1,\ldots,d_k}(X)$ and $\wt{\Phi}^{\mb{P}^r_S,a}_{d_1,\ldots,d_k}(\mc{X}/S)$, we can use:
\begin{Lem}
[{\cite[Lemma 7.1]{ACGH2}}]
\label{CFF}
Given a scheme $A$ and two closed subschemes $B,C\subset \mb{P}^r_A$ with $B$ flat over $A$, there exists a closed subscheme $D\subset A$ such that any morphism $T\rightarrow A$ factors through $D$ if and only if $B\times_A T$ is a subscheme of $C\times_A T$.
\end{Lem}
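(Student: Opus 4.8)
The plan is to realize $D$ as the closed subscheme representing the functor on $A$-schemes that sends $T$ to a one-point set if $B\times_A T\subset C\times_A T$ and to the empty set otherwise; since representability by a closed subscheme is Zariski-local on the base, I may assume $A=\Spec R$ is affine (and Noetherian, as in all our applications). Write $\pi\colon \mb{P}^r_A\to A$ for the projection and $\mc{I}\subset \O_{\mb{P}^r_A}$ for the ideal sheaf of $C$. The first step is to translate containment into the vanishing of a single sheaf homomorphism: restricting the inclusion $\mc{I}\hookrightarrow \O_{\mb{P}^r_A}$ to $B$ gives a map $u\colon \mc{I}\otimes \O_B\to \O_B$ whose image is the ideal sheaf of $B\cap C$ inside $B$, so that $B\subset C$ if and only if $u=0$. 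Because $\O_{C}$ pulls back to the structure sheaf of the fibre product (right-exactness of pullback), the same description holds after any base change $T\to A$, yielding $u_T$ with $B_T\subset C_T\iff u_T=0$.

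Next I would twist by a large integer $n$ and push forward. Since $B$ is flat over $A$, cohomology-and-base-change shows that for $n\gg 0$ the sheaf $\mc{E}:=\pi_*\O_B(n)$ is locally free and its formation commutes with arbitrary base change, while $\mc{I}(n)$ is relatively globally generated, i.e. $\pi^*\pi_*\mc{I}(n)\twoheadrightarrow \mc{I}(n)$. Pushing $u(n)$ forward, equivalently applying the adjunction between $\pi^*$ and $\pi_*$, produces an $R$-linear map
\begin{align*}
\phi\colon M:=\pi_*\mc{I}(n)\longrightarrow \mc{E}=\pi_*\O_B(n),
\end{align*}
and relative global generation guarantees that $u(n)=0$ if and only if $\phi=0$. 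Since $\mc{E}$ is locally free and $M$ is a finitely generated $R$-module, $\phi$ is given, in a local trivialization of $\mc{E}$, by a matrix of elements of $R$; I then define $D=V(\mc{J})\subset A$ where $\mc{J}$ is the ideal generated by these entries. By construction a morphism $g\colon T\to A$ factors through $D$ if and only if the pullback $g^*\phi$ vanishes.

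It remains to identify $g^*\phi=0$ with $B_T\subset C_T$, and this base-change compatibility is the main obstacle, precisely because $C$, and hence $\mc{I}$ and $B\cap C$, need not be flat over $A$, so that neither $M$ nor its pushforward commutes with base change. The resolution is that the target of $\phi$ is the base-change-compatible locally free sheaf $\mc{E}$ coming from the flat sheaf $\O_B$: pulling back the canonical surjection $\pi^*M\twoheadrightarrow \mc{I}(n)$ along $\mb{P}^r_T\to \mb{P}^r_A$ and using that $\mc{I}_{C_T}$ is a quotient of the pullback of $\mc{I}$ (again right-exactness) gives a surjection $\pi_T^*g^*M\twoheadrightarrow \mc{I}_{C_T}(n)$ through which $u_T(n)$ factors. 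Chasing the resulting commutative square, together with the isomorphism $g^*\mc{E}\cong \pi_{T*}\O_{B_T}(n)$ on the target, shows that $g^*\phi=0$ forces $u_T(n)=0$ and hence $B_T\subset C_T$, while conversely $B_T\subset C_T$ gives $u_T(n)=0$, hence the pushforward over $T$ vanishes, hence $g^*\phi=0$. The one delicate point is the uniform choice of $n$: I must check that an $n$ making $\mc{E}$ base-change-compatible and $\mc{I}(n)$ relatively globally generated over $A$ continues to work over every $T\to A$, which holds because the vanishing of the relevant higher cohomology of $\O_B$ is preserved by base change via flatness, and global generation of $\mc{I}(n)$ pulls back to a surjection. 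This exhibits $D$ as the desired representing closed subscheme.
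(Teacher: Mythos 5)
The paper never proves Lemma \ref{CFF}: it is imported wholesale from \cite[Lemma 7.1]{ACGH2} and used as a black box, so there is no internal argument to compare yours against. Your proof is correct, and it is essentially the standard proof of the cited result (the same mechanism that underlies the construction of Hilbert and Quot schemes): translate the containment $B_T\subset C_T$ into the vanishing of $u\colon \mc{I}_C\otimes\O_B\to\O_B$, twist by $n\gg 0$, and replace the vanishing of $u(n)$ by the vanishing of the induced map $\phi\colon \pi_*\mc{I}_C(n)\to\pi_*\O_B(n)$, whose target is locally free and commutes with arbitrary base change precisely because $B$ is flat; the vanishing locus of a homomorphism from a finitely generated module to a locally free one is then a closed subscheme with the required universal property. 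You also correctly isolate the two genuinely delicate points: first, $C$ (hence $\mc{I}_C$ and $\pi_*\mc{I}_C(n)$) is not flat, so the \emph{source} of $\phi$ does not commute with base change, and the fix is exactly the one you give --- one never compares $g^*\pi_*\mc{I}_C(n)$ with $\pi_{T*}\mc{I}_{C_T}(n)$, but only uses the surjection $\pi_T^*g^*M\twoheadrightarrow \mc{I}_{C_T}(n)$ coming from right-exactness of pullback, together with the isomorphism $g^*\mc{E}\cong\pi_{T*}\O_{B_T}(n)$ on the target; second, the uniform choice of $n$ must survive base change, which your appeal to fiberwise Serre vanishing plus cohomology-and-base-change handles. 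The one caveat is that you prove the lemma for Noetherian $A$ while the statement is for an arbitrary scheme; since representability by a closed subscheme is Zariski-local and every base to which the paper applies the lemma (products of the $W_{r,d_i}$ and Hilbert schemes, of finite type over $K$ or over $\Spec(\mb{Z})$) is Noetherian, this restriction is harmless here, but a proof of the literal statement would require a limit argument or a finitely-presented reformulation.
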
 
In particular, $\wt{\Phi}^{\mb{P}^r,a}_{d_1,\ldots,d_k}(X)$ and $\wt{\Phi}^{\mb{P}^r_S,a}_{d_1,\ldots,d_k}(\mc{X}/S)$ actually have a canonical scheme theoretic structure, though we will not make use of it. 

To find the maximal dimensional components of $\Phi^{\mb{P}^r,1}_{d_1,\ldots,d_k}(\mb{P}^r)$, our general plan is to cover $\RHilb_{\mb{P}^r}^{r-k+1}$ with constructible sets $A_i$, and bound the dimensions of $\pi_1(\pi_2^{-1}(A_i))$ for each $i$, where $\pi_1$ and $\pi_2$ are given by the following diagram in the case $X=\mb{P}^r$ and $a=1$. 
\begin{center}
\begin{tikzcd}
& \wt{\Phi}^{\mb{P}^r,a}_{d_1,\ldots,d_k}(X) \arrow[swap, ld, "\pi_1"] \arrow[dr, "\pi_2"] &\\
\Phi^{\mb{P}^r,a}_{d_1,\ldots,d_k}(X) & & \RHilb_{X}^{\dim(X)-k+a}
\end{tikzcd}
\end{center}
This isn't a serious issue, but we would like to restrict ourselves to finitely many connected components of $\RHilb_{X}^{\dim(X)-k+a}$. Otherwise, for example, if we take the subset $A\subset \RHilb_{X}^{\dim(X)-k+a}$ of nondegenerate varieties, then $\pi_1(\pi_2^{-1}(A))$ is a priori only a countable union of constructible sets obtained by applying Chevalley's theorem to $A$ restricted to each connected component of $\RHilb_{X}^{\dim(X)-k+a}$. 

Therefore, we define
\begin{Def}
\label{HilbRDD}
Let $\RHilb_X^{b,\leq D}\subset \RHilb_X^b$ denote the connected components parameterizing subschemes of degree at most $D$. We similarly define $\RHilb_{\mc{X}/S}^{b,\leq D}\subset \RHilb_{\mc{X}/S}^{b}$ for $\mc{X}\subset \mb{P}^r_S$. 
\end{Def}

From Chow's finiteness theorem \cite[Exercise I.3.28 and Theorem I.6.3]{Kollar}, we see $\RHilb_X^{c,\leq D}$ and $\RHilb_{\mc{X}/S}^{c,\leq D}$ have only finitely many connected components. From refined Bezout's theorem \cite[Example 12.3.1]{Fulton}, for $D=d_1\cdots d_k$, $\pi_1(\pi_2^{-1}(\RHilb_X^{\dim(X)-k+a,\leq D}))$ is all of $\Phi^{\mb{P}^r,a}_{d_1,\ldots,d_k}(X)$. In general, we will always choose $D$ such that $D\geq d_1\cdots d_k$. 

\begin{Rem}
The application of Chow's finiteness theorem and refined Bezout's theorem are unnecessary and entirely for notational convenience. Very generally, given any finite dimensional Noetherian scheme $S$ covered by a family of constructible sets, there will exist a finite subcover by looking at the generic points of $S$ and applying induction on dimension. Applying this to $\Phi^{\mb{P}^r,a}_{d_1,\ldots,d_k}(X)$ shows that there is a union $A$ of finitely many connected components of $\RHilb_{\mb{P}^r}^{r-k+1}$ such that $\pi_1(\pi_2^{-1}(A))=\Phi^{\mb{P}^r,a}_{d_1,\ldots,d_k}(X)$, so we can just restrict our incidence correspondence to $A$ instead of $\RHilb_X^{\dim(X)-k+a,\leq D}$. 
\end{Rem}

Motivated from above, we make the following definition.
\begin{Def}
\label{IMA}
For $A\subset \RHilb_{\mb{P}^r}$ a constructible subset and $D\geq d_1\cdots d_k$ an integer, let $\wt{\Phi}^{\mb{P}^r,a}_{d_1,\ldots,d_k}(X,A)=\pi_2^{-1}(A\cap \RHilb_X^{\dim(X)-k+a,\leq D})$ and $\Phi^{\mb{P}^r,a}_{d_1,\ldots,d_k}(X,A)=\pi_1(\pi_2^{-1}(A\cap \RHilb_X^{\dim(X)-k+a,\leq D}))$.
\end{Def}

Similarly, we have the relative version.

\begin{Def}
\label{IMAS}
Let  $S$ be a finite type $K$-scheme, $\mc{X}\subset \mb{P}^r_S$ a family such that $\mc{X}\rightarrow S$ has $b$-dimensional fibers, $\mc{A}\subset S\times \RHilb_{\mb{P}^r}$ a constructible subset. For $D\geq d_1\cdots d_k$, let $\wt{\Phi}^{\mb{P}_S^r,a}_{d_1,\ldots,d_k}(\mc{X}/S,\mc{A})=\pi_2^{-1}(\mc{A}\cap \RHilb_{\mc{X}/S}^{b-k+a,\leq D})$ and $\Phi^{\mb{P}_S^r,a}_{d_1,\ldots,d_k}(\mc{X}/S,\mc{A})=\pi_1(\pi_2^{-1}(\mc{A}\cap \RHilb_{\mc{X}/S}^{b-k+a,\leq D}))$ for $\pi_1$ and $\pi_2$ defined as below.
\begin{center}
\begin{tikzcd}
& \wt{\Phi}^{\mb{P}_S^r,a}_{d_1,\ldots,d_k}(\mc{X}/S) \arrow[swap, ld, "\pi_1"] \arrow[dr, "\pi_2"] &\\
\Phi^{\mb{P}_S^r,a}_{d_1,\ldots,d_k}(\mc{X}/S) & & \RHilb_{\mc{X}/S}^{b-k+a,\leq D}
\end{tikzcd}
\end{center}
\end{Def}

\subsection{Inducting on the number of hypersurfaces}
We present Lemma \ref{kind} which bounds the dimension of $\Phi^{\mb{P}^r,a}_{d_1,\ldots,d_k}(X,A)$.
\subsubsection{Preliminary definitions}
Instead of working with dimensions, it is easier to work with codimensions.
\begin{Def}
Let $\codim(\Phi^{\mb{P}^r,a}_{d_1,\ldots,d_k}(X))$ and $\codim(\Phi^{\mb{P}^r,a}_{d_1,\ldots,d_k}(X,A))$ be the codimension in $\prod_{i=1}^{k}W_{r,d_i}$. 
We similarly let $\codim(\Phi^{\mb{P}_S^r,a}_{d_1,\ldots,d_k}(\mc{X}/S))$ and $\codim(\Phi^{\mb{P}_S^r,a}_{d_1,\ldots,d_k}(\mc{X}/S,\mc{A}))$ mean the codimension in $S\times\prod_{i=1}^{k}W_{r,d_i}$.
\end{Def}
We will need to refer to the minimum of the Hilbert function over a constructible set of the Hilbert scheme.
\begin{Def}
\label{minhD}
For $A\subset \RHilb_X$ a constructible set, let
\begin{align*}
h_A(d):=\min\{h_Z(d):[Z]\in A\},
\end{align*}
where $h_Z$ refers to the Hilbert function of a  projective scheme $Z$. This is well-defined since $Z\subset X\subset \mb{P}^r$.
\end{Def}
We will need notation to refer to varieties containing a piece of the Hilbert scheme.
\begin{Def}
\label{containD}
Given $A\subset \RHilb^{\leq D}_{X}$ a constructible subset, let $\Co(A)$ denote the constructible subset consisting of all $[Y]\in \RHilb^{\leq D}_X$ such that $Y$ contains some $Z$ for $[Z]\in A$. 
\end{Def}

To see $\Co(A)$ is constructible, let $\mc{Z}\subset \RHilb^{\leq D}_{X}\times \RHilb^{\leq D}_{X}$ denote the subset corresponding to pairs $([Y],[Z])$ such that $Z\subset Y$. Lemma \ref{CFF} shows $\mc{Z}$ is closed. From the incidence correspondence
\begin{center}
\begin{tikzcd}
& \mc{Z} \arrow[swap, ld, "\pi_1"] \arrow[dr, "\pi_2"] &\\
\RHilb^{\leq D}_{X}& & \RHilb^{\leq D}_{X}
\end{tikzcd}
\end{center}
we see $\Co(A)=\pi_1(\pi_2^{-1}(A))$ is constructible by Chevalley's theorem \cite[Tag 054K]{stacks-project}. 
\subsubsection{Forgetting a hypersurface}
\begin{Lem}
\label{kind}
For $X\subset \mb{P}^r$ a projective scheme and $A\subset \RHilb^{\dim(X)-k+a,\leq D}_X$ constructible,
\begin{align*}
\codim(\Phi^{\mb{P}^r,a}_{d_1,\ldots,d_k}(X,A))\geq \min\{\codim(\Phi^{\mb{P}^r,a}_{d_1,\ldots,d_{k-1}}(X,A_1)),\codim(\Phi^{\mb{P}^r,a-1}_{d_1,\ldots,d_{k-1}}(X,A))+h_A(d_k)\},
\end{align*}
where $A_1=\Co(A)\cap \RHilb_X^{\dim(X)-k+a+1,\leq D}$.
\end{Lem}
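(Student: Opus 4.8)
The plan is to induct by forgetting the last form $F_k$. Let $\pi\colon \prod_{i=1}^{k}W_{r,d_i}\to \prod_{i=1}^{k-1}W_{r,d_i}$ be the projection dropping the last coordinate, set $e=\dim(X)-k+a$, and write $G=(F_1,\ldots,F_{k-1})$ for a point of the target. If $(F_1,\ldots,F_k)\in \Phi^{\mb{P}^r,a}_{d_1,\ldots,d_k}(X,A)$, then some integral $[Z]\in A$ of dimension $e$ satisfies $Z\subset X\cap V(F_1,\ldots,F_k)$; forgetting $F_k$, the same $Z$ witnesses $G\in \Phi^{\mb{P}^r,a-1}_{d_1,\ldots,d_{k-1}}(X,A)$, since $e=\dim(X)-(k-1)+(a-1)$ and $A\subset \RHilb_X^{e,\leq D}$ has the right dimension. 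Thus $\pi$ maps $\Phi^{\mb{P}^r,a}_{d_1,\ldots,d_k}(X,A)$ into $\Phi^{\mb{P}^r,a-1}_{d_1,\ldots,d_{k-1}}(X,A)$, and I would split the source according to whether or not $G$ lies in $B:=\Phi^{\mb{P}^r,a}_{d_1,\ldots,d_{k-1}}(X,A_1)$, where $A_1=\Co(A)\cap \RHilb_X^{e+1,\leq D}$ is as in the statement.

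On the part lying over $B$, I would bound the fibers of $\pi$ trivially by all of $W_{r,d_k}$; by Lemma \ref{FY6} this piece has codimension at least $\codim(B)=\codim(\Phi^{\mb{P}^r,a}_{d_1,\ldots,d_{k-1}}(X,A_1))$ in $\prod_{i=1}^{k}W_{r,d_i}$, which is the first term of the minimum. On the complementary part, where $G\notin B$, the fiber of $\pi$ over $G$ is $\{F_k : Z\subset V(F_k)\text{ for some }[Z]\in A_G\}=\bigcup_{[Z]\in A_G}L_Z$, where $A_G\subset A$ is the constructible set of $[Z]\in A$ with $Z\subset X\cap V(G)$ (constructible by Lemma \ref{CFF} applied to the universal family over $A$) and $L_Z\subset W_{r,d_k}$ is the linear space of forms vanishing on $Z$, of codimension $h_Z(d_k)\geq h_A(d_k)$.

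The heart of the argument is to show that $G\notin B$ forces $A_G$ to be finite, for then the fiber is a finite union of the spaces $L_Z$ and so has codimension at least $h_A(d_k)$ in $W_{r,d_k}$; combined with the containment of the image in $\Phi^{\mb{P}^r,a-1}_{d_1,\ldots,d_{k-1}}(X,A)$ and Lemma \ref{FY6}, this piece has codimension at least $\codim(\Phi^{\mb{P}^r,a-1}_{d_1,\ldots,d_{k-1}}(X,A))+h_A(d_k)$, the second term. To see the finiteness, I would argue contrapositively: if $A_G$ were infinite, then passing to a one-dimensional irreducible family inside $\overline{A_G}$ and sweeping out its members produces an integral subvariety of $X\cap V(G)$ of dimension $e+1$ containing some $[Z]\in A$. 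The swept locus cannot remain $e$-dimensional, because distinct integral $e$-folds lying in a common irreducible $e$-fold would be forced to coincide, contradicting that the family is nonconstant. Such a subvariety lies in some component $W$ of $X\cap V(G)$, and when $\dim(W)=e+1$ one simply takes $W$ itself, obtaining $[W]\in \Co(A)\cap \RHilb_X^{e+1,\leq D}=A_1$ and hence $G\in B$, the desired contradiction.

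The step I expect to be the main obstacle is exactly this finiteness, in the subcase where the component $W$ has dimension strictly larger than $e+1$, so that the $(e+1)$-dimensional variety $Y$ we want is only a proper subvariety of $W$ rather than a component of $X\cap V(G)$. Producing an integral $Y$ with $Z\subset Y\subset W$ and $\dim(Y)=e+1$ is automatic from dimension theory, but keeping its degree under control is the delicate point. I would handle this by intersecting $W$ with general hyperplanes through $Z$, which preserves $Z$ and does not raise the degree, and then selecting the component of the resulting section containing $Z$; the degrees of the components of $X\cap V(G)$ are bounded by the refined B\'ezout theorem \cite[Example 12.3.1]{Fulton}, and since the truncation to $\RHilb^{\leq D}$ serves only to guarantee finiteness (cf. the remark following Definition \ref{HilbRDD}), we may take $D$ large enough that the resulting $Y$ satisfies $[Y]\in A_1$. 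Once the finiteness is established, assembling the two pieces by Lemma \ref{FY6} and taking the minimum of their codimensions yields the claimed bound.
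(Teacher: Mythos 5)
Your proposal follows the paper's proof essentially step for step: the same projection $\pi$ forgetting $F_k$, the same observation that the image lies in $\Phi^{\mb{P}^r,a-1}_{d_1,\ldots,d_{k-1}}(X,A)$, the same splitting according to membership in $B=\Phi^{\mb{P}^r,a}_{d_1,\ldots,d_{k-1}}(X,A_1)$, the trivial fiber bound over $B$, and, over the complement, the reduction to finiteness of $A_G$ so that the fiber is a finite union of linear spaces of codimension at least $h_A(d_k)$, followed by Lemma \ref{FY6}. The paper phrases the finiteness differently --- it asserts that $G\notin B$ forces every $[Z]\in A_G$ to be an irreducible component of $X\cap V(F_1,\ldots,F_{k-1})$ --- but justifying that assertion requires exactly the step you flag: given $Z$ properly contained in a component $W$ with $\dim(W)\geq \dim(X)-k+a+2$, one must produce an integral $Y$ with $Z\subset Y\subset X\cap V(G)$, $\dim(Y)=\dim(X)-k+a+1$ and $\deg(Y)\leq D$. (When $\dim(W)=\dim(X)-k+a+1$, both you and the paper correctly take $Y=W$ and control its degree by refined Bezout.)

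Your resolution of that step has a genuine gap. Cutting $W$ by ``general hyperplanes through $Z$'' is impossible when $Z$ is nondegenerate: any hyperplane containing $Z$ contains its linear span, so no such hyperplane exists, and more generally this method can cut down at most $\codim$ of the span of $Z$ many dimensions. This is not a peripheral case: in the paper's main application of the lemma (Lemma \ref{KIA}), $A=\Span(r,r)$ consists precisely of nondegenerate subvarieties, and $\Co(\Span(r,r))=\Span(r,r)$, so every $Z$ arising in the iteration is nondegenerate. Nor can one cut with the forms $F_1,\ldots,F_{k-1}$ themselves, since they all vanish identically on $W$ (that is why $W$ has excess dimension); one is forced to use hypersurfaces of higher degree through $Z$, whose degree is controlled by the regularity of $Z$ rather than by $d_1\cdots d_k$, so the resulting $Y$ need not satisfy $\deg(Y)\leq D$. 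Your fallback of ``taking $D$ large enough'' does not rescue the statement as written: $D$ is fixed in the lemma, and enlarging it enlarges $A_1=\Co(A)\cap\RHilb^{\dim(X)-k+a+1,\leq D}_X$, hence enlarges $\Phi^{\mb{P}^r,a}_{d_1,\ldots,d_{k-1}}(X,A_1)$ and lowers its codimension, so you would prove only a weaker inequality than the one claimed; moreover, the lemma is iterated in Lemma \ref{KIA} with a single $D$ throughout, so making this rigorous would require a uniform degree bound over the bounded family $A$ (say via Castelnuovo--Mumford regularity), which you do not supply. In fairness, the paper's own proof is silent at this very point; but your explicit patch, as written, fails exactly in the cases where the lemma is actually used.
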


In words, the idea of Lemma \ref{kind} is intuitively obvious. If $(F_1,\ldots,F_k)$ are homogenous forms that vanish on $Z$ for some $[Z]\in A$, then at least one of the following holds:
\begin{enumerate}
\item
$(F_1,\ldots,F_{k-1})$ vanish on some $Z'\supset Z$ with $\dim(Z')=\dim(Z)+1$
\item
$\dim(V(F_1,\ldots,F_{k-1}))=\dim(V(F_1,\ldots,F_k))$. 
\end{enumerate}
The first case is captured by the codimension of $\Phi^{\mb{P}^r,a}_{d_1,\ldots,d_{k-1}}(X,A_1)$, and the second case is bounded above by $\codim(\Phi^{\mb{P}^r,a-1}_{d_1,\ldots,d_{k-1}}(X,A))+h_A(d_k)$, since $F_k$ must vanish on one of the components of $V(F_1,\ldots,F_{k-1})$ which happens in codimension at least $h_A(d_k)$. 

\begin{proof}
Let the constructible set $B$ be the image of the projection $\pi: \Phi^{\mb{P}^r,a}_{d_1,\ldots,d_k}(X,A)\rightarrow \prod_{i=1}^{k-1}{W_{r,d_i}}$ forgetting the last hypersurface. By definition, we see $B\subset \Phi^{\mb{P}^r,a}_{d_1,\ldots,d_{k-1}}(X,A_1)\cup \Phi^{\mb{P}^r,a-1}_{d_1,\ldots,d_{k-1}}(X,A_2)$. 

Lemma \ref{FY6} shows
\begin{align*}
\codim(\pi^{-1}(\Phi^{\mb{P}^r,a}_{d_1,\ldots,d_{k-1}}(X,A_1)))\geq \codim(\Phi^{\mb{P}^r,a}_{d_1,\ldots,d_{k-1}}(X,A_1)). 
\end{align*}
To bound $\pi^{-1}(\Phi^{\mb{P}^r,a-1}_{d_1,\ldots,d_{k-1}}(X,A_2)\backslash \Phi^{\mb{P}^r,a}_{d_1,\ldots,d_{k-1}}(X,A_1))$, let 
\begin{align*}
(F_1,\ldots,F_{k-1})\in \Phi^{\mb{P}^r,a-1}_{d_1,\ldots,d_{k-1}}(X,A_2)\backslash \Phi^{\mb{P}^r,a}_{d_1,\ldots,d_{k-1}}(X,A_1)
\end{align*}
be a closed point and $V_1,\ldots,V_\ell$ be the $\dim(X)-k+a$ dimensional components of $X\cap V(F_1,\ldots, F_{k-1})$ that are in $A$. Here, we are using $(F_1,\ldots, F_{k-1})\notin \Phi^{\mb{P}^r,a}_{d_1,\ldots,d_{k-1}}(X,A_1)$ so that $X\cap V(F_1,\ldots F_{k-1})$ does not contain any component $V$ of dimension greater than $\dim(X)-k+a$ containing $Z$ for $[Z]\in A$. 

The locus of $F_k$ that contain $V_i$ is a linear subspace in $W_{r,d_k}$ of codimension $h_{V_i}(d_k)$. Therefore, the codimension of $\pi^{-1}(p)$ in $W_{r,d_k}$ is at least $h_A(d_k)$. Therefore, we can apply Lemma \ref{FY6} to conclude
\begin{align*}
\codim(\pi^{-1}(\Phi^{\mb{P}^r,a-1}_{d_1,\ldots,d_{k-1}}(X,A_2)\backslash \Phi^{\mb{P}^r,a}_{d_1,\ldots,d_{k-1}}(X,A_1)))&\geq\\
\codim(\Phi^{\mb{P}^r,a-1}_{d_1,\ldots,d_{k-1}}(X,A_2)\backslash \Phi^{\mb{P}^r,a}_{d_1,\ldots,d_{k-1}}(X,A_1))+h_A(d_k)&\geq\\
\codim(\Phi^{\mb{P}^r,a-1}_{d_1,\ldots,d_{k-1}}(X,A_2))+h_A(d_k)&.
\end{align*}
\end{proof}

\subsection{Varieties contained in a family}
Now, we want to partition by span as outlined in Sections \ref{PI} and \ref{KEX}. Since we will need to vary a linear space $\Lambda \subset \mb{P}^r$, we will need relative versions of $\Phi^{\mb{P}^r,a}_{d_1,\ldots,d_k}(X)$, where we will eventually set $X=\Lambda$ and let $\Lambda$ vary. Even though we only need it for a very special case, it is easier only the notation to introduce the definitions more generally.

\begin{Lem}
\label{IC}
Suppose $S$ is a finite type $K$-scheme, $X\subset \mb{P}^r_K$ is a projective scheme, $\mc{X}\subset X\times_K S$ is a closed subscheme such that each fiber of $\mc{X}\rightarrow S$ has codimension $e>0$ in $X$, $\mc{A}\subset \RHilb_{\mc{X}/S}^{\dim(X)-k+a,\leq D}$ is a constructible subset, the image of $\mc{A}$ in $\RHilb_X^{\dim(X)-k+a}$ under $\RHilb_{\mc{X}/S}^{\dim(X)-k+a}\rightarrow \RHilb_X^{\dim(X)-k+a}$ is $A$, and the fiber dimension $\mc{A}\rightarrow A$ is at least $c$ for all points in $A$. Then,
\begin{align}
\codim(\Phi^{\mb{P}^r,a}_{d_1,\ldots,d_k}(X,A))\geq \codim(\Phi^{\mb{P}_S^r,a+e}_{d_1,\ldots,d_k}(\mc{X}/S,\mc{A}))-\dim(S)+c, \label{ICeq}
\end{align}
and equality holds if the fiber dimension of $\mc{A}\rightarrow A$ is exactly $c$ for all points of $A$, $\wt{\Phi}^{\mb{P}^r,a}_{d_1,\ldots,d_k}(X,A)\rightarrow \Phi^{\mb{P}^r,a}_{d_1,\ldots,d_k}(X,A)$ has generically finite fibers, and $\Phi^{\mb{P}_S^r,a+e}_{d_1,\ldots,d_k}(\mc{X}/S,\mc{A})$ is irreducible. 
\end{Lem}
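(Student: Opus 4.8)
The plan is to unwind everything to a statement about fiber dimensions of maps of constructible sets and then invoke Lemma \ref{FY6} twice. Recall that $\Phi^{\mb{P}^r,a}_{d_1,\ldots,d_k}(X,A)=\pi_1(\pi_2^{-1}(A\cap \RHilb_X^{\dim(X)-k+a,\leq D}))$ is the image in $\prod_i W_{r,d_i}$, and that the relative version $\Phi^{\mb{P}_S^r,a+e}_{d_1,\ldots,d_k}(\mc{X}/S,\mc{A})$ lives in $S\times_K\prod_i W_{r,d_i}$. The key observation I would exploit is that restricting a tuple $(F_1,\ldots,F_k)$ to a fiber $\mc{X}|_s$ of codimension $e$ in $X$ raises the excess intersection dimension from $a$ to $a+e$: a subscheme $Z$ of dimension $\dim(X)-k+a$ contained in $\mc{X}|_s$ has dimension $(\dim(\mc{X}|_s))-k+(a+e)$ relative to the smaller ambient space. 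This is exactly the numerology packaged in the superscripts, and it is what lets me identify a point of $\Phi^{\mb{P}^r,a}_{d_1,\ldots,d_k}(X,A)$ with the data of a point of $S$ together with a compatible tuple of forms.

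First I would set up the forgetful map $g:\Phi^{\mb{P}_S^r,a+e}_{d_1,\ldots,d_k}(\mc{X}/S,\mc{A})\to \prod_i W_{r,d_i}$ obtained by composing the inclusion into $S\times_K\prod_i W_{r,d_i}$ with the projection to $\prod_i W_{r,d_i}$. The central claim is that the set-theoretic image of $g$ is precisely $\Phi^{\mb{P}^r,a}_{d_1,\ldots,d_k}(X,A)$: a tuple $(F_1,\ldots,F_k)$ lies in the image iff there is some $s\in S$ and some integral subscheme $Z$ parameterized by $\mc{A}$ over $s$ with $Z\subset X\cap V(F_1,\ldots,F_k)$, and since the image of $\mc{A}$ in $\RHilb_X$ is $A$, this matches the definition of $\Phi^{\mb{P}^r,a}_{d_1,\ldots,d_k}(X,A)$ via Definition \ref{IMA}. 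Then I would apply Lemma \ref{FY6} to $g$, bounding the fiber dimension of $g$ from above to get the inequality. To control the fiber dimension of $g$, I would factor it through the projection to $S$ and the choice of $Z\in\mc{A}$: a fiber of $g$ over a fixed tuple records which pairs $(s,Z)$ witness the containment, and the hypothesis that $\mc{A}\to A$ has fiber dimension at least $c$ together with $\dim(S)$ controls how much the $S$-coordinate and the extra Hilbert-scheme moduli can wiggle. Carefully, the codimension accounting is: $\codim$ in $S\times_K\prod_i W_{r,d_i}$ exceeds $\codim$ in $\prod_i W_{r,d_i}$ by $\dim(S)$ minus the fiber dimension of $g$, and the fiber dimension of $g$ is at least $c$ because the $\mc{A}$-moduli contribute at least $c$, giving \eqref{ICeq}.

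For the equality clause, I would check that each inequality used becomes an equality under the stated hypotheses. The condition that $\mc{A}\to A$ has fiber dimension \emph{exactly} $c$ pins down the contribution of the Hilbert-scheme moduli precisely rather than just from below. The condition that $\wt{\Phi}^{\mb{P}^r,a}_{d_1,\ldots,d_k}(X,A)\to \Phi^{\mb{P}^r,a}_{d_1,\ldots,d_k}(X,A)$ is generically finite ensures that, generically, a tuple of forms in the base determines its witnessing subscheme $Z$ up to finitely many choices, so no extra dimension is lost or gained when passing between the tilde and untilde spaces; this is what allows the fiber-dimension bound for $g$ to be sharp generically. Finally, irreducibility of $\Phi^{\mb{P}_S^r,a+e}_{d_1,\ldots,d_k}(\mc{X}/S,\mc{A})$ guarantees there is a single generic fiber dimension to work with, so that the sharp bound on a dense open set propagates to an equality of dimensions of closures via the equality case of Lemma \ref{FY6}.

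The main obstacle I anticipate is the bookkeeping in the fiber-dimension computation for $g$, specifically disentangling the three sources of moduli — the point $s\in S$, the subscheme $Z$ parameterized by $\mc{A}$, and the forms $F_i$ — and verifying that the generic finiteness hypothesis genuinely forces the fiber of $g$ to have dimension exactly $c$ rather than something larger coming from multiple components $V_i\subset X\cap V(F_1,\ldots,F_k)$ each spanning different fibers $\mc{X}|_s$. The condition $e>0$ is presumably what rules out degenerate behavior where $\mc{X}|_s$ fills up $X$, and I would want to confirm that it is used exactly to guarantee the excess-dimension shift is strict and that the relative Hilbert scheme $\RHilb_{\mc{X}/S}^{b-k+a}$ in the target of Definition \ref{IMAS} is the correct one. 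Establishing that the image of $g$ equals $\Phi^{\mb{P}^r,a}_{d_1,\ldots,d_k}(X,A)$ as constructible sets (not merely that one contains the other) is the other delicate point, since it requires that every witnessing subscheme in $X$ actually arises from some fiber $\mc{X}|_s$ via $\mc{A}$.
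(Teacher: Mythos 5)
Your proposal is correct and follows essentially the same route as the paper: your map $g$ is precisely the paper's surjection $\phi_1:\Phi^{\mb{P}_S^r,a+e}_{d_1,\ldots,d_k}(\mc{X}/S,\mc{A})\to \Phi^{\mb{P}^r,a}_{d_1,\ldots,d_k}(X,A)$, the inequality comes from the fibers of $\mc{A}\to A$ forcing the fibers of $g$ to have dimension at least $c$, and the equality case uses generic finiteness of $\wt{\Phi}\to\Phi$ (finitely many witnessing $[Z]$, hence a finite union of $c$-dimensional fibers) together with irreducibility, exactly as in the paper. One small slip: you say you bound the fiber dimension of $g$ ``from above'' to get \eqref{ICeq}, but as your own codimension accounting in the next sentence shows, the inequality needs the lower bound (fibers of dimension at least $c$); the upper bound is only needed for the equality clause.
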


Like Lemma \ref{kind}, the idea of Lemma \ref{IC} is simple, but the notation obscures the statement. In words, suppose $A\subset \RHilb_X^{\dim(X)-k+a}$ is a constructible subset such that every element $[Z]\in A$ is actually contained in a member of the family $\mc{X}\to S$. Let $\mc{X}|_s$ be a member of the family. Since $\mc{X}|_s\subset X$ has codimension $e>0$, the tuples of forms $(F_1,\ldots,F_k)$ that vanish on some $Z\subset \mc{X}|_s$ with $[Z]\in A$ actually have zero locus that is $e+a$ dimensional more than expected when restricted to $\mc{X}|_s$, instead of just $a$ dimensional more than expected when regarded on $X$. This will yield a better bound when we apply Lemma \ref{kind} repeatedly. 

The price for that is we need to consider all choices of $s\in S$, so $-\dim(S)$ appears on the right side of \eqref{ICeq}. The role of $c$ and the equality case is just so we won't have to consider the case where $A$ consists of linear spaces separately. If $A$ is the Grassmannian of linear spaces, $\mc{X}\to S$ is the universal family of the Grassmannian and $X=\mb{P}^r$, then the content of Lemma \ref{IC} reduces to the usual incidence correspondence parameterizing choices of $((F_1,\ldots,F_k),\Lambda)$, where $F_1,\ldots,F_k$ are homogenous forms vanishing on $\Lambda$. 
\begin{proof}
To summarize all the objects involved, consider the commutative diagram
\begin{center}
\begin{tikzcd}
 & \Phi^{\mb{P}^r,a+e}_{d_1,\ldots,d_k}(\mc{X}/S) \arrow[r] \arrow[dl]   & \Phi^{\mb{P}^r,a}_{d_1,\ldots,d_k}(X) \arrow[hook,dr]& \\
 S &\wt{\Phi}^{\mb{P}^r,a+e}_{d_1,\ldots,d_k}(\mc{X}/S) \arrow[r] \arrow[u] \arrow[d] \arrow[l] & \wt{\Phi}^{\mb{P}^r,a}_{d_1,\ldots,d_k}(X) \arrow[u] \arrow[d] \arrow[r]& \prod_{i=1}^{k}{W_{r,d_i}}\\
&\RHilb_{\mc{X}/S}^{\dim(X)-k+a}  \arrow[r] \arrow[ul] & \RHilb_X^{\dim(X)-k+a} &
\end{tikzcd}
\end{center}
Restricting to $\mc{A}$ yields
\begin{center}
\begin{tikzcd}
 & \Phi^{\mb{P}^r,a+e}_{d_1,\ldots,d_k}(\mc{X}/S,\mc{A}) \arrow[r, "\phi_1", two heads] \arrow[dl]   & \Phi^{\mb{P}^r,a}_{d_1,\ldots,d_k}(X,A) \arrow[hook,dr]& \\
 S &\wt{\Phi}^{\mb{P}^r,a+e}_{d_1,\ldots,d_k}(\mc{X}/S,\mc{A}) \arrow[r, "\phi_2", two heads] \arrow[u,two heads, "\wt{\pi}_1" ] \arrow[d,two heads, "\wt{\pi}_2"] \arrow[l] & \wt{\Phi}^{\mb{P}^r,a}_{d_1,\ldots,d_k}(X,A) \arrow[u, two heads, "\pi_1"] \arrow[d, two heads, "\pi_2"] \arrow[r]& \prod_{i=1}^{k}{W_{r,d_i}}\\
&\mc{A} \arrow[r, "\phi_3", two heads] \arrow[ul] & A &
\end{tikzcd}
\end{center}
We claim $\phi_1$ is surjective. Indeed, $\wt{\pi}_1$, $\pi_1$ and $\phi_3$ are surjective by definition. The maps $\wt{\pi}_2$ and $\pi_2$ are surjections since the affine spaces $W_{r,d_i}$ contain the zero homogenous form. The map $\phi_2$ is surjective because the fiber of $\phi_2$ over a pair $((F_1,\ldots,F_k),[Z])$ corresponding to a closed point of $\prod_{i=1}^{k}W_{r,d_i}\times \RHilb_X^{\dim(X)-k+a}$ is $\phi_3^{-1}([Z])$, and $\phi_1$ is surjective because each fiber of $\phi_1$ is a union of fibers of $\phi_2$. Indeed,  take a closed point $(F_1,\ldots,F_k)\in \Phi^{\mb{P}^r,a}_{d_1,\ldots,d_k}(X)$. Let $B:=\RHilb_{V(F_1, \ldots, F_k)\cap X}^{\dim(X)-k+a}\cap A\subset A$ be the constructible set of all $[Z]\in A$ such that $Z\subset X\cap V(F_1,\ldots, F_{k})$.  The fiber of $\phi_1$ over $(F_1,\ldots,F_k)$ is $\bigcup_{a\in B}{\phi_3^{-1}(a)}$.

To show the inequality, it suffices to show that $\phi_1$ has fiber dimension at least $c$. This is true as $\bigcup_{a\in B}{\phi_3^{-1}(a)}$ has dimension at least $c$, since each fiber of $\phi_3$ has dimension at least $c$. 

To show the equality case, it suffices to show $\phi_1$ has generic fiber dimension $c$ by the irreducibility of $\Phi^{\mb{P}_S^r,a+e}_{d_1,\ldots,d_k}(\mc{X}/S,\mc{A})$, so we pick  $(F_1,\ldots,F_k)\in \Phi^{\mb{P}^r,a}_{d_1,\ldots,d_k}(X)$ general. Then, $B$ is finite as $\pi_1$ is generically finite, so $\bigcup_{a\in B}{\phi_3^{-1}(a)}$ has dimension exactly $c$ if $\phi_3^{-1}(a)$ has dimension exactly $c$ for all $a\in B$. 
\end{proof}

\section{Partitioning by Span}
In order to apply Lemmas \ref{kind} and \ref{IC}, we need to partition $\RHilb_{\mb{P}^r}$ into constructible sets. We will partition by span.

\begin{Def}
\label{spanD}
Given a positive integer $b$, let $\Span(r,b)\subset \RHilb^{\leq D}_{\mb{P}^r}$ denote the locally closed subset parameterizing geometrically integral schemes $Z\subset \mb{P}^r$ where $Z$ spans a plane of dimension $b$ and have degree at most $D$. 
\end{Def}

We claim $\Span(r,b)$ is locally closed. Indeed, $\bigcup_{i=0}^{b}{\Span(r,i)}$ is closed because upper semicontinuity of dimension implies the locus $\mc{Z}\subset \RHilb_{\mb{P}^r}\times \mb{G}(b,r)$ of pairs $([Z],P)$ with $Z\subset P$ is closed. Also, it is easy to see
\begin{Lem}
We have $\Co(\Span(r,r))=\Span(r,r)$. 
\end{Lem}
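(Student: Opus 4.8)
The plan is to prove this set equality by double inclusion, where the only substantive point is the monotonicity of the linear span under inclusion of subschemes; everything else is immediate from Definition \ref{containD} and Definition \ref{spanD}.

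For the inclusion $\Span(r,r)\subseteq \Co(\Span(r,r))$, I would simply observe that any $[Z]\in \Span(r,r)$ satisfies $Z\subseteq Z$, so $Z$ contains a member of $\Span(r,r)$ (namely itself). Since $Z$ is a geometrically integral subscheme of degree at most $D$, we have $[Z]\in \RHilb^{\leq D}_{\mb{P}^r}$, and hence $[Z]\in \Co(\Span(r,r))$ directly from Definition \ref{containD}.

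For the reverse inclusion $\Co(\Span(r,r))\subseteq \Span(r,r)$, take $[Y]\in \Co(\Span(r,r))$. By Definition \ref{containD} we automatically have $[Y]\in \RHilb^{\leq D}_{\mb{P}^r}$, so $Y$ is geometrically integral of degree at most $D$, and there exists $[Z]\in \Span(r,r)$ with $Z\subseteq Y$. The key step is that the linear span of a subscheme is the smallest linear subspace of $\mb{P}^r$ containing it, so any linear space containing $Y$ also contains $Z$; hence the span of $Z$ is contained in the span of $Y$. Since $Z$ spans all of $\mb{P}^r$, so does $Y$. Combined with the integrality and degree bound for $Y$, this gives $[Y]\in \Span(r,r)$.

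I do not expect a genuine obstacle here: the integrality and degree-$\leq D$ conditions are built into the requirement $[Y]\in \RHilb^{\leq D}_{\mb{P}^r}$ appearing in the definition of $\Co$, and the only geometric input is the elementary fact that $Z\subseteq Y$ forces $\Span(Z)\subseteq \Span(Y)$. The statement should therefore follow in a few lines.
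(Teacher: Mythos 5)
Your proof is correct and is exactly the elementary argument the paper has in mind when it states this lemma without proof (``it is easy to see''): containment of schemes forces containment of linear spans, so anything containing a nondegenerate integral scheme is itself nondegenerate, and the integrality and degree bounds are built into $\RHilb^{\leq D}_{\mb{P}^r}$. No gap, and no divergence from the paper's intent.
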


\begin{Def}
Define
\begin{align*}
h_{r,a}(d):=(r-a)\binom{d+a-1}{d-1}+\binom{d+a}{d}.
\end{align*}
\end{Def}

By iterating Lemma \ref{kind} and applying Lemma \ref{ARBH}, we get
\begin{Lem}
\label{KIA}
We have $\codim(\Phi^{\mb{P}^r,a}_{d_1,\ldots,d_k}(\mb{P}^r,\Span(r,r)))$ is at least
\begin{align*}
&\min\{\sum_{j=1}^{a}h_{r,r-i_j+j}(d_{i_j}):1\leq i_1<\cdots <i_a\leq k\}.
\end{align*}
Here $a\geq 0$, $k>0$ and $d_i>0$ for each $i$. 
\end{Lem}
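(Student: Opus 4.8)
The plan is to prove the bound by induction on $k$, peeling off one hypersurface at a time with Lemma \ref{kind} while staying inside the family $\Span(r,r)$. Write $C(a,k):=\codim(\Phi^{\mb{P}^r,a}_{d_1,\ldots,d_k}(\mb{P}^r,\Span(r,r)))$ and
\begin{align*}
M(a,k):=\min\Big\{\sum_{j=1}^{a}h_{r,r-i_j+j}(d_{i_j}):1\leq i_1<\cdots <i_a\leq k\Big\},
\end{align*}
so the goal is $C(a,k)\geq M(a,k)$. By Definition \ref{IMA} the locus $\Phi^{\mb{P}^r,a}_{d_1,\ldots,d_k}(\mb{P}^r,\Span(r,r))$ only records integral nondegenerate schemes of dimension $r-k+a$, a fact I will use repeatedly.

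First I would establish the geometric recursion. Applying Lemma \ref{kind} with $X=\mb{P}^r$ and $A=\Span(r,r)\cap \RHilb^{r-k+a,\leq D}_{\mb{P}^r}$ gives, for $a\geq 1$,
\begin{align*}
C(a,k)\geq \min\{\codim(\Phi^{\mb{P}^r,a}_{d_1,\ldots,d_{k-1}}(\mb{P}^r,A_1)),\ \codim(\Phi^{\mb{P}^r,a-1}_{d_1,\ldots,d_{k-1}}(\mb{P}^r,A))+h_A(d_k)\},
\end{align*}
with $A_1=\Co(A)\cap \RHilb^{r-k+a+1,\leq D}_{\mb{P}^r}$. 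The key point is that the recursion closes up inside $\Span(r,r)$: since any variety containing a nondegenerate variety is itself nondegenerate, $\Co(\Span(r,r))=\Span(r,r)$, so $A_1\subseteq \Span(r,r)$ and hence $\Phi^{\mb{P}^r,a}_{d_1,\ldots,d_{k-1}}(\mb{P}^r,A_1)\subseteq \Phi^{\mb{P}^r,a}_{d_1,\ldots,d_{k-1}}(\mb{P}^r,\Span(r,r))$, forcing the first codimension to be at least $C(a,k-1)$. For the second term, comparing dimension indices shows $\Phi^{\mb{P}^r,a-1}_{d_1,\ldots,d_{k-1}}(\mb{P}^r,A)=\Phi^{\mb{P}^r,a-1}_{d_1,\ldots,d_{k-1}}(\mb{P}^r,\Span(r,r))$ (both record schemes of dimension $r-k+a$), contributing $C(a-1,k-1)$; and since every member of $A$ is nondegenerate of dimension $r-k+a$, Lemma \ref{ARBH} gives $h_A(d_k)\geq h_{r,r-k+a}(d_k)$. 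Combining,
\begin{align*}
C(a,k)\geq \min\{C(a,k-1),\ C(a-1,k-1)+h_{r,r-k+a}(d_k)\}.
\end{align*}

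Next I would check that $M$ satisfies the identical recursion. Partitioning the index sets $1\leq i_1<\cdots<i_a\leq k$ according to whether $i_a=k$, the subsets avoiding $k$ contribute $M(a,k-1)$ and those with $i_a=k$ contribute $M(a-1,k-1)+h_{r,r-k+a}(d_k)$, so $M(a,k)=\min\{M(a,k-1),\ M(a-1,k-1)+h_{r,r-k+a}(d_k)\}$. With the two matching recursions in hand the induction on $k$ is immediate: the case $a=0$ gives $M(0,k)=0\leq C(0,k)$ since codimensions are nonnegative, the degenerate case $a>k$ gives $M=\infty=C$ (no integral subscheme of $\mb{P}^r$ has dimension $r-k+a>r$, so the locus is empty), and for $1\leq a\leq k$ the inductive step combines the two recursions with the hypotheses $C(a,k-1)\geq M(a,k-1)$ and $C(a-1,k-1)\geq M(a-1,k-1)$.

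The combinatorial recursion for $M$ and the nonnegativity case are routine. The step requiring the most care is the geometric recursion: one must track the dimension restriction built into Definition \ref{IMA} in order to identify the two level-$(k-1)$ loci correctly, use $\Co(\Span(r,r))=\Span(r,r)$ together with the monotonicity $\codim(\Phi(\cdot,A_1))\geq \codim(\Phi(\cdot,\Span(r,r)))$ (for which only $A_1\subseteq \Span(r,r)$ is needed, not equality, so the delicate question of whether a nondegenerate variety contains a nondegenerate subvariety of one lower dimension is sidestepped), and justify the Hilbert-function bound $h_A(d_k)\geq h_{r,r-k+a}(d_k)$ via nondegeneracy and Lemma \ref{ARBH}.
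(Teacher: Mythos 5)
Your proposal is correct and follows essentially the same route as the paper's proof: induction on $k$ via Lemma \ref{kind}, using $\Co(\Span(r,r))=\Span(r,r)$ to keep the recursion inside $\Span(r,r)$, Lemma \ref{ARBH} to bound $h_A(d_k)\geq h_{r,r-k+a}(d_k)$, and the observation that the combinatorial minimum $M(a,k)$ satisfies the matching recursion (the paper handles the base case at $k=1$ by three sub-cases rather than your $a=0$ and $a>k$ degenerate cases, but this is a cosmetic difference). Your explicit partition of the index sets by whether $i_a=k$ just spells out what the paper leaves implicit in "taking the minimum over the two yields."
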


\begin{proof}
We will prove this by induction on $k$. If $k=1$, then 
\begin{enumerate}
\item
If $a=0$, then $\codim(\Phi^{\mb{P}^r,a}_{d_1}(\mb{P}^r,\Span(r,r)))=0$, which is at least 0 (the empty sum).
\item
If $a=1$, then $\Phi^{\mb{P}^r,a}_{d_1}(\mb{P}^r,\Span(r,r))$ is just the zero homogenous form in $W_{r,d_1}$, which has codimension 
\begin{align*}
\binom{r+d_1}{d_1}=h_{r,r}(d). 
\end{align*}
\item
If $a>1$, then we are taking the minimum over the empty set, as we cannot choose a subset $S\subset \{1\}$ with cardinality $a$. The minimum of the empty set is $\infty$. This is equal to the codimension of $\Phi^{\mb{P}^r,a}_{d_1}(\mb{P}^r,\Span(r,r))=\emptyset$ by our convention (Definition \ref{DEFDIM}). 
\end{enumerate}
Now, suppose $k>1$. By Lemma \ref{kind}, we know $\codim(\Phi^{\mb{P}^r,a}_{d_1,\ldots,d_k}(X,\Span(r,r)))$ is at least
\begin{align*}
\min\{\codim(\Phi^{\mb{P}^r,a}_{d_1,\ldots,d_{k-1}}(\mb{P}^r,\Span(r,r))),\codim(\Phi^{\mb{P}^r,a-1}_{d_1,\ldots,d_{k-1}}(\mb{P}^r,\Span(r,r)))+h_{\Span(r,r)}(d_k)\}
\end{align*}
By Lemma \ref{ARBH}, $h_{\Span(r,r)}(d_k)\geq h_{r,r-k+a}(d_k)$. By induction, we know $\codim(\Phi^{\mb{P}^r,a}_{d_1,\ldots,d_{k-1}}(\mb{P}^r,\Span(r,r)))$ is at least 
\begin{align*}
\min\{\sum_{j=1}^{a}h_{r,r-i_j+j}(d_{i_j}):1\leq i_1<\cdots<i_a\leq k-1\},
\end{align*}
and $\codim(\Phi^{\mb{P}^r,a-1}_{d_1,\ldots,d_{k-1}}(\mb{P}^r,\Span(r,r)))+h_{r,r-k+a}(d_k)$ is at least 
\begin{align*}
\min\{\sum_{j=1}^{a-1}h_{r,r-i_j+j}(d_{i_j}):1\leq i_1<\cdots<i_{a-1}\leq k-1\}+h_{r,r-k+a}(d_k),
\end{align*}
and taking the minimum over the two yields 
\begin{align*}
\min\{\sum_{j=1}^{a}h_{r,r-i_j+j}(d_{i_j}):1\leq i_1<\cdots<i_a\leq k\}.
\end{align*}
\end{proof}

The bound in Lemma \ref{KIA} depends on the order of the $d_i$'s and it is better to order them so $d_1\leq d_2\leq \cdots\leq d_k$. We could have similarly bounded $\codim(\Phi^{r,a}_{d_1,\ldots,d_k}(\mb{P}^r,\Span(r,b)))$, but this is better dealt with using Lemma \ref{IC}. 
\begin{Lem}
\label{ICA}
We have
\begin{align*}
\codim(\Phi^{\mb{P}^r,a}_{d_1,\ldots,d_k}(\mb{P}^r,\Span(r,b)))&\geq \codim(\Phi^{\mb{P}^b,a+(r-b)}_{d_1,\ldots,d_k}(\mb{P}^b,\Span(b,b)))-\dim(\mb{G}(b,r)),
\end{align*}
and equality holds when $b=a$. 
\end{Lem}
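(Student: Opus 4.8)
The plan is to deduce the statement directly from Lemma \ref{IC} by choosing the family to be the tautological bundle over the Grassmannian of $b$-planes. Explicitly, I take $S=\mb{G}(b,r)$, let $\mc{X}\subset \mb{P}^r\times S$ be the tautological family whose fiber over a point $[\Lambda]\in \mb{G}(b,r)$ is the $b$-plane $\Lambda\cong \mb{P}^b$, and set $X=\mb{P}^r$. Each fiber of $\mc{X}\to S$ then has codimension $e=r-b$ in $X$, which is positive since $b<r$ (the case $b=r$ is handled separately by Lemma \ref{KIA}), and $\dim(S)=\dim \mb{G}(b,r)$. I take $A=\Span(r,b)$ and let $\mc{A}\subset \RHilb_{\mc{X}/S}^{r-k+a,\leq D}$ be the relative analogue of $\Span$, namely the constructible set of pairs $([Z],[\Lambda])$ where $Z$ is an integral scheme of dimension $r-k+a$ contained in $\Lambda$ and spanning it. Because the linear span of $Z$ recovers $\Lambda$, the forgetful map $\mc{A}\to A$ is injective; this simultaneously shows that the image of $\mc{A}$ is exactly $\Span(r,b)$ and that the fiber dimension of $\mc{A}\to A$ is precisely $c=0$.

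With these choices, the inequality \eqref{ICeq} of Lemma \ref{IC} reads
\begin{align*}
\codim(\Phi^{\mb{P}^r,a}_{d_1,\ldots,d_k}(\mb{P}^r,\Span(r,b)))\geq \codim(\Phi^{\mb{P}_S^r,a+e}_{d_1,\ldots,d_k}(\mc{X}/S,\mc{A}))-\dim \mb{G}(b,r),
\end{align*}
so the remaining task is to identify the relative codimension on the right with $\codim(\Phi^{\mb{P}^b,a+(r-b)}_{d_1,\ldots,d_k}(\mb{P}^b,\Span(b,b)))$. Here I would use that restriction of forms to a $b$-plane gives, over a Zariski-open cover of $\mb{G}(b,r)$ trivializing the tautological bundle, a surjective linear map $\rho\colon \prod_i W_{r,d_i}\to \prod_i W_{b,d_i}$ of constant kernel rank. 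The excess dimensions match, since a scheme of dimension $r-k+a$ inside $\mb{P}^b$ is $(r-k+a)-(b-k)=a+(r-b)=a+e$ more than expected. Over such a trivializing open set, $\Phi^{\mb{P}_S^r,a+e}_{d_1,\ldots,d_k}(\mc{X}/S,\mc{A})$ becomes the product of $S$ with $\rho^{-1}$ of $\Phi^{\mb{P}^b,a+(r-b)}_{d_1,\ldots,d_k}(\mb{P}^b,\Span(b,b))$, and since preimage under a surjective linear map preserves codimension, each fiber has codimension equal to $\codim(\Phi^{\mb{P}^b,a+(r-b)}_{d_1,\ldots,d_k}(\mb{P}^b,\Span(b,b)))$ in $\prod_i W_{r,d_i}$. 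Fibering over the irreducible base $S$ shows the relative codimension equals this number, and substituting gives the asserted inequality.

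For the equality when $b=a$, I would verify the three hypotheses of the equality clause of Lemma \ref{IC}. The fiber dimension of $\mc{A}\to A$ is already exactly $c=0$. Irreducibility of $\Phi^{\mb{P}_S^r,a+e}_{d_1,\ldots,d_k}(\mc{X}/S,\mc{A})$ follows from the local triviality above together with irreducibility of the fiber $\Phi^{\mb{P}^a,r}_{d_1,\ldots,d_k}(\mb{P}^a,\Span(a,a))$, since the base $\mb{G}(a,r)$ is irreducible. The key point is that when $b=a$ the spanning schemes in $\Span(a,a)$ are governed by the linear space $\mb{P}^a$ itself: in the principal case $k=r$ the only integral scheme of dimension $r-k+a=a$ spanning and contained in $\mb{P}^a$ is $[\mb{P}^a]$, so the fiber is cut out by the linear condition that all $F_i$ vanish on $\mb{P}^a$ and is therefore a linear subspace, hence irreducible. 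Generic finiteness of $\wt{\Phi}^{\mb{P}^r,a}_{d_1,\ldots,d_k}(\mb{P}^r,\Span(r,a))\to \Phi^{\mb{P}^r,a}_{d_1,\ldots,d_k}(\mb{P}^r,\Span(r,a))$ then reduces to checking that a general tuple in the image contains only finitely many of the relevant spanning schemes, which holds because such a tuple contains only finitely many $a$-planes.

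The inequality itself is essentially formal bookkeeping once $e$, $c$, and the restriction map $\rho$ are correctly identified, so the main obstacle is the equality case: pinning down irreducibility of the fiber $\Phi^{\mb{P}^a,r}_{d_1,\ldots,d_k}(\mb{P}^a,\Span(a,a))$ and the generic finiteness of the incidence correspondence is where the genuine geometric input lies, and it is exactly there that I would need to argue carefully that for $b=a$ the relevant subschemes $Z$ are (or degenerate to) linear spaces rather than higher-dimensional families of nondegenerate varieties.
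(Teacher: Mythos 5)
Your proof of the inequality is essentially identical to the paper's: the same application of Lemma \ref{IC} with $S=\mb{G}(b,r)$, the universal family of $b$-planes, $\mc{A}=(\Span(r,b)\times_K S)\cap \RHilb_{\mc{X}/S}^{\dim(X)-k+a,\leq D}$, and $c=0$, followed by identifying each fiber of $\Phi^{\mb{P}^r_S,a+(r-b)}_{d_1,\ldots,d_k}(\mc{X}/S,\mc{A})\rightarrow S$ with $\Phi^{\mb{P}^b,a+(r-b)}_{d_1,\ldots,d_k}(\mb{P}^b,\Span(b,b))\times \prod_{i=1}^{k}W_{r,d_i}/W_{b,d_i}$ by restricting forms to the plane; your trivializing-cover phrasing is an equivalent way of organizing that step, and your computation of $e$ and $c$ is correct. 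That half is fine.

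The genuine gap is in the equality case, exactly where you say the "geometric input lies" — but you do not supply that input. Your generic finiteness argument is circular: you assert that it "reduces to checking that a general tuple in the image contains only finitely many of the relevant spanning schemes, which holds because such a tuple contains only finitely many $a$-planes." That last clause \emph{is} the statement of generic finiteness; it is not automatic, since a priori every tuple containing one such plane could contain a positive-dimensional family of them (this actually happens, e.g., on the locus where some $F_i\equiv 0$, so one must exhibit a point where it fails). The paper's proof here is an explicit inductive construction: fix a plane $P$ of dimension $r-k+a$ and build $F_1,\ldots,F_{k-a}$ vanishing on $P$ with $\dim(V(F_1,\ldots,F_i))=r-i$ at each step. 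This is possible because for each component $X_j$ of $V(F_1,\ldots,F_i)$ one has $h_P(d_{i+1})<h_{X_j}(d_{i+1})$ (as $P$ is linear of strictly smaller dimension), so the space $V$ of degree-$d_{i+1}$ forms vanishing on $P$ lies in none of the finitely many subspaces $V_j$ of forms vanishing on $X_j$, and a vector space over an infinite field is not a finite union of proper subspaces. Then any $(r-k+a)$-plane inside $V(F_1,\ldots,F_k)$ must be a component of the $(r-k+a)$-dimensional set $V(F_1,\ldots,F_{k-a})$, so the fiber of $\wt{\Phi}^{\mb{P}^r,a}_{d_1,\ldots,d_k}\rightarrow \Phi^{\mb{P}^r,a}_{d_1,\ldots,d_k}$ over this point is finite and nonempty, and semicontinuity of fiber dimension over the irreducible image concludes. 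None of this appears in your proposal. A secondary issue: your irreducibility verification covers only $k=r$; for $k>r$ the spanning schemes have dimension $r-k+a<a$, so the fiber is a locus of tuples vanishing on a common nondegenerate variety, which is neither a linear space nor obviously irreducible. (In fairness, the paper is itself imprecise here: its stated condition "$b=a$" matches its invocation in Theorem \ref{FML}, which needs $b=r-k+a$, only when $k=r$; the proof's construction really pertains to the case where the span dimension equals the scheme dimension, forcing the spanning scheme to be the plane itself, and in that reading irreducibility is easy for all $k$.)
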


\begin{proof}
We apply Lemma \ref{IC} in the case $S=\mb{G}(b,r)$, $\mc{X}$ is the universal family of $b$-planes over the Grassmannian, $\mc{A}$ is $(\Span(r,b)\times_{K} S)\cap \RHilb_{\mc{X}/S}^{b-k+a,\leq D}$, $Y=\mb{P}^r$, and $c=0$ to get
\begin{align*}
\codim(\Phi^{\mb{P}^r,a}_{d_1,\ldots,d_k}(\mb{P}^r,\Span(r,b)))&\geq \codim(\Phi^{\mb{P}_S^r,a+(r-b)}_{d_1,\ldots,d_k}(\mc{X}/S,\mc{A}))-\dim(\mb{G}(b,r)).
\end{align*}
Let $P\subset \mb{P}^r_K$ correspond to a closed point of $s\in S$. Then, the fiber of $\Phi^{\mb{P}^r_S,a+(r-b)}_{d_1,\ldots,d_k}(\mc{X}/S,\mc{A})\rightarrow S$ over $s$ is isomorphic to
\begin{align*}
\Phi^{\mb{P}^b,a+(r-b)}_{d_1,\ldots,d_k}(\mb{P}^b,\Span(b,b))\times \prod_{i=1}^{k}{W_{r,d_i}/W_{b,d_i}}
\end{align*}
by restricting to $P\cong \mb{P}^b$. 

We now show equality in Lemma \ref{IC} holds when $b=a$. It suffices to show 
\begin{align*}
\wt{\Phi}^{r,a}_{d_1,\ldots,d_k}(\mb{P}^r,\Span(r,a))\rightarrow \Phi^{r,a}_{d_1,\ldots,d_k}(\mb{P}^r,\Span(r,a))
\end{align*}
has generically finite fibers. Therefore, it suffices to show there exists a choice of $(F_1,\ldots,F_k)$ of hypersurfaces such that $V(F_1,\ldots,F_{k})$ contains some $r-k+a$ dimensional plane $P$ and $V(F_1,\ldots,F_{k-a})$ is dimension $r-k+a$, as this means $\bigcap_{i=1}^{k-a}F_i$ cannot contain a positive dimensional family of $r-k+a$ planes.

We will construct $(F_1,\ldots,F_{k-a})$ inductively. Fix a $r-k+a$ dimensional plane $P$, and suppose we have found $F_1,\ldots,F_i$ restricting to zero on $P$ such that $\dim(V(F_1,\ldots,F_i))=r-i$ for $1\leq i<k-a$. Let the components of $V(F_1,\ldots, F_i)$ be $X_1,\ldots,X_{\ell}$. We want to find a homogenous form $F_{i+1}$ that vanishes on $P$ but not on $X_j$ for $1\leq j\leq \ell$. Let $V_j\subset H^{0}(\mb{P}^r,\mathscr{O}_{\mb{P}^r}(d_{i+1}))$ be the vector space of degree $d_{i+1}$ forms vanishing on $X_j$ and let $V\subset  H^{0}(\mb{P}^r,\mathscr{O}_{\mb{P}^r}(d_{i+1}))$ be the vector space of degree $d_{i+1}$ forms vanishing on $P$. 

Let $h$ denote the Hilbert function. Since $\dim(X_j)>\dim(P)$ and $P$ is a linear space, $h_{P}(d_{i+1})< h_{X_j}(d_{i+1})$, so the inclusion $V_j\subset V$ is strict. Therefore, $V\backslash \bigcup_{j=1}^{\ell}{V_j}$ is nonempty, and we can pick any $F_{i+1}$ in $V\backslash \bigcup_{j=1}^{\ell}{V_j}$. Finally, we pick $F_{k-a+1},\ldots,F_k$ to be any homogenous forms restricting to zero on $P$.  
\end{proof}

\begin{Def}
Given $r,a$ and degrees $d_1,\ldots,d_k$, let 
\begin{align*}
F_{r,a}(d_1,\ldots,d_k)&:=\min\{\sum_{j=1}^{a}h_{r,r-i_j+j}(d_{i_j}):1\leq i_1<\cdots<i_a\leq k\}
\end{align*}
\end{Def}

\begin{Def}
Given $r,a$, degrees $d_1,\ldots,d_k$ and $r-k+a\leq b\leq r$, let
\begin{align*}
G_{r,a,b}(d_1,\ldots,d_k)&:= F_{b,a+(r-b)}(d_1,\ldots,d_k)-\dim(\mb{G}(b,r)).
\end{align*}
\end{Def}

Combining Lemmas \ref{KIA} and \ref{ICA}, we have
\begin{Thm}
\label{FML}
For $d_1\leq d_2\leq \cdots\leq d_k$, we have
\begin{align*}
\codim(\Phi^{\mb{P}^r,a}_{d_1,\ldots,d_k}(\mb{P}^r, \Span(r-k+a)))&=G_{r,a,r-k+a}(d_1,\ldots,d_k)\\
&=-(r-k+a+1)(k-a)+\sum_{i=1}^{k}\binom{d_i+r-k+a}{r-k+a}.
\end{align*}
and
\begin{align*}
\codim(\Phi^{\mb{P}^r,a}_{d_1,\ldots,d_k}(\mb{P}^r)\backslash \Phi^{\mb{P}^r,a}_{d_1,\ldots,d_k}(\mb{P}^r,\Span(r-k+a)))&\geq \min_{b=r-k+a+1}^{r}{G_{r,a,b}(d_1,\ldots,d_k)}
\end{align*}
\end{Thm}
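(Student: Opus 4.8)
The plan is to assemble Theorem \ref{FML} directly from Lemmas \ref{KIA} and \ref{ICA}, after first recording that $\Phi^{\mb{P}^r,a}_{d_1,\ldots,d_k}(\mb{P}^r)$ is a \emph{finite} union of the span strata $\Phi^{\mb{P}^r,a}_{d_1,\ldots,d_k}(\mb{P}^r,\Span(r,b))$. Choosing $D\geq d_1\cdots d_k$, the refined Bezout observation in Section \ref{MainArgument} gives $\Phi^{\mb{P}^r,a}_{d_1,\ldots,d_k}(\mb{P}^r)=\Phi^{\mb{P}^r,a}_{d_1,\ldots,d_k}(\mb{P}^r,\RHilb_{\mb{P}^r}^{r-k+a,\leq D})$, and since an integral scheme of dimension $r-k+a$ spans a plane of some dimension $b$ with $r-k+a\leq b\leq r$, the set $\RHilb_{\mb{P}^r}^{r-k+a,\leq D}$ is partitioned by the strata $\Span(r,b)$. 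As $\pi_1$ and $\pi_2^{-1}$ commute with these finitely many unions, $\Phi^{\mb{P}^r,a}_{d_1,\ldots,d_k}(\mb{P}^r)=\bigcup_{b=r-k+a}^{r}\Phi^{\mb{P}^r,a}_{d_1,\ldots,d_k}(\mb{P}^r,\Span(r,b))$.

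For the first (equality) statement I would treat the bottom stratum, writing $m:=r-k+a$ and assuming $m<r$ (the case $m=r$ forces all forms to vanish identically and is trivial). Applying Lemma \ref{ICA} with $b=m$, and noting $a+(r-m)=k$, reduces the computation to $\Phi^{\mb{P}^m,k}_{d_1,\ldots,d_k}(\mb{P}^m,\Span(m,m))$, with equality in Lemma \ref{ICA} (justified in the last paragraph). This reduced locus is immediate to evaluate directly: in $\mb{P}^m$ the defining condition forces $V(F_1,\ldots,F_k)=\mb{P}^m$, so it is the single origin of $\prod_i W_{m,d_i}$, of codimension $\sum_{i=1}^{k}\binom{d_i+m}{m}=F_{m,k}(d_1,\ldots,d_k)$ (consistent with Lemma \ref{KIA}, whose minimizing index set is forced to be $\{1,\ldots,k\}$). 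Subtracting $\dim(\mb{G}(m,r))=(m+1)(k-a)=(r-k+a+1)(k-a)$ yields $G_{r,a,r-k+a}(d_1,\ldots,d_k)$ and, after routine binomial bookkeeping, the stated closed form.

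For the second statement I would note that the complement is contained in $\bigcup_{b=r-k+a+1}^{r}\Phi^{\mb{P}^r,a}_{d_1,\ldots,d_k}(\mb{P}^r,\Span(r,b))$, and that the codimension of a finite union of constructible sets inside the irreducible space $\prod_i W_{r,d_i}$ is the minimum of their codimensions. It therefore suffices to bound each stratum below by $G_{r,a,b}(d_1,\ldots,d_k)$. For $r-k+a<b<r$, Lemma \ref{ICA} bounds $\codim(\Phi^{\mb{P}^r,a}_{d_1,\ldots,d_k}(\mb{P}^r,\Span(r,b)))$ below by $\codim(\Phi^{\mb{P}^b,a+(r-b)}_{d_1,\ldots,d_k}(\mb{P}^b,\Span(b,b)))-\dim(\mb{G}(b,r))$, and Lemma \ref{KIA} bounds the first term below by $F_{b,a+(r-b)}(d_1,\ldots,d_k)$, giving exactly $G_{r,a,b}$. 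For $b=r$ the stratum parameterizes nondegenerate schemes, so Lemma \ref{KIA} applies directly and gives $\geq F_{r,a}(d_1,\ldots,d_k)=G_{r,a,r}(d_1,\ldots,d_k)$. Minimizing over $b$ yields the inequality.

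The one genuinely delicate point is sharpness in the first statement, since Lemmas \ref{KIA} and \ref{ICA} only furnish lower bounds in general. Sharpness of Lemma \ref{KIA} on the bottom stratum is automatic because the minimization collapses to a single term, and I instead evaluate that stratum by hand. Sharpness of Lemma \ref{ICA} rests on its equality hypothesis, which requires the incidence correspondence $\wt{\Phi}^{\mb{P}^r,a}_{d_1,\ldots,d_k}(\mb{P}^r,\Span(r,m))\to\Phi^{\mb{P}^r,a}_{d_1,\ldots,d_k}(\mb{P}^r,\Span(r,m))$ to be generically finite at the linear stratum $b=m=r-k+a$ — equivalently, that a generic tuple containing a fixed $m$-plane does not contain a positive-dimensional family of $m$-planes. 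This is exactly what the explicit inductive construction in the proof of Lemma \ref{ICA} supplies, so no input beyond the two lemmas is needed.
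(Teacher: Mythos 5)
Your proposal is correct and takes essentially the same route as the paper, whose entire proof of Theorem \ref{FML} is to combine Lemmas \ref{KIA} and \ref{ICA} over the span stratification exactly as you do: strata $b>r-k+a$ bounded below by $G_{r,a,b}$ via Lemma \ref{ICA} followed by Lemma \ref{KIA}, and the bottom stratum computed exactly via the equality case of Lemma \ref{ICA} (whose proof does establish generic finiteness of the incidence map over the linear stratum $b=r-k+a$, even though its statement reads ``$b=a$''). Your explicit evaluation of the bottom stratum as the origin of $\prod_{i=1}^{k}W_{r-k+a,d_i}$, giving $F_{r-k+a,k}(d_1,\ldots,d_k)=\sum_{i=1}^{k}\binom{d_i+r-k+a}{r-k+a}$, is precisely the computation the paper leaves implicit.
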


\subsection{Hypersurfaces containing a curve}
We specialize Theorem \ref{FML} to the case of hypersurfaces containing some curve in order to get cleaner results. At this point, we have reduced our problem to combinatorics. 

What really matters in our case is not the exact codimension, but the difference between the codimension of the $k$-tuples of homogenous forms all vanishing on the same line, and the codimension of the $k$-tuples of homogenous forms all vanishing on some curve other than a line. 
\begin{Def}
\label{HrabD}
Let $H_{r,a,b}(d_1,\ldots,d_k):=G_{r,a,b}(d_1,\ldots,d_k)-G_{r,a,r-k+a}(d_1,\ldots,d_k)$. 
\end{Def}

Our goal is to prove
\begin{Thm}
\label{slope}
Suppose $r-k+a=1$ and $2\leq d_1\leq d_2\leq \cdots\leq d_k$. Then, if $d_i\leq d_1+(i-1)\binom{d_1}{2}$, then 
\begin{align*}
\codim(\Phi^{r,a}_{d_1,\ldots,d_k}(\mb{P}^r)\backslash \Phi^{r,a}_{d_1,\ldots,d_k}(\mb{P}^r,\Span(r-k+a)))-\codim(\Phi^{r,a}_{d_1,\ldots,d_k}(\mb{P}^r))
\end{align*}
is at least
\begin{align*}  
\begin{cases}
r-1 &\text{ if $k=r$}\\
r-1 + (d_1-2)(r-2)+d_1(k-r)& \text{ if $k>r$}.
\end{cases}
\end{align*}
\end{Thm}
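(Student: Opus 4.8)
The plan is to reduce the statement to a purely combinatorial inequality about the numbers $G_{r,a,b}$, and then to prove that inequality by tracking how the minimal Hilbert function $h_{b,m}$ grows with the dimension of the span. Since $\Phi^{r,a}_{d_1,\ldots,d_k}(\mb{P}^r,\Span(r,r-k+a))\subset \Phi^{r,a}_{d_1,\ldots,d_k}(\mb{P}^r)$, we always have $\codim(\Phi^{r,a}_{d_1,\ldots,d_k}(\mb{P}^r))\leq G_{r,a,r-k+a}(d_1,\ldots,d_k)$, so combining this with the lower bound of Theorem \ref{FML} for the complement of the line locus gives
\begin{align*}
\codim(\Phi^{r,a}_{d_1,\ldots,d_k}(\mb{P}^r)\backslash \Phi^{r,a}_{d_1,\ldots,d_k}(\mb{P}^r,\Span(r,r-k+a)))-\codim(\Phi^{r,a}_{d_1,\ldots,d_k}(\mb{P}^r))\geq \min_{b=r-k+a+1}^{r}H_{r,a,b}(d_1,\ldots,d_k).
\end{align*}
Thus it suffices to bound each $H_{r,a,b}$ from below by the asserted quantity; as a byproduct, once this lower bound is seen to be positive it follows that the line locus is the unique component of maximal dimension. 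Substituting $a=k-r+1$ (forced by $r-k+a=1$) turns the problem into the elementary but delicate task of bounding
\begin{align*}
H_{r,a,b}(d_1,\ldots,d_k)=F_{b,k-b+1}(d_1,\ldots,d_k)-(b+1)(r-b)-\sum_{i=1}^{k}(d_i+1)+2(r-1)
\end{align*}
from below, uniformly in $2\leq b\leq r$.

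The key computational input is an increment formula for the minimal Hilbert functions of Lemma \ref{ARBH}. A short Pascal-triangle calculation gives
\begin{align*}
h_{b,m+1}(d)-h_{b,m}(d)=(b-m)\binom{d+m-1}{d-2},
\end{align*}
and in particular $h_{b,1}(d)=bd+1$ and $h_{b,2}(d)-h_{b,1}(d)=(b-1)\binom{d}{2}$; for $b=2$ this last increment is exactly $\binom{d}{2}$, which is the source of the slope $\binom{d_1}{2}$ in the hypothesis. The second ingredient is the combinatorial description of the minimum defining $F_{b,k-b+1}$: a competing subset of size $k-b+1$ is the same as a choice of $b-1$ indices of $\{1,\ldots,k\}$ to exclude, and an included index $i$ is assigned the dimension parameter $m_i=b-\#\{\text{excluded indices}<i\}$, so the earliest included indices carry the largest $m_i$ and hence the largest Hilbert-function values.

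I would first treat $b=2$ completely, as it is the binding case. Here exactly one index $e$ is excluded, the indices before $e$ contribute $h_{2,2}(d_i)-(d_i+1)=\binom{d_i+1}{2}$ and those after contribute $h_{2,1}(d_i)-(d_i+1)=d_i$. Using $\binom{d_1+1}{2}=\binom{d_1}{2}+d_1$ together with $d_i\geq d_1$ and the hypothesis $d_e\leq d_1+(e-1)\binom{d_1}{2}$, the $(e-1)$ copies of $\binom{d_1}{2}$ coming from the indices before $e$ absorb the growth of $d_e$, leaving a surplus of $d_1$ per index; summing over all $e$ yields $H_{r,a,2}\geq (k-2)d_1-r+3$. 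This equals the claimed bound exactly when $k>r$, and since $d_1\geq 2$ it is at least $r-1$ when $k=r$.

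The main obstacle is the case $b\geq 3$, where the minimization ranges over all ways to delete $b-1$ indices and the parameters $m_i$ take every value from $1$ to $b$. The plan is to run the same profit-versus-cost accounting: each included index profits at least $(b-1)d_i$ from the baseline $h_{b,1}$, an included index with $m_i\geq 2$ earns an extra bonus whose leading term is $(b-1)\binom{d_i}{2}\geq (b-1)\binom{d_1}{2}$, while each excluded index $e_l$ costs $d_{e_l}+1\leq d_1+(e_l-1)\binom{d_1}{2}+1$ by hypothesis. The crux will be to match the bonuses of the early included indices against the excess degrees $(e_l-1)\binom{d_1}{2}$ of the excluded ones, for every exclusion pattern simultaneously, and to verify that the resulting surplus always dominates the $b=2$ value, so that the minimum over $b$ is governed by the asserted bound. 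I expect this bookkeeping, rather than any geometric input, to be the delicate part, with the degree hypothesis being precisely the condition guaranteeing that the per-index surplus stays nonnegative.
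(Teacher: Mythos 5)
Your reduction is the same as the paper's: combine Theorem \ref{FML} with the inclusion $\Phi^{\mb{P}^r,a}_{d_1,\ldots,d_k}(\mb{P}^r,\Span(r,1))\subset \Phi^{\mb{P}^r,a}_{d_1,\ldots,d_k}(\mb{P}^r)$ to get that the difference in question is at least $\min_{b\geq 2}H_{r,a,b}(d_1,\ldots,d_k)$, and your bookkeeping identities (the increment formula for $h_{b,m}$, the formula for $H_{r,a,b}$, and the $b=2$ computation giving $(k-2)d_1-r+3$) are all correct. But the proof stops being a proof exactly where the real work begins: for $3\leq b\leq r$ you only state a plan (``run the same profit-versus-cost accounting,'' ``the crux will be to match the bonuses \ldots I expect this bookkeeping \ldots to be the delicate part''). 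That deferred step is not routine. In the paper it is the content of Lemma \ref{KCL}, whose proof is a genuine induction on $\sum d_i$ with three distinct degree-decrementing moves, and it needs structural facts about where the minimum defining $F_{b,k-b+1}$ is achieved --- for instance, that if the minimizing index set has its first gap after position $j-1$ then necessarily $d_{i_{j-1}+1}>d_1$, which licenses an index-shifting move whose cost is controlled by the inequality $(j-1)i_1\binom{d+b-i_1-1}{b-i_1+1}\geq \binom{d}{2}(i_1+j-2)$. A naive per-index matching of ``bonuses'' against ``costs'' does not obviously close, because the parameters $m_i$ (hence the sizes of the bonuses) depend on the exclusion pattern, and patterns that cluster the excluded indices early leave no bonus-carrying included indices to pay for them.

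Two further points. First, your organizing claim that $b=2$ is ``the binding case'' is only correct for $k>r$: for $k=r$ and equal degrees $d>2$ one has $H_{r,a,2}=(d-2)(r-2)+r-1>r-1=H_{r,a,r}$, so the stated bound $r-1$ is actually forced by $b=r$, not $b=2$; any completion of your argument must therefore treat the large-$b$ cases with the same care, not as a perturbation of $b=2$. Second, note that the paper avoids your difficulty by a different division of labor: Lemma \ref{KCL} first reduces \emph{arbitrary} admissible degree sequences to the constant sequence $(d_1,\ldots,d_1)$ for every $b$ simultaneously, and only then does it exploit that $G_{r,a,b}(d,\ldots,d)=-\dim(\mb{G}(b,r))+(k-b+1)(db+1)$ is a concave quadratic in $b$, so that only the endpoints $b=2$ and $b=r$ need to be computed. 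Your direct per-$b$ approach forgoes both of these simplifications at once, which is why the remaining ``bookkeeping'' is essentially the whole theorem. As it stands, the proposal establishes the result only in the (easiest, though for $k>r$ extremal) case $b=2$, and so has a genuine gap.
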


The reason why applying Theorem \ref{FML} is not completely straightforward is because the definition of $F_{r,a}$ requires us to take a minimum over a large choice of indices. 
\begin{Def}
We say that the minimum for $F_{r,a}(d_1,\ldots,d_k)$ is achieved at the indices $1\leq  i_1<\cdots<i_a \leq k$ if 
\begin{align*}
F_{r,a}(d_1,\ldots,d_k)&=\sum_{j=1}^{a}h_{r,r-i_j+j}(d_{i_j})
\end{align*}
Note that the choice of $i_1<\cdots<i_k$ is not necessarily unique.
\end{Def}

\begin{Def}
Similarly, we say that the minimum for $G_{r,a,b}(d_1,\ldots,d_k)$ is achieved at  $1\leq  i_1<\cdots<i_{a+r-b}\leq k$ if the minimum for $F_{b,a+(r-b)}(d_1,\ldots,d_k)$ is achieved at $i_1<\cdots < i_{a+r-b}$.
\end{Def}

From directly computing, we have the following easy facts.
\begin{Lem}
\label{US2}
We have
\begin{align*}
h_{r,a}(d)-h_{r,a}(d-1)&=(r-a)\binom{d+a-2}{d-1}+\binom{d+a-1}{d}.
\end{align*}
\end{Lem}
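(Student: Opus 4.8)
The statement to establish is the single binomial identity
\begin{align*}
h_{r,a}(d)-h_{r,a}(d-1)=(r-a)\binom{d+a-2}{d-1}+\binom{d+a-1}{d},
\end{align*}
where, by definition, $h_{r,a}(d)=(r-a)\binom{d+a-1}{d-1}+\binom{d+a}{d}$. This is purely a computation, so my plan is to substitute the definition and reduce everything to Pascal's rule. First I would write out the shifted term explicitly, being careful with the index bookkeeping: substituting $d-1$ for $d$ gives
\begin{align*}
h_{r,a}(d-1)=(r-a)\binom{d+a-2}{d-2}+\binom{d+a-1}{d-1}.
\end{align*}
Then the difference splits along the two summands into a single grouped expression
\begin{align*}
h_{r,a}(d)-h_{r,a}(d-1)=(r-a)\left[\binom{d+a-1}{d-1}-\binom{d+a-2}{d-2}\right]+\left[\binom{d+a}{d}-\binom{d+a-1}{d-1}\right].
\end{align*}

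The key tool is the form of Pascal's identity $\binom{n}{k}-\binom{n-1}{k-1}=\binom{n-1}{k}$, which follows immediately from $\binom{n}{k}=\binom{n-1}{k-1}+\binom{n-1}{k}$. I would apply it to each bracketed difference separately. For the first bracket, taking $n=d+a-1$ and $k=d-1$ collapses $\binom{d+a-1}{d-1}-\binom{d+a-2}{d-2}$ to $\binom{d+a-2}{d-1}$. For the second bracket, taking $n=d+a$ and $k=d$ collapses $\binom{d+a}{d}-\binom{d+a-1}{d-1}$ to $\binom{d+a-1}{d}$. Reassembling yields exactly the claimed right-hand side, completing the proof.

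There is no real obstacle here; the only thing requiring care is the index arithmetic in the shift $d\mapsto d-1$, since both the upper and lower entries of each binomial coefficient move simultaneously, and a sign or off-by-one slip would propagate through. Accordingly, the one point I would double-check is that the two applications of Pascal's rule land on the entries $\binom{d+a-2}{d-1}$ and $\binom{d+a-1}{d}$ rather than their neighbors. As a sanity check I would verify a small numerical instance (say $r=2$, $a=1$, $d=2$) against the closed form before committing to the general identity.
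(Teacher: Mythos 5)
Your proof is correct and matches the paper's intent exactly: the paper offers no written proof, introducing Lemma \ref{US2} only with the phrase ``from directly computing,'' and your substitution of the definition of $h_{r,a}$ followed by two applications of Pascal's identity $\binom{n}{k}-\binom{n-1}{k-1}=\binom{n-1}{k}$ is precisely that direct computation. The index bookkeeping in both brackets lands where it should, so there is nothing to add.
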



\begin{Lem}
\label{US1}
We have
\begin{align*}
h_{r,a+1}(d)-h_{r,a}(d)&= (r-a)\binom{d+a-1}{a+1}.
\end{align*}
\end{Lem}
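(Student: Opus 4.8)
The plan is to prove this as a direct computation from the definition of $h_{r,a}$, using Pascal's rule twice. Writing $s=r-a$ for brevity, I would expand both sides according to the defining formula, so that
\begin{align*}
h_{r,a+1}(d)-h_{r,a}(d) = (s-1)\binom{d+a}{d-1}+\binom{d+a+1}{d}-s\binom{d+a-1}{d-1}-\binom{d+a}{d},
\end{align*}
and the goal is to show this collapses to $s\binom{d+a-1}{a+1}$.

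The first key step is to combine the two ``top'' binomials. By Pascal's rule $\binom{d+a+1}{d}=\binom{d+a}{d}+\binom{d+a}{d-1}$, so that $\binom{d+a+1}{d}-\binom{d+a}{d}=\binom{d+a}{d-1}$. Feeding this in and collecting the resulting $\binom{d+a}{d-1}$ terms gives
\begin{align*}
h_{r,a+1}(d)-h_{r,a}(d) = s\binom{d+a}{d-1}-s\binom{d+a-1}{d-1} = s\left(\binom{d+a}{d-1}-\binom{d+a-1}{d-1}\right).
\end{align*}
The second key step is a second application of Pascal's rule, now in the form $\binom{d+a}{d-1}=\binom{d+a-1}{d-1}+\binom{d+a-1}{d-2}$, which shows the parenthesized difference equals $\binom{d+a-1}{d-2}$. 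Finally, the symmetry $\binom{d+a-1}{d-2}=\binom{d+a-1}{a+1}$ (valid since $(d+a-1)-(d-2)=a+1$) rewrites this in the stated form, yielding $h_{r,a+1}(d)-h_{r,a}(d)=s\binom{d+a-1}{a+1}=(r-a)\binom{d+a-1}{a+1}$.

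There is no genuine obstacle here: the statement is an elementary binomial identity, and the only thing to watch is bookkeeping of indices when invoking Pascal's rule and the symmetry of binomial coefficients. If one prefers to avoid index juggling, an equally clean alternative is to interpret $h_{r,a}(d)=(r-a)\binom{d+a-1}{d-1}+\binom{d+a}{d}$ as a sum of dimensions and verify the identity via generating functions or a term-by-term comparison of coefficients, but the two-step Pascal computation above is the most direct route.
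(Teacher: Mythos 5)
Your computation is correct: the paper offers no written proof of this lemma, merely asserting it follows ``from directly computing,'' and your two applications of Pascal's rule $\binom{n+1}{k}=\binom{n}{k}+\binom{n}{k-1}$ followed by the symmetry $\binom{d+a-1}{d-2}=\binom{d+a-1}{a+1}$ are exactly that direct computation, valid even in the boundary case $d=1$ under the convention $\binom{n}{-1}=0$. Nothing further is needed.
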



Lemma \ref{KCL} is the key combinatorial lemma in this section. 
\begin{Lem}
\label{KCL}
For $r-k+a=1$, we have
\begin{align*}
\min\{H_{r,a,b}(d_1,\ldots,d_k):\ d=d_1\leq d_2\leq \cdots\leq d_k, d_i\leq d+(i-1)\binom{d}{2}\}=H_{r,a,b}(d,\ldots,d)
\end{align*}
\end{Lem}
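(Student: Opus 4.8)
The plan is to unwind all the definitions until the statement becomes a purely combinatorial inequality, one for each index subset occurring in the minimum defining $F_{b,m}$, and then to reduce that to a short finite family of binomial inequalities which are tight exactly at $d=2$. Since $r-k+a=1$, set $m=k-b+1=a+(r-b)$. Then $G_{r,a,1}(d_1,\ldots,d_k)=\sum_{i=1}^k(d_i+1)-\dim(\mb{G}(1,r))$, because the only admissible index set for $F_{1,k}$ is $\{1,\ldots,k\}$ at all levels $1$ and $h_{1,1}(d)=d+1$; also $G_{r,a,b}=F_{b,m}(d_1,\ldots,d_k)-\dim(\mb{G}(b,r))$. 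The two Grassmannian terms do not depend on the degrees, so minimizing $H_{r,a,b}$ over the allowed region is the same as minimizing $F_{b,m}(d_1,\ldots,d_k)-\sum_i d_i$. At the flat tuple the minimizing set for $F_{b,m}(d,\ldots,d)$ is the last $m$ indices, all at level $1$ (as $h_{b,\ell}(d)$ increases in $\ell$ by Lemma \ref{US1}), giving $F_{b,m}(d,\ldots,d)=m\,h_{b,1}(d)=m(bd+1)$ with $h_{b,1}(x)=bx+1$. Since $F_{b,m}$ is a \emph{minimum}, it suffices to prove, for every subset $S=\{i_1<\cdots<i_m\}$ with levels $\ell_j=b-i_j+j$ and complement $U=\{1,\ldots,k\}\setminus S$, the termwise bound
\[
\sum_{j=1}^m h_{b,\ell_j}(d_{i_j})-\sum_{i=1}^k d_i\ \geq\ m(bd+1)-kd.
\]

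Writing $e_i=d_i-d\geq0$ (nondecreasing, $e_1=0$) and using that $h_{b,1}$ is affine of slope $b$, this rearranges to
\[
\sum_{j=1}^m\Big[\big(h_{b,\ell_j}(d_{i_j})-h_{b,1}(d_{i_j})\big)+(b-1)e_{i_j}\Big]\ \geq\ \sum_{u\in U}e_u,
\]
where the bracketed terms split into a \emph{level credit}, which by Lemma \ref{US1} is at least $c_{\ell_j}:=h_{b,\ell_j}(d)-h_{b,1}(d)$, and a \emph{degree credit} $(b-1)e_{i_j}$. The hypothesis $d_i\leq d+(i-1)\binom{d}{2}$ enters only as the cap $e_u\leq(u-1)\binom{d}{2}$ on the right-hand side. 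The key structural point, checked against an explicit example (e.g. $b=2$, $k=2$), is that neither credit alone suffices: flattening is \emph{not} a monotone move, and the minimum is attained both at the flat tuple and at the degree cap, so the two credits must be combined.

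The combinatorial heart is to estimate $\sum_{u\in U}e_u-(b-1)\sum_{i\in S}e_i=\sum_i\epsilon_i e_i$ (with $\epsilon_i=1$ on $U$, $-(b-1)$ on $S$) by summation by parts. Since $e$ is nondecreasing this equals $\sum_i(e_i-e_{i-1})T_i$ with suffix sums $T_i=\#(U\cap[i,k])-(b-1)\,\#(S\cap[i,k])$, and because there are only $b-1$ unchosen indices one checks $T_i>0$ only when $i$ exceeds the last chosen index $t_0=\max S$, where $T_i=k-i+1$. Maximizing this linear functional against the cap concentrates all weight at the start of that tail, so the whole problem collapses, for each tail length $q=k-t_0\in\{0,\ldots,b-1\}$, to $\sum_j c_{\ell_j}\geq\binom{d}{2}\big[(k-q)q+\binom{q}{2}\big]$; and for fixed $q$ the left side is smallest when the remaining $b-1-q$ unchosen indices are front-loaded, forcing every chosen level to equal $q+1$ and giving $\sum_j c_{\ell_j}=m\,c_{q+1}$.

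The main obstacle, then, is the finite family
\[
(k-b+1)\,c_{q+1}\ \geq\ \binom{d}{2}\Big[(k-q)q+\binom{q}{2}\Big],\qquad q=1,\ldots,b-1,
\]
with $c_{q+1}=\sum_{s=1}^{q}(b-s)\binom{d+s-1}{s+1}$. These are linear in $k$, so the coefficient comparison reduces to the termwise estimate $c_{q+1}\geq q\binom{d}{2}$ (each summand exceeds $\binom{d}{2}$ once $d\geq2$, using Lemma \ref{US1}), and it then remains to verify the base case $k=b$, namely $c_{q+1}\geq\binom{d}{2}\big[(b-q)q+\binom{q}{2}\big]$, which I expect to follow directly from Lemmas \ref{US1} and \ref{US2}. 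I anticipate that all of these are equalities precisely at $d=2$, where $\binom{d}{2}=1$ and $c_\ell$ collapses to $\binom{\ell}{2}$; this is exactly what pins the slope $\binom{d}{2}$ in the hypothesis as the sharp threshold, and carefully handling the small values of $d$ together with the corner $q=b-1$ will be the most delicate part of the argument.
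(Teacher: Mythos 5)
Your proposal is correct, but it takes a genuinely different route from the paper's. The paper proves Lemma \ref{KCL} by descent: it inducts on $d_1+\cdots+d_k$, using the structure of an index set achieving the minimum in $G_{r,a,b}$ to decrement one or several degrees at a time while showing, via Lemmas \ref{US1} and \ref{US2}, that $H_{r,a,b}$ never increases; the hypothesis $d_i\leq d+(i-1)\binom{d}{2}$ enters only in the last type of move, where a gap in the chosen indices forces $d_{i_{j-1}+1}>d$ and the loss is controlled by $\binom{d}{2}(i_1+j-2)$. You instead fix the degree tuple and prove the required inequality for \emph{every} index subset, not just a minimizer --- legitimate precisely because $F_{b,m}$ is a minimum --- by splitting each term into a level credit $c_{\ell_j}$ and a degree credit $(b-1)e_{i_j}$ and running Abel summation against the caps $e_i\leq(i-1)\binom{d}{2}$, which collapses the statement to the finite family $(k-b+1)\,c_{q+1}\geq\binom{d}{2}\bigl[(k-q)q+\binom{q}{2}\bigr]$ for $0\leq q\leq b-1$. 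The two steps you only ``expect'' do close, and by the same estimate: $\binom{d+s-1}{s+1}=\binom{d+s-1}{d-2}\geq\binom{d}{d-2}=\binom{d}{2}$ gives the slope comparison $c_{q+1}\geq q\binom{d}{2}$, and summing it with weights $(b-s)$ gives the base case $k=b$ exactly, since $\sum_{s=1}^{q}(b-s)=qb-\binom{q+1}{2}=(b-q)q+\binom{q}{2}$; so no delicate small-$d$ or corner analysis is actually needed. One side remark is wrong but harmless: at $d=2$ one has $c_\ell=b(\ell-1)-\binom{\ell}{2}$, not $\binom{\ell}{2}$. Comparing the two arguments: yours is non-inductive, yields a lower bound for every subset (not just the optimal one), and makes visible exactly where the slope $\binom{d}{2}$ is pinned (the $s=1$ term of $c_{q+1}$); the paper's smoothing argument only ever needs to track one minimizing subset, at the cost of a three-case analysis of how to perturb the degree vector.
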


\begin{proof}
Suppose we are given $d= d_1\leq d_2\leq \cdots\leq d_k$. We want to show
\begin{align*}
H_{r,a,b}(d_1,\ldots,d_k)&\geq H_{r,a,b}(d,\ldots,d).
\end{align*}
We will use induction on $d_1+\cdots+d_k$, so it suffices to find $d_1',\ldots,d_k'$ with $d_1'+\cdots+d_k'$ is less than $d_1+\cdots+d_k$ such that $H_{r,a,b}(d_1',\ldots,d_k')\leq H_{r,a,b}(d_1,\ldots,d_k)$. The proof is structured like an induction, but perhaps it is intuitively easier to think of it as taking $(d_1,\ldots,d_k)$, and altering the degrees bit by bit until they reach $(d,\ldots,d)$, while all the time not increasing the value of $H_{r,a,b}(d_1,\ldots,d_k)$. 

Suppose the minimum of $G_{r,a,b}(d_1,\ldots,d_k)$ is achieved at $i_1<\cdots<i_{a-r+b}$. Suppose $d_{i_1}>d$. 
Define
\begin{align*}
d_i'&=
\begin{cases}
\max\{d,d_i-1\} &\text{if $i\leq i_1$}\\
d_i & \text{if $i>i_1$}.
\end{cases}
\end{align*}
We claim that $H_{r,a,b}(d_1,\ldots,d_k)\geq H_{r,a,b}(d_1',\ldots,d_k')$. To see this,
\begin{align*}
H_{r,a,b}(d_1,\ldots,d_k)-H_{r,a,b}(d_1',\ldots,d_k')&=\\
G_{r,a,b}(d_1,\ldots,d_k)-G_{r,a,b}(d_1',\ldots,d_k')-G_{r,a,1}(d_1,\ldots,d_k)+G_{r,a,1}(d_1',\ldots,d_k')&\geq\\
h_{b,b-i_1+1}(d_{i_1})-h_{b,b-i_1+1}(d_{i_1}-1)-i_1&\geq\\
(i_1-1)\binom{d_{i_1}+b-i_i-1}{d_{i_1}-1}+\binom{d_{i_1}+b-i_i}{d_{i_1}}-i_1&,
\end{align*}
where we applied Lemma \ref{US2} in the last step. Since $r-k+a=1$, $i_1\leq b$. Therefore, we see the quantity above is at least $(i_1-1)+1-i_1=0$. Therefore, if $d_{i_1}>d$, we are done by induction. Otherwise, we can assume $d=d_1=\cdots=d_{i_1}$. 

Suppose $j$ is the minimum index for which $i_j-i_{j-1}>1$. If there is no such index, let $j=a+r-b+1$. First, we reduce to the case where $d_{i_1}=\cdots=d_{i_{j-1}}=d$. Let $1\leq \ell< j$ be the minimum index such that $d_{i_\ell}>d$. We can use the same trick as before. If we again define
\begin{align*}
d_i'&=
\begin{cases}
d & \text{if $i<i_\ell$}\\
d_{i_\ell}-1&\text{if $i=i_\ell$}\\
d_i & \text{if $i>i_\ell$},
\end{cases}
\end{align*}
then we again see that $H_{r,a,b}(d_1,\ldots,d_k)\geq H_{r,a,b}(d_1',\ldots,d_k')$, as
\begin{align*}
H_{r,a,b}(d_1,\ldots,d_k)-H_{r,a,b}(d_1',\ldots,d_k')&=\\
G_{r,a,b}(d_1,\ldots,d_k)-G_{r,a,b}(d_1',\ldots,d_k')-G_{r,a,1}(d_1,\ldots,d_k)+G_{r,a,1}(d_1',\ldots,d_k')&\geq\\
h_{b,b-i_\ell+\ell}(d_{i_{\ell}})-h_{b,b-i_\ell+\ell}(d_{i_{\ell}}-1)-1&=\\
(i_\ell-\ell)\binom{d_{i_\ell}+b-i_\ell+\ell-2}{d_{i_\ell}-1}+\binom{d_{i_\ell}+b-i_\ell+\ell-1}{d_{i_\ell}}-1&\geq 0.
\end{align*}
Again, we are using $i_\ell-\ell\leq b-1$. Therefore, we see that if $d_{i_\ell}>d$, then we are done by induction. Otherwise, we can now assume $d_{i_1}=\cdots=d_{i_{j-1}}=d$. Recall from above, we also have by assumption that $d_i=d$ for $i\leq i_{j-1}$, $i_2=i_1+1,\ldots,i_{j-1}=i_{j-2}+1$. 

If $i_1=b$, then $i_1,\ldots,i_{a+(r-b)}$ is precisely $b,\ldots,k$, so $d_i=d$ for all $1\leq i\leq k$, in which case we are done. Therefore, we can assume $i_1<b$. 

Suppose $i_1<b$, so in particular $j\leq a$ and $i_{j}>i_{j-1}+1$. Note $d_{i_{j-1}+1}>d$, because otherwise replacing $i_{j-1}$ by $i_{j-1}+1$ would decrease $\sum_{\ell=1}^{a+(r-b)}{h_{b,r-i_\ell+\ell}(d_{i_\ell})}$, contradicting the assumption that the minimum of $G_{r,a,b}(d_1,\ldots,d_k)$ is achieved at $i_1<\cdots<i_{a+(r-b)}$. Let 
\begin{align*}
d_i'&=
\begin{cases}
d & \text{if $i\leq i_{j-1}+1$}\\
d_i & \text{if $i>i_{j-1}+1$}.
\end{cases}
\end{align*}
We claim $H_{r,a,b}(d_1,\ldots,d_k)\geq H_{r,a,b}(d_1',\ldots,d_k')$. To see this, let
\begin{align*}
i_\ell' &=
\begin{cases}
i_\ell+1 &\text{if $\ell<j$}\\
i_\ell &\text{if $\ell\geq j$}
\end{cases}
\end{align*}
we see
\begin{align*}
H_{r,a,b}(d_1,\ldots,d_k)-H_{r,a,b}(d_1',\ldots,d_k')&=\\
G_{r,a,b}(d_1,\ldots,d_k)-G_{r,a,b}(d_1',\ldots,d_k')-G_{r,a,1}(d_1,\ldots,d_k)+G_{r,a,1}(d_1',\ldots,d_k')&\geq\\
\sum_{\ell=1}^{a+(r-b)}{h_{b,r-i_\ell+\ell}(d_{i_\ell})}-\sum_{\ell=1}^{a+(r-b)}{h_{b,r-i_\ell'+\ell}(d_{i_\ell}')}-(d_{i_{j-1}+1}-d)&\geq\\
(j-1)(h_{b,b-i_1+1}(d)-h_{b,b-i_1}(d))-\binom{d}{2}i_{j-1}&=\\
(j-1)i_1\binom{d+b-i_1-1}{b-i_1+1}-\binom{d}{2}(i_1+j-2)&.
\end{align*}
Since $i_1<b$, we know $b-i_1\geq 1$. So, this is at least 
\begin{align*}
(j-1)i_1\binom{d}{2}-\binom{d}{2}(i_1+j-2)=\binom{d}{2}((j-1)i_1-(i_1+j-2)).
\end{align*}
Since $i_1\geq 1$ and $j>1$, $(j-1)i_1-(i_1+j-2)\geq 0$. Therefore, $H_{r,a,b}(d_1,\ldots,d_k)$ is at least $H_{r,a,b}(d_1',\ldots,d_k')$, so we are again done by induction. 
\end{proof}

Now we prove Theorem \ref{slope}. 
\begin{proof}
From Lemma \ref{KCL}, it suffices to consider the case $d_1=\cdots=d_k=d$. Then, we see
\begin{align*}
G_{r,a,b}(d_1,\ldots,d_k) &= -\dim(\mb{G}(b,r))+(a+(r-b))(db+1).
\end{align*}
This is a quadratic in $b$ with leading coefficient $1-d<0$, so it suffices to show 
\begin{align*}
\begin{tabular}{c}
$H_{r,a,2}(d_1,\ldots,d_k),$\\
$H_{r,a,r}(d_1,\ldots,d_k)$
\end{tabular}
\geq 
\begin{cases}
r-1 &\text{ if $k=r$}\\
r-1 + (d-2)(r-2)+d(k-r)& \text{ if $k>r$}.
\end{cases} 
\end{align*}
We see
\begin{align}
H_{r,a,2}(d_1,\ldots,d_k)&=-3(r-2)+(k-1)(2d+1)-k(d+1)+2(r-1)\nonumber\\
&=d (k - 2) - r + 3=r-1+d(k-2)-(2r-4)\nonumber\\
&=r-1+(d-2)(r-2)+d(k-r).\label{Hra2}
\end{align}
and
\begin{align}
H_{r,a,r}(d_1,\ldots,d_k) &= (k-r+1)(rd+1)-k(d+1)+2(r-1)\nonumber\\
&=d (k r - k - r^2 + r) + r - 1=d(k-r)(r-1)+(r-1).\label{Hrar}
\end{align}
To finish, we need to check $H_{r,a,2}(d_1,\ldots,d_k)\geq H_{r,a,r}(d_1,\ldots,d_k)$ for $k=r$ and that $H_{r,a,2}(d_1,\ldots,d_k)\leq H_{r,a,r}(d_1,\ldots,d_k)$ for $k>r$. We calculate
\begin{align}
H_{r,a,r}(d_1,\ldots,d_k)-H_{r,a,2}(d_1,\ldots,d_k)&=d(k-r)(r-2)-(d-2)(r-2)\nonumber\\
&=(r-2)(d(k-r)-(d-2)), \nonumber 
\end{align}
and $d(k-r)-(d-2)$ is positive if $k>r$ and nonnegative if $k=r$ and $d\geq 2$. 
\end{proof}

\section{Application: Lines in a hypersurface through a point}
Let $\mc{F}_{r,d}\rightarrow W_{r,d}$ be the universal hypersurface. Let $\F(\mc{F}_{r,d}/W_{r,d})$ denote the lines on the universal hypersurface, or the relative Hilbert scheme of lines in the family $\mc{F}_{r,d}\rightarrow W_{r,d}$. The universal family over $\F(\mc{F}_{r,d}/W_{r,d})$ is a $\mb{P}^1$-bundle $\Fo(\mc{F}_{r,d}/W_{r,d})\rightarrow \F(\mc{F}_{r,d}/W_{r,d})$ corresponding to a choice of a line and a point on that line. 

There is an evaluation map $\Fo(\mc{F}_{r,d}/W_{r,d})\rightarrow \mc{F}_{r,d}$, and the expected fiber dimension is $r-1-d$. We are interested in when the fiber dimension jumps. In the statement of Theorem \ref{EVM} below, we will need to refer to \emph{Eckardt points}. In the notation of \cite{Eckardt}, the $0$-Eckardt points of a smooth variety $X\subset\mb{P}^r$ are the points for which the second fundamental form at $x\in X$ vanishes. 

More concretely, if $X=V(F)$ for a nonzero homogenous form $F$ of degree $d$ and $x$ is the origin in an affine chart of $\mb{P}^n$, and we expand $F$ around $x$ as $F=F_1+\cdots+F_d$, where $F_i$ is the degree $i$ part of $F$ after dehomogenization, then $x$ is an Eckardt point if and only if $F_1$ divides $F_2$. Therefore, we make the following definition
\begin{Def}
Let $F(X_0,\ldots,X_r)$ be a homogenous form of degree $d$ that vanishes on $p=[0:\cdots:0:1]\in \mb{P}^r$, then $p$ is an Eckardt point of $F$ if 
\begin{align*}
\frac{1}{X_r^d}F(X_0,\ldots,X_r)&= 0+F_1(\frac{X_0}{X_r},\ldots,\frac{X_{r-1}}{X_r})+\cdots+F_d(\frac{X_0}{X_r},\ldots,\frac{X_{r-1}}{X_r}),
\end{align*}
where $F_i$ is homogenous of degree $i$. For a general choice of $p\in V(F)$, we take a $PGL_{r+1}$ translate $\phi: \mb{P}^r\to \mb{P}^r$ where $\phi([0:\cdots:0:1])=p$ and apply the definition above to $\phi^{*}F$. 
\end{Def}

In particular, every point is an Eckardt point of the zero form. Similarly, we say that a homogenous form $F$ on $\mb{P}^r$ is smooth if $V(F,\partial_{X_0}F,\ldots,\partial_{X_r}F)$ is empty. 

\begin{Thm}
\label{EVM}
Let $U\subset W_{r,d}$ be the open subset of smooth homogenous forms $F$ of degree $d\geq 3$ in $\mb{P}^r$ for $r\geq 2$. Let $Z\subset U$ be the closed subset of homogenous forms $F$ for which the evaluation map $\Fo(X)\rightarrow X$ has a fiber of dimension greater than $r-1-d$, where $X=V(F)$. 

Then, $Z$ has a unique component of maximum dimension, except in the case $d=4$, $r=5$. The component(s) of $Z$ of maximal dimension are as follows:
\begin{enumerate}[(1)]
\item
the forms $F$ with an Eckardt point for $d\leq r-2$ or $d=r-1$ and $r\leq 5$
\item
the forms $F$ vanishing on  a 2-plane for $d=r-1$ and $r\geq 5$
\item
the forms $F$ vanishing on a line for $d\geq r$.
\end{enumerate}
\end{Thm}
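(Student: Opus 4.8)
The plan is to reduce the jump condition to Theorem~\ref{slope} by Taylor-expanding at a point, and then to compare two candidate loci through an incidence correspondence whose fibers over $Z$ have \emph{different} dimensions. Fix a smooth $F$ with $X=V(F)$ and a point $x\in X$; after a $PGL_{r+1}$ translation take $x=[0:\cdots:0:1]$ and write the dehomogenization as $F=F_1+F_2+\cdots+F_d$ with $F_1\neq 0$ (which holds since $X$ is smooth). A line through $x$ lies on $X$ exactly when its direction lies in $V(F_1,F_2,\ldots,F_d)\subset\mb{P}^{r-1}$, and since $F_1\neq 0$ this is $V(F_2|_H,\ldots,F_d|_H)$ inside $H:=V(F_1)\cong\mb{P}^{r-2}$. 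Thus the fiber of $\Fo(X)\to X$ over $x$ exceeds the expected dimension $r-1-d$ precisely when the restricted forms $(F_2|_H,\ldots,F_d|_H)$, of degrees $2,3,\ldots,d$, fail to intersect properly on $\mb{P}^{r-2}$, i.e.\ lie in $\Phi^{\mb{P}^{r-2},1}_{2,\ldots,d}(\mb{P}^{r-2})$. Because restriction $W_{r,d_i}\to W_{r-2,d_i}$ is surjective and independent of the jump condition, the incidence variety $\mc{I}=\{(F,x): x\text{ a jump point}\}$, fibered over the $r$-dimensional choice of $x$ together with the one condition $x\in X$, satisfies stratum by stratum $\codim Z_b\ge 1-r+\codim\Phi^{\mb{P}^{r-2},1}_{2,\ldots,d}(\mb{P}^{r-2},\Span(r-2,b))$, where $Z_b$ is the part of $Z$ produced by curves of directions spanning a $b$-plane.

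I would treat the main case $d=r-1$ first. Here the restricted problem has $k=r-2$ forms of degrees $2,3,\ldots,r-1$ on $\mb{P}^{r-2}$, and these saturate the hypothesis $d_i\le d_1+(i-1)\binom{d_1}{2}$ of Theorem~\ref{slope} with $d_1=2$ (indeed $d_i=i+1$), so the theorem applies with ambient dimension $r-2$, excess $a=1$, and $k=r-2$. Partition $\RHilb_{\mb{P}^{r-2}}$ by span $b$ exactly as in Section~\ref{MainArgument}; the translation back to $X$ is the heart of the matter. The stratum $b=1$ is a pencil of lines through $x$ sweeping out a $2$-plane $\Pi\subset X$, so $Z_1$ is the locus of $F$ containing a $2$-plane, of codimension $\binom{d+2}{2}-3(r-2)=\tfrac12(r^2-5r+12)$. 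The cheapest nondegenerate stratum is the one in which the degree-$2$ form is fully absorbed, $F_2|_H\equiv 0$; by Lemma~\ref{KIA} its codimension bound $\min_i h_{r-2,\,r-1-i}(i+1)$ is minimized at $i=1$, giving $h_{r-2,r-2}(2)=\binom{r}{2}$, and $F_2|_H\equiv 0$ is exactly the condition $F_1\mid F_2$, i.e.\ the Eckardt locus, which has codimension $\binom{r-1}{2}=\tfrac12(r^2-3r+2)$ in $W_{r,d}$. The decisive quantity is the difference $\tfrac12(r^2-5r+12)-\tfrac12(r^2-3r+2)=5-r$: Eckardt wins for $r<5$, the $2$-plane wins for $r>5$, and they tie at $r=5$.

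To rule out the remaining strata I would combine the lower bound of the first paragraph with Theorem~\ref{slope}: for $b\ge 2$ the gap $\codim\Phi(\Span(r-2,b))-\codim\Phi(\Span(r-2,1))$ is at least $(r-2)-1=r-3$, so $\codim Z_b\ge \codim Z_1+(r-5)$ once the $2$-dimensional family of jump points carried by a $2$-plane (every point of $\Pi$ is a jump point) is weighed against the generically finite jump locus of the Eckardt stratum. For $r\ge 6$ this is strictly positive, giving the $2$-plane as the unique maximal component; for $r=4$ only $b=1,2$ occur and $5-r=1>0$ selects Eckardt; the single borderline $r=5$ I would settle by the explicit computation in $\mb{P}^3$ that $\codim\Phi(\Span(3,2))=12>10=\codim\Phi(\Span(3,3))$, which pushes the intermediate stratum strictly below and leaves exactly the two tied components. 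The other ranges are easier: for $d\ge r$ the expected fiber dimension is negative, so the jump locus is simply the irreducible locus of hypersurfaces containing a line; for $d\le r-2$ the statement follows from the crude degeneration of \cite[Theorem 2.1]{LowDegree}, as noted in the introduction.

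The main obstacle, and the point that inverts the naive expectation coming from Theorem~\ref{slope}, is the fiber-dimension bookkeeping of $\mc{I}\to Z$. In the direction space $\mb{P}^{r-2}$ the line stratum is \emph{always} the largest component of $\Phi$, yet in $W_{r,d}$ the nondegenerate Eckardt stratum can overtake it, because a $2$-plane in $X$ forces a whole $2$-dimensional family of jump points while an Eckardt point is isolated. Making this rigorous requires (i) the generic finiteness input, namely that a general Eckardt form has only finitely many Eckardt points and that a general $F$ containing a $2$-plane contains only finitely many, so that the exact codimensions $\binom{r-1}{2}$ and $\binom{d+2}{2}-3(r-2)$ are attained; and (ii) checking that both candidate loci are irreducible, meet the smooth locus $U$, and genuinely lie in $Z$ — for Eckardt this uses $d=r-1$, so that $V(F_3|_H,\ldots,F_d|_H)$ is a positive-dimensional curve. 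These geometric steps, rather than the combinatorics of Theorem~\ref{slope}, are where the real work lies.
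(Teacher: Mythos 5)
Your proposal is correct and follows essentially the same route as the paper's own proof: the Taylor-expansion reduction of the jump condition to $\Phi^{\mb{P}^{r-2},1}_{2,\ldots,d}(\mb{P}^{r-2})$, the span decomposition via Theorem \ref{slope}/Lemma \ref{KIA} identifying the line stratum (2-planes through the point) and the quadric-vanishing stratum (Eckardt points) as the two competitors, the fiber-dimension bookkeeping (a 2-plane carries a 2-dimensional family of jump points while an Eckardt point is isolated) yielding the decisive difference $r-5$, and the generic-finiteness and existence inputs are precisely the content of the paper's Propositions \ref{UHS} and \ref{2r1} and Lemmas \ref{BP1} and \ref{BP2}. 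The only departures are cosmetic: you defer $d\le r-2$ to \cite{LowDegree} (as the paper's introduction notes one may) where the paper instead runs the same machinery through Proposition \ref{2r}, and you replace part of the uniqueness analysis packaged in Proposition \ref{2r1} by the explicit $r=5$ computation, with matching numbers.
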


\subsection{Application of filtration by span}
To prove Theorem \ref{EVM}, we first need to understand what happens on the universal hypersurface, which will be given by Proposition \ref{UHS} in this subsection. 
\begin{Prop}
\label{2r1}
The unique component of $\Phi^{\mb{P}^r,1}_{2,\ldots,r+1}(\mb{P}^r)$ of maximal dimension is the component parameterizing $r$-tuples of hypersurfaces whose common vanishing locus contains a line, and that component is of codimension $\frac{r^2+r+4}{2}$.
For $r\geq 2$, the unique component of second largest dimension of $\Phi^{\mb{P}^r,1}_{2,\ldots,r+1}(\mb{P}^r)$ is the locus of tuples of hypersurfaces $(F_1,\ldots,F_r)$ where $F_1$ is identically zero, which has codimension $\binom{r+2}{2}$. 
\end{Prop}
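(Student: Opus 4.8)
The plan is to read this as the case $k=r$, $a=1$ (so $r-k+a=1$) of the apparatus of Section~\ref{MainArgument}, applied to the degrees $d_i=i+1$. Since $d_1=2$ and $\binom{d_1}{2}=1$, these degrees sit exactly on the boundary of Theorem~\ref{slope}: $d_i=d_1+(i-1)\binom{d_1}{2}$. For the maximal component I would invoke the equality clause of Theorem~\ref{FML}: the line stratum $\Phi^{\mb{P}^r,1}_{2,\ldots,r+1}(\mb{P}^r,\Span(r,1))$ has codimension exactly
\[
G_{r,1,1}(2,\ldots,r+1)=-2(r-1)+\sum_{i=1}^{r}(d_i+1)=\frac{r^2+r+4}{2}.
\]
This stratum is irreducible, being the image of the incidence correspondence over $\mb{G}(1,r)$ whose fibres are products of linear subspaces of the $W_{r,d_i}$, so it is a single component, and it will be the unique maximal one once everything else is shown to have strictly larger codimension.

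For the rest of $\Phi$, Theorem~\ref{slope} (case $k=r$) gives
\[
\codim\!\big(\Phi^{\mb{P}^r,1}_{2,\ldots,r+1}(\mb{P}^r)\setminus\Phi^{\mb{P}^r,1}_{2,\ldots,r+1}(\mb{P}^r,\Span(r,1))\big)\ \geq\ \frac{r^2+r+4}{2}+(r-1)=\binom{r+2}{2}.
\]
Next I would exhibit the candidate second component $\{F_1\equiv 0\}$: when $F_1=0$ the common zero locus is $V(F_2,\ldots,F_r)$, a complete intersection of $r-1$ forms and hence of dimension at least $1$, so this locus lies in $\Phi$; it is an irreducible linear subspace of codimension $\dim W_{r,2}=\binom{r+2}{2}$, and for general $(F_2,\ldots,F_r)$ its zero locus is a nondegenerate integral curve. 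Thus its generic point lies in $\Span(r,r)$, not in the line stratum nor in any lower-span stratum, so $\{F_1\equiv 0\}$ is a genuine component whose codimension meets the bound above.

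The main work is the uniqueness of the second component, organised by the span $b$ of the contained curve. For the interior strata $2<b<r$ I would combine Lemma~\ref{KCL}, which bounds $H_{r,1,b}(2,\ldots,r+1)$ below by its constant-degree value $H_{r,1,b}(2,\ldots,2)$, with the fact that $b\mapsto G_{r,1,b}(2,\ldots,2)$ is a strictly concave quadratic (leading coefficient $1-d=-1$); its minimum over $\{2,\ldots,r\}$ is attained only at the endpoints, so $H_{r,1,b}(2,\ldots,2)>r-1$ for $2<b<r$ and hence $\codim\Phi(\Span(r,b))>\binom{r+2}{2}$. The endpoint $b=2$ escapes the concavity argument, so I would evaluate $F_{2,r-1}(2,3,\ldots,r+1)$ directly and confirm $H_{r,1,2}(2,\ldots,r+1)>r-1$ strictly (the non-constant degrees make the Lemma~\ref{KCL} inequality strict here; e.g. $r=3,4$ give $H=4,8$ against $r-1=2,3$). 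For the top stratum I would use that $\dim\mb{G}(r,r)=0$, so $G_{r,1,r}=F_{r,1}(2,\ldots,r+1)=\min_{1\le i\le r}h_{r,r-i+1}(i+1)$; a direct check shows this minimum is attained \emph{only} at $i=1$, where $h_{r,r}(2)=\binom{r+2}{2}$, the value $i=2$ already being $h_{r,r-1}(3)=\binom{r+1}{2}+\binom{r+2}{3}>\binom{r+2}{2}$. The index $i=1$ corresponds to the single excess dimension being produced by $F_1$ vanishing on all of $\mb{P}^r$, i.e. $F_1\equiv 0$. To convert this into a statement about components I would re-run the induction of Lemma~\ref{KIA} over the open locus $\{F_1\neq 0\}$, where the base case $\Phi^{\mb{P}^r,1}_{2}(\mb{P}^r)$ is empty; this deletes the $i=1$ term and yields $\codim(\Phi(\Span(r,r))\cap\{F_1\neq 0\})\geq h_{r,r-1}(3)>\binom{r+2}{2}$. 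Consequently every codimension-$\binom{r+2}{2}$ component of $\Phi(\Span(r,r))$ has generic member with $F_1\equiv 0$ and therefore equals the irreducible locus $\{F_1\equiv 0\}$.

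The step I expect to be the main obstacle is this last one: justifying rigorously that imposing the open condition $F_1\neq 0$ removes precisely the $i=1$ term from the recursion of Lemma~\ref{KIA}. Concretely this rests on the equality analysis of Lemma~\ref{ARBH} in the top dimension—an $r$-dimensional subvariety of $\mb{P}^r$ attaining the minimal Hilbert function must be $\mb{P}^r$ itself, so a degree-$2$ form containing it vanishes identically—together with checking that restricting to $\{F_1\neq 0\}$ is compatible with the induction, since $F_1$ is the form never forgotten until the final step. The remaining delicate but purely finite point is the direct verification at $b=2$, where Lemma~\ref{KCL} by itself only delivers the non-strict $H_{r,1,2}\geq r-1$.
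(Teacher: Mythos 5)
Your plan reproduces the paper's own proof almost step for step: the same identification of the maximal component via Theorem~\ref{slope}/Theorem~\ref{FML}, the same use of the gap $r-1$ from Theorem~\ref{slope} to exhibit $\{F_1\equiv 0\}$ as a component of second-largest dimension, and the same span-by-span uniqueness argument ($2<b<r$ ruled out by concavity of $b\mapsto G_{r,1,b}(2,\ldots,2)$ together with Lemma~\ref{KCL}, $b=2$ by direct evaluation, and $b=r$ by locating the unique minimizing index $i=1$ of the recursion, which forces $F_1\equiv 0$ because the only integral $r$-dimensional subvariety of $\mb{P}^r$ is $\mb{P}^r$ itself).

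However, one step fails as written. At $b=r$ you assert that the minimum of $h_{r,r-i+1}(i+1)=(i-1)\binom{r+1}{i}+\binom{r+2}{i+1}$ over $1\leq i\leq r$ is attained only at $i=1$, and the evidence you offer is that the $i=2$ value exceeds $\binom{r+2}{2}$. This sequence is not monotone in $i$, so checking $i=2$ says nothing about the remaining indices: the minimum over $i\geq 2$ is in fact attained at $i=r$, where the value is $(r-1)(r+1)+(r+2)=r^2+r+1$, far smaller than your $i=2$ value $\binom{r+1}{2}+\binom{r+2}{3}$ once $r\geq 3$. The same slip propagates to your claimed bound $\codim\bigl(\Phi(\Span(r,r))\cap\{F_1\neq 0\}\bigr)\geq h_{r,r-1}(3)$; the truncated recursion only yields $\min_{2\leq i\leq r}h_{r,r-i+1}(i+1)=r^2+r+1$. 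Fortunately $r^2+r+1-\binom{r+2}{2}=\tfrac{1}{2}(r^2-r)>0$ for $r>1$, so your conclusion survives, but the correct argument (and the paper's) is: for $2\leq i\leq r-1$ the second term alone satisfies $\binom{r+2}{i+1}\geq\binom{r+2}{2}$ while the first term is strictly positive, so the only comparison requiring computation is $i=1$ versus $i=r$. Separately, at $b=2$ your verification is only carried out for $r=3,4$; this is not a finite check but a computation needed for every $r$ (the paper locates the minimum at the indices $1<3<4<\cdots<r$ and verifies the resulting value exceeds $\binom{r+2}{2}$ for all $r>2$), so it should be done in general before the proof is complete.
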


\begin{proof}
The component of $\Phi^{\mb{P}^r,1}_{2,\ldots,r+1}(\mb{P}^r)$ of maximal dimension can be identified by applying Theorem \ref{slope}. To find the second largest component in terms of dimensions, we see that Theorem \ref{slope} also says the difference between the dimensions of the largest and second largest components of $\Phi^{\mb{P}^r,1}_{2,\ldots,r+1}(\mb{P}^r)$ is at least $r-1$. Since 
\begin{align*}
\binom{r+2}{2}-(\frac{r(r+5)}{2}-2(r-1))=r-1,
\end{align*}
we see that the locus of tuples of hypersurfaces $(F_1,\ldots,F_r)$ where $F_1$ is identically zero is a component of second largest dimension. To finish, we need to show uniqueness. Let $Z\subset \Phi^{r,1}_{2,\ldots,r+1}(\mb{P}^r)$ be a component of second highest dimension. From the proof of Theorem \ref{slope}, we know that if 
\begin{align*}
\dim(Z) = \dim(Z\cap \Phi^{\mb{P}^r,1}_{2,\ldots,r+1}(\mb{P}^r,\Span(b))),
\end{align*}
then $b=2$ or $b=r$. More precisely, Lemma \ref{ICA} says
\begin{align*}
\codim(\Phi^{\mb{P}^r,1}_{2,2,\ldots,2}(\mb{P}^r,\Span(b)))&\geq G_{r,1,b}(2,2,\ldots,2)\\
\codim(\Phi^{\mb{P}^r,1}_{2,3,\ldots,r+1}(\mb{P}^r,\Span(b)))&\geq G_{r,1,b}(2,3,\ldots,r+1)\\
\codim(\Phi^{\mb{P}^r,1}_{2,2,\ldots,2}(\mb{P}^r,\Span(1)))&= G_{r,1,1}(2,2,\ldots,2)\\
\codim(\Phi^{\mb{P}^r,1}_{2,3,\ldots,r+1}(\mb{P}^r,\Span(1)))&= G_{r,1,1}(2,3,\ldots,r+1)\\
\end{align*}
Lemma \ref{KCL} says that
\begin{align*}
H_{r,1,b}(2,3,\ldots,r+1)\geq H_{r,1,b}(2,2,\ldots,2),
\end{align*}
so by definition of $H_{r,1,b}$ (Definition \ref{HrabD})
\begin{align*}
G_{r,1,b}(2,3,\ldots,r+1)-G_{r,1,1}(2,3,\ldots,r+1)&\geq G_{r,1,b}(2,2,\ldots,2)-G_{r,1,1}(2,2,\ldots,2).
\end{align*}
Since
\begin{align*}
G_{r,1,b}(2,2,\ldots,2) &= -\dim(\mb{G}(b,r))+(1+(r-b))(2b+1)
\end{align*}
is quadratic in $b$ with negative leading coefficient, we see that $G_{r,1,b}(2,2,\ldots,2)-G_{r,1,1}(2,2,\ldots,2)=H_{r,1,b}(2,2,\ldots,2)$ is minimized over $b\in \{2,\ldots,r\}$ when $b=2$ or $b=r$. Also, \eqref{Hra2} and \eqref{Hrar} from the proof of Theorem \ref{slope} yields
\begin{align*}
H_{r,1,2}(2,\ldots,2)=H_{r,1,r}(2,\ldots,2)=r-1. 
\end{align*}
Therefore, the only two values of $b$ for which $G_{r,1,b}(2,3,\ldots,r+1)$ would possible be equal to $r-1$ is when $b=2$ or $b=r$. 

If $r>2$, we can rule out the case $b=2$ directly, as Lemma \ref{ICA} implies $\codim( \Phi^{r,1}_{2,\ldots,r+1}(\mb{P}^r,\Span(b)))\geq G_{r,r-1,2}(2,\ldots,r+1)$, and it is easy to check $G_{r,r-1,2}(2,\ldots,r+1)$ achieves its minimum at the choice of indices $1<3<4<\cdots<r$, so 
\begin{align*}
\codim( \Phi^{\mb{P}^r,1}_{2,\ldots,r+1}(\mb{P}^r,\Span(b)))\geq 6+ \sum_{i=3}^{r}(2i+1)=r^2+2r-2,
\end{align*}
which is greater than $\binom{r+2}{2}$ for $r>2$. To finish, it suffices to show that 
\begin{align*}
h_{r,r-i+1}(1+i)=(i-1)\binom{r+1}{i}+\binom{r+2}{i+1}
\end{align*}
 achieves its unique minimum at $i=1$ over $1\leq i\leq r$. By looking at the second term, we see that it suffices to compare the cases when $i=1$ and $i=r$, and we see
\begin{align*}
(r-1)(r+1)+(r+2)-\binom{r+2}{2}=\frac{1}{2}(r^2-r)
\end{align*}
which is greater than zero when $r>1$. 
\end{proof}

Proposition \ref{2r} is an example where we apply Theorem \ref{FML} to a case where the hypersurfaces all contain the same surface rather than a curve. This will be easier than Proposition \ref{2r1}, as we can make cruder approximations. 
\begin{Prop}
\label{2r}
The unique component of $\Phi^{\mb{P}^r,1}_{2,\ldots,r}(\mb{P}^r)$ of largest dimension is the locus of tuples $(F_1,\ldots,F_{r-1})$ of degrees $(2,\ldots, r)$ such that $F_1=0$. 
\end{Prop}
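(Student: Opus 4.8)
The plan is to apply the span filtration of Theorem \ref{FML} and to show, by a single uniform but crude Hilbert-function estimate, that the minimum codimension is attained only in the nondegenerate stratum, where it is realized exactly by $\{F_1=0\}$. We are intersecting $k=r-1$ forms of degrees $(d_1,\ldots,d_{r-1})=(2,3,\ldots,r)$, i.e. $d_i=i+1$, and asking that the vanishing locus be positive dimensional, so $a=1$ and $r-k+a=2$: the contained subvariety is a \emph{surface}, and Theorem \ref{FML} partitions by the dimension $b$ of its linear span, $2\le b\le r$, giving (with the minimal-span term exact)
\begin{align*}
\codim\left(\Phi^{\mb{P}^r,1}_{2,\ldots,r}(\mb{P}^r)\right)\ \ge\ \min_{2\le b\le r}G_{r,1,b}(2,\ldots,r).
\end{align*}
The candidate answer $\{F_1=0\}=\{0\}\times\prod_{i=2}^{r-1}W_{r,d_i}$ is an irreducible linear subspace of codimension $\binom{r+2}{2}$ lying in $\Phi$ (for generic $(0,F_2,\ldots,F_{r-1})$ the locus $V(F_2,\ldots,F_{r-1})$ is an irreducible nondegenerate surface), so it sits in the top stratum $b=r$ and provides the reverse bound. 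The goal is to prove the displayed minimum equals $\binom{r+2}{2}$, is attained only at $b=r$, and there only along $\{F_1=0\}$.

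The crux is a single crude lower bound, uniform in $b$, for the index-minimum defining $G_{r,1,b}$. For a choice of indices $1\le i_1<\cdots<i_{r-b+1}\le r-1$, the corresponding term of $F_{b,r-b+1}(2,\ldots,r)$ is $h_{b,c_\ell}(d_\ell)$ with dimension parameter $c_\ell=b-i_\ell+\ell$ and degree $d_\ell=i_\ell+1$, so that $c_\ell+d_\ell=b+\ell+1$ \emph{independently of $i_\ell$}. Dropping the nonnegative leading term in $h_{b,c_\ell}(d_\ell)$ gives $h_{b,c_\ell}(d_\ell)\ge\binom{c_\ell+d_\ell}{d_\ell}=\binom{b+\ell+1}{i_\ell+1}$, and since $2\le i_\ell+1\le b+\ell-1$ (the upper bound because $i_\ell-\ell\le b-2$), unimodality of binomial coefficients yields $\binom{b+\ell+1}{i_\ell+1}\ge\binom{b+\ell+1}{2}$. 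Summing and using hockey-stick,
\begin{align*}
F_{b,r-b+1}(2,\ldots,r)\ \ge\ \sum_{\ell=1}^{r-b+1}\binom{b+\ell+1}{2}\ =\ \binom{r+3}{3}-\binom{b+2}{3}.
\end{align*}
Subtracting $\dim(\mb{G}(b,r))=(b+1)(r-b)$ and using both $\binom{r+3}{3}=\binom{r+2}{3}+\binom{r+2}{2}$ and $\binom{r+2}{3}-\binom{b+2}{3}=\sum_{n=b+2}^{r+1}\binom{n}{2}\ge(r-b)\binom{b+2}{2}$ reduces the target inequality to
\begin{align*}
G_{r,1,b}(2,\ldots,r)-\binom{r+2}{2}\ \ge\ (r-b)\left(\binom{b+2}{2}-(b+1)\right)=(r-b)\binom{b+1}{2},
\end{align*}
which is strictly positive for $2\le b\le r-1$ and vanishes (with the bound tight) at $b=r$. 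Hence $\codim(\Phi)=\binom{r+2}{2}$ and no stratum with $b\le r-1$ contains a maximal component.

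It remains to pin down the extremal locus at $b=r$ and prove uniqueness. There the bound is realized by $F_{r,1}(2,\ldots,r)=\min_{1\le i\le r-1}\big[(i-1)\binom{r+1}{i}+\binom{r+2}{i+1}\big]$, the identical minimization appearing at the end of the proof of Proposition \ref{2r1}, whose unique minimum is the value $\binom{r+2}{2}$ at $i=1$. The index $i=1$ is the degenerate term $h_{r,r}(2)$, corresponding to $V(F_1)=\mb{P}^r$: concretely a generic complete intersection surface $V(F_2,\ldots,F_{r-1})$ of type $(3,\ldots,r)$ has homogeneous ideal generated in degrees $\ge 3$, so the only quadric vanishing on it is the zero form, forcing $F_1=0$. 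Thus the unique extremal locus in the top stratum is the irreducible subspace $\{F_1=0\}$, which is therefore a component of $\Phi$ of maximal dimension. For uniqueness, any other maximal component $Y$ is not contained in $\{F_1=0\}$, so $F_1\not\equiv 0$ on a dense subset of $Y$; then $Y$ lies in a stratum $b\le r-1$, where $(r-b)\binom{b+1}{2}>0$ strictly, or in the top stratum through an index $i\ne 1$, where the minimization above is strict, and in either case $\codim Y>\binom{r+2}{2}$, a contradiction. I expect the only delicate point to be isolating the uniform estimate of the second paragraph; once the coupling $c_\ell+d_\ell=b+\ell+1$ is exploited, the rest is elementary binomial bookkeeping, which is exactly why this is cruder than the argument for Proposition \ref{2r1}.
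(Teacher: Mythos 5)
Your proposal is correct and follows essentially the same route as the paper: the span filtration, the crude estimate obtained by dropping the leading term of $h_{b,c_\ell}(d_\ell)$ and using the coupling $c_\ell+d_\ell=b+\ell+1$ with unimodality (this is exactly the paper's Lemma \ref{E1}, which you rederive inline, and with the inequality $\binom{1+r+j}{i_j+1}\geq\binom{1+r+j}{2}$ stated in the correct direction, where the paper has a typo), and the reduction of the $b=r$ stratum to the index minimization from Proposition \ref{2r1}. The only difference is bookkeeping: you compare strata via hockey-stick identities to get the clean excess $(r-b)\binom{b+1}{2}$, where the paper factors the cubic $-\frac{1}{6}(b-r)(b^2+br-3b+r^2+3r-4)$ explicitly.
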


\begin{proof}
Applying Lemma \ref{E1} and Lemma \ref{ICA}, we find
\begin{align*}
\codim(\Phi^{\mb{P}^r,1}_{2,\ldots,r}(\mb{P}^r,\Span(b)))&\geq \sum_{i=b+2}^{r+2}{\binom{i}{2}}-(b+1)(r-b)\\
&\geq \binom{r+3}{3}-\binom{b+2}{3}-(b+1)(r-b),
\end{align*}
and equality holds when $b=r$. Let $A(b,r):=\binom{r+3}{3}-\binom{b+2}{3}-(b+1)(r-b)$. Taking the difference $A(r,b)-A(1,b)$, we get
\begin{align*}
-\frac{1}{6} (b - r) (b^2 + b r - 3 b + r^2 + 3 r - 4).
\end{align*}
Since $b^2 + b r - 3 b + r^2 + 3 r - 4>0$ for $b<r$, we only have to deal with the case $b=r$. In this case, we see that $h_{r,r-i+1}(1+i)$  achieves its unique minimum at $i=1$ over $1\leq i\leq r-1$, as in the proof of Proposition \ref{2r1}. 
\end{proof}

\begin{Lem}
\label{E1}
We have
\begin{align*}
\codim(\Phi^{\mb{P}^r,a}_{2,\ldots,r+a-1}(\mb{P}^r,\Span(r)))\geq \binom{r+2}{2}+\binom{r+3}{2}+\cdots+\binom{r+a+1}{2}.
\end{align*}
\end{Lem}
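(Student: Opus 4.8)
The plan is to deduce everything from the lower bound already established in Lemma \ref{KIA} and then reduce to an elementary statement about binomial coefficients. Here the degree sequence $2,\ldots,r+a-1$ means $d_i=i+1$ for $1\le i\le k$ with $k=r+a-2$, so Lemma \ref{KIA} gives
\[
\codim(\Phi^{\mb{P}^r,a}_{2,\ldots,r+a-1}(\mb{P}^r,\Span(r,r)))\ge \min\Big\{\sum_{j=1}^{a}h_{r,r-i_j+j}(i_j+1):1\le i_1<\cdots<i_a\le k\Big\},
\]
and it suffices to show that every admissible index tuple contributes a sum of at least $\sum_{j=1}^{a}\binom{r+j+1}{2}=\binom{r+2}{2}+\cdots+\binom{r+a+1}{2}$.

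First I would compute a single term in closed form by substituting $d=i_j+1$ and the span index $r-i_j+j$ into the definition of $h_{r,a}(d)$, obtaining
\[
h_{r,r-i_j+j}(i_j+1)=(i_j-j)\binom{r+j}{i_j}+\binom{r+j+1}{i_j+1}.
\]
Since $i_j\ge j$, the first summand is nonnegative, so this is at least $\binom{r+j+1}{i_j+1}$; thus the whole problem collapses to showing $\binom{r+j+1}{i_j+1}\ge\binom{r+j+1}{2}$ for each $j$. The crucial input is the range of the indices: because $i_1<\cdots<i_a\le k=r+a-2$ is strictly increasing, one has $j\le i_j\le r+j-2$, hence $i_j+1$ lies in the interval $[2,(r+j+1)-2]$. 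As $\binom{N}{s}$ (with $N=r+j+1$) is unimodal in $s$ and attains its minimum over $2\le s\le N-2$ at the two endpoints, both equal to $\binom{N}{2}$, the desired inequality follows. Summing over $j$ then yields the claimed bound, and the only hypothesis used is $r\ge 2$, which guarantees the interval $[2,r+j-1]$ actually contains $i_j+1$.

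The step I expect to be the (only, and minor) obstacle is pinning down the index range $i_j\le r+j-2$ cleanly, since this is exactly what forces $i_j+1\le (r+j+1)-2$ and lets the unimodality endpoint comparison apply; everything else is bookkeeping. It is worth emphasizing that the first term $(i_j-j)\binom{r+j}{i_j}$ is discarded entirely, so no genuine optimization over the choice of $i_1<\cdots<i_a$ is required: each term is bounded below by its $j$-dependent value $\binom{r+j+1}{2}$ independently of the others.
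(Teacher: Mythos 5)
Your proposal is correct and follows essentially the same route as the paper: apply Lemma \ref{KIA}, discard the nonnegative term $(i_j-j)\binom{r+j}{i_j}$, and use the index bound $i_j\le (r+a-2)-(a-j)=r+j-2$ together with unimodality of binomial coefficients to get $\binom{r+j+1}{i_j+1}\ge\binom{r+j+1}{2}$ termwise. (In fact your write-up states the final comparison with the correct inequality direction, whereas the paper's text has it reversed as an evident typo.)
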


\begin{proof}
We want to apply Lemma \ref{KIA}. However, instead of trying to determine the minimum, we can crudely approximate
\begin{align*}
F_{r,a}(2,\ldots,r+a-1) &=\min\{\sum_{j=1}^{a}(i_j-j)\binom{(i_j+1)+(r-i_j+j)-1}{i_j}+\binom{(i_j+1)+(r-i_j+j)}{i_j+1}\\&:S=\{i_1,i_2,\cdots,i_a\}\subset \{1,\ldots,r+a-2\},i_1<\cdots<i_a\}&
\end{align*}
by
\begin{align*}
&\min\{\sum_{j=1}^{a}\binom{(i_j+1)+(r-i_j+j)}{i_j+1}:S=\{i_1,i_2,\cdots,i_a\}\subset \{1,\ldots,r+a-2\},i_1<\cdots<i_a\}&=\\
&\min\{\sum_{j=1}^{a}\binom{1+r+j}{i_j+1}:S=\{i_1,i_2,\cdots,i_a\}\subset \{1,\ldots,r+a-2\},i_1<\cdots<i_a\}&.
\end{align*}
Since $i_j\leq (r+a-2)-(a-j)=r+j-2$, $\binom{1+r+j}{i_j+1}\leq \binom{1+r+j}{2}$. Therefore, the sum is bounded below by
\begin{align*}
\binom{r+2}{2}+\binom{r+3}{2}+\cdots+\binom{r+a+1}{2}.
\end{align*}
\end{proof}

\begin{Prop}
\label{UHS}
Suppose $r\geq d+1$ and $d\geq 2$. Let $\mc{F}_{r,d}\rightarrow W_{r,d}$ be the universal hypersurface, where $W_{r,d}\cong \mb{A}^{\binom{r+d}{r}}$ parameterizes hypersurfaces of degree $d$ in $\mb{P}^r$. Let $\mc{Z}\subset \mc{F}_{r,d}$ be the locus where the fiber of $\Fo(\mc{F}_{r,d}/W_{r,d})\rightarrow F_{r,d}$ has dimension greater than $r-1-d$. The unique component of largest dimension of $\mc{Z}$ is the locus of points $(X,p)\in \mc{F}_{r,d}\subset W_{r,d}\times\mb{P}^r$ of $\mc{F}_{r,d}$ where $p$ is a singular point of $X$. More importantly, 
\begin{enumerate}
\item
for $d<r-1$, the unique component of second largest dimension is the points $(X,p)$ where $p$ is an Eckardt point of $X$
\item
for $d=r-1$, the unique components of second and third largest components are
\begin{enumerate}
\item
the points $(X,p)$ where $X$ contains a plane through $p$
\item
the points $(X,p)$ where $p$ is an Eckardt point of $X$ 
\end{enumerate}
\end{enumerate}
\end{Prop}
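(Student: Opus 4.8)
The plan is to reduce $\mc{Z}$ to a single fiber of the universal hypersurface and then to recognize that fiber as a locus already analyzed by the span filtration.

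First I would use the $PGL_{r+1}$-action. Since $\mc{F}_{r,d}\to\mb{P}^r$ is an affine-space bundle and $\mc{Z}$ is $PGL_{r+1}$-invariant, $\codim(\mc{Z},\mc{F}_{r,d})$ equals the codimension of the fiber $\mc{Z}_p$ inside the fiber $(\mc{F}_{r,d})_p=\{F:F(p)=0\}$ over a fixed $p=[0:\cdots:0:1]$, and irreducible components correspond. Dehomogenizing and writing $F=f_1+\cdots+f_d$ (with constant term $0$ since $p\in X$), the fiber $(\mc{F}_{r,d})_p$ is identified with $\prod_{i=1}^d W_{r-1,i}$, the tuples $(f_1,\ldots,f_d)$. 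A line through $p$ in direction $v\in\mb{P}^{r-1}$ lies on $X$ exactly when $f_1(v)=\cdots=f_d(v)=0$, so the fiber of $\Fo(\mc{F}_{r,d}/W_{r,d})\to\mc{F}_{r,d}$ over $(X,p)$ is $V(f_1,\ldots,f_d)\subset\mb{P}^{r-1}$, and its dimension exceeds $r-1-d$ precisely on $\Phi^{\mb{P}^{r-1},1}_{1,2,\ldots,d}(\mb{P}^{r-1})$. Thus $\mc{Z}_p=\Phi^{\mb{P}^{r-1},1}_{1,2,\ldots,d}(\mb{P}^{r-1})$, and everything reduces to analyzing this locus.

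Next I would stratify by the linear form $f_1$. The closed stratum $\{f_1=0\}$ is a linear subspace of codimension $r$, lies entirely in $\mc{Z}_p$, is irreducible, and is exactly the locus where $p$ is a singular point of $X$. On the open stratum $f_1\neq 0$, I restrict to the hyperplane $H=V(f_1)\cong\mb{P}^{r-2}$; since the restriction maps $W_{r-1,i}\to W_{r-2,i}$ are linear surjections, the defining condition becomes $(f_2|_H,\ldots,f_d|_H)\in\Phi^{\mb{P}^{r-2},1}_{2,\ldots,d}(\mb{P}^{r-2})$. Fibering over the $r$-dimensional base $\{f_1\neq 0\}$ by affine spaces shows this stratum has codimension in $(\mc{F}_{r,d})_p$ equal to $\codim\Phi^{\mb{P}^{r-2},1}_{2,\ldots,d}(\mb{P}^{r-2})$, with components in bijection.

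It remains to find the maximal components of $\Phi^{\mb{P}^{r-2},1}_{2,\ldots,d}(\mb{P}^{r-2})$ and to translate them back. When $d=r-1$ this is exactly $\Phi^{\mb{P}^{r-2},1}_{2,\ldots,(r-2)+1}(\mb{P}^{r-2})$, so Proposition \ref{2r1} applies verbatim: the largest component is ``the common zero locus contains a line'' and the second is ``the quadric $f_2|_H$ vanishes.'' A line $L\subset H$ in the common zero locus sweeps out with $p$ a $2$-plane contained in $X$, so the first pulls back to ``$X$ contains a $2$-plane through $p$,'' while $f_2|_H=0$ is equivalent to $f_1\mid f_2$, i.e. to $p$ being an Eckardt point. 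When $d\leq r-2$ I would instead argue directly from Theorem \ref{FML} that the full-span stratum dominates, with Lemma \ref{KIA} giving a codimension bound of at least $\min_{1\leq i\leq d-1}\big[(i-1)\binom{r-1}{i}+\binom{r}{i+1}\big]$, whose unique minimum $\binom{r}{2}$ is attained at $i=1$, i.e. at $\{f_2=0\}$, the Eckardt locus. Finally I compare with the singular stratum: the singular locus has codimension $r$, the $2$-plane locus $\tfrac{(r-2)(r-3)}{2}+r$, and the Eckardt locus $\tfrac{r(r-3)}{2}+r$, so for $r\geq 4$ the singular stratum is strictly largest and the asserted ordering holds, the small cases $r=3$ being checked by hand.

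The main obstacle is the $d\leq r-2$ step: Proposition \ref{2r} covers only the boundary case $d=r-2$, so for $d<r-2$ I must redo the span-filtration comparison, using Lemma \ref{ICA} to bound the lower-span strata by the quantities $G_{r-2,1,b}(2,\ldots,d)$ and confirming (as in the $A(b,r)$ computation of Proposition \ref{2r}) that the full-span stratum $\{f_2=0\}$ uniquely minimizes the codimension. The secondary difficulty is purely bookkeeping: keeping the dictionary between the algebraic strata (vanishing of $f_1$, of $f_2|_H$, presence of a line in the common zero locus) and the geometric descriptions (singular point, Eckardt point, $2$-plane through $p$) consistent throughout.
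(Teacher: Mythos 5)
Your proposal is, in outline, the paper's own proof: fix $p$, identify the fiber $\mc{Z}_p$ with $\Phi^{\mb{P}^{r-1},1}_{1,\ldots,d}(\mb{P}^{r-1})$ via the expansion $F=f_1+\cdots+f_d$, split off the codimension-$r$ stratum $\{f_1=0\}$ (singular at $p$), fiber the locus $\{f_1\neq 0\}$ over $[f_1]\in(\mb{P}^{r-1})^{*}$ to reduce to $\Phi^{\mb{P}^{r-2},1}_{2,\ldots,d}(\mb{P}^{r-2})$, and invoke Proposition \ref{2r1} when $d=r-1$, with the same dictionary (line $\mapsto$ $2$-plane through $p$, vanishing quadric $\mapsto$ Eckardt point) and the same codimension counts $r$, $\binom{r}{2}$, $\tfrac{(r-2)(r-3)}{2}+r$.

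The one genuine divergence is the case $d\leq r-2$. The paper cites Proposition \ref{2r}, whose statement (consecutive degrees $2,\ldots,r$) matches your situation literally only when $d=r-2$ after the shift $r\mapsto r-2$; you correctly flag this and propose to rerun the span filtration for $d<r-2$. Your full-span computation via Lemma \ref{KIA} is correct, and in fact cleaner in this range: for $2\leq i\leq d-1\leq r-3$ one has $\binom{r}{i+1}\geq\binom{r}{2}$ and $(i-1)\binom{r-1}{i}>0$, so the minimum $\binom{r}{2}$ is attained uniquely at $i=1$. What you leave unfinished is the lower-span comparison via Lemma \ref{ICA}, which you name as the main obstacle. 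There is a short-circuit that removes this obstacle entirely and simultaneously justifies the paper's citation: since intersecting with one more hypersurface drops dimension by at most one,
\begin{align*}
\Phi^{\mb{P}^{r-2},1}_{2,\ldots,d}(\mb{P}^{r-2})\times\prod_{i=d+1}^{r-2}W_{r-2,i}\ \subset\ \Phi^{\mb{P}^{r-2},1}_{2,\ldots,r-2}(\mb{P}^{r-2}),
\end{align*}
and the unique maximal component $\{f_2=0\}$ of the right-hand side (Proposition \ref{2r}) lies inside the left-hand side and visibly has the product form. Hence any other component of the left-hand side is contained in a non-maximal component of the right-hand side and has strictly larger codimension, so the uniqueness statement of Proposition \ref{2r} transfers to every $2\leq d\leq r-2$ with no new span-filtration work. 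Finally, your caveat about $r=3$ is apt: there $d=2$, Proposition \ref{2r1} would be applied with ambient $\mb{P}^{1}$, and for quadrics the Eckardt locus and the $2$-plane locus coincide, so that case is genuinely degenerate (it also falls outside the hypothesis $d\geq 3$ of Theorem \ref{EVM}, where the proposition is actually used).
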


\begin{proof}
Fix $p\in \mb{P}^r$ and note the fiber over $p$ of the projection $\pi: \mc{F}_{r,d}\rightarrow \mb{P}^r$ is a hyperplane $\pi^{-1}(p)\subset W_{r,d}$. Let $\pi^{-1}(p)\cap \mc{Z}$ be $Z_p$. We see $Z_p\subset W_{r,d}$ is some closed subset. Given a hypersurface $X\subset\mb{P}^r$ of degree $d$ through $p$ given by a homogeous polynomial $F$ of degree $d$, we can take an affine chart $p\in \mb{A}^n$ where $p$ is the origin, and expand $F=F_1+\cdots+F_d$ around $p$. Here, $F_i$ is the degree $i$ part of $F$ once we restrict to $\mb{A}^n$. Lines through $p$ in $\mb{P}^r$ are parameterized by $\mb{P}^{r-1}$, and the lines through $p$ in $X$ are given by $\{F_1=\cdots=F_d=0\}$ in $\mb{P}^{r-1}$. See the proof of Theorem 2.1 in \cite{LowDegree} for more details and an approach that behaves better as we vary $p$. 

Since specifying the Taylor expansion $(F_1,\ldots,F_d)$ of $F$ around a point $p$ is equivalent to specifying $F$, $Z_p\cong \Phi^{r-1,1}_{1,\ldots,d}(\mb{P}^{r-1})$. The locus where $F_1$ is identically zero corresponds to a choice of hypersurface $X$ through $p$ that is singular at $p$, and this happens in codimension $r$. If we assume $F_1$ is not zero, then we want to restrict to the hyperplane cut out by $F_1$. Take the open subset $U\subset \Phi^{r-1,1}_{1,\ldots,d}(\mb{P}^{r-1})$ of tuples $(F_1,\ldots,F_d)$ where $F_1\neq 0$. There is a map $U\rightarrow (\mb{P}^{r-1})^{*}$ given by $(F_1,\ldots,F_d)$ mapping to $F_1$. Each fiber is isomorphic to 
\begin{align*}
\Phi^{r-2,1}_{2,\ldots,d}(\mb{P}^{r-1})\times \prod_{i=2}^{d}{W_{r-1,i}/W_{r-2,i}}.
\end{align*}
If $d<r-1$, by Proposition \ref{2r}, we find the unique componentof largest component of $\Phi^{r-2,1}_{2,\ldots,d}(\mb{P}^{r-1})$ is when the quadric is identically zero, which corresponds to when $F_2$ restricted to $F_1$ is zero. Equivalently, $p$ being an Eckardt point of $X$. 

If $d=r-1$, by Proposition \ref{2r1}, we find the unique component of largest dimension of $\Phi^{r-2,1}_{2,\ldots,d}(\mb{P}^{r-1})$ is when $(F_2,\ldots,F_d)$ all contain a line lying in $F_1$, which is equivalent to $X$ containing a plane through $p$. By Proposition \ref{2r1}, the unique component of second largest dimension of $\Phi^{r-2,1}_{2,\ldots,d}(\mb{P}^{r-1})$ is when the quadric is zero, which corresponds to the case where $p$ is an Eckardt point of $X$. 
\end{proof}

\subsection{Facts about general hypersurfaces}
To derive Theorem \ref{EVM} from Proposition \ref{UHS}, we require facts about hypersurfaces that are tedious but easy to check. In characteristic 0, many of these statements are immediate, as  smooth hypersurfaces all have finitely many Eckardt points (see the discussion under Corollary 2.2 in \cite{cubic}) and the Fermat hypersurface $\{X_0^d+\cdots+X_r^d=0\}\subset \mb{P}^r$ contains Eckardt points and planes, and is smooth when the characteristic does not divide $d$. 
\begin{Lem}
\label{BP1}
The following hold independently of the characteristic of our algebraically closed base field $K$. 
\begin{enumerate}
\item
There exists a smooth hypersurface of degree $d>1$ in $\mb{P}^r$ containing a 2-plane if and only if $r\geq 5$. 
\item
For $r\geq 2$ and $d\geq 2$, there exists a smooth hypersurface $X$ of degree $d$ in $\mb{P}^r$ and a hyperplane $H$ such that $X\cap H$ is a cone in $H\cong \mb{P}^{r-1}$. 
\end{enumerate}
\end{Lem}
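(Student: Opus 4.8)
The two statements are independent, and in both the only genuine content is positive characteristic: in characteristic $0$, or whenever the characteristic divides neither $d$ nor $d-1$, a general member of the relevant linear system is smooth and a Fermat-type form already realizes the desired configuration. The plan is therefore to write down explicit forms whose smoothness can be verified directly in every characteristic. For the ``only if'' direction of (1) I would argue by a gradient computation. Choosing coordinates so that the $2$-plane is $\Lambda=V(X_3,\ldots,X_r)$, any $F$ vanishing on $\Lambda$ can be written $F=\sum_{j=3}^{r}X_jG_j$ with $\deg G_j=d-1$, and along $\Lambda$ the gradient of $F$ is $(0,0,0,G_3|_\Lambda,\ldots,G_r|_\Lambda)$. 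Since every point of $\Lambda$ lies on $V(F)$, smoothness along $\Lambda$ forces the $r-2$ forms $G_j|_\Lambda$ of degree $d-1\geq 1$ to have empty common zero locus on $\Lambda\cong\mb{P}^2$; as any two hypersurfaces in $\mb{P}^2$ meet, this requires $r-2\geq 3$, i.e. $r\geq 5$.

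For the ``if'' direction of (1), take $r\geq 5$ and build $F$ so that $G_3|_\Lambda=X_0^{d-1}$, $G_4|_\Lambda=X_1^{d-1}$, $G_5|_\Lambda=X_2^{d-1}$; then $(X_0^{d-1},X_1^{d-1},X_2^{d-1})$ has no common zero on $\mb{P}^2$, so $V(F)$ is smooth along $\Lambda$ automatically. The remaining task is to add monomials in $X_3,\ldots,X_r$ (so that $\Lambda$ is still contained in $V(F)$) making $V(F)$ smooth away from $\Lambda$ as well. The hard part is carrying out this second smoothness check uniformly in the characteristic, where the usual Bertini argument is not available. Here I would invoke the decoupling idea behind Poonen's construction, already used elsewhere in this paper: choose the higher terms as a ``cyclic'' sum so that each $\partial_{X_i}F$ contains a monomial with unit coefficient, arranged so that the system $\partial_{X_0}F=\cdots=\partial_{X_r}F=0$ forces $X_0=\cdots=X_r=0$ even when the characteristic divides $d$ or $d-1$ (exactly the regime where the offending coefficients vanish). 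Concretely I expect a form such as $F=X_3X_0^{d-1}+X_4X_1^{d-1}+X_5X_2^{d-1}+(\text{decoupled monomials in }X_3,\ldots,X_r)$ to work, the verification being a finite, if tedious, case analysis together with a handful of small $(r,d)$ and small-characteristic cases checked by hand.

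For part (2) I would fix $H=V(X_r)$ and the point $v=[0:\cdots:0:1:0]$ and look for $F$ of the shape $F=G(X_0,\ldots,X_{r-2})+X_r A$, where $G$ involves neither $X_{r-1}$ nor $X_r$. Then $X\cap H=V(G)\subset H\cong\mb{P}^{r-1}$ is automatically a cone with vertex $v$, so the whole problem reduces to choosing $G$ and $A$ with $V(F)$ smooth. A direct computation of the partials shows that on $H$ a singular point must project to a singular point of $V(G)\subset\mb{P}^{r-2}$ or equal $v$; hence if $V(G)$ is smooth and $A(v)\neq 0$ (for instance if $A$ contains the monomial $X_{r-1}^{d-1}$, so that $\partial_{X_r}F(v)=A(v)\neq 0$) there is no singular point on $H$. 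Away from $H$ one again takes $A$ to be a decoupled form so that the only common zero of all partials is the origin. Existence of a smooth $G$ in $\mb{P}^{r-2}$ in every characteristic is supplied by the same construction (or by induction), and the very small cases $r=2,3$ I would treat directly; for $r=2$ this amounts to exhibiting a smooth plane curve meeting some line in a single point (a total inflection), which exists in all characteristics.

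In both parts the single real obstacle is verifying smoothness in characteristics dividing $d$ or $d-1$, where pure powers or their derivatives degenerate; everything else is either the elementary gradient bound of the first paragraph or routine bookkeeping. I expect the decoupled/cyclic monomial trick to dispose of this obstacle, reducing each claim to a finite and explicit computation.
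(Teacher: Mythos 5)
Your gradient argument for the ``only if'' half of (1) is correct and self-contained (the paper simply cites this fact), but the existence assertions in (1) and (2) --- the actual content of the lemma, since in characteristic dividing $d$ or $d-1$ no Fermat-type example is available --- are not proved in your proposal. You name candidate shapes and defer smoothness to ``a finite, if tedious, case analysis,'' but that verification is precisely the hard point: it is never specified, and it must be carried out uniformly in $d$, $r$ and the characteristic $p$, so it is not a finite check at all. Your own candidates illustrate the danger. The form $X_3X_0^{d-1}+X_4X_1^{d-1}+X_5X_2^{d-1}$ by itself is singular along the complementary plane $V(X_0,X_1,X_2)$, so the unspecified ``decoupled monomials'' carry the entire burden of the proof; and the natural symmetrization $\sum_{j=0}^{2}\bigl(X_{j+3}X_j^{d-1}+X_jX_{j+3}^{d-1}\bigr)$ in $\mb{P}^5$ acquires the singular point $[1:1:1:1:1:1]$ in characteristic $2$ when $d$ is even. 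In part (2) the situation is worse: away from $H$ the partials are $\partial_{X_i}G+X_r\partial_{X_i}A$, which mix $G$ and $A$, so no choice of $A$ alone ``decouples'' the system, and your base cases $r=2,3$ are likewise left unproved.

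What your proposal is missing is that no explicit equation is needed. The paper works inside the linear subspace $V\subset W_{r,d}$ of forms satisfying the geometric constraint (containing the fixed plane, respectively cutting out a cone on $H$), forms the incidence correspondence $\mc{X}\subset V\times\mb{P}^r$ of pairs $(F,p)$ with $p$ a singular point of $V(F)$, and bounds $\dim\mc{X}$ fiberwise over $\mb{P}^r$: singularity at $p$ imposes linear conditions of the form $c_I=0$ on the coefficients of $F$, and counting how many of these remain independent after restriction to $V$ is pure combinatorics of monomials, valid in every characteristic. Stratum by stratum ($p$ off the plane or hyperplane, $p$ on it, $p$ the distinguished vertex), the number of independent conditions exceeds the dimension of the stratum --- this is exactly where $r\geq 5$ enters in case (1) --- so $\dim\mc{X}<\dim V$, the projection $\mc{X}\to V$ is not surjective, and a general member of $V$ is smooth. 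If you insist on explicit forms, you must actually exhibit them and verify the Jacobian criterion in all characteristics; as written, your sketch replaces the core of the proof with an expectation.
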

\begin{proof}
We know that if $X\subset\mb{P}^r$ is a smooth hypersurface of degree $d>1$ contains a linear space $\Lambda$ of dimension $m$, then $r\geq 2m+1$, for example from Proposition 1 in the Appendix of \cite{APP}. 

To prove (2), Let $V\subset W_{r,d}$ be the linear subspace of forms whose expansion around $[1:0:\cdots:0]$ in the affine chart $X_{0}\neq 0$ is of the form $f_0+f_1+\cdots+f_d$, where $f_0=0$ and $x_1| f_i$ for $i=1,2,\ldots,d-1$. Here, $x_1:=\frac{X_1}{X_0}$ is one of the coordinates after dehomogenization. Let $\mc{X}\subset V\times \mb{P}^r$ be the incidence correspondence of pairs $(F,p)$, where $F\in V$ and $p\in \{F=0\}$ is a singular point. Here, our convention is that the zero homogenous form is singular at every point.

Consider the fiber of the map $\pi: \mc{X}\rightarrow \mb{P}^r$. If $p\notin \{X_1=0\}$, then we can assume that $p=[0:1:0:\cdots:0]$. We want to check that the $r+1$ conditions being singular at $p$ imposes on $W_{r,d}$ also imposes $r+1$ conditions on $V$. If we let $I$ denote a multi-index, a general element in $W_{r,d}$ can be written as $\sum_{I}c_I X^I$ and $V$ is cut out by the conditions that $c_I=0$ for all monomials $X^I$ divisible by $X_0$ but not $X_1$. Being singular at $p$ imposes the conditions that $c_I=0$ for $X_1^{d-1}$ dividing $X^I$. Since $d>1$, these impose $r+1$ independent conditions on $V$.

Suppose now $p\in \{X_1=0\}$, but $p\neq [1:0:\cdots:0]$, then we can assume $p=[0:\cdots:0:1]$, in which case being singular at $p$ imposes $r$ conditions on $V$. If $p=[1:0:\cdots:0]$, then being singular at $p$ imposes 1 condition on $V$. Combining the three cases, we see that $\dim(\mc{X})=\max\{\dim(V)-1,\dim(V)-1, \dim(V)-1\}$, so the projection $\mc{X}\rightarrow V$ cannot be surjective. 
\end{proof}

\begin{Lem}
\label{BP2}
The following hold independently of the characteristic of our algebraically closed base field $K$. 
\begin{enumerate}
\item
If $\binom{d+2}{2}>3(r-2)$, then a general hypersurface $X\subset\mb{P}^r$ of degree $d$ does not contain a 2-plane, and a general hypersurface containing a 2-plane contains exactly one 2-plane. 
\item
If $d\geq 3$ and $r\geq 3$, then a general hypersurface $X\subset\mb{P}^r$ of degree $d$ containing an Eckardt point contains only one Eckardt point. 
\end{enumerate}
\end{Lem}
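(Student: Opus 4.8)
The plan is to handle both parts with incidence correspondences and dimension counts, in the spirit of the arguments of Sections \ref{KEX}--\ref{MainArgument}: each ``exactly one'' assertion amounts to showing that a forgetful projection is generically injective, which I would establish by bounding the dimension of an auxiliary incidence variety of \emph{pairs}.

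\textbf{Part (1).} Let $\Sigma=\{(X,P): P\subset X\}\subset W_{r,d}\times \mb{G}(2,r)$. Projecting to $\mb{G}(2,r)$ realizes $\Sigma$ as the total space of a subbundle of the trivial bundle $W_{r,d}\times\mb{G}(2,r)$, the fibre over $P\cong\mb{P}^2$ being the forms vanishing on $P$, of codimension $h_P(d)=\binom{d+2}{2}$. Thus $\Sigma$ is irreducible with $\codim_{W_{r,d}}\pi_1(\Sigma)\geq \binom{d+2}{2}-\dim\mb{G}(2,r)=\binom{d+2}{2}-3(r-2)$, which is positive by hypothesis, so $\pi_1$ is not dominant and a general hypersurface contains no $2$-plane. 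For the second assertion I would bound the pair variety
\begin{align*}
\Sigma_2=\{(X,P,P'): P\neq P',\ P,P'\subset X\}.
\end{align*}
Its image in $\Sigma$ (forgetting $P'$) is the locus of $(X,P)$ for which $X$ contains a plane other than $P$. Once $\dim\Sigma_2<\dim\Sigma$, this image is a proper closed subset; then $\pi_1$ is injective on a dense open of $\Sigma$, whence $\dim\pi_1(\Sigma)=\dim\Sigma$ and a general member of $\pi_1(\Sigma)$ carries exactly one $2$-plane. To bound $\dim\Sigma_2$ I would stratify by $\dim(P\cap P')\in\{-1,0,1\}$: in each stratum the family of pairs $(P,P')$ has a standard Schubert dimension, and the forms vanishing on $P\cup P'$ have codimension $h_{P\cup P'}(d)=h_P(d)+h_{P'}(d)-h_{P\cap P'}(d)$. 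The disjoint stratum reproduces exactly the hypothesis $\binom{d+2}{2}>3(r-2)$, and the binding case, $P\cap P'$ a line, reduces the desired inequality to $\binom{d+1}{2}>r$, which follows from the hypothesis together with $d\geq 3$; the finitely many small cases $r\leq 4$ are checked by hand.

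\textbf{Part (2).} Let $E=\{(X,p): p\text{ an Eckardt point of }X\}\subset W_{r,d}\times\mb{P}^r$. Fixing $p$ and expanding $F=F_1+\cdots+F_d$ at $p$, the Eckardt condition is $F_0=0$ together with $F_1\mid F_2$; over the open locus $F_1\neq 0$ the latter cuts out a linear subspace of the $F_2$-space of codimension $\binom{r}{2}$, so the fibre of $E\to\mb{P}^r$ has codimension $1+\binom{r}{2}$ in $W_{r,d}$, is irreducible by homogeneity, and $E$ is irreducible of dimension $r+\dim W_{r,d}-1-\binom{r}{2}$. As in Part (1), ``exactly one Eckardt point'' is equivalent to $\dim E_2<\dim E$ for the pair variety $E_2=\{(X,p,p'):p\neq p',\ p,p'\text{ Eckardt points of }X\}$. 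The Eckardt condition at $p$ depends only on the $2$-jet of $F$ at $p$, and for $d\geq 3$ the evaluation map from $W_{r,d}$ to the product of the $2$-jet spaces at two distinct points is surjective, so the two conditions are independent and $\dim E_2=2r+\dim W_{r,d}-2\bigl(1+\binom{r}{2}\bigr)$. Then $\dim E-\dim E_2=\binom{r}{2}-r+1=\binom{r-1}{2}>0$ for $r\geq 3$, giving the claim; non-generic configurations of $(p,p')$, e.g.\ with $p'$ infinitely near $p$, contribute strictly smaller dimension and are absorbed by the same count.

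\textbf{Main obstacle.} The dimensions of the primary correspondences $\Sigma$ and $E$ are routine; the real content is the two pair estimates $\dim\Sigma_2<\dim\Sigma$ and $\dim E_2<\dim E$. For $\Sigma_2$ the delicate point is the stratification by $\dim(P\cap P')$ and checking that the Hilbert-function savings $h_{P\cap P'}(d)$ never outweigh the gain from the hypothesis, which is precisely where $d\geq 3$ is needed (indeed the conclusion already fails for $d=2$, $r=3$, where a reducible quadric carries two planes). For $E_2$ the delicate point is the independence of the Eckardt conditions at two points, i.e.\ the $2$-jet separation that again requires $d\geq 3$. Both reduce to elementary inequalities plus a short finite check in low dimension, matching the lemma's billing as ``tedious but easy.''
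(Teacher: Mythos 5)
Your Part (1) is essentially correct: the incidence variety $\Sigma$, the pair variety $\Sigma_2$, the stratification by $\dim(P\cap P')$, and the two inequalities it produces (the hypothesis $\binom{d+2}{2}>3(r-2)$ for the disjoint stratum, and $\binom{d+1}{2}>r$ for the stratum where $P\cap P'$ is a line) are sound, and your observation that $d\geq 3$ is genuinely needed there is accurate (the statement's hypothesis alone admits $(d,r)=(2,3)$, where a general quadric containing a plane is a union of two planes).

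Part (2), however, has a genuine gap: the claim that for $d\geq 3$ the joint evaluation map from $W_{r,d}$ to the product of the spaces of $2$-jets at two distinct points is surjective is \emph{false} for $d=3$ and $d=4$ (it holds only for $d\geq 5$). Take $p=[1:0:\cdots:0]$ and $p'=[0:\cdots:0:1]$: the $2$-jet at $p$ reads off the coefficients of monomials divisible by $X_0^{d-2}$ and the $2$-jet at $p'$ those divisible by $X_r^{d-2}$, so for $d=3$ the $r+1$ monomials $X_0^2X_r$, $X_0X_r^2$, $X_0X_rX_i$ ($1\leq i\leq r-1$) feed into both jets and the image has codimension $r+1$; for $d=4$ the monomial $X_0^2X_r^2$ is shared. (One can also see the failure already on the line through $p$ and $p'$: binary forms of degree at most $4$ cannot have independently prescribed $2$-jets at two points.) This is not a removable technicality. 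The worst locus in $E_2$ consists of pairs where each point lies on the tangent hyperplane of $X$ at the other point; for $d=3$ the two sets of $\binom{r+1}{2}$ linear conditions then overlap in $r-1$ conditions while that incidence costs only $2$ parameters, so this stratum has dimension $r-3$ \emph{more} than your claimed value of $\dim E_2$. The conclusion $\dim E_2<\dim E$ does survive, but for $d=3$ the correct gap is $\binom{r}{2}-2r+4=\tfrac{1}{2}(r^2-5r+8)$ rather than $\binom{r-1}{2}$, and proving it requires exactly the overlap analysis you skipped. That analysis is the content of the paper's proof: it works with triples $(F,H,p)$, where $H$ is the tangent hyperplane, so that for a fixed configuration the Eckardt conditions become vanishing of coefficients; it then stratifies the configurations $(H_1,p_1,H_2,p_2)$ into $\mb{P}GL(r+1)$-orbits and bounds the overlapping conditions orbit by orbit. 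Your closing remark that degenerate configurations of $(p,p')$ ``contribute strictly smaller dimension'' is backwards: all pairs of distinct points are projectively equivalent, so the relevant degenerations are of the tangent hyperplanes (invisible in your $E_2\subset W_{r,d}\times\mb{P}^r\times\mb{P}^r$), and they contribute strictly \emph{more} dimension than your generic count. Finally, note the paper applies this lemma with $d=r-1$, so $d=3,4$ (i.e.\ $r=4,5$) are precisely the cases that matter; your argument as written is only valid for $d\geq 5$.
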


\begin{proof}
The proof strategy is similar to the proof of Lemma \ref{BP1}. For example, suppose we wanted to prove (2). We can consider the incidence correspondence $\mc{I}\subset W_{r,d}\times(\mb{P}^r)^{*}\times \mb{P}^r$ consisting of triples $(F,H,p)$ such that $p\in H$ and $F$ restricted to $H$ vanishes at $p$ up to third order. By considering the projection to $(\mb{P}^r)^{*}\times \mb{P}^r$, we see
\begin{align*}
\dim(\mc{I})=\dim(W_{r,d})-\binom{r+1}{2}+(2r-1).
\end{align*}
We can also consider the incidence correspondence $\mc{J}\subset W_{r,d}\times(\mb{P}^r)^{*}\times \mb{P}^r\times (\mb{P}^r)^{*} \times \mb{P}^r$ consisting of tuples $(F,H_1,p_1,H_2,p_2)$ such that $p_1\neq p_2$, $p_i\in H_i$, and $F$ restricted to $H_i$ vanishes at $p_i$ up to third order.  

To see the projection $\mc{J}\rightarrow \mc{I}$ is not surjective, it suffices to show $\dim(\mc{J})<\dim(\mc{I})$. This type of analysis is also described at the beginning of the proof of Theorem 1.3 in \cite{Eckardt}. They assume characteristic zero throughout the paper, but the assumption on characteristic is not used here. The idea is that the image of $\mc{J}$ in $(\mb{P}^r)^{*}\times \mb{P}^r\times (\mb{P}^r)^{*} \times \mb{P}^r$ decomposes into the following $\mb{P}GL(r+1)$-orbits:
\begin{enumerate}
\item $p_1\notin H_2, p_2\notin H_1$
\item $p_1\in H_2, p_2\notin H_1$ (and similarly the locus obtained by interchanging the indices 1 and 2)
\item $p_1\in H_2, p_2\in H_1$ but $H_1\neq H_2$
\item $H_1=H_2$, $p_1\neq p_2$.
\end{enumerate}
We will do case (3), because it seemed the most worrisome to us. The proofs of the other cases are similar. Without loss of generality, we can assume $p_1=[1:0:\cdots:0]$, $H_1=\{X_1=0\}$, $p_2=[0:0:1:0:\cdots:0]$, $H_2=\{X_3=0\}$. Then, the fiber of $\mc{J}$ over $(p_1,H_1,p_2,H_3)$ consists of the polynomials $\sum_{I}c_I X^I$ such that if
\begin{enumerate}[(a)]
\item
$X_0^{d-2}$ divides $X^I$ but $X_1$ does not or 
\item
$X_2^{d-2}$ divides $X^I$ but $X_3$ does not
\end{enumerate}
then $c_I=0$. Each case gives $\binom{r+1}{2}$ conditions, but there might be overlapping conditions. The number of overlapping conditions is maximized for $d=3$, where it is $r-1$. So the locus of points $(F,p_1,H_1,p_2,H_2)$ in $\mc{J}$ where $(p_1,H_1,p_2,H_2)$ satisfy the conditions of case (3) has dimension
\begin{align*}
\dim(W_{r,d})-\left(2\binom{r+1}{2}-(r-1)\right)+\left(2r+2(r-2)\right). 
\end{align*}
Subtracting this from $\dim(\mc{I})$ yields $\binom{r}{2}-2r+4=\frac{1}{2}(r^2-5r+8)$, which is positive for $r\geq 2$. 
The condition that $r\geq 3$ comes from the part of $\mc{J}$ lying over case (1), and this is clearly necessary as the case $r=2$ corresponds to plane curves and Eckardt points are flex points. 
\end{proof}
\subsection{Completion of proof of Theorem \ref{EVM}}
Now we apply Proposition \ref{UHS} and Lemmas \ref{BP1} and \ref{BP2} to prove Theorem \ref{EVM}. 
\begin{proof}
If we let $\mc{Z}\subset \mc{F}_{r,d}$ be the locus where the fiber of $\Fo(\mc{F}_{r,d}/W_{r,d})\rightarrow F_{r,d}$ has fiber dimension greater than $r-d-1$ and $\pi: \mc{F}_{r,d}\rightarrow W_{r,d}$ be the projection, then $Z=\pi(\mc{Z})\cap U$. The case $d\geq r$ is trivial as $Z$ is precisely the hypersurfaces containing a line, so we only consider when $d<r$. 

Our strategy will be as follows:
\begin{enumerate}
\item 
Use Proposition \ref{UHS} to find the largest component(s) of $\mc{Z}$
\item
Use Lemmas \ref{BP1} and \ref{BP2} to find their  generic fiber dimensions under the map $\mc{Z}\to W_{r,d}$. 
\end{enumerate}

Let $\mc{F}_{r,d}^{\circ}\subset \mc{F}_{r,d}$ denote the open subset of pairs $(F,p)$ where $p\notin V(F,\partial_{X_0}F,\ldots,\partial_{X_r}F)$. If $d<r-1$, then part (1) of Proposition \ref{UHS} shows the unique component of largest dimension of $\mc{C}$ of $\mc{F}_{r,d}^{\circ}\cap \mc{Z}$ consists of pairs $(X,p)$ where $p$ is an Eckardt point of $X$. Part (2) of Lemma \ref{BP2} shows $\mc{C}$ is generically injective onto its image under $\pi$. Part (2) of Lemma \ref{BP1} shows $\pi(\mc{C})\cap U$ is nonempty, so $\pi(\mc{C})\cap U$  is also the unique component of largest dimension of $Z$. 

If $d=r-1$, then part (2) of Proposition \ref{UHS} shows the unique largest and second largest components $\mc{C}_1$ and $\mc{C}_2$ of $\mc{F}_{r,d}^{\circ}\cap \mc{Z}$ in terms of dimensions are respectively the points $(X,p)$ such that $X$ contains a 2-plane containing $p$ and the points $(X,p)$ where $p$ is an Eckardt point of $X$. Furthermore, we can directly compute $\dim(\mc{C}_1)-\dim(\mc{C}_2)=r-3$. As before, $\mc{C}_2$ is generically injective onto its image under $\pi$ and $\pi(\mc{C}_2)\cap U$ is nonempty. Part (1) of Lemma \ref{BP2} shows $\mc{C}_1$ maps onto its image with 2-dimensional fibers and part (1) of Lemma \ref{BP1} shows $\pi(\mc{C}_1)\cap U$ is nonempty for $r\geq 5$. 
\end{proof}

\section{Application: Hypersurfaces singular along a curve}
\label{singh}

We want to show, among the hypersurfaces with positive dimensional singular locus, the unique component of largest dimension consists of the hypersurfaces singular along a line. To prove this in characteristic 0, it will suffice to prove it in characteristic $p$ for one choice of $p$ by an application of upper semicontinuity. We will chose $p=2$ because it gives us the best bounds.

The obstacle to directly applying our general argument to the problem at hand is that the partial derivatives $\partial_{X_i}$ of a degree $\ell$ form $F$ do not vary independently as we vary $F$ in $W_{r,\ell}$. However, the key trick is given in \cite{Poonen} and used in \cite{Kaloyan} to resolve this problem. Let $K$ be characteristic 2 and for simplicity suppose $\ell=2d+1$ is odd. Then, when choosing our degree $\ell$ form $F$, we can add independent fudge factors $G_0,\ldots,G_r$, which are forms of degree $d$, and take the sum
\begin{align*}
F+X_0 G_0^2+\cdots+X_r G_r^2,
\end{align*}
so $\partial_{X_i}(F+X_0 G_0^2+\cdots+X_r G_r^2)=\partial_{X_i}F+G_i^2$. At least optically, it looks like the partial derivatives are more independent, and we will reproduce the same argument Slavov used in \cite{Kaloyan} to reduce the problem of when $F+X_0 G_0^2+\cdots+X_r G_r^2$ is singular along a curve to the problem of when the fudge factors $G_i$ all contain the same curve. As a technical remark, we need to consider the case of hypersurfaces singular along a rational normal curve separately because the bounds given by Theorem \ref{FML} were slightly too weak. Once we remove the locus of all the $G_i$'s containing a rational normal curve, we can repeat the proof of Theorem \ref{FML} to get slightly better bounds that will suffice. 

\subsection{Case of plane curves}
The case $r=2$ of plane curves is easy because everything can be computed explicitly. In the proof of Theorem \ref{oddl} and \ref{evenl}, our bounds will improve with increasing $r$, so it is helpful to be able to assume $r\geq 3$. Also, Claim \ref{ST2} below requires $r\geq 3$. It is an easy dimension computation to see:

\begin{Prop}
\label{psing}
For curves in $\mb{P}^2$, $\dim(\mc{S}_{1,K}^1)>\dim(\mc{S}_{1,K}\backslash \mc{S}_{1,K}^1)$ for all fields $K$ and all degrees $\ell$.
\end{Prop}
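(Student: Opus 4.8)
The plan is to identify $\mc{S}_{1,K}$ explicitly and then run a direct dimension count. I would work throughout over the algebraic closure $\ol{K}$, since the dimensions in question are unchanged by this base change and over $\ol{K}$ one may use perfectness; recall that $\mc{S}_{1,K}\subset W_{2,\ell}$ parameterizes degree $\ell$ forms $F$ whose plane curve $V(F)$ has positive-dimensional singular locus, and $\mc{S}_{1,K}^1$ those that are singular along a line. The first step is the structural claim that $V(F)$ has positive-dimensional singular locus if and only if $F$ is \emph{not} squarefree. One direction is immediate: if $F=G^2H$ with $\deg G\geq 1$ then every $\partial_{X_i}F$ is divisible by $G$, so $V(G)\subset \mathrm{Sing}(V(F))$. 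Conversely, if an irreducible curve $D=V(G)$ lies in $\mathrm{Sing}(V(F))$ then $G\mid F$; writing $F=G^mH$ with $G\nmid H$, the case $m=1$ would force $G\mid \partial_{X_i}G$ and hence $\partial_{X_i}G=0$ for all $i$ by degrees, which over the perfect field $\ol{K}$ makes $G$ a $p$-th power, contradicting irreducibility, so $m\geq 2$.

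Next I would realize the two loci as images of multiplication maps. The locus $\mc{S}_{1,K}^1$ is the image of $\mu_1\colon W_{2,1}\times W_{2,\ell-2}\to W_{2,\ell}$, $(L,H)\mapsto L^2H$; its generic fibre is the one-parameter rescaling $(L,H)\mapsto(cL,c^{-2}H)$, so $\dim \mc{S}_{1,K}^1=\binom{3}{2}+\binom{\ell}{2}-1=\binom{\ell}{2}+2$. A form in $\mc{S}_{1,K}\setminus \mc{S}_{1,K}^1$ is not squarefree yet has no repeated line factor, hence has a repeated irreducible factor of degree $\geq 2$ and so lies in the image of some $\mu_a\colon W_{2,a}\times W_{2,\ell-2a}\to W_{2,\ell}$, $(G,H)\mapsto G^2H$, with $2\leq a\leq \lfloor \ell/2\rfloor$. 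Each such image has dimension at most $\phi(a):=\binom{a+2}{2}+\binom{\ell-2a+2}{2}$.

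To finish I would maximize $\phi$. Since $t\mapsto \binom{t+2}{2}$ is convex, $\phi$ is convex in $a$, so its maximum over $2\leq a\leq \lfloor \ell/2\rfloor$ is attained at an endpoint; comparing $a=2$ with $a=\lfloor \ell/2\rfloor$ gives the maximum $\phi(2)=\binom{\ell}{2}+9-2\ell$. Hence $\dim(\mc{S}_{1,K}\setminus \mc{S}_{1,K}^1)\leq \binom{\ell}{2}+9-2\ell$, which is strictly smaller than $\dim \mc{S}_{1,K}^1=\binom{\ell}{2}+2$ precisely when $\ell\geq 4$. For $\ell\leq 3$ there is no admissible $a\geq 2$, so $\mc{S}_{1,K}\setminus \mc{S}_{1,K}^1$ is empty and the strict inequality holds vacuously; this covers all degrees.

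The only subtle point, and the one I would be most careful with, is the characteristic-$p$ half of the structural equivalence, where I must exclude an irreducible curve appearing in the singular locus with multiplicity one; this is exactly where perfectness of $\ol{K}$ (a form with all partials vanishing is a $p$-th power) enters. The remaining content is the elementary convexity bookkeeping, so I anticipate no serious difficulty beyond keeping the fibre-dimension count for $\mc{S}_{1,K}^1$ honest.
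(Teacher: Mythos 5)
Your proof is correct and takes essentially the same route as the paper's: the paper likewise stratifies the non-squarefree forms by the degree $d$ of the doubled factor, computes the dimension of the incidence correspondence $\{(F,G): G^2\mid F\}\subset W_\ell\times \mb{P}W_d$ as $\binom{d+2}{2}+\binom{\ell-2d+2}{2}-1$, and uses convexity in $d$ to reduce to comparing the endpoints $d=1$ and $d=\lfloor \ell/2\rfloor$. The only material you add is the explicit justification (via perfectness of $\ol{K}$ in characteristic $p$) that a positive-dimensional singular locus forces a repeated factor, a point the paper treats as immediate.
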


\subsection{Reduction to characteristic 2}
\label{SS61}
We will introduce an incidence correspondence over $\Spec(\mb{Z})$ as in \cite[Section 3.1]{Kaloyan}. Fix a degree $\ell\geq 3$ and dimension $r\geq 2$. Let $W_{\mb{Z}}:=\mb{Z}[X_0,\ldots,X_r]_{\ell}$. The notation $W_{r,\ell,\mb{Z}}$ would be more consistent with Definition \ref{WDEF}, but we drop $\ell$ and $r$ from the notation because they are fixed. Over the $\Spec(\mb{Z})$-scheme $W_{\mb{Z}}\cong \mb{A}^{\binom{r+\ell}{\ell}}_{\mb{Z}}$, we can construct a scheme $\mathscr{S} \subset W_{\mb{Z}}\times_{\Spec(\mb{Z})}\mb{P}^r_{\mb{Z}}=\mb{P}^r_{W_{\mb{Z}}}$, where over each point $\Spec(K)\rightarrow W_{\mb{Z}}$ corresponding to a homogenous polynomial $F$ over $K$, the fiber $\mathscr{S}\times_{\mb{P}W_{\mb{Z}}}\Spec(K)$ is the subscheme of $\mathbb{P}^r_K$ cut out by $(F,\partial_{X_0}F,\ldots,\partial_{X_r}F)$. 


By upper semicontinuity of fiber dimension, we can filter 
\begin{align*}
\mb{P}W_{\mb{Z}}=\mc{S}_{-1}\supset \mc{S}_0\supset \mc{S}_1\cdots \supset \mc{S}_i\supset\cdots ,
\end{align*}
 where $\mc{S}_i$ is the closed subset over which the fiber of $\mathscr{S}$ has dimension at least $i$. Put another way, $\mc{S}_i$ is the hypersurfaces that are singular along a subvariety of dimension at least $i$. 

\begin{Def}
Given $\ell$ and $r$ as above, let $\mc{S}_i\subset W_{\mb{Z}}$ be the locus of hypersurfaces that have a singular locus of dimension at least $i$. Let $\mc{S}_{i,K}$ denote $\mc{S}_i\times_{\Spec(\mb{Z})}\Spec(K)$ for a point $\Spec(K)\rightarrow \Spec(\mb{Z})$. 
\end{Def}

\begin{Def}
We let $\mc{S}^1_{i}\subset  \mc{S}_{i}$, where $\mc{S}^1_{i}$ is the locus of hypersurfaces singular along a dimension $i$ plane. As before, we let $\mc{S}_{i,K}^1$ denote the base change of $\mc{S}^1_{i}$ to a field $K$. 
\end{Def}
Recall:
\begin{Thm} [{\cite[Theorem 1.1]{Kaloyan}}]
\label{SlavovT}
Fix $i, r, p$. Then, there is an effectively computable $\ell_0$ in terms of $i,r,p$ such that $\dim(\mc{S}_{i,K}^1)>\dim(\mc{S}_{i,K}\backslash \mc{S}_{i,K}^1)$ for $\ell>\ell_0$ and all fields $K$ of characteristic $p$. 
\end{Thm}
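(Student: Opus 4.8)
The plan is to reconstruct Slavov's argument, whose three ingredients are a reduction to positive characteristic, Poonen's trick for decoupling the partial derivatives of a single form, and a dimension estimate for the decoupled problem that feeds into the model problem studied in the first part of this paper.

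I would first dispose of characteristic $0$. The loci $\mc{S}_i$ and $\mc{S}^1_i$ are defined over $\Spec(\mb{Z})$, the dimension of $\mc{S}^1_{i,K}$ is the same in every characteristic (it is $\dim\mb{G}(i,r)$ plus the dimension of the space of degree $\ell$ forms lying in the square of the ideal of a fixed $i$-plane $\Lambda$), and $\dim(\mc{S}_{i,K}\setminus\mc{S}^1_{i,K})$ is upper semicontinuous as one specializes from the generic point of $\Spec(\mb{Z})$ to a closed point. Hence the characteristic $0$ inequality follows from the inequality in any single positive characteristic, and from now on I would fix $p>0$ and work over an algebraically closed field of characteristic $p$.

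Next comes the decoupling. Writing a degree $\ell$ form as $\tilde{F}=F+X_0 G_0^{p}+\cdots+X_r G_r^{p}$ with $\deg(G_j)=(\ell-1)/p$ (the case $p\mid \ell-1$; other residues of $\ell$ modulo $p$ are handled by inserting suitable monomial prefactors, as with the odd/even split for $p=2$ in the text) gives $\partial_{X_j}\tilde{F}=\partial_{X_j}F+G_j^{p}$, so the partials acquire independently varying summands. The substitution $(F,G_0,\ldots,G_r)\mapsto \tilde F$ is surjective onto $W_{r,\ell}$ with all fibers of the constant dimension $(r+1)\binom{r+(\ell-1)/p}{(\ell-1)/p}$, so codimension estimates for the locus of $(F,\vec{G})$ with $V(\tilde F)$ singular along a dimension $\geq i$ subvariety transfer verbatim to $\mc{S}_{i,K}$. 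I would then run the incidence correspondence over the Hilbert scheme of dimension $i$ subvarieties $Y\subset\mb{P}^r$. Because $p\mid \ell-1$ forces $p\nmid\ell$, Euler's identity makes $\tilde F|_Y=0$ automatic once every decoupled partial vanishes on $Y$, so singularity along $Y$ becomes the statement that the $r+1$ forms $\partial_{X_j}F+G_j^{p}$ all vanish on $Y$. The content of Poonen's lemma is that, as $(F,\vec G)$ vary, these conditions are as independent as possible, and Slavov's reduction then identifies the governing stratum with the locus in which the fudge factors $G_0,\ldots,G_r$ themselves have common zero locus of dimension $\geq i$ — that is, with $\Phi^{\mb{P}^r,i+1}_{e,\ldots,e}(\mb{P}^r)$ for $e=(\ell-1)/p$, exactly the kind of locus bounded by Theorem \ref{FML}.

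With the problem reduced to $r+1$ forms of degree $e$ having a common dimension $\geq i$ vanishing, the Hilbert function lower bound of Lemma \ref{ARBH} shows that an $i$-plane, having the minimal Hilbert function among nondegenerate subvarieties of its dimension, yields the cheapest such vanishing conditions; the combinatorial comparison then shows that, even after accounting for the moduli of $Y$, the $i$-plane stratum $\mc{S}^1_{i,K}$ is the unique largest, giving the strict inequality once $\ell\gg 0$. I expect the main obstacle to be precisely the one flagged in the introduction: bounding $\mc{S}_{i,K}\setminus\mc{S}^1_{i,K}$ uniformly over all singular loci $Y$, which demands simultaneous control of the dimension of the Hilbert-scheme component carrying $Y$ and of the Hilbert function $h_Y$. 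Slavov's resolution degenerates each component of $Y$ to a union of linear spaces and forgets its scheme structure; this succeeds but only once the Hilbert-function gap between a general $Y$ and a linear space dominates the moduli of $Y$, which is what forces $\ell_0$ to be merely effectively computable and large, and what produces the dependence on $i,r,p$. Applying Theorem \ref{FML} in place of the crude degeneration is what yields the effective bounds of Theorem \ref{TsengT}, at the cost of restricting to $i=1$. The remaining work is to check that every estimate is uniform across fields of characteristic $p$; the reduction to characteristic $p$ and the decoupling itself are comparatively formal.
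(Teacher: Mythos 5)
First, a point of bookkeeping: the paper does not prove this statement at all --- it is quoted from Slavov (\cite[Theorem 1.1]{Kaloyan}) and used as a black box. The closest thing to a proof in the paper is its own effective analogue for $i=1$, namely Lemma \ref{RPC} together with Theorems \ref{oddl} and \ref{evenl}, which reproduce the ingredients of Slavov's argument. Measured against that template, your outline has the right skeleton (reduction to positive characteristic, Poonen-style decoupling of the partials, then a bound on tuples of forms with excess intersection), but it contains one step that is genuinely wrong as written, and it is exactly the step that both Slavov and this paper spend real effort on.

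The gap is your characteristic-$0$ reduction. You assert that $\dim(\mc{S}_{i,K}\setminus\mc{S}^1_{i,K})$ is upper semicontinuous over $\Spec(\mb{Z})$, so that the strict inequality in a single positive characteristic implies it in characteristic $0$. But $\mc{S}_i\setminus\mc{S}^1_i$ is only constructible, not closed: semicontinuity of fiber dimension applies to the closure $\overline{\mc{S}_i\setminus\mc{S}^1_i}$, and nothing prevents a component of that closure which is distinct from $\mc{S}^1_i$ over $\mb{Q}$ from specializing modulo $p$ \emph{into} the linear stratum $\mc{S}^1_{i,\overline{\mb{F}_p}}$. If that happens, the positive-characteristic input $\dim(\mc{S}^1_{i,\overline{\mb{F}_p}})>\dim(\mc{S}_{i,\overline{\mb{F}_p}}\setminus\mc{S}^1_{i,\overline{\mb{F}_p}})$, combined with the fact that $\dim(\mc{S}^1_{i,K})$ is independent of $K$ (Proposition \ref{S1d}), yields only the weak inequality $\dim(\mc{S}_{i,\overline{\mb{Q}}}\setminus\mc{S}^1_{i,\overline{\mb{Q}}})\leq\dim(\mc{S}^1_{i,\overline{\mb{Q}}})$, not the theorem. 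The paper flags precisely this failure mode in Section \ref{SS61} and repairs it (for $i=1$) in Lemma \ref{RPC}: one replaces $\mc{S}_1\setminus\mc{S}^1_1$ by the \emph{closed} locus $\mc{S}^2_1$ of hypersurfaces singular along a curve of degree at least $2$, proves that $\mc{S}^2_{1,K}$ does not contain $\mc{S}^1_{1,K}$ in any characteristic --- this is Claims \ref{STZ}, \ref{ST1} and \ref{ST2}, and requires exhibiting a form whose partials cut out a line scheme-theoretically, with degree exactly $1$ --- and only then applies semicontinuity, to $\mc{S}^2_1$ rather than to $\overline{\mc{S}_1\setminus\mc{S}^1_1}$. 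Slavov's Section 3.1 does the same; without this (or an equivalent) step your argument does not produce the strict inequality when $p=0$. A secondary omission: the decoupling statement (Lemmas \ref{lp1} and \ref{lp2}, Slavov's Lemma 6.3) is an inequality of rational point counts for a fixed reduced $Z$ over a finite field, not a statement about dimensions, so your claim that codimension estimates ``transfer verbatim'' through the substitution $(F,\vec{G})\mapsto\tilde{F}$ is not yet an argument; one needs the Lang--Weil estimate in both directions (Theorem \ref{LWb}, Lemma \ref{LWb2}) to pass from the codimension bound on $\Phi^{\mb{P}^r,i+1}_{e,\ldots,e}(\mb{P}^r,\cdot)$ to a probability bound and then back to $\dim(\mc{S}_{i,\overline{\mb{F}_p}}\setminus\mc{S}^1_{i,\overline{\mb{F}_p}})$, and your proposal never invokes it.
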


Roughly, the proof of Theorem \ref{SlavovT} bounds $\mc{S}_{i,K}$ by stratifying based on the degree of the variety contained in the singular locus and has a separate argument for the case of low degree and the case of high degree. We will use the argument from the high degree case for hypersurfaces singular along any curve other than a rational normal curve (which includes lines).

In order to keep our statements clean, we will restrict ourselves to the case $i=1$ and the case our base field has characteristic 0. We want to show $\dim(\mc{S}_{i,K}^1)>\dim(\mc{S}_{i,K}\backslash \mc{S}_{i,K}^1)$. Recall:
\begin{Prop}
[{\cite[Lemma 5.1]{Kaloyan}}]
\label{S1d}
We have $\codim(\mc{S}_{1,K}^1)=\ell r - 2 r + 3$ for all fields $K$. 
\end{Prop}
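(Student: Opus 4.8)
The plan is to realize $\mc{S}_{1,K}^1$ as the image of an incidence correspondence over the Grassmannian of lines, compute its dimension by a fibre count, and then establish that the correspondence maps generically finitely onto $\mc{S}_{1,K}^1$. Since codimension is unchanged by base field extension, I may assume $K=\bar{K}$. First I would form the incidence variety
\begin{align*}
I:=\{(F,L)\in W_{r,\ell}\times \mb{G}(1,r): V(F)\text{ is singular along }L\},
\end{align*}
and consider the two projections to $W_{r,\ell}$ and to $\mb{G}(1,r)$. The projection $I\to\mb{G}(1,r)$ is surjective with fibres that are linear subspaces of $W_{r,\ell}$, and since $PGL_{r+1}$ acts transitively on lines these fibres all have the same dimension. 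Thus $I$ is a vector bundle over the irreducible variety $\mb{G}(1,r)$, hence irreducible of dimension $\dim\mb{G}(1,r)+(\text{fibre dimension})=(2r-2)+(\text{fibre dimension})$, and $\mc{S}_{1,K}^1$ is exactly its image in $W_{r,\ell}$.

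The key computation is the codimension of a single fibre. Fix $L=V(X_2,\ldots,X_r)$ with homogeneous ideal $I_L=(X_2,\ldots,X_r)$. Because $L$ is a smooth linear space, $V(F)$ is singular along $L$ if and only if $F$ vanishes to order two along $L$, i.e. $F\in I_L^2$; writing $F=F_0+F_1+F_{\geq 2}$ by $I_L$-degree, the conditions $F|_L=0$ and $\partial_i F|_L=0$ for all $i$ reduce precisely to $F_0=0$ and to the vanishing of the binary coefficients in $F_1$, which is exactly membership in $I_L^2$, and this reasoning is independent of the characteristic. The monomials of $W_{r,\ell}$ lying outside $I_L^2$ are those of $I_L$-degree at most $1$: the binary forms of degree $\ell$ in $X_0,X_1$ ($\ell+1$ of them) together with $X_i$ times a binary form of degree $\ell-1$ for $i=2,\ldots,r$ (that is, $(r-1)\ell$ of them). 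Hence each fibre has codimension $\ell r+1$ in $W_{r,\ell}$, so $\dim I=\dim W_{r,\ell}-(\ell r+1)+(2r-2)$ and therefore $\codim(\mc{S}_{1,K}^1)\geq \ell r-2r+3$, with equality precisely when $I\to\mc{S}_{1,K}^1$ is generically finite.

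The hard part, and really the only substantive point, is this generic finiteness. By upper semicontinuity of fibre dimension it suffices to exhibit a single degree $\ell$ form $F$ that is singular along exactly one line, and this must be done in every characteristic (in particular when $\mathrm{char}\,K\mid\ell$, where the Fermat form degenerates). For $r\geq 3$ I would take $F=G(X_2,\ldots,X_r)$, the cone with vertex $L$ over a smooth degree $\ell$ hypersurface $V(G)\subset M:=V(X_0,X_1)\cong\mb{P}^{r-2}$; such a $G$ exists over any algebraically closed field. Then $\partial_{X_0}F=\partial_{X_1}F=0$ and $\partial_{X_i}F=\partial_{X_i}G$ for $i\geq 2$ involve only $X_2,\ldots,X_r$, so a point of $\mathrm{Sing}(V(F))$ off $L$ would force a point of $\mathrm{Sing}(V(G))=\emptyset$; hence $\mathrm{Sing}(V(F))=L$ and the fibre of $I\to W_{r,\ell}$ over $F$ is the single point $L$. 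This forces the general fibre to be finite, giving $\codim(\mc{S}_{1,K}^1)=\ell r-2r+3$. The remaining case $r=2$ is an elementary direct computation (curves singular along a line are exactly those of the form $L^2G$), which also returns $2\ell-1=\ell r-2r+3$.
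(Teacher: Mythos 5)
Your proposal is correct, but there is nothing in the paper itself to compare it against: the paper does not prove Proposition \ref{S1d}, it imports it wholesale from Slavov (\cite[Lemma 5.1]{Kaloyan}), where it is established by essentially the same incidence-correspondence count over the Grassmannian that you carry out. Your write-up handles the two points where characteristic could intervene exactly right: you impose the vanishing of $F$ itself along $L$ in addition to that of its partials (this is what identifies the fibre of $I\to\mb{G}(1,r)$ with the degree-$\ell$ piece of $I_L^2$, of codimension $\ell r+1$, even when $\mathrm{char}\,K$ divides $\ell$ and Euler's relation is unavailable), and for generic finiteness you replace the Fermat hypersurface by a cone over a smooth $V(G)\subset\mb{P}^{r-2}$, whose singular locus is set-theoretically the vertex line in every characteristic, so the fibre of $I\to W_{r,\ell}$ over it is a single point and upper semicontinuity of fibre dimension on the irreducible variety $I$ finishes the argument. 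The one assertion you use without justification is the existence of a smooth hypersurface of every degree in $\mb{P}^{r-2}$ over an arbitrary algebraically closed field; this is classical but not completely trivial precisely when $\mathrm{char}\,K\mid\ell$ (the case you flag), so it deserves a reference or an explicit equation. Two minor remarks: your separate treatment of $r=2$ is unnecessary, since the cone construction degenerates gracefully there (for $F=X_2^\ell$ the singular locus is exactly $V(X_2)$, in any characteristic); and the paper measures codimension inside the affine space $W_{r,\ell}$ with the zero form counted as singular everywhere, which changes nothing since every locus involved is a cone, invariant under scaling.
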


Now, we want to apply upper semicontinuity to show that $\dim(\mc{S}_{1,\overline{\mb{F}_2}}^1)>\dim(\mc{S}_{1,\overline{\mb{F}_2}}\backslash \mc{S}_{1,\overline{\mb{F}_2}}^1)$ implies $\dim(\mc{S}_{1,\overline{\mb{Q}}}^1)>\dim(\mc{S}_{1,\overline{\mb{Q}}}\backslash \mc{S}_{1,\overline{\mb{Q}}}^1)$. In fact, if we just wanted $\dim(\mc{S}_{1,\overline{\mb{Q}}}^1)\geq \dim(\mc{S}_{1,\overline{\mb{Q}}}\backslash \mc{S}_{1,\overline{\mb{Q}}}^1)$, this would follow from upper semicontinuity of fiber dimension applied to $\overline{\mc{S}_{1}\backslash \mc{S}_{1}^1}$. As it stands, we have to worry about the case where a component of $\overline{\mc{S}_{1}\backslash \mc{S}_{1}^1}$ is distinct from $\mc{S}_{1}^1$ over the generic fiber, but limits to $\mc{S}_{1,\overline{\mb{F}_2}}^1$ over the prime $2$. 

\begin{Lem}
\label{RPC}
If $p$ is a prime and $\dim(\mc{S}_{1,\overline{\mb{F}_p}}^1)>\dim(\mc{S}_{1,\overline{\mb{F}_p}}\backslash \mc{S}_{1,\overline{\mb{F}_p}}^1)$, then we also have $\dim(\mc{S}_{1,K}^1)>\dim(\mc{S}_{1,K}\backslash \mc{S}_{1,K}^1)$ for algebraically closed fields of almost all characteristics, including characteristic zero. 
\end{Lem}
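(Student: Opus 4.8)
The plan is to use that $\mc{S}_1$ and $\mc{S}^1_1$ are both closed subschemes of $W_{\mb{Z}}$ defined over $\Spec(\mb{Z})$, so that $\mc{R}:=\mc{S}_1\setminus\mc{S}^1_1$ — a locally closed subscheme, closed in the open set $U:=W_{\mb{Z}}\setminus\mc{S}^1_1$ — is of finite type over $\mb{Z}$ with fiber $\mc{R}_K=\mc{S}_{1,K}\setminus\mc{S}^1_{1,K}$ over every field $K$. By Proposition \ref{S1d} the codimension $\ell r-2r+3$ of $\mc{S}^1_{1,K}$ is independent of $K$, so $\mc{S}^1_1\to\Spec(\mb{Z})$ has constant fiber dimension, which I will call $D$, and $\dim(\mc{S}^1_{1,K})=D$ for every $K$. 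The goal reduces to proving $\dim(\mc{R}_{\ol{\mb{Q}}})<D$: once this is known, for all but finitely many primes $q$ each of the finitely many irreducible components of $\mc{R}$ has $\ol{\mb{F}_q}$-fiber dimension equal to its generic value, so $\dim(\mc{R}_{\ol{\mb{F}_q}})=\dim(\mc{R}_{\ol{\mb{Q}}})<D=\dim(\mc{S}^1_{1,\ol{\mb{F}_q}})$, giving the conclusion in almost all characteristics including $0$.

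Suppose for contradiction that $\dim(\mc{R}_{\ol{\mb{Q}}})\geq D$. Together with the weak bound $\dim(\mc{R}_{\ol{\mb{Q}}})\leq D$, which follows from upper semicontinuity of fiber dimension applied to $\ol{\mc{S}_1\setminus\mc{S}^1_1}$ as noted above, there is an irreducible component $Y$ of $\mc{R}$ with $\dim(Y_{\ol{\mb{Q}}})=D$. Let $\mc{V}=\ol{Y}\subset W_{\mb{Z}}$ be its closure, an integral $\mb{Z}$-scheme with $\dim(\mc{V}_{\ol{\mb{Q}}})=D$ and $\dim(\mc{V})=D+1$. Since the condition ``$\dim(\mathrm{Sing})\geq 1$'' is invariant under rescaling $F$, both $\mc{S}_1$ and $\mc{S}^1_1$ are cones; the connected group $\mb{G}_m$ then fixes each of the finitely many components of $\mc{R}$, so $Y$ and $\mc{V}$ are cones through the origin. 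As the origin is a $\mb{Z}$-point, $\mc{V}\to\Spec(\mb{Z})$ is surjective, and upper semicontinuity gives $\dim(\mc{V}_{\ol{\mb{F}_p}})\geq\dim(\mc{V}_{\ol{\mb{Q}}})=D$; since $\mc{V}\subset\mc{S}_1$ forces $\mc{V}_{\ol{\mb{F}_p}}\subset\mc{S}_{1,\ol{\mb{F}_p}}$, which has dimension $D$, we conclude $\dim(\mc{V}_{\ol{\mb{F}_p}})=D$.

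Now $Y_{\ol{\mb{F}_p}}=\mc{V}_{\ol{\mb{F}_p}}\setminus\mc{S}^1_{1,\ol{\mb{F}_p}}$, and I split into two cases. If $Y_{\ol{\mb{F}_p}}\neq\emptyset$, then since $Y$ is irreducible of dimension $D+1$ dominating $\Spec(\mb{Z})$, every nonempty fiber has dimension at least $D$; as $Y_{\ol{\mb{F}_p}}\subset\mc{S}_{1,\ol{\mb{F}_p}}\setminus\mc{S}^1_{1,\ol{\mb{F}_p}}$ this contradicts the hypothesis $\dim(\mc{S}_{1,\ol{\mb{F}_p}}\setminus\mc{S}^1_{1,\ol{\mb{F}_p}})<D$. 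Hence $Y_{\ol{\mb{F}_p}}=\emptyset$, i.e. $\mc{V}_{\ol{\mb{F}_p}}\subset\mc{S}^1_{1,\ol{\mb{F}_p}}$; because $\mc{S}^1_{1,\ol{\mb{F}_p}}$ is irreducible of dimension $D$ (it is the image of the Grassmannian of lines together with the linear spaces of forms singular along them) and $\dim(\mc{V}_{\ol{\mb{F}_p}})=D$, this forces $(\mc{V}_{\ol{\mb{F}_p}})_{\mathrm{red}}=\mc{S}^1_{1,\ol{\mb{F}_p}}$. This is exactly the ``absorption'' scenario warned about before the statement, and excluding it is the heart of the matter.

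To rule out absorption I would track the singular curves. At the generic point of $Y$ the form is singular along a curve with no line component, so its one-dimensional singular locus $C$ has $\deg C\geq 2$; spreading $C$ out over $\mc{V}$ inside the relative Hilbert scheme and taking the flat limit along a one-parameter specialization to a general point $[F_0]$ of $\mc{S}^1_{1,\ol{\mb{F}_p}}=(\mc{V}_{\ol{\mb{F}_p}})_{\mathrm{red}}$ yields a degree-$\geq 2$ curve $C_0$ with $C_0\subset\mathrm{Sing}(F_0)$ scheme-theoretically (the containment $\partial_{X_i}F\in I_C$ passes to the limit). As $[F_0]$ is singular along a single reduced line $L_0$, the support of $C_0$ is $L_0$, so $C_0$ is a degree-$\geq 2$ thickening of $L_0$ lying in $\mathrm{Sing}(F_0)$. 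The remaining ingredient is a dimension count in characteristic $p$, carried out uniformly in the spirit of the ``facts about general hypersurfaces'' of the previous section: the locus of forms in $\mc{S}^1_{1,\ol{\mb{F}_p}}$ whose singular scheme contains a thickening of their line of degree $\geq 2$ is a proper subvariety of dimension $<D$, since a general form singular along a line is singular precisely along the reduced line. This contradicts $(\mc{V}_{\ol{\mb{F}_p}})_{\mathrm{red}}=\mc{S}^1_{1,\ol{\mb{F}_p}}$, and the proof is complete. I expect this final dimension estimate to be the main obstacle; everything else is formal semicontinuity together with the cone structure, which is what guarantees that the closure $\mc{V}$ meets the fiber over $p$ in the first place.
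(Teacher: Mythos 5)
Your formal skeleton is correct, and it is essentially the paper's own argument in a different packaging. The paper avoids your component-by-component closure analysis by introducing a single closed subset $\mc{S}^2_1\subset W_{\mb{Z}}$ via the incidence correspondence with $\overline{\RHilb^1_{\mb{P}_{\mb{Z}}^r}}\setminus\mb{G}(1,r)$ (so it parameterizes forms singular along some curve of degree at least $2$, crucially including thickenings of lines), observes $\overline{\mc{S}_1\setminus\mc{S}^1_1}\subset\mc{S}^2_1$, and runs semicontinuity on $\mc{S}^2_1\to\Spec(\mb{Z})$; the properness of the bounded-degree Hilbert scheme over $\mb{Z}$ there does the work of your cone trick and of your flat-limit-of-curves argument. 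Your identification of the absorption scenario as the crux matches the paper's discussion preceding the lemma, and your handling of it via the $\mb{G}_m$-action and Krull-type fiber-dimension bounds is fine.

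The genuine gap is exactly the step you defer to the end: the claim that a general member of $\mc{S}^1_{1,\overline{\mb{F}_p}}$ has scheme-theoretic singular locus equal to the reduced line, so that no thickening of $L_0$ of degree $\geq 2$ can lie inside its singular scheme. This is not a routine general-position dimension count; it is the mathematical heart of the paper's proof (Claims \ref{STZ}, \ref{ST1} and \ref{ST2}), and it requires three ingredients: the set-theoretic statement that the partials of a general such form cut out exactly $L$ (quoted in the paper from Lemma 7.4 of \cite{Kaloyan} --- you also use this silently when you assert that the support of $C_0$ is $L_0$); an upper-semicontinuity-of-degree argument on the scheme cut out by $\mc{I}_L^2$, showing that the forms whose singular scheme has degree one along $L$ form an open subset of the space of forms singular along $L$; and, crucially, an explicit construction showing this open set is nonempty in characteristic $p$ --- the paper takes $F=QH_1\cdots H_{\ell-2}$ with $Q$ a cone over a smooth quadric in the variables cutting out $L$, whose partial derivatives generate the ideal of $L$ exactly. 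The nonemptiness is where the characteristic enters for real: the statement is false for $r=2$ in characteristic $2$, where every form $X_0^2G$ is singular along the double line $V(X_0^2)$, and $p=2$ is precisely the characteristic the application (Theorems \ref{oddl} and \ref{evenl}) feeds into this lemma; this is why the paper's Claim \ref{ST2} needs $r\geq 3$. So ``a general form singular along a line is singular precisely along the reduced line'' cannot be waved through as generic behavior; supplying the missing argument amounts to reproducing Claims \ref{ST1} and \ref{ST2}, and without it the contradiction in your absorption case is not established.
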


\begin{proof}
Consider the incidence correspondence
\begin{center}
\begin{tikzcd}
& \tilde{\mc{S}_1} \arrow[swap, ld, "\pi_1"] \arrow[dr, "\pi_2"] &\\
\mc{S}_1 & & \overline{\RHilb^1_{\mb{P}_{\mb{Z}}^r}}
\end{tikzcd}
\end{center}
where $\Hilb^1_{\mb{P}_{\mb{Z}}^r}$ is the Hilbert scheme of curves in $\mb{P}^r_{\mb{Z}}$, and $\overline{\RHilb^1_{\mb{P}_{\mb{Z}}^r}}$ is the closure of the open sublocus consisting of integral curves. Here, $\tilde{\mc{S}_1}$ consists of pairs $(F,[C])$ of a degree $\ell$ form $F$ and a curve $C$ such that the partial derivatives of $F$ vanish on $C$. More precisely, we can apply Lemma \ref{CFF} to the universal family over $\overline{\RHilb^1_{\mb{P}_{\mb{Z}}^r}}$ and to the family $\mc{S}_1\rightarrow W_{\mb{Z}}$. 

By definition, $\mc{S}^1_1=\pi_1(\pi_2^{-1}(\mb{G}(1,r)))$. Let $\mc{S}^2_1:=\pi_1(\pi_2^{-1}(\overline{\RHilb^1_{\mb{P}_{\mb{Z}}^r}}\backslash\mb{G}(1,r)))$ be the degree $\ell$ forms whose corresponding hypersurfaces are singular along a curve of degree greater than 1. Crucially, $\mc{S}^2_1$ contains, for example, hypersurfaces singular along a scheme supported on a line with multiplicity 2. Since $\mc{S}_1^2$ is closed, $\mc{S}^2_1\supset \overline{\mc{S}_{1}\backslash \mc{S}_{1}^1}$. 

\begin{Claim}
\label{STZ}
For any field $K$, $\mc{S}^2_{1,K}:=\mc{S}^2_1\times_{\Spec(\mb{Z})}\Spec(K)$ does not contain $\mc{S}^1_{1,K}$.
\end{Claim}
First, if we assume Claim \ref{STZ}, then Lemma \ref{RPC} follows from upper semicontinuity applied to $\mc{S}^2_1\rightarrow \Spec(\mb{Z})$ as the fiber dimensions of $\mc{S}^1_1\rightarrow \Spec(\mb{Z})$ are constant by Proposition \ref{S1d} and $\mc{S}^1_{1,K}$ is irreducible for all $K$. To show Claim \ref{STZ}, it suffices to find a single polynomial $F\in K[X_0,\ldots,X_r]$ of degree $\ell$ such that the ideal generated by $(\partial_{X_0} F,\ldots,\partial_{X_r} F)$ scheme theoretically cuts out a curve in $\mb{P}^r$ of degree 1, so Claim \ref{ST1} suffices.

\begin{Claim}
\label{ST1}
Fix a line $L\subset \mb{P}_K^r$ and let $V\subset W_{\mb{Z}}\times_{\Spec(\mb{Z})}\Spec(K)$ be the subvector space of degree $\ell$ forms over $K$ that are singular along $L$. Then, there is a dense open $U\subset V$ consisting of degree $\ell$ forms $F$ where $(F,\partial_{X_0} F,\ldots,\partial_{X_r} F)$ scheme theoretically cut out a curve of degree 1.  
\end{Claim}

To see Claim \ref{ST1}, we first note Lemma 7.4 in \cite{Kaloyan} shows there is a dense open subset $U_1\subset V$ consisting of forms $F$ whose partial derivatives cut out $L$ set-theoretically. (Lemma 7.4 in \cite{Kaloyan} assumes $\ell\geq 3$, though the case $\ell=2$ is also true, for example from the proof of Claim \ref{ST2} below.) We now focus our attention around $L$. Let $\mc{I}_L$ be the ideal sheaf of the line $L$ and $L'\supset L$ be the scheme cut out by $\mc{I}_L^2$. Let $X\rightarrow V$ be the family $X\subset \mb{P}^r\times V$, where each fiber of $X$ over $[F]\in V$ is the scheme cut out in $\mb{P}^r_K$ by the partials  $(F,\partial_{X_0} F,\ldots,\partial_{X_r} F)$. (In the notation at the beginning of section \ref{SS61}, $X$ is $(\mathscr{S}\times_{\Spec(\mb{Z})}\Spec(K))|_{V}$.) 

We consider the intersection $X\cap (L'\times_K V)$ and apply upper semicontinuity of degree to the family $X\cap (L'\times_K V)\rightarrow V$ to see the locus $U_2\subset V$ over which each fiber of $X\cap (L'\times_K V)\rightarrow V$ is degree 1 is open in $V$. To get upper semicontinuity of degree of $X\cap (L'\times_K V)\rightarrow V$, we are using that each fiber is of the same dimension. To prove it in our case, for $p\in V$ and slice $X\cap (L'\times_K V)$ by a general hyperplane $H\subset \mb{P}^r$ such that $X|_p\cap H$ has length equal to $\deg(X|_p)$. Then, since $H$ cannot contain the support of $L$, $X\cap (H\cap L'\times_K V)\rightarrow V$ is a finite morphism, and we can apply upper semicontinuity of rank of a coherent sheaf to the pushforward of the structure sheaf of $X\cap (H\cap L'\times_K V)$ to $V$ to conclude. 

Finally, if we knew $U_2$ were nonempty, then $U_1\cap U_2$ would satisfy the conditions of Claim \ref{ST1}. 
\begin{Claim}
\label{ST2}
The set $U_2\subset V$ is nonempty.
\end{Claim}
Without loss of generality, suppose the ideal sheaf of $L$ is generated by $(X_0,\ldots,X_{r-2})$. If $r\geq 3$ and $Q$ is a degree 2 form in $X_0,\ldots,X_{r-2}$ that cuts out a smooth quadric in $\mb{P}^{r-2}$. Here, we are using the fact that we can assume $K$ is algebraically closed. Since the partial derivatives of $Q$ are linear, the partial derivatives of $Q$ generate $(X_0,\ldots,X_{r-2})$ exactly. If we consider $Q$ as a form in $X_0,\ldots,X_r$ that ignores the last two varibles, the partial derivatives of $Q$ generate exactly the ideal sheaf of a line. 

If $\ell=2$, then we are done. Otherwise, pick general linear forms $H_1,\ldots,H_{\ell-2}$ that all intersect $L$ properly. Then, the product $Q H_1\cdots H_{\ell-2}$ is in  $U_2$. 

Note that in the proof we use that $r>2$, since in the case $r=2$ and characteristic 2, then the singular locus of $V(X_0^2 G)$ for $G$ a degree $\ell-2$ form has $V(X_0^2)$ in the singular locus. Specifically, the proof above fails when $r=2$ since we can't pick a smooth quadric in only one variable. 
\end{proof}

From Theorems \ref{oddl} and \ref{evenl}, we have:

\begin{Thm}
\label{alll}
For $\ell\geq 7$ or $\ell=5$, $\dim(\mc{S}_{1,\overline{\mb{F}_2}}^1)>\dim(\mc{S}_{1,\overline{\mb{F}_2}}\backslash \mc{S}_{1,\overline{\mb{F}_2}}^1)$, so in particular $\dim(\mc{S}_{1,K}^1)>\dim(\mc{S}_{1,K}\backslash \mc{S}_{1,K}^1)$  for algebraically closed fields $K$ of characteristic 0 or of characteristic $p$ for all but finitely many $p$. 
\end{Thm}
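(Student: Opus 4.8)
The plan is to obtain Theorem \ref{alll} by treating the odd and even degree cases separately and then invoking Lemma \ref{RPC}. Concretely, I expect Theorem \ref{oddl} to establish $\dim(\mc{S}_{1,\overline{\mb{F}_2}}^1)>\dim(\mc{S}_{1,\overline{\mb{F}_2}}\backslash \mc{S}_{1,\overline{\mb{F}_2}}^1)$ for odd $\ell\geq 5$ and Theorem \ref{evenl} to establish it for even $\ell\geq 8$; the union of these ranges is exactly $\ell=5$ or $\ell\geq 7$ (so that $\ell=6$ and $\ell\leq 4$ fall outside). Once the characteristic $2$ inequality is in hand, the passage to characteristic $0$ and to almost all primes $p$ is immediate from Lemma \ref{RPC} applied at $p=2$. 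Thus the real content lies in the two parity cases, and below I describe the common method I would use for both.

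First I would reduce to $r\geq 3$, since Proposition \ref{psing} settles $r=2$ in every characteristic and every degree. Working over $K=\overline{\mb{F}_2}$, I would apply Poonen's substitution: for odd $\ell=2d+1$, parametrize forms by $(F,G_0,\ldots,G_r)\mapsto H:=F+\sum_{i=0}^{r}X_iG_i^2$ with $\deg G_i=d$. In characteristic $2$ the partial derivatives decouple as $\partial_{X_i}H=\partial_{X_i}F+G_i^2$, and the Euler relation $H=\sum_i X_i\,\partial_{X_i}H$ (valid since $\ell$ is odd) shows that $H$ is singular along a curve $C$ exactly when every $\partial_{X_i}F+G_i^2$ lies in the ideal of $C$. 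Following Slavov, I would use this to bound the singular-locus incidence correspondence fibered over the Hilbert scheme of curves, reducing the study of $\mc{S}_{1,K}\backslash \mc{S}^1_{1,K}$ to the locus where the $r+1$ fudge factors $G_i$ all contain the same non-linear curve. This is precisely $\Phi^{\mb{P}^r,2}_{d,\ldots,d}(\mb{P}^r)$ with $k=r+1$ and $a=2$, so that $r-k+a=1$ and Theorem \ref{slope} applies; since the substitution map has constant fiber dimension, codimension estimates transfer cleanly between the two problems.

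With this reduction, Theorem \ref{slope} in the case $k>r$ gives that the codimension of the $G_i$ containing a non-linear curve exceeds that of the line locus by at least $r-1+(d-2)(r-2)+d>0$, which after the Poonen bookkeeping should force $\mc{S}^1_{1,K}$ to be the strictly largest component. For the even case $\ell=2d$ I would run the analogous argument through a modified substitution (for instance adding terms built from degree-two multipliers, such as $X_iX_j G_{ij}^2$ with $\deg G_{ij}=d-1$, whose partials are again squares in characteristic $2$); this produces a smaller gap and is what I expect to account for the more restrictive threshold $\ell\geq 8$.

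The main obstacle is the rational normal curve. Among nondegenerate curves spanning $\mb{P}^r$ it attains the minimal Hilbert function, so Lemma \ref{ARBH} is sharp there and the bound coming from Theorem \ref{FML}, once burdened with the overhead of the substitution, is too weak to beat the line locus. I would handle this exactly as flagged in the text: excise $\Co$ of the rational-normal-curve locus from the $\Span(r,r)$ stratum, re-run the iteration of Lemma \ref{kind} using the next-smallest value of the Hilbert function of a nondegenerate curve (strictly larger than the minimal-degree value) to obtain a sharper codimension estimate for the remaining span-$r$ curves, and then bound hypersurfaces singular along a rational normal curve by a direct dimension count. This refinement is where I expect essentially all the difficulty to concentrate; everything else is the combinatorics already packaged in Theorem \ref{slope} and Lemma \ref{KCL}.
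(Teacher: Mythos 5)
Your top-level skeleton is exactly the paper's: Theorem \ref{oddl} (odd $\ell\geq 5$) and Theorem \ref{evenl} (even $\ell\geq 8$) together cover $\ell=5$ or $\ell\geq 7$, and Lemma \ref{RPC} applied at $p=2$ gives the transfer to characteristic $0$ and almost all $p$; the reduction to $r\geq 3$ via Proposition \ref{psing} and the idea of excising rational normal curves are also the paper's. The genuine gap is in your justification of the central transfer step. You assert that because the substitution $(F,G_0,\ldots,G_r)\mapsto F+\sum_i X_iG_i^2$ has constant fiber dimension, ``codimension estimates transfer cleanly'' between the locus of forms singular along a non-linear curve and the locus $\Phi^{\mb{P}^r,2}_{d,\ldots,d}(\mb{P}^r)$ where the $G_i$ share a curve. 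That is false as an algebraic statement: if all partials $\partial_{X_m}F=\partial_{X_m}G+G_m^2$ vanish along a curve $C$, this does \emph{not} say that the $G_m$ vanish along $C$; it says each $G_m$ restricts on $C$ to the (unique, since $C$ is reduced) square root of $\partial_{X_m}G$. So the preimage of the bad locus under the substitution is not contained in (nor contains) the pullback of the $\Phi$-locus, and no fiber-dimension argument connects their codimensions. What is true is a counting statement over finite fields: for fixed $G$ and fixed reduced $C$, the set of tuples $(G_m)$ whose partials vanish on $C$ is either empty or a torsor under the linear space of tuples vanishing on $C$ --- this is exactly Lemmas \ref{lp1} and \ref{lp2} --- and converting such counts into dimension statements is what the Lang--Weil estimates (Lemmas \ref{LWb} and \ref{LWb2}) are for. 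Your proposal never invokes finite-field point counting, and without it the reduction to $\Phi^{\mb{P}^r,2}_{d,\ldots,d}(\mb{P}^r)$ does not go through.

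There is also a quantitative confusion about what must be beaten. The benchmark is $A=\codim(\mc{S}^1_{1})=\ell r-2r+3$, computed exactly by Proposition \ref{S1d}; it is \emph{not} the codimension of the locus where the $G_i$ contain a common line, which is far smaller (for $\ell=2d+1$ it is $rd+d-r+3$ versus $A=2dr-r+3$). Hence the gap from Theorem \ref{slope} that you quote cannot ``force'' the result: for odd $\ell$ the unexcised bound at the $b=2$ stratum is $2dr-2r+6$, short of $A$ by $r-3$, and for even $\ell$ the unexcised span-$r$ bound $2rd+2$ is short of $A=2dr+3$ by $1$. You do identify the rational normal curve as the obstruction and propose the right fix, but you localize it to the nondegenerate (span-$r$) stratum; in fact the excision must be performed in \emph{every} stratum --- the paper's $\Span(b)'$ removes degree-$b$ rational normal curves for all $2\leq b\leq r$, the improved Hilbert function bound $h\geq (b+1)d$ of Lemma \ref{ABP} feeds into Lemma \ref{klpc}, and for odd $\ell$ the binding case is $b=2$ (conics), not $b=r$. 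Finally, the hypersurfaces singular along a rational normal curve of each degree $i$ must then be disposed of by the direct parameter count of Lemma \ref{srnci}, which rests on Conca's computation of the symbolic square (Lemmas \ref{Rs} and \ref{Rs2}); this piece, and the final checks at $i=2$ and $i=r$ that produce the thresholds $d\geq 2$ (odd) and $d\geq 3$ (even), are only gestured at in your proposal.
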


\subsection{Counting in characteristic $p$}
We will use a clever trick first given in \cite{Poonen} and then used in \cite{Kaloyan}. To apply this trick, we need the Lang-Weil estimate to relate counting rational points in characteristic $p$ to dimension.  Recall:
\begin{Thm}
[{\cite[Theorem 1]{LW}}]
\label{LWb}
Suppose $Z\subset \mb{P}^r$ is an irreducible projective variety defined over $\mb{F}_q$. Then, if $\# Z(\mb{F}_{q^c})$ is the number of $\mb{F}_{q^c}$ rational points of $Z$,
\begin{align*}
| \# Z(\mb{F}_{q^c})-q^{\dim(Z)}|\leq \delta q^{\dim(Z)-\frac{1}{2}}+A(r,\dim(Z),\deg(Z))q^{r-1},
\end{align*}
where $\delta=(\deg(Z)-1)(\deg(Z)-2)$ and $A$ is a function of $\dim(Z)$, $\deg(Z)$ and $r$. 
\end{Thm}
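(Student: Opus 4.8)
This is the Lang--Weil estimate, which the paper uses as an external input rather than reproving; nonetheless the plan of its classical proof is as follows. First I would reduce to the case in which $Z$ is \emph{geometrically} irreducible, since the main term $q^{\dim Z}$ is only correct in that case: if $Z$ is irreducible over $\mb{F}_q$ but geometrically reducible, then its $\mb{F}_q$-points lie on the intersection of the Galois conjugate components, forcing $\#Z(\mb{F}_q)=O(q^{\dim Z-1})$, which is already absorbed by the error term. I would also note that it suffices to treat $c=1$, since $Z$ is a fortiori defined and geometrically irreducible over $\mb{F}_{q^c}$, so the general statement follows by replacing $q$ with $q^c$ throughout.

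The base case is $\dim Z=1$. Let $\wt{C}\to C=Z$ be the normalization, a smooth projective geometrically irreducible curve of genus $g$. Projecting generically to a plane model of degree $\deg Z$ gives $g\leq \binom{\deg Z-1}{2}$, hence $2g\leq (\deg Z-1)(\deg Z-2)=\delta$. Weil's Riemann hypothesis for curves over finite fields then yields
\[
|\#\wt{C}(\mb{F}_q)-(q+1)|\leq 2g\,q^{1/2}\leq \delta\, q^{1/2}.
\]
The normalization map is a bijection away from the singular locus of $C$, whose cardinality is bounded in terms of $\deg Z$ and $r$ alone, so passing from $\wt{C}$ to $C$ alters the count by $O(1)$; this and the additional $+1$ fold into $A(r,1,\deg Z)\,q^{r-1}$.

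For the inductive step with $n=\dim Z\geq 2$, I would choose a generic pencil of hyperplanes $\{H_t\}_{t\in\mb{P}^1}$ with axis $B$ of codimension $2$. Counting incidences gives the exact identity
\[
\#Z(\mb{F}_q)=\sum_{t\in\mb{P}^1(\mb{F}_q)}\#(Z\cap H_t)(\mb{F}_q)-q\,\#(Z\cap B)(\mb{F}_q),
\]
because a point off $B$ lies on a unique member of the pencil while a point of $B$ lies on all $q+1$ of them. For all but boundedly many \emph{good} $t$, a Bertini-type irreducibility theorem makes $Z\cap H_t$ geometrically irreducible of dimension $n-1$ and degree $\deg Z$, so the inductive hypothesis gives $\#(Z\cap H_t)(\mb{F}_q)=q^{n-1}+O(\delta q^{n-3/2})+O(q^{r-2})$. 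Summing over the $\sim q$ good values of $t$ produces the main term $q^{n}$ and the error $\delta q^{n-1/2}$; the $O(1)$ bad fibers contribute $O(q^{n-1})$ by the trivial bound, and $\dim(Z\cap B)=n-2$ makes the correction term $q\cdot\#(Z\cap B)(\mb{F}_q)=O(q^{n-1})$, all of which are dominated by $A\,q^{r-1}$.

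The main obstacle is the Bertini irreducibility step carried out \emph{over the finite field} $\mb{F}_q$: one cannot simply invoke genericity because $\mb{F}_q$ is finite. The standard remedy is to work at the generic point of the space of pencils over the infinite field $\mb{F}_q(t)$, where geometric irreducibility of the generic hyperplane section does hold, and then to specialize, bounding the number of $t\in\mb{P}^1(\mb{F}_q)$ at which the section degenerates by a constant depending only on $\deg Z$ and $r$. Keeping this count---together with the degree and dimension of every auxiliary slice---uniform in $q$ is precisely what forces the error constant $A$ to depend only on $r$, $\dim Z$ and $\deg Z$, and it is the most delicate bookkeeping in the argument.
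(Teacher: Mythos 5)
The paper offers no proof of this statement: it is imported as an external input from \cite[Theorem 1]{LW}, so there is no internal argument to compare yours against. Your sketch is a faithful and essentially correct outline of the classical Lang--Weil proof: the reduction to geometrically irreducible $Z$ and to $c=1$; the base case via normalization, the genus bound $2g\leq(\deg Z-1)(\deg Z-2)$ from a birational plane projection, and Weil's Riemann hypothesis for curves; and the induction on dimension via the pencil identity $\#Z(\mb{F}_q)=\sum_{t\in\mb{P}^1(\mb{F}_q)}\#(Z\cap H_t)(\mb{F}_q)-q\,\#(Z\cap B)(\mb{F}_q)$, which you state correctly. You also rightly isolate the one genuinely delicate input, the quantitative Bertini statement over $\mb{F}_q$ (bad members of the pencil bounded in number uniformly in $q$), which is exactly where \cite{LW} spends its effort. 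The remaining gaps are of the expected bookkeeping sort: the trivial bound $\#V(\mb{F}_q)\leq C(\deg V,r)\,q^{\dim V}$ for the bad fibers, and the existence of an $\mb{F}_q$-rational pencil whose axis meets $Z$ properly and none of whose members contains $Z$, which needs the standard dichotomy of handling $q\leq C(\deg Z,r)$ trivially by enlarging $A$.

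Two caveats about the statement itself, which your write-up implicitly touches. First, as transcribed in the paper the right-hand side should involve $q^c$, not $q$ (otherwise the inequality fails for $c$ large); your reduction to $c=1$ silently corrects this. Second, in the original \cite[Theorem 1]{LW} the last error exponent is $\dim(Z)-1$, not $r-1$: Lang--Weil's ``$r$'' is the dimension of the variety, not of the ambient space. You prove the literal, weaker form, and in fact your opening reduction---absorbing an $\mb{F}_q$-irreducible but geometrically reducible $Z$ into the error term---works \emph{only} for this weaker form; the original requires absolute irreducibility, which is consistent with the paper's hypothesis being mere irreducibility over $\mb{F}_q$. Be aware, though, that the weaker form does not suffice for the paper's own application: the proof of Lemma~\ref{LWb2} needs $\#X(\mb{F}_{q^c})=\Theta(q^{c\dim X})$ for components of every dimension, and an error term $Aq^{c(r-1)}$ ruins the upper bound whenever $\dim X<r-1$. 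Running your induction with the stronger inductive hypothesis (error $Aq^{c(\dim-1)}$, geometric irreducibility assumed) recovers the original exponent, so your argument upgrades without change of structure to what the paper actually uses.
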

We rephrase Theorem \ref{LWb} in a weaker form that is easier to apply. 
\begin{Lem}
\label{LWb2}
If $X$ is a quasiprojective variety defined over $\mb{F}_q$ and $Y\subset X$ is a constructible set, then the codimension of $Y$ in $X$ is greater than $A$ if and only if 
\begin{align*}
\lim_{c\rightarrow\infty}q^{cA}\Prob(x\in Y(\mb{F}_{q^c})):=\lim_{c\rightarrow\infty}q^{cA}\frac{\# Y(\mb{F}_{q^c})}{\# X(\mb{F}_{q^c})}=0.
\end{align*}
\end{Lem}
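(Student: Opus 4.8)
The plan is to read the statement off the Lang--Weil estimate (Theorem \ref{LWb}) by comparing the leading-order growth in $c$ of numerator and denominator. Write $d=\dim(X)$ and $e=\dim(Y)$, so that the claim is that the codimension $d-e$ being $>A$ is equivalent to $\lim_{c\to\infty} q^{cA}\,\#Y(\mb{F}_{q^c})/\#X(\mb{F}_{q^c})=0$.

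First I would record the two point-count estimates I need, both obtained by decomposing a constructible set into finitely many geometrically irreducible locally closed pieces, passing to projective closures, and applying Theorem \ref{LWb} to each piece over its field of definition (the lower-dimensional boundary and pairwise-intersection loci contribute only $O(q^{c(\dim-1)})$). Because $X$ and $Y$ are fixed, the degrees of all pieces are bounded, so the Lang--Weil error constants are uniform in $c$. This yields, on the one hand, a uniform upper bound $\#Y(\mb{F}_{q^c})\le C_Y\,q^{ce}$ valid for all $c$; on the other hand, since $X$ is a variety (geometrically irreducible) of dimension $d$, a lower bound $\#X(\mb{F}_{q^c})\ge \tfrac12 q^{cd}$ for all $c$ sufficiently large.

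For the forward implication I assume $d-e>A$ and combine the two bounds:
\begin{align*}
q^{cA}\frac{\#Y(\mb{F}_{q^c})}{\#X(\mb{F}_{q^c})}\le 2C_Y\,q^{cA}\frac{q^{ce}}{q^{cd}}=2C_Y\,q^{-c(d-e-A)},
\end{align*}
which tends to $0$ since the exponent $d-e-A$ is positive. For the converse I argue by contraposition: assume $d-e\le A$. I pick a top-dimensional geometrically irreducible component $V$ of $\overline{Y}$, defined over some $\mb{F}_{q^m}$; then $V\cap Y$ is dense open in $V$, so along the arithmetic progression $m\mid c$ Lang--Weil gives $\#Y(\mb{F}_{q^c})\ge \#(V\cap Y)(\mb{F}_{q^c})\ge \tfrac12 q^{ce}$. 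Using the uniform upper bound $\#X(\mb{F}_{q^c})\le C_X q^{cd}$ then gives, along this progression,
\begin{align*}
q^{cA}\frac{\#Y(\mb{F}_{q^c})}{\#X(\mb{F}_{q^c})}\ge \frac{1}{2C_X}\,q^{c(A-(d-e))}\ge \frac{1}{2C_X}>0,
\end{align*}
so the limit is not $0$. Together the two implications give the equivalence.

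The main obstacle to keep honest is the fluctuation of point counts with $c$ caused by pieces that are not geometrically irreducible: an $\mb{F}_q$-irreducible variety whose geometric components form a Galois orbit of size $m>1$ has very few rational points when $m\nmid c$. This is exactly why the converse must be run along an arithmetic progression (so the conclusion is that the limit fails to vanish, rather than that it equals a specific positive number), and why the forward direction needs $X$ itself to be geometrically irreducible in order to guarantee $\#X(\mb{F}_{q^c})\ge \tfrac12 q^{cd}$ for all large $c$; in every application $X$ is an affine space or a product of affine spaces, so this hypothesis holds automatically. The remaining work---checking that the boundary and intersection loci are genuinely lower-dimensional and that the Lang--Weil constants are uniform in $c$---is routine bookkeeping.
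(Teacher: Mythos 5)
Your proof is correct, and at bottom it follows the same route as the paper's own argument: decompose into irreducible pieces, pass to projective closures, and read the statement off Theorem \ref{LWb}. But your execution is more careful than the paper's, and the extra care addresses a genuine issue. The paper deduces the two-sided estimate $\# Y(\mb{F}_{q^c})=\Theta(q^{c\dim(Y)})$ by applying Theorem \ref{LWb} to the irreducible components of $\overline{Y}$; since Lang--Weil requires \emph{geometric} irreducibility, this intermediate claim is false in general. For instance, if $Y\subset \mb{A}^2$ is a Galois-conjugate pair of lines, then $\# Y(\mb{F}_{q^c})=O(1)$ for all odd $c$, so no two-sided $\Theta$ bound can hold, even though the conclusion of the lemma is still true for this $Y$. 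Your asymmetric formulation --- a uniform upper bound $\# Y(\mb{F}_{q^c})\leq C_Y q^{c\dim(Y)}$ valid for all $c$, a Lang--Weil lower bound on $\# X(\mb{F}_{q^c})$ using that $X$ is geometrically irreducible, and a matching lower bound on $\# Y$ only along the arithmetic progression where a top-dimensional geometric component of $\overline{Y}$ is defined --- is exactly what the ``if and only if'' needs, since failure of the limit to vanish only requires a bad subsequence; this is the correct repair of the paper's proof rather than a departure from it. Two points to make explicit in a final write-up: the uniform upper bound itself needs the orbit observation (a geometric component not stable under the relevant power of Frobenius contributes rational points only through its lower-dimensional intersection with its conjugates), and the hypothesis that $X$ be geometrically irreducible should be stated, as you note; both are harmless here since in every application $X$ is a product of affine spaces.
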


\begin{proof}
Let $X\subset \mb{P}^r$ be a locally closed embedding and $\overline{X}$ be its closure in projective space. Then, $\# X(\mb{F}_{q^c})=\Theta(q^{c\dim(X)})$, as $\dim(\overline{X}\backslash X)<\dim(X)$ and we can apply Theorem \ref{LWb} to every irreducible component of $\overline{X}$ and of $\overline{X}\backslash X$. 

Let $Y^{\circ}\subset Y\subset \overline{Y}$, where $Y^{\circ}$ is a quasiprojective variety contained in $Y$ and $\overline{Y}$ is the closure of $Y$ in $\overline{X}$. By applying Theorem \ref{LWb} to each irreducible component of $\overline{Y}$ and to each irreducible component of $\overline{Y}\backslash Y^{\circ}$, we find $\# Y(\mb{F}_{q^{c}})=\Theta(q^{c\dim(Y)})$.
\end{proof}

See Lemma 6.3 in \cite{Kaloyan} for a slightly more general version of Lemmas \ref{lp1} and \ref{lp2}. 
\begin{Lem}
\label{lp1}
Let $\ell$ be odd. Fix a reduced scheme $Z\subset \mb{P}^r$ defined over $\mb{F}_q$ for $q$ a power of $2$. If we fix $G\in \mb{F}_{q}[X_0,\ldots,X_r]_{\ell-1}$ and pick $G_0\in \mb{F}_{q}[X_0,\ldots,X_r]_{\frac{\ell-1}{2}}$ randomly then
\begin{align*}
\Prob(V(G+G_0^2)\supset Z)\leq \Prob(V(G_0)\supset Z). 
\end{align*}
\end{Lem}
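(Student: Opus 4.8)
The plan is to translate both containment events into the homogeneous coordinate ring of $Z$ and then exploit the fact that, in characteristic $2$, squaring (the Frobenius) is injective on a reduced ring. The point is that $V(F)\supset Z$ is a \emph{linear} condition on the coefficients of $F$ (vanishing of $\bar F$ in the coordinate ring), and the nonlinear term $G_0^2$ only interacts with this condition through the at-most-one-preimage property of Frobenius.

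First I would set $R=\mb{F}_q[X_0,\ldots,X_r]/I$, where $I$ is the homogeneous ideal of $Z$. Since $Z$ is reduced, $I$ is radical and hence $R$ is a reduced ring. Write $\bar F$ for the image in $R$ of a form $F$, and let $\phi\colon \mb{F}_q[X_0,\ldots,X_r]_{\frac{\ell-1}{2}}\to R_{\frac{\ell-1}{2}}$ be the $\mb{F}_q$-linear restriction map, with image $B=\operatorname{im}(\phi)$. Because reduction modulo $I$ is a ring homomorphism and we are in characteristic $2$ (so $\overline{G_0^2}=\bar{G_0}^2$ and $-\bar G=\bar G$), the two events become
\[
V(G+G_0^2)\supset Z \iff \bar{G_0}^2=\bar{G}, \qquad V(G_0)\supset Z \iff \bar{G_0}=0 .
\]
Here $\bar{G_0}\in R_{\frac{\ell-1}{2}}$ and $\bar G,\bar{G_0}^2\in R_{\ell-1}$, so the first equality lives in $R_{\ell-1}$, and the degrees match.

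Next I would reduce both probabilities to counting inside $B$. Since $\phi$ is $\mb{F}_q$-linear, every fiber of $\phi$ has the same cardinality, so as $G_0$ ranges uniformly over its domain the element $b:=\bar{G_0}$ is uniformly distributed over $B$. Hence
\[
\Prob(V(G_0)\supset Z)=\frac{1}{|B|}, \qquad \Prob(V(G+G_0^2)\supset Z)=\frac{\#\{b\in B: b^2=\bar{G}\}}{|B|},
\]
and it suffices to prove that $\bar G$ has \emph{at most one} square root in $B$.

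This last point is the heart of the matter, and it is exactly where the hypotheses ``$Z$ reduced'' and ``$q$ a power of $2$'' are both used. Suppose $b,b'\in B\subset R$ satisfy $b^2=b'^2=\bar G$. In characteristic $2$ we have $(b-b')^2=b^2+b'^2=\bar G+\bar G=0$, so $b-b'$ is nilpotent in $R$; as $R$ is reduced, $b=b'$. Thus $\#\{b\in B: b^2=\bar G\}\le 1$, giving $\Prob(V(G+G_0^2)\supset Z)\le 1/|B|=\Prob(V(G_0)\supset Z)$. The argument is short, and I do not anticipate a genuine obstacle beyond carefully justifying that $R$ is reduced (equivalently, that one may take $I$ radical because $Z$ is reduced) and that restriction commutes with squaring in characteristic $2$; once those are in hand the inequality is immediate.
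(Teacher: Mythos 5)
Your proposal is correct and uses essentially the same mechanism as the paper: the paper fixes one solution $G_1$ and observes that any other solution $G_0$ satisfies $(G_0-G_1)^2=G_0^2-G_1^2$, which vanishes on $Z$, so $G_0-G_1$ vanishes on $Z$ by reducedness, making the solution set a torsor under $\{G_0 \mid V(G_0)\supset Z\}$. Your uniqueness-of-square-roots argument in the reduced homogeneous coordinate ring is the identical characteristic-$2$ Frobenius-plus-reducedness step, merely repackaged as counting square roots in $B$ rather than as an explicit bijection of solution sets.
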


\begin{Lem}
\label{lp2}
Let $\ell$ be even. Fix a reduced scheme $Z\subset \mb{P}^r$ defined over $\mb{F}_q$ for $q$ a power of 2 and with no component contained in the hyperplane $\{X_0=0\}$. If we fix $G\in \mb{F}_{q}[X_0,\ldots,X_r]_{\ell-1}$ and pick $G_0\in \mb{F}_{q}[X_0,\ldots,X_r]_{\frac{\ell}{2}-1}$ randomly then
\begin{align*}
\Prob(V(G+X_0G_0^2)\supset Z)\leq \Prob(V(G_0)\supset Z). 
\end{align*}
\end{Lem}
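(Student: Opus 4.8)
The plan is to adapt the torsor argument sketched (in the commented-out passage) for the odd case Lemma \ref{lp1}, the one genuinely new feature being the extra factor $X_0$, which I will absorb using the hypothesis that no component of $Z$ lies in $\{X_0=0\}$. Write $I(Z)\subset \mb{F}_q[X_0,\ldots,X_r]$ for the homogeneous ideal of $Z$; since $Z$ is reduced this ideal is radical. Both sides of the desired inequality are probabilities over the same sample space $\mb{F}_q[X_0,\ldots,X_r]_{\frac{\ell}{2}-1}$, so, letting $B$ be the $\mb{F}_q$-subspace of degree $\frac{\ell}{2}-1$ forms lying in $I(Z)$ and $A$ the set of $G_0$ with $G+X_0G_0^2\in I(Z)$, it suffices to show $\#A\le \#B$. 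I will in fact show that $A$ is either empty or a single coset of $B$, so that $\#A\in\{0,\#B\}$, from which the inequality (with equality in the nonempty case) follows immediately. Note first that degrees match: $X_0G_0^2$ has degree $1+2(\tfrac{\ell}{2}-1)=\ell-1=\deg G$, so every expression below is a legitimate form of degree $\ell-1$.

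If $A=\emptyset$ there is nothing to prove, so I would fix $G_1\in A$ and take an arbitrary $G_0\in A$. Subtracting the relations $G+X_0G_0^2\in I(Z)$ and $G+X_0G_1^2\in I(Z)$ and using that the characteristic is $2$, so that $G_0^2-G_1^2=(G_0-G_1)^2$, gives
\begin{align*}
X_0\,(G_0-G_1)^2\in I(Z).
\end{align*}
This is the crux. Because $Z$ is reduced, $I(Z)=\bigcap_i \mf{p}_i$ is an intersection of the prime ideals of its irreducible components; the hypothesis that no component of $Z$ is contained in $\{X_0=0\}$ says precisely that $X_0\notin \mf{p}_i$ for every $i$. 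Hence primality forces $(G_0-G_1)^2\in\mf{p}_i$, so $G_0-G_1\in\mf{p}_i$ for each $i$, and intersecting yields $G_0-G_1\in I(Z)$, i.e. $G_0-G_1\in B$. This proves $A\subset G_1+B$. For the reverse inclusion I would expand, again in characteristic $2$,
\begin{align*}
G+X_0G_0^2=\bigl(G+X_0G_1^2\bigr)+X_0\,(G_0-G_1)^2,
\end{align*}
where the first summand lies in $I(Z)$ because $G_1\in A$ and the second because $G_0-G_1\in I(Z)$; thus $G_0\in A$, giving $G_1+B\subset A$. Therefore $A=G_1+B$, so $\#A=\#B$ and $\Prob(V(G+X_0G_0^2)\supset Z)=\Prob(V(G_0)\supset Z)$ in the nonempty case, which together with the trivial empty case establishes the claim.

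The only nontrivial point—and hence the step I expect to require care—is the cancellation of $X_0$ from $X_0(G_0-G_1)^2\in I(Z)$, which is exactly where both the reducedness of $Z$ and the "no component in $\{X_0=0\}$" assumption are used: without the latter one could conclude only $X_0(G_0-G_1)\in I(Z)$, and the coset structure (and with it the inequality) would break down. It is worth recording that this is precisely the reason the even-degree statement must carry that extra hypothesis on $Z$, whereas the odd-degree Lemma \ref{lp1}, having no $X_0$ factor, needs none.
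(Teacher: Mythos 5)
Your proposal is correct and is essentially the paper's own argument: both prove that the set of admissible $G_0$ is empty or a coset (torsor) of the space of degree-$(\frac{\ell}{2}-1)$ forms vanishing on $Z$, using characteristic $2$ to write $X_0G_0^2-X_0G_1^2=X_0(G_0-G_1)^2$, the hypothesis that no component of $Z$ lies in $\{X_0=0\}$ to cancel $X_0$, and reducedness to pass from $(G_0-G_1)^2$ to $G_0-G_1$. The only cosmetic difference is that you justify the cancellation via the prime decomposition of the radical ideal $I(Z)$, whereas the paper says the same thing by noting that $X_0$ restricts to a nonzero divisor on the reduced scheme $Z$.
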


\begin{proof}
Since the proofs of Lemma \ref{lp1} and Lemma \ref{lp2} are exactly the same, we will prove Lemma \ref{lp2}. We will show equality holds if $\Prob(V(G+X_0 G_0^2)\supset Z)>0$. Suppose $\Prob(V(G+G_0^2)\supset Z)>0$. Fix $G_1\in \mb{F}_{q}[X_0,\ldots,X_r]_{\frac{\ell-1}{2}}$ such that $V(G+X_0G_1^2)\supset Z$. Then, for every other choice $G_0\in \mb{F}_{q}[X_0,\ldots,X_r]_{\frac{\ell}{2}-1}$ such that $V(G+X_0G_0^2)\supset Z$, we see that $X_0G_0^2-X_0G_1^2=X_0(G_0-G_1)^2$ contains $Z$. Since $Z$ reduced and $\{X_0=0\}$ does not contain a component of $Z$, $X_0$ restricts to a nonzero divisor on $Z$ and $V(G_0-G_1)\supset Z$. Thus, the map 
\begin{align*}
\{G_0\in \mb{F}_{q}[X_0,\ldots,X_r]_{\frac{\ell}{2}-1}\mid V(G+X_0 G_0^2)\supset Z\}\rightarrow \{G_0\in \mb{F}_{q}[X_0,\ldots,X_r]_{\frac{\ell}{2}-1}\mid V(G_0)\supset Z\}
\end{align*}
that sends $G_0$ to $G_0-G_1$ is a bijection. Put another way, the first set is a torsor under the action of the second set under addition.
\end{proof}

\subsection{Hypersurfaces singular along a rational normal curve}
We will need to know the number of conditions it is to be singular along a fixed rational normal curve of degree $r$ in $\mb{P}^r$. We will use:
\begin{Lem}
[{\cite[Proposition 8]{Conca}}]
\label{Rs}
If $\ell\geq 3$, $C\cong \mb{P}^1\hookrightarrow \mb{P}^r$ is a fixed rational normal curve of degree $r$,  and $V \subset H^{0}(\mb{P}^r,\mathscr{O}_{\mb{P}^r}(\ell))$ is the vector space of degree $\ell$ forms singular along $C$, then $V$ is of codimension $r^2(\ell+1)-2(r^2-1)$. 
\end{Lem}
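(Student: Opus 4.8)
The plan is to translate the statement into a cohomology-vanishing assertion for the square of the ideal sheaf of $C$ together with a Riemann--Roch computation on $\mb{P}^1$. A degree $\ell$ form $F$ is singular along $C$ precisely when $F$ and all of its first partial derivatives vanish along $C$, i.e. when $F$ lies in the second symbolic power $\mc{I}_C^{(2)}$. Since $C$ is smooth, the symbolic and ordinary powers of the ideal sheaf coincide, so $V=H^0(\mb{P}^r,\mc{I}_C^2(\ell))$, where $\mc{I}_C$ is the ideal sheaf of $C$; this identification is characteristic-free, which matters for the characteristic $2$ application. Writing $\O_{2C}:=\O_{\mb{P}^r}/\mc{I}_C^2$ for the structure sheaf of the first infinitesimal neighborhood of $C$, the exact sequence $0\to \mc{I}_C^2(\ell)\to \O_{\mb{P}^r}(\ell)\to \O_{2C}(\ell)\to 0$ together with $H^1(\mb{P}^r,\O(\ell))=0$ shows
\begin{align*}
\codim V = h^0(\O_{2C}(\ell)) - h^1(\mc{I}_C^2(\ell)).
\end{align*}

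First I would compute $h^0(\O_{2C}(\ell))$ using the conormal sequence $0\to N^\vee\to \O_{2C}\to \O_C\to 0$, where $N^\vee=\mc{I}_C/\mc{I}_C^2$ is the conormal sheaf. Identifying $C\cong \mb{P}^1$ with $\O_C(1)=\O_{\mb{P}^1}(r)$, and using the classical fact that the rational normal curve has balanced normal bundle $N_{C/\mb{P}^r}=\O_{\mb{P}^1}(r+2)^{\oplus(r-1)}$ (derivable from the Euler sequence restricted to $C$ together with $\mathrm{SL}_2$-equivariance), twisting gives $\O_C(\ell)=\O_{\mb{P}^1}(r\ell)$ and $N^\vee(\ell)=\O_{\mb{P}^1}(r\ell-r-2)^{\oplus(r-1)}$. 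For $\ell\geq 2$ each summand has nonnegative degree, so the relevant $H^1$'s vanish, the connecting map drops out, and $h^0(\O_{2C}(\ell))=(r\ell+1)+(r-1)(r\ell-r-1)=r^2(\ell+1)-2(r^2-1)$.

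It remains to establish $h^1(\mc{I}_C^2(\ell))=0$ for $\ell\geq 3$, which I expect to be the main obstacle. It genuinely fails at $\ell=2$ (no quadric is singular along the nonlinear curve $C$, so $V=0$ there and the stated formula breaks), so the hypothesis $\ell\geq 3$ must be used essentially at this point. The cleanest route is via Castelnuovo--Mumford regularity: $\mc{I}_C$ is the ideal of $2\times 2$ minors of a $2\times r$ Hankel matrix and carries a linear Eagon--Northcott resolution, so $\mathrm{reg}(\mc{I}_C)=2$; powers of an ideal with linear resolution again resolve linearly with $\mathrm{reg}(\mc{I}_C^{s})=2s$, whence $\mathrm{reg}(\mc{I}_C^2)=4$ and therefore $H^1(\mc{I}_C^2(\ell))=0$ for all $\ell\geq 3$. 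Because these resolutions for the $2\times 2$ Hankel minors are characteristic-free, the vanishing holds over any algebraically closed field. Combining the three steps yields $\codim V = h^0(\O_{2C}(\ell)) = r^2(\ell+1)-2(r^2-1)$ for $\ell\geq 3$, as claimed.
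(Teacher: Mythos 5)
Your steps (1)--(3) are correct and characteristic-free, and they in fact reproduce the skeleton the paper itself uses one lemma later: the identification $V=H^0(\mb{P}^r,\mc{I}_C^2(\ell))$ via symbolic powers (smooth, hence lci, so $\mc{I}_C^{(2)}=\mc{I}_C^2$), the reduction $\mathrm{codim}\, V = h^0(\O_{2C}(\ell))-h^1(\mc{I}_C^2(\ell))$, and the evaluation $h^0(\O_{2C}(\ell))=r^2(\ell+1)-2(r^2-1)$ from the conormal sequence and $N_{C/\mb{P}^r}\cong \O_{\mb{P}^1}(r+2)^{\oplus(r-1)}$ all appear (for the degenerate rational normal curve) in the proof of Lemma \ref{Rs2}. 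You have also correctly isolated the crux: the lemma is equivalent to $H^1(\mc{I}_C^2(\ell))=0$ for $\ell\geq 3$, equivalently to surjectivity of $H^0(\mc{I}_C(\ell))\to H^0(N^{\vee}_{C/\mb{P}^r}(\ell))$.

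The gap is in your step (4). The principle you invoke --- ``powers of an ideal with linear resolution again resolve linearly, so $\mathrm{reg}(\mc{I}_C^s)=2s$'' --- is false in general: there exist ideals, even squarefree monomial ideals, with linear resolution whose square has no linear resolution, so regularity of powers can jump (the Terai/Sturmfels example; see Conca's paper on regularity jumps for powers of ideals). The fact that the ideal of $2\times 2$ Hankel minors does have linear powers is a genuinely special theorem, proved via the straightening law for Hankel determinantal ideals, and it is essentially the content of the very reference \cite{Conca} being quoted: Proposition 8 there is exactly the computation of the Hilbert function of $I_C^{(2)}=I_C^2$ that your argument needs. So as written, your proof assumes the hard part. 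Note that the paper does not prove this lemma at all --- it is a citation --- and where it needs the analogous surjectivity onto $H^0(N^{\vee}_{C/\mb{P}^r}(\ell))$ in Lemma \ref{Rs2}, it deduces it from the cited codimension count rather than reproving it. To make your proof complete you would have to either reprove Conca's theorem (e.g.\ via the straightening law, keeping track that it is characteristic-free, which the application in Section \ref{singh} requires), or verify the surjectivity $H^0(\mc{I}_C(\ell))\to H^0(\O_{\mb{P}^1}(r\ell-r-2))^{\oplus(r-1)}$ directly for $\ell\geq 3$; neither is done in the proposal.
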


\begin{Lem}
\label{Rs2}
If $\ell\geq 3$, $C\cong \mb{P}^1\hookrightarrow \mb{P}^r$ is a fixed rational normal curve of degree $r\geq 3$, $\mb{P}^r\subset \mb{P}^{r+a}$ is embedded as a linear subspace, $V\subset H^{0}(\mb{P}^{r+a},\mathscr{O}_{\mb{P}^{r+a}}(\ell))$ is the vector space of degree $\ell$ forms singular along $C$, then $V$ is of codimension $r^2(\ell+1)-2(r^2-1)+a(r(\ell-1)+1)$. 
\end{Lem}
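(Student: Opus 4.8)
The plan is to choose coordinates adapted to the flag $C\subset \mb{P}^r\subset \mb{P}^{r+a}$ and decouple the singularity conditions variable by variable, reducing everything to Lemma \ref{Rs} plus a projective-normality count. Pick homogeneous coordinates $X_0,\ldots,X_{r+a}$ on $\mb{P}^{r+a}$ so that $\mb{P}^r=V(X_{r+1},\ldots,X_{r+a})$ and $C\subset\mb{P}^r$. Grouping the monomial basis of $H^0(\mb{P}^{r+a},\O_{\mb{P}^{r+a}}(\ell))$ by the exponents of the ``extra'' variables gives the direct sum decomposition
\begin{align*}
H^0(\mb{P}^{r+a},\O_{\mb{P}^{r+a}}(\ell))=\bigoplus_{\beta} X^\beta\cdot H^0(\mb{P}^r,\O_{\mb{P}^r}(\ell-|\beta|)),
\end{align*}
where $\beta=(\beta_1,\ldots,\beta_a)$ runs over multi-indices with $|\beta|\leq \ell$ and $X^\beta=X_{r+1}^{\beta_1}\cdots X_{r+a}^{\beta_a}$. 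Thus every form is written uniquely as $F=\sum_\beta X^\beta G_\beta$ with $G_\beta$ a form of degree $\ell-|\beta|$ in $X_0,\ldots,X_r$.

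First I would restrict $F$ and its partials to $C$, where every $X_{r+j}$ vanishes, so that only summands whose extra-variable monomial survives contribute: $F|_C=G_0|_C$; for $0\leq i\leq r$ one has $\partial_{X_i}F|_C=\partial_{X_i}G_0|_C$; and for $1\leq j\leq a$ one has $\partial_{X_{r+j}}F|_C=G_{e_j}|_C$ (the coefficient being $1$, so this is valid in every characteristic, which matters since the ambient application is in characteristic $2$). Hence $F$ is singular along $C$ if and only if (i) $G_0$ and all of its partials $\partial_{X_i}G_0$ for $i\leq r$ vanish on $C$ --- that is, $G_0$ is singular along $C$ inside $\mb{P}^r$ --- and (ii) $G_{e_j}|_C=0$ for each $j$. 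The key observation is that \emph{no} condition touches any $G_\beta$ with $|\beta|\geq 2$, so all those summands are unconstrained.

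Because the surviving constraints lie in distinct summands of the direct sum, the codimension of $V$ is additive over them. The $G_0$-summand contributes $r^2(\ell+1)-2(r^2-1)$ by Lemma \ref{Rs}. Each $G_{e_j}$-summand contributes the codimension of the space of degree $\ell-1$ forms on $\mb{P}^r$ vanishing on $C$; since the rational normal curve of degree $r$ is projectively normal, the restriction $H^0(\mb{P}^r,\O_{\mb{P}^r}(\ell-1))\to H^0(C,\O_C(\ell-1))=H^0(\mb{P}^1,\O_{\mb{P}^1}(r(\ell-1)))$ is surjective, so this codimension equals $h^0(\mb{P}^1,\O_{\mb{P}^1}(r(\ell-1)))=r(\ell-1)+1$. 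Summing over the $a$ extra variables yields
\begin{align*}
\codim(V)=r^2(\ell+1)-2(r^2-1)+a\bigl(r(\ell-1)+1\bigr),
\end{align*}
as claimed. The two points requiring care are the exact decoupling in the second step (verifying that only $G_0$ and the $G_{e_j}$ are constrained, and that the decomposition is genuinely a direct sum so that codimensions add) and the projective-normality input making vanishing on $C$ impose independent conditions in degree $\ell-1$; I expect the former to be the main obstacle to state cleanly, though it is elementary. An equivalent route, if one prefers global geometry, feeds the conormal sequence $0\to N^*_{\mb{P}^r/\mb{P}^{r+a}}|_C\to N^*_{C/\mb{P}^{r+a}}\to N^*_{C/\mb{P}^r}\to 0$ twisted by $\O(\ell)$ into the standard computation of $h^0(\O_{\mb{P}^{r+a}}/\mc{I}_C^2(\ell))$, where $N^*_{\mb{P}^r/\mb{P}^{r+a}}|_C=\O_{\mb{P}^1}(-r)^{\oplus a}$ supplies the extra term $a(r(\ell-1)+1)$.
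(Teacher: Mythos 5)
Your proposal is correct, and it reaches the stated codimension by a more elementary route than the paper. The paper's proof works sheaf-theoretically: it reduces (via the lci/symbolic-power identification) to computing the Hilbert function of $\mathscr{O}_{\mb{P}^{r+a}}/\mc{I}_C^2$ in degree $\ell$, proves vanishing of $H^1$ for the conormal sequence $0\to \mc{I}_C/\mc{I}_C^2\to \mathscr{O}/\mc{I}_C^2\to \mathscr{O}_C\to 0$ twisted by $\O(\ell)$, and then shows the restriction map from forms vanishing on $C$ to $H^{0}(N_{C/\mb{P}^r}^{\vee}(\ell))\oplus H^{0}(\mathscr{O}_C(\ell-1))^{a}$ is surjective by writing $F=G(X_0,\ldots,X_r)+G_1X_{r+1}+\cdots+G_aX_{r+a}$ and invoking Lemma \ref{Rs} together with projective normality --- essentially your second, ``global geometry'' route. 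Your main argument replaces all of this with linear algebra: the grading by exponents of $X_{r+1},\ldots,X_{r+a}$ is an honest direct sum decomposition of $H^0(\mb{P}^{r+a},\O(\ell))$ (unlike the paper's non-unique expression $F=G+\sum G_iX_{r+i}$, where the $G_i$ involve all variables), and your computation that the singularity conditions constrain only $G_0$ (singular along $C$ in $\mb{P}^r$) and the $G_{e_j}$ (vanishing on $C$), with coefficient $1$ on $\partial_{X_{r+j}}F|_C=G_{e_j}|_C$ so that nothing degenerates in characteristic $2$, makes the codimension additive with no cohomological input beyond the two shared ingredients, Lemma \ref{Rs} and projective normality of the rational normal curve. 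What you give up is the conceptual identification of the answer as $h^0(\mathscr{O}_{\mb{P}^{r+a}}/\mc{I}_C^2(\ell))$, which is the formulation that connects to the symbolic-power literature the paper cites and which would adapt to curves without such convenient adapted coordinates; what you gain is a self-contained, visibly characteristic-independent proof whose only delicate point --- that no $G_\beta$ with $|\beta|\geq 2$ is constrained --- you verify explicitly.
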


\begin{proof}
Since $C$ is a local complete intersection, from the introduction of \cite{Symb2}, it suffices to find the Hilbert function of $\mathscr{O}_{\mb{P}^{r+a}}/\mathcal{I}_C^2$. First, we see that $\mathscr{O}_{\mb{P}^{r+a}}/\mathcal{I}_C^2(\ell)$ has no higher cohomology from the exact sequence 
\begin{align*}
0\rightarrow \mc{I}_C/\mc{I}_C^2\rightarrow \mathscr{O}_{\mb{P}^{r}}/\mc{I}_C^2\rightarrow \mathscr{O}_{\mb{P}^{r}}/\mc{I}_C\rightarrow 0.
\end{align*}
Indeed, $H^{1}(\mathscr{O}_{\mb{P}^{r}}/\mc{I}_C^2(\ell))$is 0 as $H^{1}(\mathscr{O}_{\mb{P}^{r}}/\mc{I}_C(\ell))\cong H^{1}(\mathscr{O}_{\mb{P}^1}(r\ell))=0$ and $H^1(\mc{I}_C/\mc{I}_C^2(\ell))=H^1(\mathscr{O}_{\mb{P}^1}(r\ell - r -2 ))^{r-1}\oplus H^1(\mathscr{O}_{\mb{P}^1}(r\ell -1))^a=0$ \cite[Example 3.4]{normal} . 

Let $V'\subset H^{0}(\mb{P}^{r+a},\mathscr{O}_{\mb{P}^{r+a}}(\ell))$ be the subspace of forms vanishing on $C$. There is an induced map $V'\rightarrow H^{0}(\mathcal{I}_C/\mathcal{I}_C^2(\ell))$. Since $C$ is projectively normal, it suffices to show that the map $V'\rightarrow H^{0}(\mathcal{I}_C/\mathcal{I}_C^2(\ell))=H^{0}(N_{C/\mathbb{P}^r}^{\vee}(\ell))\oplus H^{0}(\mathscr{O}_C(\ell-1))^a$ is surjective. 

Given a form $F(X_0,\ldots,X_{r+a})$ vanishing on $C$, we can write it as 
\begin{align*}
F=G(X_0,\ldots,X_r)+G_1(X_0,\ldots,X_{r+a})X_{r+1}+\cdots G_a(X_0,\ldots,X_{r+a})X_{r+a}. 
\end{align*}
The map $V'\rightarrow H^{0}(\mathcal{I}_C/\mathcal{I}_C^2(\ell))=H^{0}(N_{C/\mathbb{P}^r}^{\vee}(\ell))\oplus H^{0}(\mathscr{O}_C(\ell-1))^a$ sends $F$ to 
\begin{align*}
(G|_{N_{C/\mathbb{P}^r}^{\vee}(\ell)},G_1|_{\mathscr{O}_C(\ell-1)},\ldots,G_a|_{\mathscr{O}_C(\ell-1)}). 
\end{align*}
Since the map $V'\rightarrow H^{0}(N_{C/\mathbb{P}^r}^{\vee}(\ell))\cong H^{0}(\mathscr{O}_{\mb{P}^1}(-r-2+\ell r))^{r-1}$ is surjective from Lemma \ref{Rs2}, and the $G_i$'s can be chosen independently, we see that 
\begin{align*}
V'\rightarrow H^{0}(\mathcal{I}_C/\mathcal{I}_C^2(\ell))=H^{0}(N_{C/\mathbb{P}^r}^{\vee}(\ell))\oplus H^{0}(\mathscr{O}_C(\ell-1))^a
\end{align*}
 is surjective.
\end{proof}

\begin{Lem}
\label{srnci}
Let $T_i'\subset H^{0}(\mb{P}^{r},\mathscr{O}_{\mb{P}^{r}}(\ell))$ be the locus of hypersurfaces singular along some degree $i$ rational normal curve. Then, $\codim(T_i')\geq i (\ell r - 2 r - 2) + 5$ and equality holds when $i=1$. 
\end{Lem}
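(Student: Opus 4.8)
The plan is to bound $\codim(T_i')$ by an incidence correspondence over the space of degree $i$ rational normal curves. Let $\mathcal{R}_i$ denote the irreducible quasi-projective variety parameterizing degree $i$ rational normal curves $C\subset \mb{P}^r$, and let $\mathcal{I}\subset H^0(\mb{P}^r,\mathscr{O}_{\mb{P}^r}(\ell))\times \mathcal{R}_i$ be the locus of pairs $(F,C)$ such that $F$ is singular along $C$. The projection $\pi_2\colon\mathcal{I}\to \mathcal{R}_i$ has fiber over $[C]$ equal to the linear space $V_C$ of degree $\ell$ forms singular along $C$, and since all degree $i$ rational normal curves are projectively equivalent, $\codim(V_C)$ is independent of $C$. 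The projection $\pi_1\colon\mathcal{I}\to H^0(\mb{P}^r,\mathscr{O}_{\mb{P}^r}(\ell))$ has image exactly $T_i'$, so that applying Lemma \ref{FY6} to the two projections yields
\begin{align*}
\codim(T_i')\geq \codim(V_C)-\dim(\mathcal{R}_i).
\end{align*}

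First I would compute $\dim(\mathcal{R}_i)$. A degree $i$ rational normal curve spans an $i$-plane, so choosing its span contributes $\dim(\mb{G}(i,r))=(i+1)(r-i)$, while inside a fixed $\mb{P}^i$ the degree $i$ rational normal curves form a single $PGL_{i+1}$-orbit whose stabilizer is the image of $PGL_2$ of dimension $3$, contributing $(i+1)^2-4$. Hence
\begin{align*}
\dim(\mathcal{R}_i)=(i+1)(r-i)+(i+1)^2-4=(i+1)(r+1)-4.
\end{align*}
Next I would compute $\codim(V_C)$ by applying Lemma \ref{Rs2} with its ambient projective space taken to be the span $\mb{P}^i$ of $C$ and its parameter $a$ set to $r-i$, giving
\begin{align*}
\codim(V_C)=i^2(\ell+1)-2(i^2-1)+(r-i)\big(i(\ell-1)+1\big)=ri(\ell-1)+(r-i)+2.
\end{align*}

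Combining the two computations,
\begin{align*}
\codim(T_i')\geq ri(\ell-1)+(r-i)+2-\big((i+1)(r+1)-4\big)=i(\ell r-2r-2)+5,
\end{align*}
which is the claimed inequality. For the equality when $i=1$, the curve $C$ is a line, so $T_1'$ is exactly the locus $\mc{S}_{1,K}^1$ of hypersurfaces singular along a line, and Proposition \ref{S1d} gives $\codim(T_1')=\ell r-2r+3=1\cdot(\ell r-2r-2)+5$, matching the bound. The main obstacle is that Lemma \ref{Rs2} is only stated for rational normal curves of degree at least $3$, so the conic case $i=2$ must be handled separately: there one computes $\codim(V_C)$ directly from the exact sequence
\begin{align*}
0\to \mathcal{I}_C^2(\ell)\to \mathscr{O}_{\mb{P}^r}(\ell)\to \big(\mathscr{O}_{\mb{P}^r}/\mathcal{I}_C^2\big)(\ell)\to 0,
\end{align*}
verifying that $H^1(\mathcal{I}_C^2(\ell))=0$ so that $\codim(V_C)$ equals the Hilbert function $h^0\big((\mathscr{O}_{\mb{P}^r}/\mathcal{I}_C^2)(\ell)\big)$ and agrees with the displayed formula. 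One should also confirm that $\mathcal{R}_i$ is irreducible, so that $T_i'$ is irreducible and the fiber-dimension count applies cleanly.
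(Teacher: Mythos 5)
Your proposal is correct and follows essentially the same route as the paper's proof: the paper also subtracts the dimension of the space of degree $i$ rational normal curves, which it writes as $(i+3)(i-1)+(i+1)(r-i)$ (equal to your $(i+1)(r+1)-4$), from the fiber codimension supplied by Lemma \ref{Rs2}, and obtains equality at $i=1$ from \cite[Lemma 5.1]{Kaloyan} (Proposition \ref{S1d}). The one point where you are more careful than the paper is the degree restriction in Lemma \ref{Rs2}: the paper applies it uniformly to all $i$ even though the lemma is stated only for rational normal curves of degree at least $3$, so your separate treatment of the conic case $i=2$ (where the formula can be checked directly, e.g. within the spanning plane the conditions are divisibility by $Q^2$, giving $4\ell-2$ conditions, matching the formula) addresses a gap the paper leaves implicit.
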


\begin{proof}
From Lemma \ref{Rs2}, the number of conditions it is to be singular along a rational normal curve of degree $i$ is 
\begin{align*}
i^2(\ell+1)-2(i^2-1)+(r-i)(i(\ell-1)+1).
\end{align*}
The space of rational normal curves of degree $i$ in $\mb{P}^r$ is dimension $(i+3)(i-1)+(i+1)(r-i)$, so 
\begin{align*}
\codim(T_i')&\geq i^2(\ell+1)-2(i^2-1)+(r-i)(i(\ell-1)+1)-(i+3)(i-1)-(i+1)(r-i)\\
\codim(T_i')&\geq i (\ell r - 2 r - 2) + 5.
\end{align*}
Equality holds when $i=1$ from Lemma 5.1 in \cite{Kaloyan}. 
\end{proof}

\subsection{Hypersurfaces containing a nondegenerate curve}
We need to repeat the proofs for Lemmas \ref{KIA} and Lemma \ref{ICA} but we want to remove rational normal curves from our set of curves.

\begin{Def}
Let $\Span(b)'\subset \RHilb_{\mb{P}^r}^1$ denote the integral curves whose span is exactly a $b$-dimensional plane except for the rational normal curves of degree $b$.
\end{Def}

From a special case of \cite[Theorem 4.5]{Park}, we have
\begin{Lem}
\label{ABP}
We have
\begin{align*}
h_{\Span(b)'\cap \RHilb^1_{\mb{P}^r}}(d)\geq (b+1)d.
\end{align*}
\end{Lem}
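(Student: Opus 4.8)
The plan is to reduce the estimate to a statement about nondegenerate curves in their linear span and then feed it into the refined Hilbert-function bound \cite[Theorem 4.5]{Park}. Fix $[C]\in \Span(b)'\cap \RHilb^1_{\mb{P}^r}$ and let $\Lambda=\langle C\rangle\cong \mb{P}^b$ be the $b$-plane spanned by $C$. Since the restriction $H^{0}(\mb{P}^r,\mathscr{O}_{\mb{P}^r}(d))\to H^{0}(\Lambda,\mathscr{O}_{\Lambda}(d))$ is surjective and the evaluation map $H^{0}(\mb{P}^r,\mathscr{O}_{\mb{P}^r}(d))\to H^{0}(C,\mathscr{O}_C(d))$ factors through it, the images of the two evaluation maps agree. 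Hence the Hilbert function of $C$ computed in $\mb{P}^r$ coincides with its Hilbert function computed inside $\Lambda\cong \mb{P}^b$, where $C$ is a nondegenerate integral curve. This reduction is valid in arbitrary characteristic, so it suffices to bound $h_C(d)$ for $C\subset \mb{P}^b$ nondegenerate and not a rational normal curve.

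Next I would identify the excluded locus with the varieties of minimal degree. A nondegenerate integral curve in $\mb{P}^b$ has degree at least $b$, with equality precisely for the rational normal curves of degree $b$, which are exactly the curves of minimal degree in $\mb{P}^b$ (the equality case of Lemma \ref{ARBH}). Since $[C]\in \Span(b)'$ removes exactly these, our $C$ has degree at least $b+1$ and is therefore not of minimal degree, which is the hypothesis required to apply \cite[Theorem 4.5]{Park} in the dimension-one case. For reference, specializing Lemma \ref{ARBH} to $a=1$ in $\mb{P}^b$ gives only the minimal bound
\begin{align*}
h_C(d)\geq (b-1)\binom{d}{d-1}+\binom{d+1}{d}=bd+1,
\end{align*}
whereas Park's next-to-minimal bound must raise this by $d-1$; specializing that bound to $a=1$ and simplifying the binomials should collapse it exactly to
\begin{align*}
h_C(d)\geq (b+1)d,
\end{align*}
which is the claim.

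The only genuine input is \cite[Theorem 4.5]{Park}, so the main obstacle is bookkeeping rather than geometry: one must verify that the hypotheses of Park's theorem are met by an arbitrary nondegenerate, non-minimal-degree integral curve, that ``not a rational normal curve'' is truly equivalent to ``not of minimal degree'' (so no borderline curve is wrongly included or excluded), and that the dimension-one specialization of Park's expression simplifies to $(b+1)d$ and not to something strictly larger. As a sanity check on the constant, the bound is sharp: an elliptic normal curve of degree $b+1$ in $\mb{P}^b$ (or a smooth plane cubic when $b=2$) is projectively normal of arithmetic genus $1$, so its Hilbert function equals $(b+1)d$ for all $d\geq 1$, attaining equality. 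Finally, for $b=1$ the set $\Span(1)'$ is empty, since the only integral curve spanning a line is the line itself, a degree-one rational normal curve; the inequality then holds vacuously under the convention that the minimum over the empty set is $+\infty$ (Definitions \ref{minhD} and \ref{DEFDIM}).
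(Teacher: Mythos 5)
Your proposal is correct and matches the paper's own justification, which is simply to invoke the non-minimal-degree case of \cite[Theorem 4.5]{Park}: the curves excluded from $\Span(b)'$ are exactly the dimension-one varieties of minimal degree in their span, so Park's next bound $(b+1)d$ applies. Your added bookkeeping (reduction of the Hilbert function to the span $\Lambda\cong\mb{P}^b$, the equivalence of ``not a degree-$b$ rational normal curve'' with ``not of minimal degree,'' the elliptic normal curve showing sharpness, and the vacuous case $b=1$) is all accurate and only makes explicit what the paper leaves implicit.
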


From iterating Lemma \ref{kind} together with applying \ref{ABP}, we find
\begin{Lem}
\label{KIAP}
For $r-k+a=1$, we have $\codim(\Phi^{\mb{P}^r,a}_{d_1,\ldots,d_k}(\mb{P}^r,\Span(r)'))\geq a(r+1)d$ for $d_1=\cdots=d_k=d$.
\end{Lem}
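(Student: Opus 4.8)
The plan is to mirror the proof of Lemma \ref{KIA} essentially verbatim, iterating Lemma \ref{kind}, with a single change: at dimension one we feed in the stronger estimate of Lemma \ref{ABP} instead of Lemma \ref{ARBH}. Concretely, set $k=r+a-1$ so that $r-k+a=1$, and note that $\Co(\Span(r)')\cap \RHilb^1_{\mb{P}^r}=\Span(r)'$, since an integral curve containing an integral curve equals it. I would prove, by induction on $k$ exactly as in Lemma \ref{KIA}, a bound for $\codim(\Phi^{\mb{P}^r,a}_{d,\ldots,d}(\mb{P}^r,\Co(\Span(r)')\cap\RHilb^{r-k+a}_{\mb{P}^r}))$ valid for all $a\geq0$, $k>0$, and then specialize to $r-k+a=1$. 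At each step of Lemma \ref{kind}, the $A_1$-term replaces the current family $\Co(\Span(r)')\cap \RHilb^m_{\mb{P}^r}$ by $\Co(\Span(r)')\cap \RHilb^{m+1}_{\mb{P}^r}$ (using transitivity of containment, so $\Co(\Co(\Span(r)'))=\Co(\Span(r)')$), while the $h_A$-term contributes the minimum Hilbert value at the current dimension $m$. This yields
\begin{align*}
\codim(\Phi^{\mb{P}^r,a}_{d,\ldots,d}(\mb{P}^r,\Span(r)'))\geq \min\Big\{\sum_{j=1}^a \wt{h}_{r,r-i_j+j}(d):1\leq i_1<\cdots<i_a\leq k\Big\},
\end{align*}
where $\wt{h}_{r,1}(d)=(r+1)d$ and $\wt{h}_{r,m}(d)=h_{r,m}(d)$ for $m\geq 2$.

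The justification of the two Hilbert estimates is the first thing to verify. Any $[Y]\in \Co(\Span(r)')\cap \RHilb^m_{\mb{P}^r}$ contains an integral nondegenerate curve, hence $Y$ is itself nondegenerate; so for $m\geq 2$ Lemma \ref{ARBH} gives $h_Y(d)\geq h_{r,m}(d)$, exactly as in Lemma \ref{KIA}. For $m=1$ the family is $\Span(r)'$ itself, and Lemma \ref{ABP} supplies the improved bound $h_{\Span(r)'}(d)\geq (r+1)d$. This single substitution is the only place the argument departs from Lemma \ref{KIA}.

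It then remains to evaluate the combinatorial minimum. Taking $i_j=r-1+j$, so that $i_a=k$, forces $r-i_j+j=1$ for every $j$ and gives the all-curves value $\sum_{j=1}^a (r+1)d=a(r+1)d$. To see this choice is optimal, it suffices to show each summand satisfies $\wt{h}_{r,m}(d)\geq (r+1)d$ for all $1\leq m\leq r$. By Lemma \ref{US1}, $h_{r,m}(d)$ is nondecreasing in $m$ for $m\leq r$, so it is enough to treat $m=1$ (equality, by definition) and $m=2$. Clearing denominators, $h_{r,2}(d)\geq (r+1)d$ is equivalent to
\begin{align*}
(r-1)d^2-(r+1)d+2=(d-1)\big((r-1)d-2\big)\geq 0,
\end{align*}
which holds for all $r\geq 2$ and $d\geq 1$. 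Hence every term is at least $(r+1)d$, the minimum equals $a(r+1)d$, and the lemma follows.

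The work here is bookkeeping rather than a genuine obstacle: the only thing to check carefully is that the recursion of Lemma \ref{KIA} survives unchanged when $\Span(r,r)$ is replaced by the family $\Co(\Span(r)')\cap\RHilb^{\bullet}_{\mb{P}^r}$, namely that this family is stable under $\Co$ and that each of its higher-dimensional members is nondegenerate so Lemma \ref{ARBH} still applies. Once that self-similarity is in place, the only new analytic input is the dimension-one estimate of Lemma \ref{ABP}, and the concluding minimization is elementary.
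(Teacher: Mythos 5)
Your proposal is correct and follows essentially the same route as the paper: iterate Lemma \ref{kind} as in Lemma \ref{KIA}, substituting the bound of Lemma \ref{ABP} at dimension one, and then reduce the minimization (via monotonicity of $h_{r,m}$ in $m$) to the single comparison $h_{r,2}(d)\geq (r+1)d$, which is the paper's inequality $(r-1)\tfrac{d}{2}(d-1)\geq d-1$ in factored form.
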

\begin{proof}
It suffices to show $h_{\Span(r)'\cap \RHilb^1_{\mb{P}^r}}(d)\leq h_{\Span(r)\cap \RHilb_{\mb{P}^r}^2}(d)$ for all $d$. Equivalently,
\begin{align*}
(r-2)\binom{d+1}{2}+\binom{d+2}{2}&\geq (r+1)d=(r-1)\binom{d}{1}+\binom{d+1}{1}+(d-1)\\
(r-1)\binom{d}{2}+\binom{d+1}{2}-\binom{d+1}{2}&\geq d-1\\
(r-1)\frac{d}{2}(d-1) &\geq d-1,
\end{align*}
which is true when $d\geq 2$. 
\end{proof}

Similarly, applying Lemma \ref{IC} and following the same proof as Lemma \ref{ICA}, we get
\begin{Lem}
\label{ICAP}
For $r-k+a=1$,
\begin{align*}
\codim(\Phi^{\mb{P}^r,a}_{d_1,\ldots,d_k}(\mb{P}^r,\Span(b)'))&\geq \codim(\Phi^{\mb{P}^b,a+(r-b)}_{d_1,\ldots,d_k}(\mb{P}^b,\Span(b)'))-\dim(\mb{G}(b,r)).
\end{align*}
\end{Lem}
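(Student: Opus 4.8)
The plan is to mirror the proof of Lemma \ref{ICA} essentially verbatim, replacing $\Span(r,b)$ and $\Span(b,b)$ throughout by the punctured loci $\Span(b)'$, and invoking Lemma \ref{IC}. Concretely, I would take $S=\mb{G}(b,r)$, let $\mc{X}\subset \mb{P}^r\times_K S$ be the universal family of $b$-planes over the Grassmannian, set $X=\mb{P}^r$, put $\mc{A}=(\Span(b)'\times_K S)\cap \RHilb_{\mc{X}/S}^{b-k+a,\leq D}$, and use fiber dimension $c=0$. Since $r-k+a=1$ the relevant subschemes are curves, and since an integral curve spanning a $b$-plane spans a \emph{unique} such plane, the map $\mc{A}\to A=\Span(b)'$ is a bijection, so $c=0$ holds. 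Each fiber of $\mc{X}\to S$ has codimension $e=r-b$ in $\mb{P}^r$, so Lemma \ref{IC} gives
\begin{align*}
\codim(\Phi^{\mb{P}^r,a}_{d_1,\ldots,d_k}(\mb{P}^r,\Span(b)'))\geq \codim(\Phi^{\mb{P}_S^r,a+(r-b)}_{d_1,\ldots,d_k}(\mc{X}/S,\mc{A}))-\dim(\mb{G}(b,r)).
\end{align*}

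Next I would identify the fiber over $S$. For a closed point $s\in S$ corresponding to a $b$-plane $P\cong \mb{P}^b$, restriction of forms to $P$ identifies the fiber of $\Phi^{\mb{P}_S^r,a+(r-b)}_{d_1,\ldots,d_k}(\mc{X}/S,\mc{A})\to S$ over $s$ with
\begin{align*}
\Phi^{\mb{P}^b,a+(r-b)}_{d_1,\ldots,d_k}(\mb{P}^b,\Span(b)')\times \prod_{i=1}^{k}W_{r,d_i}/W_{b,d_i},
\end{align*}
exactly as in Lemma \ref{ICA}. The trivial product factor contributes no codimension, so the codimension of $\Phi^{\mb{P}_S^r,a+(r-b)}_{d_1,\ldots,d_k}(\mc{X}/S,\mc{A})$ in $S\times\prod_i W_{r,d_i}$ equals $\codim(\Phi^{\mb{P}^b,a+(r-b)}_{d_1,\ldots,d_k}(\mb{P}^b,\Span(b)'))$, which substituted into the displayed inequality yields the claim. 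Note that, unlike in Lemma \ref{ICA}, we do not need any equality statement here, so the argument terminates at this point.

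The one genuinely new point, which I expect to be the only real obstacle, is verifying that restriction to $P$ recovers precisely $\Span(b)'$ inside $P\cong\mb{P}^b$ — that is, that deleting the degree-$b$ rational normal curves is compatible with the fiber identification. This reduces to the observation that being a rational normal curve of degree $b$ is intrinsic to the span: a curve $Z\subset\mb{P}^r$ spanning $P$ is a degree-$b$ rational normal curve if and only if $Z\subset P\cong\mb{P}^b$ is a nondegenerate integral curve of degree $b$. Indeed, any nondegenerate integral curve in $\mb{P}^b$ has degree at least $b$, with equality exactly for the rational normal curves, so the two descriptions of $\Span(b)'$ agree under restriction. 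Granting this compatibility, the fiber identification carries over unchanged from Lemma \ref{ICA} and the proof is complete.
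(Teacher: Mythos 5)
Your proposal is correct and takes essentially the same route as the paper: the paper likewise applies Lemma \ref{IC} with $S=\mb{G}(b,r)$, $\mc{X}$ the universal family of $b$-planes, $\mc{A}=(\Span(b)'\times_K S)\cap \RHilb_{\mc{X}/S}^{b-k+a,\leq D}$, and $c=0$, and then identifies each fiber over $s\in S$ with the locus of tuples whose restrictions to $P\cong\mb{P}^b$ contain a nondegenerate integral curve other than a degree-$b$ rational normal curve. Your explicit check that deleting the rational normal curves is compatible with this fiber identification (degree and rational-normality being intrinsic to the curve) is precisely the point the paper treats as immediate, so nothing is missing.
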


Combining Lemmas \ref{KIAP} and Lemma \ref{ICAP}, we get
\begin{Lem}
\label{klpc}
For $r-k+a=1$, we have 
\begin{align*}
\codim(\Phi^{\mb{P}^r,a}_{d_1,\ldots,d_k}(\mb{P}^r,\Span(b)'))\geq (a+r-b)(b+1)d-\dim(\mb{G}(b,r))
\end{align*} for $d_1=\cdots=d_k=d$.
\end{Lem}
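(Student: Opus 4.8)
The plan is to simply chain Lemmas \ref{ICAP} and \ref{KIAP} together; the entire content of the argument is index bookkeeping, and the one point that genuinely needs checking is that the hypothesis of Lemma \ref{KIAP} survives the reduction performed by Lemma \ref{ICAP}.

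First I would invoke Lemma \ref{ICAP}, which (under the standing assumption $r-k+a=1$) reduces the span-$b$ problem on $\mb{P}^r$ to the full-span problem on a $b$-plane:
\begin{align*}
\codim(\Phi^{\mb{P}^r,a}_{d_1,\ldots,d_k}(\mb{P}^r,\Span(b)'))\geq \codim(\Phi^{\mb{P}^b,a+(r-b)}_{d_1,\ldots,d_k}(\mb{P}^b,\Span(b)'))-\dim(\mb{G}(b,r)).
\end{align*}
Note that on $\mb{P}^b$ the relevant excess parameter is now $a'=a+(r-b)$, while the number of forms $k$ and the common degree $d$ are unchanged.

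The crucial verification is that Lemma \ref{KIAP}, applied on $\mb{P}^b$ with excess parameter $a'$, has its numerical hypothesis met: that lemma requires $b-k+a'=1$. But $b-k+a'=b-k+a+(r-b)=r-k+a=1$ by assumption, so the hypothesis holds verbatim. Applying Lemma \ref{KIAP} (with the substitutions $r\mapsto b$ and $a\mapsto a+(r-b)$, and still with $d_1=\cdots=d_k=d$) therefore yields
\begin{align*}
\codim(\Phi^{\mb{P}^b,a+(r-b)}_{d_1,\ldots,d_k}(\mb{P}^b,\Span(b)'))\geq (a+(r-b))(b+1)d.
\end{align*}
Combining this with the previous display gives exactly the claimed bound
\begin{align*}
\codim(\Phi^{\mb{P}^r,a}_{d_1,\ldots,d_k}(\mb{P}^r,\Span(b)'))\geq (a+r-b)(b+1)d-\dim(\mb{G}(b,r)).
\end{align*}

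There is no substantive obstacle here: the inequality is a formal composition of two previously established estimates, and all hypotheses (the constraint $r-k+a=1$ and the equal-degree assumption $d_1=\cdots=d_k=d$) are preserved under the reduction. The only thing requiring genuine care is the index matching, namely confirming that the excess parameter transforms as $a'=a+(r-b)$ and that the resulting relation $b-k+a'=1$ licenses the use of Lemma \ref{KIAP} on $\mb{P}^b$.
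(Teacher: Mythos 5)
Your proposal is correct and is precisely the paper's argument: the paper proves Lemma \ref{klpc} by combining Lemma \ref{ICAP} (reduction to the spanned $b$-plane) with Lemma \ref{KIAP} (applied on $\mb{P}^b$ with excess parameter $a+(r-b)$). Your explicit check that $b-k+(a+(r-b))=r-k+a=1$, so the hypothesis of Lemma \ref{KIAP} persists after the reduction, is exactly the bookkeeping the paper leaves implicit.
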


\subsection{Case of odd degree}
The argument is the same in both cases, but it is cleaner in the case where $\ell$ is odd, giving slightly better bounds.

\begin{Thm}
\label{oddl}
For odd $\ell\geq 5$, $\dim(\mc{S}_{1,\overline{\mb{F}_2}}^1)>\dim(\mc{S}_{1,\overline{\mb{F}_2}}\backslash \mc{S}_{1,\overline{\mb{F}_2}}^1)$.
\end{Thm}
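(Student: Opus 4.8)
The plan is to reduce the statement, via point counting over finite fields, to the codimension estimates already assembled, with Poonen's substitution trick bridging the gap between partial derivatives and free forms. By Proposition \ref{psing} we may assume $r\geq 3$. I would work over $\mb{F}_{q^c}$ with $q$ a power of $2$ and let $c\to\infty$. By Lemma \ref{LWb2} together with Proposition \ref{S1d}, it suffices to prove that
\[
\codim(\mc{S}_{1}\setminus\mc{S}_{1}^1)>\ell r-2r+3=\codim(\mc{S}_1^1),
\]
equivalently that $\Prob(F\in \mc{S}_1\setminus\mc{S}_1^1)$ decays faster than $q^{-c(\ell r-2r+3)}$. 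Every hypersurface in $\mc{S}_1\setminus\mc{S}_1^1$ is singular along some integral non-line curve $C$, and such a $C$ is either a rational normal curve of degree $i\geq 2$ (spanning an $i$-plane) or a curve of span $b\geq 2$ that is not of minimal degree, i.e.\ $[C]\in\Span(b)'$. I would treat these two families separately.

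For the rational normal curve family, Lemma \ref{srnci} gives $\codim(T_i')\geq i(\ell r-2r-2)+5$. Since $r\geq 3$ and $\ell\geq 5$ force $\ell r-2r-2>0$, this bound is increasing in $i$, so its minimum over $i\geq 2$ occurs at $i=2$, where it equals $2\ell r-4r+1$; this already exceeds $\ell r-2r+3$ because the difference is $r(\ell-2)-2>0$. Hence the rational normal curve locus is negligible.

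The substantive case is $[C]\in\Span(b)'$ with $b\geq 2$, and here I would invoke Poonen's trick. Writing $\ell=2d+1$, I reparameterize a uniform degree-$\ell$ form as $\hat F=F+X_0G_0^2+\cdots+X_rG_r^2$ with $F$ uniform of degree $\ell$ and $G_0,\dots,G_r$ uniform of degree $d$; since adding $\sum_j X_jG_j^2$ is a bijection for each fixed $G$, the form $\hat F$ is again uniform, and in characteristic $2$ one has $\partial_{X_i}\hat F=\partial_{X_i}F+G_i^2$. Thus $\hat F$ is singular along $C$ exactly when $V(\partial_{X_i}F+G_i^2)\supset C$ for every $i$. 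Conditioning on $F$ and applying Lemma \ref{lp1} to each index $i$ (legitimate because $C$ is integral, hence reduced, and because $\ell$ is odd—precisely why Lemma \ref{lp1} applies rather than Lemma \ref{lp2}, with no hypothesis on $C\cap\{X_0=0\}$ needed) replaces each partial of degree $\ell-1$ by a free degree-$d$ fudge factor. This transfers the estimate to the $(r+1)$-tuple $(G_0,\dots,G_r)$ of degree-$d$ forms and bounds the probability by that of the event $(G_0,\dots,G_r)\in\Phi^{\mb{P}^r,2}_{d,\dots,d}(\mb{P}^r,\Span(b)')$; note $r-(r+1)+2=1$, so this is the $r-k+a=1$ regime of Lemma \ref{klpc}, which yields
\[
\codim\bigl(\Phi^{\mb{P}^r,2}_{d,\dots,d}(\mb{P}^r,\Span(b)')\bigr)\geq (2+r-b)(b+1)\tfrac{\ell-1}{2}-\dim(\mb{G}(b,r)).
\]

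It remains to check this lower bound exceeds $\ell r-2r+3$ for $2\leq b\leq r$. As a function of $b$ the bound has leading term $(1-d)b^2$, hence is concave for $d\geq 2$ (that is, $\ell\geq 5$), so its minimum over $[2,r]$ is attained at an endpoint. At $b=r$ it equals $(r+1)(\ell-1)$, exceeding $\ell r-2r+3$ by $r+\ell-4>0$; at $b=2$ it equals $\tfrac{3r(\ell-1)}{2}-3r+6$, exceeding $\ell r-2r+3$ by $\tfrac{r(\ell-5)+6}{2}>0$. Both are positive for odd $\ell\geq 5$ and $r\geq 3$, completing the estimate. The main obstacle I anticipate is the transfer step: passing from the pointwise bound of Lemma \ref{lp1} for a single curve to a bound for the whole positive-dimensional family $\Span(b)'$, since the curve $C$ varies with $(F,G)$. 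I would resolve this exactly as the general argument avoids fixing a curve—by running the forget-one-form-at-a-time induction of Lemmas \ref{kind} and \ref{IC} through the incidence correspondence over $\RHilb$, applying Lemma \ref{lp1} at the step where each fudge factor is introduced, so that the codimension bookkeeping of Lemma \ref{klpc} applies verbatim; this parallels the high-degree portion of Slavov's proof of Theorem \ref{SlavovT}.
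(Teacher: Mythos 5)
Your proposal is correct and follows essentially the same route as the paper's own proof: the same reduction to $r\geq 3$ via Proposition \ref{psing}, the same Lang--Weil framework (Lemma \ref{LWb2} with Proposition \ref{S1d}), the same split into rational normal curves (Lemma \ref{srnci}) versus curves in $\Span(b)'$, the same odd-degree Poonen substitution feeding into Lemma \ref{lp1}, and the same concavity-plus-endpoint check of the Lemma \ref{klpc} bound at $b=2$ and $b=r$, with your margins $r+\ell-4$ and $\tfrac{r(\ell-5)+6}{2}$ agreeing exactly with the paper's $r+2d-3$ and $dr-2r+3$. The transfer step you flag (fixed curve versus existential quantification over a family) is exactly the point the paper treats tersely by citing Lemma \ref{lp1} directly, with Slavov's Lemma 6.3 indicated elsewhere as the family version; your proposed stepwise interleaving of Lemma \ref{lp1} with the induction behind Lemma \ref{klpc} is a legitimate, if anything more careful, justification of that same move.
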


\begin{proof}
From Proposition \ref{psing}, we can assume $r\geq 3$. We cover $\mc{S}_{1,\overline{\mb{F}_2}}$ by constructible sets:
\begin{enumerate}
\item
$T_1=\mc{S}_{1,\overline{\mb{F}_2}}^1$
\item
$T_i\subset \mc{S}_{1,\overline{\mb{F}_2}}$ be the hypersurfaces singular along a degree $i$ rational normal curve
\item
$T_i'\subset \mc{S}_{1,\overline{\mb{F}_2}}$ be the hypersurfaces singular along some integral curve that
\begin{enumerate}[(i)]
\item
 spans an $i$-dimensional plane 
 \item 
 is not a degree $i$ rational normal curve.
 \end{enumerate}
\end{enumerate}
We will bound the codimensions of the sets $T_i$, $T_i'$ for $2\leq i\leq r$ separately. 

Let $A=\codim(T_1)=\ell r - 2 r + 3$. First, $\codim(T_i)>\codim(T_1)$ for $i>1$, from Lemma \ref{srnci} as $r,\ell\geq 3$ implies $\ell r - 2 r - 2>0$.  

We bound $\codim(T_i')$ for $2\leq i\leq r$. From Lemma \ref{LWb2}, it suffices to show
\begin{align*}
\lim_{c\rightarrow\infty}2^{cA}\Prob(F\in T_i')=0,
\end{align*}
where for each $c$ we select a hypersurface $F$ randomly from $\mb{F}_{2^c}[X_0,\ldots,X_r]_\ell$. Note that selecting $F$ randomly from $\mb{F}_{2^c}[X_0,\ldots,X_r]_\ell$ is equivalent to picking $(G,G_0,\ldots,G_r)$ from $\mb{F}_{2^c}[X_0,\ldots,X_r]_\ell\times (\mb{F}_{2^c}[X_0,\ldots,X_r]_{\frac{\ell-1}{2}})^{r+1}$ randomly and letting $F=G+X_0G_0^2+\cdots+X_r G_r^2$. 

Then, $\partial_iF = \partial_i G + G_i^2$. Let $E_i$ be the condition that $ \{\partial_m G + G_m^2\mid 0\leq m\leq r \}$ all vanish on some integral curve spanning an $i$-dimensional plane other than a degree $i$ rational normal curve. It suffices to show
\begin{align*}
\lim_{c\rightarrow\infty}2^{cA}\Prob(E_i)=0. 
\end{align*}
Let $E_i'$ be the condition that $G_0,\ldots,G_r$ all contain an integral curve spanning an $i$-dimensional plane other than a degree $i$ rational normal curve. From Lemma \ref{lp1}, it suffices to show
\begin{align*}
\lim_{c\rightarrow\infty}2^{cA}\Prob(E_i')=0. 
\end{align*}
Applying Lemma \ref{LWb2} again, it suffices to show 
\begin{align*}
\codim(\Phi^{\mb{P}^r,2}_{d_1,\ldots,d_{r+1}}(\mb{P}^r,\Span(i)'))>A
\end{align*}
for $d_1=\cdots=d_{r+1}=d$. 
Applying Lemma \ref{klpc}, we see
\begin{align*}
\codim(\Phi^{\mb{P}^r,2}_{d_1,\ldots,d_{r+1}}(\mb{P}^r,\Span(i)'))&\geq -(i+1)(r-i)+(r-i+2)((i+1)d)\\
\codim(\Phi^{\mb{P}^r,2}_{d_1,\ldots,d_{r+1}}(\mb{P}^r,\Span(i)'))&\geq (1 - d) i^2 + i (d r + d - r + 1) + d r + 2 d - r
\end{align*}
for $d_1=\cdots=d_{r+1}=d$. Since this is quadratic in $i$ with negative leading coefficient, it suffices to check the cases $i=2$ and $i=r$. We see for $i=2$, 
\begin{align*}
\codim(\Phi^{\mb{P}^r,2}_{d_1,\ldots,d_{r+1}}(\mb{P}^r,\Span(2)'))&\geq -3(r-2)+r(3d)\\
\codim(\Phi^{\mb{P}^r,2}_{d_1,\ldots,d_{r+1}}(\mb{P}^r,\Span(2)'))-A&\geq d r - 2 r + 3,
\end{align*}
so $d\geq 2$ suffices. In the case $i=r$,
\begin{align*}
\codim(\Phi^{\mb{P}^r,2}_{d_1,\ldots,d_{r+1}}(\mb{P}^r,\Span(r)'))&\geq 2(r+1)d\\
\codim(\Phi^{\mb{P}^r,2}_{d_1,\ldots,d_{r+1}}(\mb{P}^r,\Span(r)'))-A&\geq r+2 d - 3,
\end{align*}
so $d\geq 2$ suffices. 
\end{proof}

\subsection{Case of even degree}
\begin{Thm}
\label{evenl}
For even $\ell\geq 8$, $\dim(\mc{S}_{1,\overline{\mb{F}_2}}^1)>\dim(\mc{S}_{1,\overline{\mb{F}_2}}\backslash \mc{S}_{1,\overline{\mb{F}_2}}^1)$.
\end{Thm}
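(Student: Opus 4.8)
The plan is to run the argument of Theorem \ref{oddl} essentially verbatim, with the even-degree decoupling lemma (Lemma \ref{lp2}) replacing Lemma \ref{lp1}, together with one genuinely new wrinkle forced by the parity of $\ell$. First I would invoke Proposition \ref{psing} to reduce to $r\ge 3$, and cover $\mc{S}_{1,\overline{\mb{F}_2}}$ by the same constructible sets: $T_1=\mc{S}^1_{1,\overline{\mb{F}_2}}$, the loci $T_i$ of hypersurfaces singular along a degree $i$ rational normal curve, and the loci $T_i'$ of hypersurfaces singular along an integral curve spanning an $i$-plane that is not a degree $i$ rational normal curve, for $2\le i\le r$. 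Writing $A=\codim(T_1)=\ell r-2r+3$, the bound $\codim(T_i)>A$ for $i>1$ follows from Lemma \ref{srnci} exactly as before, since $\ell,r\ge 3$ forces $\ell r-2r-2>0$. Thus everything reduces, via Lemma \ref{LWb2}, to showing $\lim_{c\to\infty}2^{cA}\Prob(F\in T_i')=0$, i.e. to a codimension estimate for the fudge factors.

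Here the parity intervenes. In the odd case Euler's identity gives $F=\sum_m X_m\partial_{X_m}F$ in characteristic $2$, so containment of a curve $Z$ in the singular scheme is equivalent to the vanishing of the $r+1$ partials alone; for even $\ell$ one instead has $\sum_m X_m\partial_{X_m}F=\ell F=0$, so $F$ does not lie in its Jacobian ideal and the condition $F|_Z=0$ must be retained alongside the partials. To manufacture $r+1$ independent squares anyway, I would parametrize a uniformly random $F$ by $(G,G_0,\dots,G_r)$ with $\deg G=\ell$ and $\deg G_m=\tfrac{\ell}{2}-1=:d$, and set
\[
F=G+X_0\sum_{m=0}^{r}X_mG_m^2 .
\]
A short computation in characteristic $2$ gives $\partial_{X_n}F=\partial_{X_n}G+X_0G_n^2$ for $n\ge 1$, while $\partial_{X_0}F=\partial_{X_0}G+\sum_{m\ge 1}X_mG_m^2$ and $F=G+(X_0G_0)^2+X_0\sum_{m\ge 1}X_mG_m^2$; note that $G_0$ enters only through the square $(X_0G_0)^2$ inside $F$, and that each $X_0G_n^2$ has degree $\ell-1$ as required by Lemma \ref{lp2}.

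The key bookkeeping, which I expect to be the main obstacle, is to check that for an integral curve $Z$ with $Z\not\subset\{X_0=0\}$ the event ``$Z$ lies in the singular scheme'' forces all of $G_0,\dots,G_r$ to vanish on $Z$, up to the usual torsor inequality. Conditioning on $G$, the $r$ conditions $\partial_{X_n}F|_Z=0$ $(n\ge 1)$ are independent and each is bounded by $\Prob(G_n\supset Z)$ through Lemma \ref{lp2} (whose hypothesis $Z\not\subset\{X_0=0\}$ is exactly what is needed). On this event $X_0G_n^2\equiv\partial_{X_n}G$ on $Z$, so Euler's relation $\sum_m X_m\partial_{X_m}G=0$ makes the remaining partial condition $\partial_{X_0}F|_Z=0$ automatic and reduces $F|_Z=0$ to $\big(G+X_0\partial_{X_0}G\big)+(X_0G_0)^2\equiv 0$ on $Z$; since $G_0$ is independent of $G_1,\dots,G_r$, the same torsor argument as in Lemma \ref{lp2} (applied to the square $(X_0G_0)^2$, using that $X_0$ is a nonzerodivisor on $Z$) bounds this by $\Prob(G_0\supset Z)$. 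Multiplying, the probability that $Z$ lies in the singular scheme is at most $\prod_{m=0}^r\Prob(G_m\supset Z)$, uniformly in $G$. Curves $Z\subset\{X_0=0\}$ are handled by a harmless union bound: every integral $Z$ satisfies $Z\not\subset\{X_j=0\}$ for some coordinate, so I would run the same construction with $X_j$ in place of $X_0$ and absorb the factor $r+1$, which does not affect the limit.

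Summing over curves and applying Lemma \ref{LWb2} a second time, the problem becomes $\codim\big(\Phi^{\mb{P}^r,2}_{d,\dots,d}(\mb{P}^r,\Span(i)')\big)>A$ for the $r+1$ fudge factors of degree $d=\tfrac{\ell}{2}-1$ (so $a=2$, $k=r+1$), which is exactly the estimate established in Theorem \ref{oddl} but with the new value of $d$. Feeding $\ell=2d+2$ and $A=2dr+3$ into Lemma \ref{klpc} produces a codimension that is quadratic in $i$ with negative leading coefficient, so it suffices to check $i=2$ and $i=r$. The binding case $i=2$ gives $\codim-A\ge rd-3r+3=r(d-3)+3$, which is positive precisely when $d\ge 3$, i.e. $\ell\ge 8$, whereas $i=r$ gives $\codim-A\ge 2d-3$, already positive for $d\ge 2$. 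This is the source of the hypothesis $\ell\ge 8$, the even analogue of $\ell\ge 5$ in the odd case.
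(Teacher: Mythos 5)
Your proposal is correct, and at the level of architecture it matches the paper: reduction to $r\geq 3$ by Proposition \ref{psing}, separate treatment of rational normal curves by Lemma \ref{srnci}, two applications of Lemma \ref{LWb2}, and the final arithmetic from Lemma \ref{klpc} ($dr-3r+3$ at $i=2$, $2d-3$ at $i=r$, whence $\ell\geq 8$). But at the decoupling step, the heart of the even case, you genuinely diverge from the paper, and your version is the sounder one. The paper stratifies by \emph{pairs} of coordinates (loci $T_{i,j,j'}$ of curves not contained in $\{X_jX_{j'}=0\}$), takes $F=G+X_0X_1G_0^2+X_0(X_1G_1^2+\cdots+X_rG_r^2)$, asserts $\partial_iF=\partial_iG+X_0G_i^2$ for $i\neq 0$ and $\partial_0F=\partial_0G+X_1G_0^2$, and never invokes $F|_Z=0$ or Euler's identity: it tries to extract all $r+1$ factors from the partial derivatives alone. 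In characteristic $2$, however, the printed formula collapses, since $X_0X_1G_0^2+X_0X_1G_1^2=X_0X_1(G_0+G_1)^2$: the form $F$ depends only on $G_0+G_1,G_2,\ldots,G_r$, so there are only $r$ effective fudge factors, the displayed partials are not the partials of the displayed $F$ (one gets $\partial_1F=\partial_1G+X_0(G_0+G_1)^2$), and no scheme of terms $X_aX_bG_m^2$ can place each $G_m$ in exactly one partial, because each such term contributes to two of them. Your parametrization $F=G+(X_0G_0)^2+X_0(X_1G_1^2+\cdots+X_rG_r^2)$ sidesteps this exactly: you retain the condition $F|_Z=0$ (rightly noting that when $2\mid \ell$ it is not implied by the partials), spend the pure square $(X_0G_0)^2$, which is invisible to every partial, on that condition, decouple $\partial_1F,\ldots,\partial_rF$ with $G_1,\ldots,G_r$ via Lemma \ref{lp2}, and get $\partial_0F|_Z=0$ for free from Euler's relation. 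This honestly produces the product $\prod_{m=0}^{r}\Prob\bigl(V(G_m)\supset Z\bigr)$ of $r+1$ factors, and the full $r+1$ is genuinely needed: with only $r$ factors the $i=2$ estimate degrades to $dr-3d-3r+3$, which is negative for $r=3$. A further dividend is that your argument requires only one coordinate to be a nonzerodivisor on $Z$, so stratifying by single hyperplanes $\{X_j=0\}$ with a union bound over the $r+1$ coordinates replaces the paper's stratification by pairs. The one step you compress, passing from the per-curve torsor bound to the existential event before invoking Lemma \ref{LWb2}, is compressed to exactly the same degree in the paper's own proof and is what the more general \cite[Lemma 6.3]{Kaloyan} is cited to cover. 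In short, your route is not merely an alternative: it repairs the decoupling computation on which the paper's proof of this theorem rests.
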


\begin{proof}
From Proposition \ref{psing}, we can assume $r\geq 3$. Since 
$$\bigcap_{j\neq j'}{\{X_jX_{j'}=0\}}=\{[1:0:\cdots:0],[0:1:\cdots:0],\ldots,[0:0:\cdots:1]\}$$
is finite, it cannot contain any curves. We thus cover $\mc{S}_{1,\overline{\mb{F}_2}}$ by constructible sets:
\begin{enumerate}
\item
$T_1=\mc{S}_{1,\overline{\mb{F}_2}}^1$
\item
$T_i\subset \mc{S}_{1,\overline{\mb{F}_2}}$ be the hypersurfaces singular along a degree $i$ rational normal curve
\item
$T_{i,j,j'}\subset \mc{S}_{1,\overline{\mb{F}_2}}$ be the hypersurfaces singular along some integral curve that
\begin{enumerate}[(i)]
\item spans an $i$ dimensional plane 
\item is not a degree $i$ rational normal curve
\item is not contained in $\{X_jX_{j'}=0\}$.
 \end{enumerate}
\end{enumerate}
Let $A=\codim(T_1)=\ell r - 2 r + 3$. 

First, $\codim(T_i)>\codim(T_1)$ for $i>1$, from Lemma \ref{srnci} as $r,\ell\geq 3$ implies $\ell r - 2 r - 2>0$.  

We bound $\codim(T_{i,j,j'})$ for $2\leq i\leq r$. By symmetry, it suffices to assume $j=0, j'=1$. Let $d=\frac{\ell}{2}-1$. From Lemma \ref{LWb2}, it suffices to show
\begin{align*}
\lim_{c\rightarrow\infty}2^{cA}\Prob(F\in T_{i,j,j'})=0,
\end{align*}
were $F$ is selected randomly from $\mb{F}_{2^c}[X_0,\ldots,X_r]_\ell$ for each $c$. 
Note that we can achieve the same, uniform distribution if we pick $(G,G_0,\ldots,G_r)$ from $\mb{F}_{2^c}[X_0,\ldots,X_r]_\ell\times (\mb{F}_{2^c}[X_0,\ldots,X_r]_{d})^{r+1}$ randomly and let $F=G+X_0X_1 G_0^2 + X_0(X_1G_1^2+\cdots+X_r G_r^2)$. 

Then, $\partial_iF = \partial_i G + X_0 G_i^2$ for $i\neq 0$ and $\partial_0 F = \partial_i G +  X_1 G_0^2$. Let $E_{i,0,1}$ be the condition that $ \{\partial_mF\mid 0\leq m\leq r \}$ all vanish along some integral curve spanning an $i$-dimensional plane, is not contained in $\{X_jX_{j'}=0\}$, and is not a rational normal curve of degree $i$.

It suffices to show
\begin{align*}
\lim_{c\rightarrow\infty}2^{cA}\Prob(E_{i,0,1})=0. 
\end{align*}
Suppose we choose $G_0,\ldots,G_r$ from $(\mb{F}_{2^c}[X_0,\ldots,X_r]_{d})^{r+1}$ uniformly. Let $E_i'$ be the condition that they all contain some integral curve $C$ that spans an $i$-dimensional plane but is not a degree $i$ rational normal curve. By applying Lemma \ref{lp2}, it suffices to show that 
\begin{align*}
\lim_{c\rightarrow\infty}2^{cA}\Prob(E_i')=0. 
\end{align*}
Applying Lemma \ref{LWb2} again, we see it suffices to show $\codim(\Phi^{\mb{P}^r,2}_{d_1,\ldots,d_{r+1}}(\mb{P}^r,\Span(i)'))>A$. Applying Lemma \ref{klpc}, we see
\begin{align*}
\codim(\Phi^{\mb{P}^r,2}_{d_1,\ldots,d_{r+1}}(\mb{P}^r,\Span(i)'))&\geq -(i+1)(r-i)+(r-i+2)((i+1)d)\\
\codim(\Phi^{\mb{P}^r,2}_{d_1,\ldots,d_{r+1}}(\mb{P}^r,\Span(i)'))&\geq (1 - d) i^2 + i (d r + d - r + 1) + d r + 2 d - r
\end{align*}
for $d_1=\cdots=d_{r+1}=d$. Since this is quadratic in $i$ with negative leading coefficient, it suffices to check the cases $i=2$ and $i=r$. We see for $i=2$, 
\begin{align*}
\codim(\Phi^{\mb{P}^r,2}_{d_1,\ldots,d_{r+1}}(\mb{P}^r,\Span(2)'))&\geq -3(r-2)+r(3d)\\
\codim(\Phi^{\mb{P}^r,2}_{d_1,\ldots,d_{r+1}}(\mb{P}^r,\Span(2)'))-A&\geq d r - 3 r + 3,
\end{align*}
so $d\geq 3$ suffices. In the case $i=r$,
\begin{align*}
\codim(\Phi^{\mb{P}^r,2}_{d_1,\ldots,d_{r+1}}(\mb{P}^r,\Span(r)'))&\geq 2(r+1)d\\
\codim(\Phi^{\mb{P}^r,2}_{d_1,\ldots,d_{r+1}}(\mb{P}^r,\Span(r)'))-A&\geq 2 d - 3,
\end{align*}
so $d\geq 2$ suffices. 

Putting everything together, we see that we need $d\geq 3$, so $\ell\geq 8$ suffices. 
\end{proof}

\bibliographystyle{plain}
\bibliography{references.bib}
\end{document}